\DeclareMathOperator{\diag}{diag}
\DeclareMathOperator{\Res}{Res}
\newcommand{\lam}{\lambda}
\newcommand{\Lam}{\Lambda}
\newcommand{\bol}{\hfill\square\\}
\newcommand{\til}{\tilde}
\newcommand{\wtil}{\widetilde}
\newcommand{\crit}{\textrm{crit}}
\newcommand{\ep}{\epsilon}
\renewcommand{\Re}{\mathrm{Re}\,}
\renewcommand{\Im}{\mathrm{Im}\,}
\newcommand{\ds}{\displaystyle}
\newcommand{\Ai}{\mathrm{Ai}}
\newcommand{\supp}{\mathrm{supp}}
\newcommand{\Mdec}{M^{(1)}}
\newcommand{\Mmod}{M^{(2)}}
\newcommand{\Mhom}{M^{(3)}}
\newcommand{\Mhomdef}{M^{(4)}}
\newcommand{\Mhomtris}{M^{(5)}}
\newtheorem{theorem}{Theorem}[section]
\newtheorem{lemma}[theorem]{Lemma}
\newtheorem{proposition}[theorem]{Proposition}
\newtheorem{corollary}[theorem]{Corollary}
\newtheorem{rhp}[theorem]{RH problem}
\theoremstyle{definition}
\newtheorem{definition}[theorem]{Definition}
\theoremstyle{remark}
\newtheorem{remark}[theorem]{Remark}
\numberwithin{equation}{section}
\title{Critical behavior of non-intersecting Brownian motions at a tacnode}
\author{Steven Delvaux\footnotemark[1],\quad Arno B.~J.~Kuijlaars\footnotemark[1],\quad Lun Zhang\footnotemark[1]}
\date{\today}
\begin{document}

\maketitle
\renewcommand{\thefootnote}{\fnsymbol{footnote}}
\footnotetext[1]{Department of Mathematics, Katholieke Universiteit Leuven,
Celestijnenlaan 200B, B-3001 Leuven, Belgium. email:
\{steven.delvaux, arno.kuijlaars, lun.zhang\}\symbol{'100}wis.kuleuven.be.}

\begin{abstract}
We study a model of $n$ one-dimensional non-intersecting Brownian
motions with two prescribed starting points at time $t=0$ and two prescribed
ending points at time $t=1$ in a critical regime where the paths fill two
tangent ellipses in the time-space plane as $n \to \infty$.
The limiting mean density for the positions of the Brownian paths
at the time of tangency consists of two touching semicircles, possibly of different sizes.
We show that in an appropriate double scaling limit, there is a new familiy of limiting
determinantal point processes with  integrable correlation kernels that are
expressed in terms of a new Riemann-Hilbert problem of size $4\times 4$.
We prove solvability of the Riemann-Hilbert problem and establish
a remarkable connection with the Hastings-McLeod solution of the
Painlev\'e~II equation. We show that this Painlev\'e~II transcendent also
appears in the critical limits of the recurrence coefficients of the
multiple Hermite polynomials that are associated with the non-intersecting Brownian motions.
Universality suggests that the new limiting kernels apply to more general
situations whenever a limiting mean density vanishes according to two touching
square roots, which represents a new universality class.

\textbf{Keywords}: non-intersecting Brownian motion, determinantal point
process, correlation kernel, Rie\-mann-Hil\-bert
problem, Deift-Zhou steepest descent analysis, Painlev\'e~II equation, multiple
Hermite polynomial.

\end{abstract}

\setcounter{tocdepth}{2} \tableofcontents

\section{Introduction}
\label{section:introduction}

In recent years the model of non-intersecting Brownian
motions has been studied in various regimes, see e.g.\
\cite{ADV,AVV,DelKui1,DelKui2,KT1,KT2,KT3,TW4},
where many connections with determinantal point processes and random matrix
theory were found, see also \cite{KIK,KMW,KO,TW4} for non-intersecting Bessel
paths and Brownian excursions.  Their discrete counterparts, the non-intersecting random walks,
have important connections with tiling and random growth models,
see e.g.\ \cite{Baik,BorKuan,Joh1,Joh2,PS}.

In this paper we consider one-dimensional non-intersecting Brownian
motions with two prescribed starting points at time $t=0$ and two
prescribed ending points at time $t=1$. We assume that the number of
paths that leave from the topmost (bottommost) starting point is the
same as the number of paths that arrive at the topmost (bottommost)
ending point. As the number of paths increases and simultaneously
the overall variance of the Brownian transition probability
decreases, we may create various situations that are illustrated in
Figure~\ref{fig:3cases}.

\begin{figure}[tbp]
\begin{center}
\subfigure{\label{figlargesep}}\includegraphics[scale=0.3]{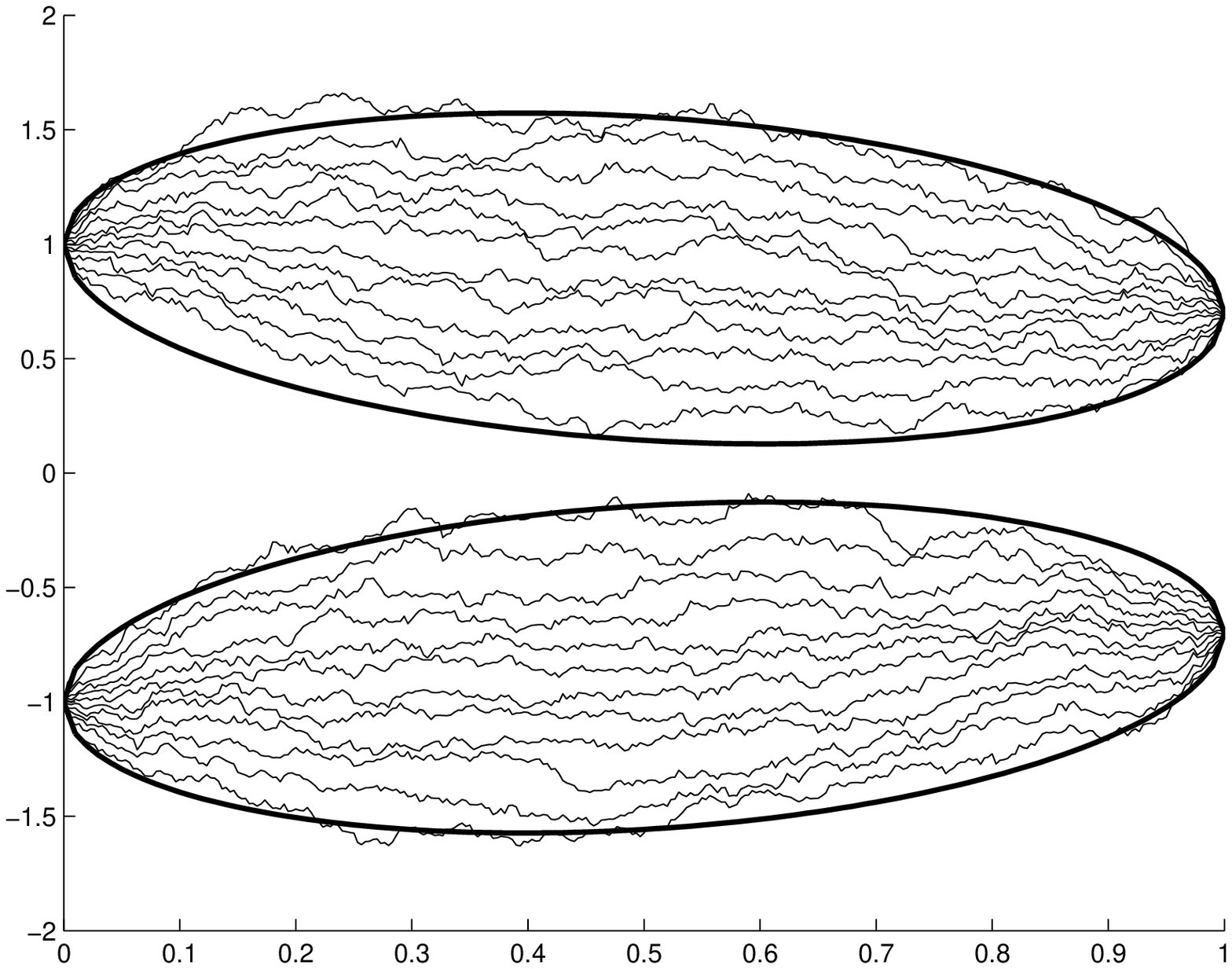}\hspace{5mm}
\subfigure{\label{figsmallsep}}\includegraphics[scale=0.3]{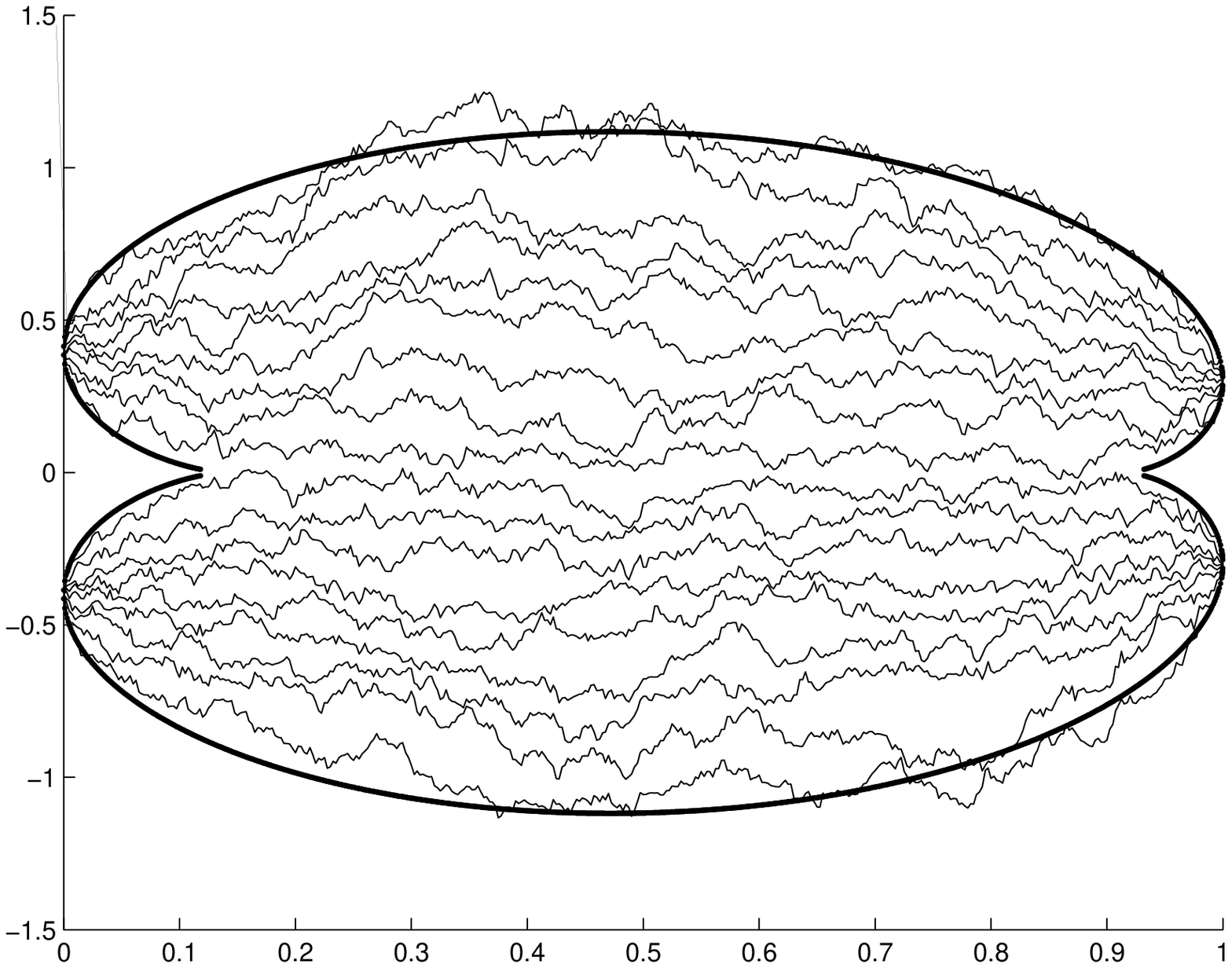}
\subfigure{\label{figcriticalsep}}\includegraphics[scale=0.3]{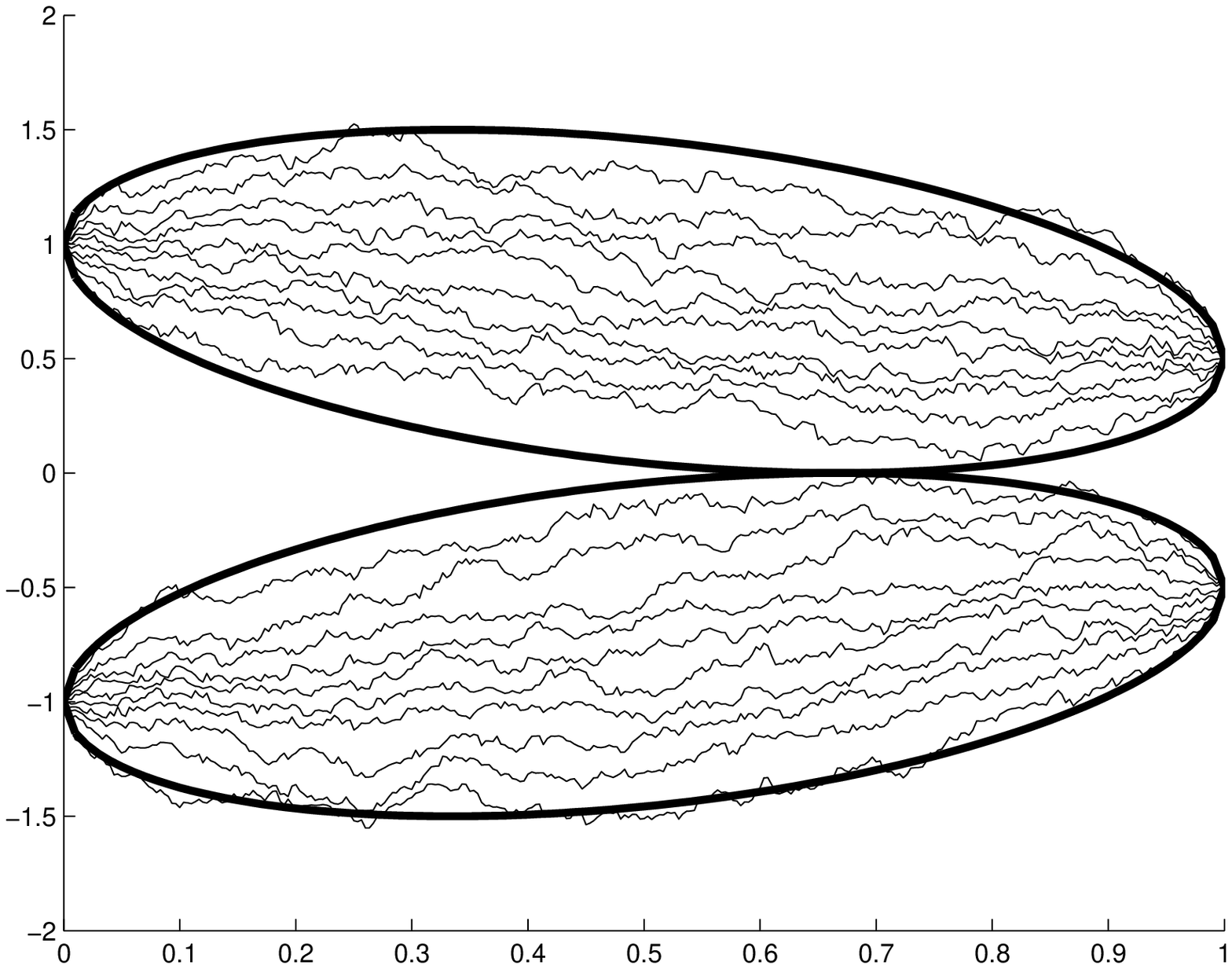}
\end{center}
\caption{Non-intersecting Brownian motions with two starting and two ending
positions in case of (a) large, (b) small, and (c) critical separation between
the endpoints. Here the horizontal axis denotes the time, $t\in[0,1]$, and for
each fixed $t$ the positions of the $n$ non-intersecting Brownian motions at
time $t$ are denoted on the vertical line through $t$. Note that for
$n\to\infty$ the positions of the Brownian motions fill a prescribed region in
the time-space plane, which is bounded by the boldface lines in the figures.
Here we have chosen $N=n=20$ and $p_1=p_2=1/2$ in each of the figures, and (a)
$a_1=-a_2=1$, $b_1=-b_2=0.7$, (b) $a_1=-a_2=0.4$, $b_1=-b_2=0.3$, and (c)
$a_1=-a_2=1$, $b_1=-b_2=1/2$, in the cases of large, small and critical
separation, respectively.} \label{fig:3cases}
\end{figure}

In case of large separation of the starting and ending points we have a
situation as in Figure~\ref{fig:3cases}(a). The paths are in two disjoint
groups, where one group of paths goes from the topmost starting point to the
topmost ending point, and the other group goes from the bottommost starting
point to the bottommost ending point. In the large $n$ limit the paths fill out
two disjoint ellipses.

In case of small separation of the starting and ending points we have a
situation as in Figure~\ref{fig:3cases}(b). Here the two groups of paths that
emanate from the two starting points merge at a certain time, stay together for
a while, and separate at a later time. In the large $n$ limit the paths fill
out a region that is bounded by a more complicated curve with two cusp
points. The critical behavior at the cusp point is known to be described
by the Pearcey process, see \cite{AvM1, BK3,  OR, TW4}.

In the transitional case of critical separation the paths fill out two ellipses
that are tangent at a critical time as shown in Figure~\ref{fig:3cases}(c). The
case of critical separation was already considered by the first two authors
\cite{DelKui1}, but at the non-critical time. Here we consider the
behavior at the critical time. Note that the tangent point of two ellipses is
called a tacnode in the classification scheme of singular points of
algebraic curves, whence the title of this paper; see also \cite{BorodinDuits}
where another model is analyzed with a tacnode, but with markedly different
properties.

The phase transition at critical separation can already be observed
at times different from the critical time $t_{\crit}$. In
\cite{DelKui1} a connection was found with the Hastings-McLeod
solution of the Painlev\'e~II equation
\begin{equation}\label{def:Painleve2}
    q''(s) = sq(s)+2q^3(s).
\end{equation}
Here the prime denotes the derivative with respect to $s$. The Hasting-McLeod
solution \cite{HML} is the special solution $q(s)$ of \eqref{def:Painleve2},
which is real for real $s$ and satisfies
\begin{equation}\label{def:HastingMcLeod}
    q(s) = \Ai(s)(1+o(1)), \qquad \text{as } s\to +\infty,
\end{equation}
where $\Ai$ denotes the usual Airy function.

The Hastings-McLeod solution $q(s)$ (or functions related to it) do
not appear in the local scaling limits of the correlation kernel at
time $t \neq t_{\crit}$. The Hastings-McLeod solution, however, does
appear in the asymptotics of the recurrence coefficients of the
multiple Hermite polynomials related to the non-intersecting
Brownian motions, see \cite{DelKui1}.

The results in \cite{DelKui1} were obtained from the Deift-Zhou steepest
descent analysis of the $4 \times 4$ matrix-valued Riemann-Hilbert (RH) problem
for multiple Hermite polynomials. During the analysis we had to construct a
local parametrix at a point $x_0 \in \mathbb R$ that lies strictly between the
two intervals where the two groups of Brownian paths accumulate. The point
$x_0$ does not have a physical meaning. However, the local parametrix affects
the recurrence coefficients of the multiple Hermite polynomials, as mentioned
before.

The aim of this paper is to perform a similar steepest descent analysis for the
critical time $t=t_{\crit}$. This multicritical situation, after appropriate
scaling  of the parameters will be locally described by the solution of a model
RH problem of size $4\times 4$. The RH problem can be considered as a
combination of two  RH problems for the Airy function with an
additional non-trivial coupling in the jump matrices.

Using this new $4\times 4$ model RH problem, we obtain an expression for the
limit of the correlation kernel at the critical time. We show that the kernel
has an integrable form determined by the solution to the RH problem, see
Theorem~\ref{theorem:kernelpsi}.

We find it remarkable that this new $4\times 4$ RH problem is again related to
the Hastings-McLeod solution of the Painlev\'e~II equation. More precisely, we
prove that the Hastings-McLeod solution shows up in the residue matrix in the
asymptotic series at infinity of the $4\times 4$ model RH problem, see
Theorem~\ref{theorem:Painleve2modelrhp}. This is very similar to the situation
for the classical $2\times 2$ RH problem for Painlev\'e~II due to
Flaschka-Newell \cite{FN,FIKN}. This suggests that our $4\times 4$ problem may
be expressible in terms of this smaller $2\times 2$ problem; however we could
not find such an expression. So our RH problem might lead to a genuinely new
Lax pair for the Hastings-McLeod solution to Painlev\'e~II. See also the paper
\cite{JKT} for Lax pairs with matrices of size $3\times 3$.

As a consequence, we are able to show that the results in \cite{DelKui1} on
the recurrence coefficients remain valid at the critical time $t=t_{\crit}$,
i.e., the asymptotic behavior of the recurrence coefficients of the multiple
Hermite polynomials is still governed by the Hastings-McLeod solution to
Painlev\'e~II with exactly the same formulas as in \cite{DelKui1}. We find
this a surprising fact.

Very recently, a model of non-intersecting random walks was studied by
Adler, Ferrari and Van Moerbeke \cite{AFV2} in a situation that is
very similar to ours. There are two groups or random walks in \cite{AFV2},
that in the  scaling limit fill out two domains that are tangent in one point.
It is shown that there is a limiting correlation kernel  at the tacnode,
and two expressions for it are given in terms of multiple integrals
involving  Airy functions and the Airy kernel resolvent.
It seems very likely that this limiting kernel should be equivalent to the one
that we obtain in Theorem~\ref{theorem:kernelpsi} below for the symmetric case
(i.e., $p_1^* = p_2^*= 1/2$ in Theorem~\ref{theorem:kernelpsi}), but we have
not been able to make this identification.

It would indeed be interesting to see how the Painlev\'e~II equation arises in the
framework of \cite{AFV2}, and conversely, to see how our formula can be reduced to
integrals with Airy functions and the Airy kernel resolvent.
It would also be interesting to have a process version with an extended tacnode kernel.

\section{Statement of results}
\label{section:modelRHP}

We now give a precise statement of our results.
In Sections~\ref{subsection:RHintro}-\ref{subsection:doublescaling} we
describe our situation and the connection with a $4 \times 4$ matrix valued
RH problem. In Section~\ref{subsection:modelrhp} we formulate the the new
$4\times 4$ RH problem and its properties, in particular the connection with
the Painlev\'e II equation. Finally, in  Sections~\ref{subsection:kernellim}--\ref{subsection:additional}
we state the main results about the  limiting behavior
of the correlation kernels and recurrence coefficients.

\subsection{Correlation kernel and the Riemann-Hilbert problem}
\label{subsection:RHintro} We consider $n$ one-dimensional non-intersecting
Brownian motions with two starting points $a_1>a_2$ at time $t=0$ and two
ending points $b_1>b_2$ at time $t=1$. We assume that $n_1$ of the particles
move from the topmost starting point $a_1$ to the topmost ending point $b_1$,
and $n_2$ particles move from the bottommost starting point $a_2$ to the
bottommost ending point $b_2$, with $n_1+n_2=n$.

The transition probability density of the Brownian motions is
\begin{equation}\label{transitionprob}
    P_N(t,x,y) = \frac{1}{\sqrt{2\pi t} \, \sigma_N} \exp\left(- \frac{1}{2t \sigma_N^2}(x-y)^2\right),
    \qquad \sigma_N^2 = 1/N,
\end{equation}
with an overall variance $\sigma_N^2 = 1/N$, that decreases as $n$
increases such that
\begin{align}
\label{defnN}
    T =  n \sigma_N^2 = \frac{n}{N}  > 0
    \end{align}
remains fixed. We interpret $T$ as a temperature variable.

Using the above setting, it is known that the positions $x_1,\ldots,x_n$ of the
Brownian paths at time $t \in (0,1)$ have a joint probability density
\cite{KMcG}
\begin{equation}\label{jpdf} p(x_1, \ldots, x_n) = \frac{1}{Z_n} \det \left(
f_i(x_j) \right)_{i,j=1}^n \,
    \det \left( g_i(x_j) \right)_{i,j=1}^n \end{equation}
with functions
\begin{align}
    f_i(x) & =  \frac{\partial^{i-1}}{\partial x^{i-1}} P_N(t, a_1, x),
    \qquad i = 1, \ldots, n_1, \\
    f_{n_1+i}(x) & = \frac{\partial^{i-1}}{\partial x^{i-1}} P_N(t, a_2, x),
    \qquad i = 1, \ldots, n_2, \\
    g_i(x) & =  \frac{\partial^{i-1}}{\partial x^{i-1}} P_N(1-t, x, b_1),
    \qquad i = 1, \ldots, n_1, \\
    g_{n_1+i}(x) & = \frac{\partial^{i-1}}{\partial x^{i-1}} P_N(1-t, x, b_2),
    \qquad i = 1, \ldots, n_2,
    \end{align}
and with $Z_n$ a normalization constant. Note that \eqref{jpdf} is a
biorthogonal ensemble \cite{Bor}. In particular, it is a determinantal point
process with correlation kernel
\begin{equation} \label{kernel:doublesum}
    K_n(x,y) = \sum_{i,j=1}^n f_i(x) \left( A^{-1} \right)_{i,j} g_j(y), \end{equation}
where $\left( A^{-1} \right)_{i,j}$ denotes the $(i,j)$th entry of the inverse of the
matrix
\[ A = \left( \int_{\mathbb R} f_i(x) g_j(x) dx \right)_{i,j=1}^n. \]

The double sum formula \eqref{kernel:doublesum} for the kernel is not very
tractable for asymptotic analysis. However there is a convenient representation
of $K_n$ in terms of the solution of a Riemann-Hilbert problem. It is of size
$4 \times 4$, since $4$ is the total number of starting and ending positions.

Define the weight functions
\begin{align}
\label{gaussianw1} w_{1,k}(x) & = \exp\left(-\frac{n}{2Tt}(x^2-2a_kx)\right), &&  k=1,2, \\
\label{gaussianw2} w_{2,l}(x) & =
\exp\left(-\frac{n}{2T(1-t)}(x^2-2b_lx)\right), &&  l=1,2,
\end{align}
and consider the following RH problem which was introduced in
\cite{DK2} as a generalization of the RH problem for orthogonal polynomials
\cite{FIK}, see also \cite{VAGK}.

\begin{rhp} \label{rhp:Y} We look for a $4\times 4$ matrix-valued function
$Y : \mathbb C \setminus \mathbb R \to \mathbb C^{4 \times 4}$
satisfying
\begin{enumerate}
\item[\rm (1)] $Y(z)$ is analytic for $z\in\mathbb C\setminus\mathbb R$.
\item[\rm (2)] $Y$ has limiting values $Y_{\pm}$ on $\mathbb R$,
where $Y_+$ ($Y_-$) denotes the limiting value from the upper
(lower) half-plane, and
\begin{equation}\label{defjumpmatrix0}
Y_{+}(x) = Y_{-}(x)
\begin{pmatrix} I_2 & W(x)\\
0 & I_2
\end{pmatrix}, \qquad  x \in \mathbb R,
\end{equation}
where $I_2$ denotes the $2 \times 2$ identity matrix, and  $W(x)$ is
the rank-one matrix (outer product of two vectors)
\begin{equation}\label{defWblock0}
W(x) = \begin{pmatrix} w_{1,1}(x) \\ w_{1,2}(x)
\end{pmatrix}\begin{pmatrix} w_{2,1}(x) & w_{2,2}(x)
\end{pmatrix}.
\end{equation}
\item[\rm (3)] As $z\to\infty$, we have that
\begin{equation}\label{asymptoticconditionY0}
    Y(z) = \left(I+\frac{Y_1}{z}+O\left(\frac{1}{z^2}\right)\right)
    \diag(z^{n_1},z^{n_2},z^{-n_1},z^{-n_2}).
\end{equation}
\end{enumerate}
\end{rhp}

The RH problem has a unique solution that can be described in terms of certain
multiple orthogonal polynomials (actually multiple Hermite polynomials of mixed
type), see \cite{DK2,DelKui1} for  details. According to \cite{DK2},
the correlation kernel $K_n$ from \eqref{kernel:doublesum} is equal to
\begin{equation} \label{correlationkernel}
    K_n(x,y) = \frac{1}{2\pi i(x-y)}\begin{pmatrix} 0 & 0 & w_{2,1}(y) &
    w_{2,2}(y)\end{pmatrix} Y_{+}^{-1}(y)Y_{+}(x)\begin{pmatrix} w_{1,1}(x)\\
    w_{1,2}(x)\\ 0 \\ 0 \end{pmatrix}.
\end{equation}

The kernel $K_n$ in \eqref{correlationkernel} not only depends on $n$, but also
on $n_1$, $n_2$, $a_1,a_2$, $b_1, b_2$, $t$ and $T$. In what follows, we will
take the variables $T$ and $t$ fixed while the other variables will be
varying with $n$, see Section~\ref{subsection:doublescaling}.

\subsection{Separation of the starting and ending points}
\label{subsection:sepintro}

We now discuss in more detail the three situations in Figure~\ref{fig:3cases}.
Given $n_1$ and $n_2$, we denote the corresponding fractions of particles by
\begin{align} \label{defp1p2}
    p_1 := \frac{n_1}{n}, \qquad p_2 := \frac{n_2}{n} = 1-p_1,
\end{align}
which are varying with $n$, and we assume that
\begin{align}
\label{defp1}
    p_1  = p_1^*+O(1/n), \qquad p_2 = p_2^* + O(1/n), \qquad n\to\infty,
\end{align}
for certain limiting values $p_1^*,p_2^* \in (0,1)$. Of course, $p_2^* =
1-p_1^*$.

For given $T$ and $p_1^*, p_2^*$, the three cases of large, small and critical
separation of the starting and ending points are distinguished as follows; see
\cite{DelKui1}. There is large separation in case the inequality
\begin{equation}\label{largeseparation}
    (a_1-a_2)(b_1-b_2) > T \left(\sqrt{p_1^*}+\sqrt{p_2^*}\right)^2
\end{equation}
holds. Then the Brownian motion paths remain in two separate groups, and the
limiting hull in the $tx$-plane consists of two ellipses. For any $t\in(0,1)$,
the limiting distribution of the positions of the paths at time $t$ is
supported on the two disjoint intervals $[\alpha_1^*,\beta_1^*]$ and
$[\alpha_2^*,\beta_2^*]$, where the endpoints satisfy
\begin{equation}\label{chain:ineq}\alpha_2^*<\beta_2^*<\alpha_1^*<\beta_1^*\end{equation} and are given
explicitly by
\begin{align} \label{def:alphaj}
    \alpha_j^* =  (1-t)a_j+tb_j- 2\sqrt{p_j^* Tt(1-t)},\\
    \label{def:betaj}
    \beta_j^* = (1-t)a_j+tb_j+ 2\sqrt{p_j^* Tt(1-t)},
\end{align}
with the limiting density on these intervals given by the semicircle
laws
\begin{equation}\label{wignerdensity}
  \frac{1}{2\pi Tt(1-t)}\sqrt{(\beta_j^*(t)-x)(x-\alpha_j^*(t))},\qquad
   x\in[\alpha_j^*(t),\beta_j^*(t)],
\end{equation}
for $j=1,2$. See again Figure~\ref{fig:3cases}(a).

We are in the case of small separation if the inequality
\eqref{largeseparation} is reversed, so that
\begin{equation}\label{smallseparation}
    (a_1-a_2)(b_1-b_2) < T \left(\sqrt{p_1^*}+\sqrt{p_2^*}\right)^2.
\end{equation}
Then the limiting hull in the $tx$-plane is an algebraic curve and the limiting
distribution of the paths at any given time $t \in (0,1)$ is not given by
semicircle laws anymore. This was shown in \cite{DKV} for the case where
$p_1^*=p_2^*=1/2$. See Figure~\ref{fig:3cases}(b).

The case of critical separation corresponds to the equality sign in
\eqref{largeseparation}, that is,
\begin{equation}\label{criticalseparation}
    (a_1-a_2)(b_1-b_2) = T \left(\sqrt{p_1^*}+\sqrt{p_2^*}\right)^2.
\end{equation}
Then the limiting distribution is still given by two semicircle
densities \eqref{wignerdensity}, which are now however touching at
the critical time $t_{\crit}\in(0,1)$ defined by
\begin{equation}\label{tcrit}
    t_{\crit} = \frac{a_1-a_2}{(a_1-a_2)+(b_1-b_2)},
\end{equation}
in the sense that $\beta_2^*=\alpha_1^*$ in \eqref{chain:ineq} when
$t=t_{\crit}$. See Figure~\ref{fig:3cases}(c).

\subsection{Double scaling limit}
\label{subsection:doublescaling}

In order to study the case of critical separation we choose without
loss of generality
\begin{equation}\label{fixed tem}
 T = 1,
\end{equation}
and take $a_1^* > a_2^*$, $b_1^*
> b_2^*$ so that
\begin{equation}  \label{criticalseparation:tris}
    (a_1^*-a_2^*)(b_1^*-b_2^*) = \left(\sqrt{p_1^*}+\sqrt{p_2^*}\right)^2,
\end{equation}
cf.~\eqref{criticalseparation}. We also take
\begin{equation}\label{tcrit:bis}
    t=t_{\crit} = \frac{a_1^*-a_2^*}{(a_1^*-a_2^*)+(b_1^*-b_2^*)}
\end{equation}
and
\begin{equation} \label{xcrit}
    x_{\crit} =  (1-t_{\crit}) \frac{a_1^* + a_2^*}{2} + t_{\crit} \frac{ b_1^* + b_2^*}{2}
\end{equation}
as the critical time and place of tangency.

We can assume that $T$ and $t$ remain fixed as in \eqref{fixed tem} and
\eqref{tcrit:bis} without loss of generality, since we cannot create
different limiting behavior by varying $T$ and $t$ as well, see
Remarks~\ref{remark:varying:T} and \ref{remark:varying:t} in
Section~\ref{subsection:kernellim}.

We let the starting points $a_1$, $a_2$ and ending points $b_1$, $b_2$ be
varying with $n$ in such a way that for certain real constants $L_1, L_2, L_3,
L_4$,
\begin{align}
    \label{doublescaling:a} a_{1} &= a_1^* + L_1 n^{-2/3}, \quad &
    b_{1} &= b_1^* + L_3 n^{-2/3},\\
    \label{doublescaling:b}
    a_{2} &= a_2^* + L_2 n^{-2/3}, \quad &
     b_{2} &= b_2^* + L_4 n^{-2/3},
\end{align}
as $n \to \infty$. It turns out that the constants only appear in
our results through the combinations
\begin{align}
\label{doublescaling:L5} L_5 &:= (b_1^*-b_2^*)L_1+(a_1^*-a_2^*)L_3, \\
\label{doublescaling:L6} L_6 &:= (b_1^*-b_2^*)L_2+(a_1^*-a_2^*)L_4.
\end{align}

Near the critical value $x_{\crit}$, the asymptotics of $K_n$ for $n\to\infty$
will be described by a family of limiting kernels
\[ K^{tacnode}(u, v; r_1, r_2, s_1, s_2), \qquad u, v \in \mathbb R, \]
depending on four variables $r_1, r_2 > 0$ and $s_1, s_2 \in \mathbb
R$ that depend on the values $p_1^*$, $p_2^*$, $L_5$ and $L_6$.
Because of a dilation and translation symmetry
\[ c^2 K^{tacnode}(c^2u,c^2 v; r_1, r_2, s_1, s_2)
    = K^{tacnode}(u,v; c^3 r_1, c^3 r_2, c s_1, c s_2 ), \qquad c > 0, \]
\[ K^{tacnode}(u + 2c, v+ 2c; r_1, r_2, s_1, s_2)
    = K^{tacnode}(u,v; r_1,r_2,s_1 - c r_1, s_2 + cr_2),
    \qquad c \in \mathbb R, \]
the family essentially depends on two parameters only, and we could
for example choose $r_1 = 1$, $s_1 = s_2$. However, in order to
preserve the symmetry in the formulas, we prefer to use four
parameters.

The limiting kernels are given in terms of a RH problem that we
discuss next.

\subsection{A new $4\times 4$ Riemann-Hilbert problem}
\label{subsection:modelrhp}

The $4\times 4$ matrix-valued RH problem has jumps on a contour in
the complex plane consisting of $10$ rays emanating from the origin.
The rays are determined by two numbers $\varphi_1,\varphi_2$ such
that
\begin{equation}\label{def:angles}
0<\varphi_1<\varphi_2<\pi/3.
\end{equation}
The value $\pi/3$ in \eqref{def:angles} is not the optimal one, but
it will be sufficient for our purposes. We define the half-lines
$\Gamma_k$, $k=0,\ldots,9$, by
\begin{equation}\label{def:rays1}
\begin{aligned}
    \Gamma_0  =\mathbb R^+, \qquad \Gamma_1 & =e^{i\varphi_1}\mathbb R^+,
    && \qquad
    \Gamma_2 =e^{i\varphi_2}\mathbb R^+, \\
    \Gamma_3 & =e^{i(\pi-\varphi_2)}\mathbb R^+, &&\qquad
\Gamma_4=e^{i(\pi-\varphi_1)}\mathbb R^+,
\end{aligned}
\end{equation}
and
\begin{equation}\label{def:rays2}
\Gamma_{5+k}=-\Gamma_k,\qquad k=0,\ldots,4.
\end{equation}
All rays are oriented towards infinity, as shown in
Figure~\ref{fig:modelRHP}.

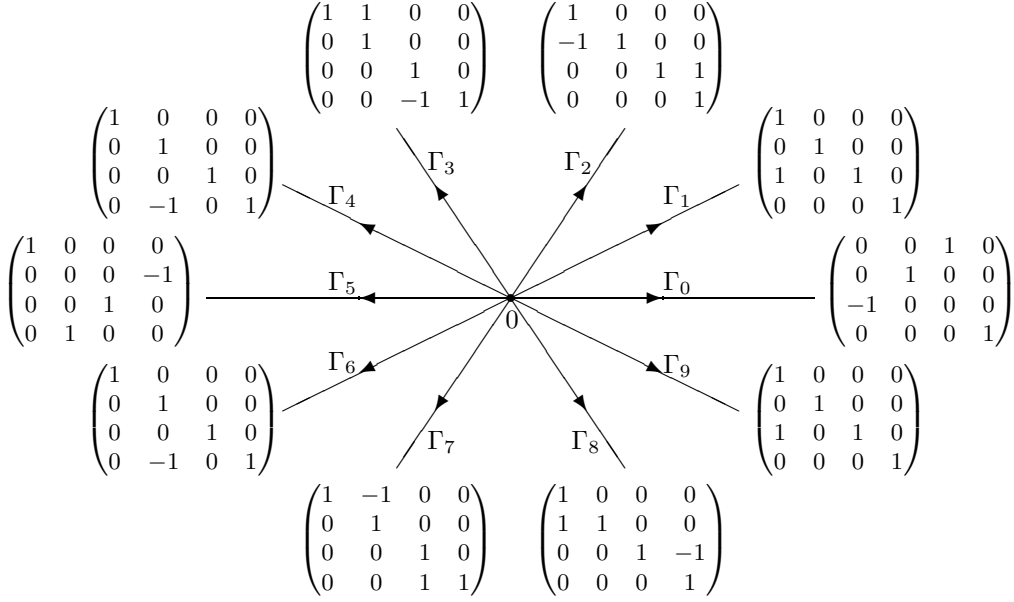
\begin{figure}[t]
\vspace{14mm}
\begin{center}
   \setlength{\unitlength}{1truemm}
   \begin{picture}(100,70)(-5,2)
       \put(40,40){\line(1,0){40}}
       \put(40,40){\line(-1,0){40}}
       \put(40,40){\line(2,1){30}}
       \put(40,40){\line(2,-1){30}}
       \put(40,40){\line(-2,1){30}}
       \put(40,40){\line(-2,-1){30}}
       \put(40,40){\line(2,3){15}}
       \put(40,40){\line(2,-3){15}}
       \put(40,40){\line(-2,3){15}}
       \put(40,40){\line(-2,-3){15}}
       \put(40,40){\thicklines\circle*{1}}
       \put(39.3,36){$0$}
       \put(60,40){\thicklines\vector(1,0){.0001}}
       \put(20,40){\thicklines\vector(-1,0){.0001}}
       \put(60,50){\thicklines\vector(2,1){.0001}}
       \put(60,30){\thicklines\vector(2,-1){.0001}}
       \put(20,50){\thicklines\vector(-2,1){.0001}}
       \put(20,30){\thicklines\vector(-2,-1){.0001}}
       \put(50,55){\thicklines\vector(2,3){.0001}}
       \put(50,25){\thicklines\vector(2,-3){.0001}}
       \put(30,55){\thicklines\vector(-2,3){.0001}}
       \put(30,25){\thicklines\vector(-2,-3){.0001}}

       \put(60,41){$\Gamma_0$}
       \put(60,52.5){$\Gamma_1$}
       \put(47,57){$\Gamma_2$}
       \put(29,57){$\Gamma_3$}
       \put(16,52.5){$\Gamma_4$}
       \put(16,41){$\Gamma_5$}
       \put(16,30.5){$\Gamma_6$}
       \put(29,20){$\Gamma_7$}
       \put(48,20){$\Gamma_8$}
       \put(60,30){$\Gamma_{9}$}

       \put(81,40){$\small{\begin{pmatrix}0&0&1&0\\ 0&1&0&0\\ -1&0&0&0\\ 0&0&0&1 \end{pmatrix}}$}
       \put(71.5,57){$\small{\begin{pmatrix}1&0&0&0\\ 0&1&0&0\\ 1&0&1&0\\ 0&0&0&1 \end{pmatrix}}$}
       \put(43,71){$\small{\begin{pmatrix}1&0&0&0\\ -1&1&0&0\\ 0&0&1&1\\ 0&0&0&1 \end{pmatrix}}$}
       \put(12,71){$\small{\begin{pmatrix}1&1&0&0\\ 0&1&0&0\\ 0&0&1&0\\ 0&0&-1&1 \end{pmatrix}}$}
       \put(-16,57){$\small{\begin{pmatrix}1&0&0&0\\ 0&1&0&0\\ 0&0&1&0\\ 0&-1&0&1 \end{pmatrix}}$}
       \put(-27,40){$\small{\begin{pmatrix}1&0&0&0\\ 0&0&0&-1\\ 0&0&1&0\\ 0&1&0&0 \end{pmatrix}}$}
       \put(-16,23){$\small{\begin{pmatrix}1&0&0&0\\ 0&1&0&0\\ 0&0&1&0\\ 0&-1&0&1 \end{pmatrix}}$}
       \put(12,7){$\small{\begin{pmatrix}1&-1&0&0\\ 0&1&0&0\\ 0&0&1&0\\ 0&0&1&1 \end{pmatrix}}$}
       \put(43,7){$\small{\begin{pmatrix}1&0&0&0\\ 1&1&0&0\\ 0&0&1&-1\\ 0&0&0&1 \end{pmatrix}}$}
       \put(71.5,23){$\small{\begin{pmatrix}1&0&0&0\\ 0&1&0&0\\ 1&0&1&0\\ 0&0&0&1 \end{pmatrix}}$}

  \end{picture}
   \vspace{0mm}
   \caption{The figure shows the jump contours $\Gamma_k$ in the complex $\zeta$-plane and the corresponding jump matrices
   $J_k$, $k=0,\ldots,9$, in the RH problem for  $M = M(\zeta)$.}
   \label{fig:modelRHP}
\end{center}
\end{figure}

\begin{rhp}\label{rhp:modelM}
We look for a $4\times 4$ matrix-valued function $M(\zeta)$ (which
also depends parametrically on the parameters $r_1, r_2 > 0$ and
$s_1, s_2 \in \mathbb C$) satisfying
\begin{enumerate}
\item[\rm (1)] $M(\zeta)$ is analytic for $\zeta\in\mathbb C\setminus\left(\bigcup_{k=0}^{9}
\Gamma_k\right)$.
\item[\rm (2)] For $\zeta\in\Gamma_k$, the limiting values
\[ M_+(\zeta) = \lim_{z \to \zeta, \,  z\textrm{ on $+$-side of }\Gamma_k} M(z), \qquad
    M_-(\zeta) = \lim_{z \to \zeta, \, z\textrm{ on $-$-side of }\Gamma_k} M(z) \]
exist, where the $+$-side and $-$-side of $\Gamma_k$ are the sides
which lie on the left and right of $\Gamma_k$, respectively, when
traversing $\Gamma_k$ according to its orientation. These limiting
values satisfy the jump relation
\begin{equation}\label{jumps:M}
    M_{+}(\zeta) = M_-(\zeta)J_k,\qquad \zeta \in \Gamma_k, \quad k=0,\ldots,9,
\end{equation}
where the jump matrices $J_k$ are shown in
Figure~\ref{fig:modelRHP}.
\item[\rm (3)] As $\zeta\to\infty$, we have that
\begin{multline}
\label{M:asymptotics} M(\zeta) =
\left(I+\frac{M_1}{\zeta}+\frac{M_2}{\zeta^2}+O\left(\frac{1}{\zeta^3}\right)\right)
\diag((-\zeta)^{-1/4},\zeta^{-1/4},(-\zeta)^{1/4},\zeta^{1/4})
\\ \times \frac{1}{\sqrt{2}}
\begin{pmatrix} 1 & 0 & -i & 0 \\
0 & 1 & 0 & i \\
-i & 0 & 1 & 0 \\
0 & i & 0 & 1 \\
\end{pmatrix}
\diag\left(e^{-\theta_1(\zeta)},e^{-\theta_2(\zeta)},e^{\theta_1(\zeta)},e^{\theta_2(\zeta)}\right),
\end{multline}
where the coefficient matrices $M_1,M_2,\ldots$ depend on the
parameters $r_1, r_2, s_1, s_2$, but not on $\zeta$, and where we
define
\begin{equation}\label{def:theta1}
\theta_1(\zeta) = \frac{2}{3} r_1 (-\zeta)^{3/2}+2s_1 (-\zeta)^{1/2}
\end{equation}
and
\begin{equation}\label{def:theta2}
\theta_2(\zeta) = \frac{2}{3} r_2 \zeta^{3/2}+2s_2 \zeta^{1/2}.
\end{equation}
\item[\rm (4)] $M(\zeta)$ is bounded near $\zeta=0$.
\end{enumerate}
\end{rhp}

In \eqref{M:asymptotics}--\eqref{def:theta2}, we use the principal
branches of the fractional powers, so that for example
$(-\zeta)^{1/4}$ is defined and analytic for $\zeta \in \mathbb C
\setminus [0,\infty)$ with real and positive values on $\mathbb
R^-$. We write
\[ M(\zeta; r_1, r_2, s_1, s_2), \]
in case we want to emphasize the dependence of $M$ on the
parameters.

It follows from standard arguments (e.g.~\cite{Dei}) that the
solution to the RH problem~\ref{rhp:modelM} is unique if it exists.
The existence issue is our first main theorem.

\begin{theorem}\label{theorem:solvability} (Existence:)
Assume that $r_1, r_2>0$ and $s_1, s_2 \in \mathbb R$. Then the RH
problem~{\rm\ref{rhp:modelM}} for $M(\zeta)$ is uniquely solvable.

\end{theorem}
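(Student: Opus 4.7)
Each of the ten jump matrices in Figure~\ref{fig:modelRHP} has determinant $1$, so $\det M$ is entire; the asymptotic condition \eqref{M:asymptotics} forces $\det M\equiv 1$ (the rotation factor in the middle of \eqref{M:asymptotics} is unitary with determinant $1$, and the diagonal factor of fractional powers pairs reciprocally), so $M^{-1}$ exists. Given two solutions $M^{(a)},M^{(b)}$, the ratio $M^{(a)}(M^{(b)})^{-1}$ is analytic across every ray by (2), bounded near $0$ by (4), and tends to $I_4$ at infinity, so $M^{(a)}=M^{(b)}$ by Liouville.

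\textbf{Existence via a vanishing lemma.} I would follow the standard Deift--Zhou/Fokas--Its--Kitaev scheme. Normalize by setting $\hat M(\zeta):=M(\zeta)P(\zeta)^{-1}$, where $P(\zeta)$ denotes the explicit diagonal-times-rotation-times-exponential prefactor in \eqref{M:asymptotics}, so that $\hat M\to I_4$ at infinity. The transformed jumps are $\hat J_k=P_-(\zeta)^{-1}J_k\,P_+(\zeta)$; using $r_1,r_2>0$ and $s_1,s_2\in\mathbb{R}$, one checks ray by ray that $\mathrm{Re}\,\theta_1$ and $\mathrm{Re}\,\theta_2$ have the correct signs on the non-real rays so that $\hat J_k-I_4$ decays exponentially on each $\Gamma_k$ with $k\neq 0,5$, while remaining a bounded constant on $\Gamma_0,\Gamma_5$; in particular $\hat J_k-I_4\in L^2\cap L^\infty$ on every ray. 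By Zhou's theorem (see \cite{Dei}), existence of $\hat M$, and hence of $M$, then reduces to the \emph{vanishing lemma}: any $N:\mathbb{C}\setminus\bigcup_k\Gamma_k\to\mathbb{C}^{4\times 4}$ satisfying the same jumps, bounded at $\zeta=0$, and with $N(\zeta)=O(1/\zeta)$ as $\zeta\to\infty$, must vanish identically.

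\textbf{The vanishing lemma, which is the main obstacle.} The key structural observation is that the real-axis jumps $J_0$ and $J_5$ are unitary, $J_0^{\ast}J_0=J_5^{\ast}J_5=I_4$, while the eight remaining jumps are either upper triangular, acting on a ``$(1,3)$-Airy block'', lower triangular, acting on a ``$(2,4)$-Airy block'', or cross-coupling the two Airy blocks on $\Gamma_2,\Gamma_3,\Gamma_7,\Gamma_8$. The plan is to build an auxiliary Schwarz-reflected matrix
\begin{equation*}
H(\zeta):=N(\zeta)\,C\,[N(\bar\zeta)]^{\ast}
\end{equation*}
for a suitable constant matrix $C$ chosen to absorb the rotations hidden in $J_0$ and $J_5$, verify that $H$ has no jump across any ray, conclude that $H$ is entire, and then deduce $H\equiv 0$ from $H=O(1/\zeta^2)$ at infinity. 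Evaluating $H\equiv 0$ on the imaginary axis yields diagonal Gram-type identities which, by positivity of sums of squared moduli of columns of $N(iy)$, force every column of $N$ to vanish, giving $N\equiv 0$.

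\textbf{Where the difficulty lies.} The hard part is verifying the no-jump property of $H$ across the coupling rays $\Gamma_2,\Gamma_3,\Gamma_7,\Gamma_8$, where the jumps of $N$ are not block diagonal. This dictates the precise choice of $C$ and requires a careful case-by-case matching of the jump $J_k$ on $\Gamma_k$ in the upper half-plane with the jump on the reflected ray in the lower half-plane, stitched together across the real axis by means of the unitarity of $J_0$ and $J_5$. Condition~(4) is essential to exclude a residual polar singularity of $H$ at the origin. Once the vanishing lemma is in place, existence of $M$ follows from Zhou's general Fredholm argument.
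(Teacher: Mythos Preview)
Your overall strategy (Fredholm property $+$ index zero $+$ vanishing lemma) matches the paper's, but two execution points differ in ways that matter.

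\textbf{Fredholm setup.} Your normalization $\hat M=MP^{-1}$ is problematic at $\zeta=0$: the factor $P$ contains $(-\zeta)^{\pm1/4}$ and $\zeta^{\pm1/4}$, so $\hat M$ inherits an algebraic singularity at the origin and the conjugated jumps $P_-J_kP_+^{-1}$ are not continuous (or even bounded) there. This blocks the standard $L^2$ singular-integral argument. The paper circumvents this by first proving solvability for large $s_1$ via a separate steepest-descent analysis (Proposition~\ref{prop:large s1 solvability}), and then conjugating not by $P$ but by the \emph{known} solution $\Mdec(\zeta;r_1,r_2,s_1^*,s_2)=M(\zeta;r_1,r_2,s_1^*,s_2)\Theta(\zeta)$ at a fixed large parameter $s_1^*$. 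Since this reference solution has exactly the same local behavior at $0$ and the same asymptotics at $\infty$, the conjugated jumps are continuous at $0$ and tend to $I$ at $\infty$, and the Fredholm machinery applies cleanly. The index-zero step also uses this: it is a homotopy in $s_1$ from $s_1^*$ (where the operator is invertible) to the given $s_1$.

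\textbf{Vanishing lemma.} Rather than seeking a constant $C$ making $H(\zeta)=N(\zeta)C[N(\bar\zeta)]^*$ jump-free across all ten rays (the case-by-case matching on $\Gamma_2,\Gamma_3,\Gamma_7,\Gamma_8$ that you flag as the hard part), the paper first \emph{folds all jumps onto the real axis}: multiply $\Mhom$ by the appropriate products of the $J_k$ in each sector to obtain a function $\Mhomdef$ with a single jump on $\mathbb R$, namely the upper-triangular matrix $\Theta_-^{-1}\left(\begin{smallmatrix}1&0&1&1\\0&1&1&1\\0&0&1&0\\0&0&0&1\end{smallmatrix}\right)\Theta_+$. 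After a further twist $\Mhomtris$ by $\left(\begin{smallmatrix}0&-I_2\\I_2&0\end{smallmatrix}\right)$ in the upper half-plane, one forms $Q(\zeta)=\Mhomtris(\zeta)[\Mhomtris(\bar\zeta)]^H$, analytic in the upper half-plane with $O(\zeta^{-3/2})$ decay, so $\int_{\mathbb R}Q_+=0$. Symmetrizing gives an integrand with $J+J^H$ of rank one and positive semidefinite, forcing only the combination $\Mhomtris_-(x)\bigl(e^{-\theta_{1,-}},e^{-\theta_{2,-}},0,0\bigr)^T\equiv 0$. This does \emph{not} immediately kill all columns: the residual jump becomes diagonal in $\Theta$, the problem decouples into four scalar RH problems, and each is shown to have only the zero solution via Carlson's theorem. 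Your sketch suggests positivity alone finishes the job; in the paper's argument it does not, and the Carlson step is essential.
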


Theorem~\ref{theorem:solvability} will be proved in
Section~\ref{section:vanishing} by using the technique of a
vanishing lemma \cite{DKMVZ1,FZ,Zhou}.

Our next result is about the residue matrix $M_1$ in
\eqref{M:asymptotics}. We show that its top right $2\times 2$ block
is related to the Hastings-McLeod solution $q(s)$ of the
Painlev\'e~II equation and the associated Hamiltonian $u(s)$.

\begin{theorem}\label{theorem:Painleve2modelrhp}
($M_1$ vs.~the Painlev\'e~II equation:) Let the parameters $r_1, r_2>0$ and
$s_1, s_2 \in \mathbb R$ in \eqref{M:asymptotics}--\eqref{def:theta2} be fixed.
Then the matrix $M_1$ in \eqref{M:asymptotics} can be written as
\begin{equation} \label{eq:defM1}
    M_1 = \begin{pmatrix} a & b & ic & id \\ - \wtil b & - \wtil a & id & i \wtil c \\
    i e & i f & -a & \wtil b \\ if & i\wtil e & -b & \wtil a
    \end{pmatrix}
    \end{equation}
for certain real valued numbers $a, \wtil a, b, \wtil b,c,\wtil c, d,e,\wtil
e,f$ that depend on $r_1, r_2, s_1, s_2$. In addition, we have
\begin{align}
\label{d:Painleve2} d &= \frac{(r_1 r_2)^{1/6}}{(r_1^2 +
r_2^2)^{1/3}} q\left(\sigma\right),
\\
\label{c:Hamiltonian} c &= -\frac{r_2^{2/3}}{r_1^{1/3} (r_1^2+
r_2^2)^{1/3}} u(\sigma)+ \frac{s_1^2}{r_1},
\\
\label{ctil:Hamiltonian} \wtil c &=
-\frac{r_1^{2/3}}{r_2^{1/3}(r_1^2+r_2^2)^{1/3}}u(\sigma)+\frac{s_2^2}{r_2},
\end{align}
where $q(s)$ is the Hastings-McLeod solution of the Painlev\'e~II
equation \eqref{def:Painleve2}--\eqref{def:HastingMcLeod}, $u(s)$ is
the Hamiltonian
\begin{equation} \label{def:Hamiltonian}
    u(s) := \left( q'(s)\right)^2 - s q^2(s) - q^4(s)
    \end{equation}
and
\begin{equation}\label{def:sigma}
    \sigma:=\frac{2(r_1 s_2 + r_2 s_1)}{(r_1 r_2)^{1/3}(r_1^2 + r_2^2)^{1/3}}.
\end{equation}
\end{theorem}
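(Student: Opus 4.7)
The plan is to follow the standard Lax-pair argument that links a matrix-valued RH problem to a Painlev\'e transcendent. It will have four conceptual steps.

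\medskip

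\noindent\emph{Step 1: symmetries yielding (\ref{eq:defM1}).} The specific shape of $M_1$ encodes two discrete symmetries of RH problem~\ref{rhp:modelM} that hold because $r_1,r_2,s_1,s_2$ are real. The first is a \emph{reality symmetry}: a direct check on the jump matrices in Figure~\ref{fig:modelRHP} shows that $\wtil M(\zeta):=\overline{S\,M(\bar\zeta)\,S^{-1}}$, for an explicit diagonal signature matrix $S$, satisfies the same jump and asymptotic conditions as $M$, and hence equals $M$ by the uniqueness part of Theorem~\ref{theorem:solvability}; reading off the coefficient of $1/\zeta$ forces the upper-left and lower-right $2\times 2$ blocks of $M_1$ to be real and the two off-diagonal blocks to be purely imaginary, which is exactly the structure in (\ref{eq:defM1}). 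The second is a \emph{species-exchange symmetry}: combining $\zeta\mapsto -\zeta$ with conjugation by a suitable permutation that swaps the two ``species'' in the $4\times 4$ structure maps RH problem~\ref{rhp:modelM} to the one with $(r_1,s_1)\leftrightarrow(r_2,s_2)$, thereby relating the tilded and untilded entries in (\ref{eq:defM1}).

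\medskip

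\noindent\emph{Step 2: Lax pair.} Because the jumps $J_k$ are independent of $\zeta$, $s_1$, and $s_2$, and because condition~(4) of RH problem~\ref{rhp:modelM} ensures $M$ is bounded at the origin, the matrices
\[
A(\zeta):=(\partial_\zeta M)\,M^{-1},\qquad B_j(\zeta):=(\partial_{s_j} M)\,M^{-1},\quad j=1,2,
\]
are entire in $\zeta$. The asymptotic expansion (\ref{M:asymptotics})--(\ref{def:theta2}) then shows that $A$ is polynomial of degree one in $\zeta$ (the potential half-integer powers cancel thanks to the block structure of the constant middle matrix in (\ref{M:asymptotics})) and that $B_j$ is $\zeta$-independent. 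Matching the first few orders in $1/\zeta$ expresses the coefficients of $A$ and $B_j$ as explicit polynomial functions of $r_1,r_2,s_1,s_2$ and of the entries of $M_1$ and $M_2$.

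\medskip

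\noindent\emph{Step 3: compatibility and Painlev\'e II.} The zero-curvature equations
\[
\partial_{s_j}A-\partial_\zeta B_j+[A,B_j]=0\quad(j=1,2),\qquad \partial_{s_1}B_2-\partial_{s_2}B_1+[B_1,B_2]=0
\]
yield a closed nonlinear system of ODEs for the ten real parameters of (\ref{eq:defM1}) as functions of $(s_1,s_2)$. Combining this system with the dilation symmetry $(\zeta,s_1,s_2,r_1,r_2)\mapsto(c^2\zeta,cs_1,cs_2,c^3r_1,c^3r_2)$ and with the species-exchange symmetry of Step~1 reduces every rescaled entry to a function of the single variable $\sigma$ from (\ref{def:sigma}). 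The scalar ODE for $d$, with the rescaling in (\ref{d:Painleve2}), then reads $q''(\sigma)=\sigma q(\sigma)+2q^3(\sigma)$. The entries $c$ and $\wtil c$ appear algebraically in the same system: using $u'=-q^2$ together with the definition (\ref{def:Hamiltonian}) and the symmetry $(r_1,s_1)\leftrightarrow(r_2,s_2)$, one obtains (\ref{c:Hamiltonian})--(\ref{ctil:Hamiltonian}).

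\medskip

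\noindent\emph{Step 4: identification of Hastings--McLeod.} Steps 1--3 determine $q$ only up to the choice of a real solution of Painlev\'e~II. To single out the Hastings--McLeod transcendent one needs the asymptotic condition (\ref{def:HastingMcLeod}). This is obtained via a Deift--Zhou steepest-descent analysis of RH problem~\ref{rhp:modelM} as $\sigma\to+\infty$: in this regime the phases $\theta_1,\theta_2$ become strongly dominant off the real axis, the coupling jumps $J_0$ and $J_5$ on $\mathbb R$ are shown to be exponentially negligible after opening lenses, and the problem decouples into two standard $2\times 2$ Airy parametrices. Comparing the $1/\zeta$-coefficient produced by these parametrices with the rescaling in (\ref{d:Painleve2}) gives $q(\sigma)=\Ai(\sigma)(1+o(1))$, which is precisely (\ref{def:HastingMcLeod}). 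The heaviest bookkeeping lies in Step~3, but it is essentially linear algebra; the main obstacle is Step~4, where the coupling between the two Airy blocks must be controlled uniformly so that the decoupling survives at leading order and the Hastings--McLeod selection becomes rigorous.
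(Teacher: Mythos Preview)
Your four-step plan matches the paper's proof almost exactly: symmetries for the shape of $M_1$, a Lax pair from the constancy of the jumps, compatibility to extract Painlev\'e~II, and a large-parameter steepest-descent analysis to pin down Hastings--McLeod. Two points deserve correction, though.

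First, your $B_j$ are \emph{not} $\zeta$-independent. From the asymptotics \eqref{M:asymptotics}--\eqref{def:theta2} one has $\partial_{s_1}\theta_1=2(-\zeta)^{1/2}$, so the leading part of $(\partial_{s_1}M)M^{-1}$ carries a $-2i\zeta$ in the $(3,1)$ entry; each $B_j$ is linear in $\zeta$, just like $A$. This does not break the argument, but the $\partial_\zeta B_j$ term in the zero-curvature relation is nonzero and enters the explicit equations you need in Step~3.

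Second, and more importantly, the compatibility equations alone do \emph{not} close to a scalar second-order ODE for $d$. In the paper the missing input is a set of algebraic identities among $a,\wtil a,b,\wtil b,c,\wtil c,d$ obtained from the vanishing of the $O(\zeta^{-1})$ term in the expansion that computes $A(\zeta)$ (i.e.\ the next order beyond what fixes the polynomial part of $A$). These identities---for instance $(2a+c^2)r_1=r_2d^2+s_1$ and $r_2b-r_1\wtil b=(r_2\wtil c-r_1 c)d$---are what allow you to eliminate $a,\wtil a,b,\wtil b$ and reduce to $d''=8(r_2/r_1)\bigl((r_1s_2+r_2s_1)/(r_1^2+r_2^2)\bigr)d+8(r_2/r_1)d^3$. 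Your symmetry-reduction narrative (``every rescaled entry becomes a function of $\sigma$'') is too coarse here: $c$ and $\wtil c$ are not functions of $\sigma$ alone (note the $s_1^2/r_1$ and $s_2^2/r_2$ terms in \eqref{c:Hamiltonian}--\eqref{ctil:Hamiltonian}), and they are obtained not algebraically but by integrating $\partial c/\partial s_1=2(r_2/r_1)d^2+2s_1/r_1$ and $\partial\wtil c/\partial s_1=2d^2$, with the integration constants fixed by the $s_1\to+\infty$ asymptotics from Step~4. Finally, to get the full pattern \eqref{eq:defM1} you also need the inverse-transpose symmetry $M^{-T}(\zeta)=\left(\begin{smallmatrix}0&I_2\\-I_2&0\end{smallmatrix}\right)M(\zeta)\left(\begin{smallmatrix}0&-I_2\\I_2&0\end{smallmatrix}\right)$ in addition to the reality symmetry you mention.
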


Theorem~\ref{theorem:Painleve2modelrhp} shows that the top right $2\times 2$
block in \eqref{eq:defM1} behaves in a similar way as the residue matrix in the
classical $2\times2$ matrix-valued RH problem for Painlev\'e~II due to Flaschka and Newell
\cite{FN,FIKN}. We do not think that our $4\times 4$ RH problem can be reduced
to the $2\times 2$ problem.

The proof of Theorem~\ref{theorem:Painleve2modelrhp} is given in
Section~\ref{section:proofP2M}.

\begin{remark} \label{rem:Laxpair}
In Section~\ref{section:proofP2M} we also find identities relating the
entries in the top left block of $M_1$ to the entries in the top right block in \eqref{eq:defM1},
namely
\[ (2a+c^2)r_1 = r_2 d^2 + s_1, \quad (2\wtil{a} + \wtil{c}^2) r_2 = r_1 d^2 + s_2, \quad
    \frac{\partial d}{\partial s_1} = 2 cd - 2 \wtil b = \frac{r_2}{r_1} (2 \wtil c d - 2 b) \]
see \eqref{threeextrarelations:1}, \eqref{threeextrarelations:2}, and \eqref{eq:compat4}.
Hence also $a, \wtil a, b$ and $\wtil b$ can be directly expressed in terms
of $q(s)$.

We also obtain the Lax pair equations
\begin{align}
    \frac{\partial M}{\partial \zeta} = U M, \qquad
    \frac{\partial M}{\partial s_j} = V_j M, \quad j =1,2
    \end{align}
    with matrices $U$ and $V_j$ that are explicitly given in terms of $M_1$,
    see Propositions \ref{prop:diffeq1} and \ref{prop:diffeq2}. In fact the matrices
    only depend on the entries $a, \wtil{a}, b, \wtil{b}, c, \wtil{c}$ and $d$
    of $M_1$, and so $U$ and $V_j$ depend  only on the Hastings-McLeod solution
    $q(s)$ of Painlev\'e II.

In terms of $U$ one may therefore view $M$ as
a solution of $\frac{\partial M}{\partial \zeta} = UM$ in each sector
 with the asymptotic behavior \eqref{M:asymptotics} in that sector.
\end{remark}

\subsection{Critical limit of correlation kernel}
\label{subsection:kernellim} The limiting kernels are defined in terms of the
solution of the model RH problem \ref{rhp:modelM} as follows.
Let $M(\zeta; r_1, r_2, s_1, s_2)$ be the solution of the RH problem
\ref{rhp:modelM} for fixed $r_1, r_2 > 0$ and $s_1, s_2 \in \mathbb R$. Thus,
$M$ is analytic in each of the sectors determined by the contours $\Gamma_k$,
and the restriction to one such sector has an analytic continuation to the
entire complex $\zeta$ plane. In other words, the entries of $M$ are entire
functions. Consider the sector around the positive imaginary axis, bounded by
$\Gamma_2$ and $\Gamma_3$; we denote the analytic continuation of the
restriction of $M$ to this sector by $\widehat M$.

\begin{definition} For $u, v \in \mathbb
R$, the kernel $K^{tacnode}(u,v; r_1, r_2, s_1, s_2)$ is defined by
\begin{multline} \label{eq:tacnode kernel}
    K^{tacnode}(u,v; r_1, r_2, s_1, s_2)  \\
     = \frac{1}{2\pi i(u-v)} \begin{pmatrix} 0 & 0 & 1 & 1 \end{pmatrix}
    \widehat M^{-1}(u; r_1, r_2, s_1, s_2)
    \widehat M (v; r_1, r_2, s_1, s_2) \begin{pmatrix} 1 \\ 1 \\ 0 \\ 0 \end{pmatrix}.
\end{multline}
\end{definition}

We can rewrite the kernel in terms of the limiting values $M_+(u)$ and $M_+(v)$
of $M$ on the real axis, by using the jump relations in the RH problem for $M$.
For example, for $u, v > 0$, we have
\begin{equation}\label{tacnodekernel:pos}
    K^{tacnode}(u,v; r_1, r_2, s_1, s_2)
    = \frac{1}{2\pi i(u-v)} \begin{pmatrix} -1 & 0 & 1 & 0 \end{pmatrix}
     M_+^{-1}(u)  M_+(v) \begin{pmatrix} 1 \\ 0 \\ 1 \\ 0 \end{pmatrix},
\end{equation}
with a different expression in case $u$ and/or $v$ are negative.
Here, we have dropped the dependence of $M$ on the parameters $r_1,
r_2, s_1, s_2$.

From \eqref{eq:tacnode kernel}, it is also easily seen that the
kernel has the integrable form
\[ \frac{f_1(u) g_1(v) + f_2(u) g_2(v) + f_3(u) g_3(v) + f_4(u) g_4(v)}{u-v}, \]
for certain entire functions $f_j$ and $g_j$, $j=1, \ldots,4$, with
$ \sum_{j=1}^4 f_j(u) g_j(u) = 0$.
The functions of course depend on $r_1, r_2, s_1, s_2$.

The following is the main theorem of this paper.

\begin{theorem} \label{theorem:kernelpsi}
(Correlation kernel at the tacnode:) Consider $n$
non-intersecting Brownian motions on $[0,1]$ with transition
probability density \eqref{transitionprob}  with two given starting
points $a_1 > a_2$ and two given endpoints $b_1 > b_2$. Suppose
$n_1$ paths start in $a_1$ and end in $b_1$, and $n_2 = n-n_1$ paths
start in $a_2$ and end in $b_2$. Assume that $T=1$ and that
\eqref{defp1}, \eqref{doublescaling:a}--\eqref{doublescaling:b} hold
with values $p_1^*, p_2^*, a_1^*, a_2^*, b_1^*, b_2^*$ such that
\eqref{criticalseparation:tris} holds.

Let $t_{\crit}$ and $x_{\crit}$ be the critical time and place of tangency as given
in \eqref{tcrit:bis} and \eqref{xcrit}, respectively. Then the correlation
kernels $K_n$ for the positions of the paths at the critical time $t_{\crit}$
satisfy
\begin{multline} \label{kernel at tacnode}
    \lim_{n \to \infty} \frac{\exp\left(c_2 n^{1/3} (u-v) \right)}{c n^{2/3}}
    K_n \left( x_{\crit} +  \frac{u}{cn^{2/3}}, x_{\crit} + \frac{v}{cn^{2/3}}\right) \\
        = K^{tacnode}(u,v; r_1, r_2, s_1, s_2),
\end{multline}
with $K^{tacnode}$ given by \eqref{eq:tacnode kernel} and
\begin{align}
\label{doublescaling:r} r_1 & = \left(p_1^*\right)^{1/4}, & r_2 & = \left(p_2^*\right)^{1/4}, \\
\label{doublescaling:s} s_1 &=
\frac{(p_1^*)^{1/4}}{2(\sqrt{p_1^*}+\sqrt{p_2^*})} L_5, &
     s_2 &= - \frac{(p_2^*)^{1/4}}{2(\sqrt{p_1^*} + \sqrt{p_2^*})} L_6,
\end{align}
and the constants $c$ and $c_2$ in \eqref{kernel at tacnode} are given by
\begin{align} \label{eq:c}
    c & =   \frac{1}{\sqrt{t_{\crit}(1-t_{\crit})}}, \\
    \label{eq:c2}
    c_2  & := c \left[- (1-t_{\crit}) \frac{a_1^*}{2}  + t_{\crit} \frac{b_1^*}{2} \right] =
        c \left[-(1-t_{\crit}) \frac{a_2^*}{2} +  t_{\crit} \frac{b_2^*}{2}
        \right].
    \end{align}
\end{theorem}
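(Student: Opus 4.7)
The plan is to prove Theorem~\ref{theorem:kernelpsi} by a Deift--Zhou steepest descent analysis of the $4\times 4$ RH problem~\ref{rhp:Y} for $Y$ at the critical time $t=t_{\crit}$, using the model solution $M$ of RH problem~\ref{rhp:modelM} (whose existence is guaranteed by Theorem~\ref{theorem:solvability}) as the local parametrix at the tacnode $x_{\crit}$. The infrastructure for the non-critical analysis was already developed in \cite{DelKui1}; the genuinely new element is the construction of the tacnode parametrix and its matching to the outer parametrix. First I would introduce $g$-functions $g_1,g_2$ built from the two semicircle laws \eqref{wignerdensity} on the intervals $[\alpha_j^*,\beta_j^*]$, which by \eqref{criticalseparation:tris} share the common endpoint $x_{\crit}=\beta_2^*=\alpha_1^*$ at $t=t_{\crit}$. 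A transformation $Y\mapsto T$ normalizes $Y$ at infinity via these $g$-functions, and opening lenses $T\mapsto S$ turns the oscillatory jumps on each $[\alpha_j^*,\beta_j^*]$ into jumps that are exponentially close to the identity off fixed neighborhoods of the three edges $\alpha_2^*,\,x_{\crit},\,\beta_1^*$.

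Next I would construct three kinds of parametrices: a global parametrix $P^{(\infty)}$ on $\mathbb C\setminus([\alpha_2^*,\beta_2^*]\cup[\alpha_1^*,\beta_1^*])$ obtained from the Riemann surface of the associated spectral curve, standard Airy parametrices at the outer soft edges $\alpha_2^*$ and $\beta_1^*$, and the new tacnode parametrix $P^{(\mathrm{tac})}$ on a fixed disk $D(x_{\crit},\delta)$. For $P^{(\mathrm{tac})}$, introduce the local coordinate
\[
\zeta = c\,n^{2/3}(z-x_{\crit}),\qquad c=\frac{1}{\sqrt{t_{\crit}(1-t_{\crit})}},
\]
and set
\[
P^{(\mathrm{tac})}(z)=E(z)\,M\bigl(\zeta(z);r_1,r_2,s_1,s_2\bigr)\,\diag\bigl(e^{\theta_1(\zeta)},e^{\theta_2(\zeta)},e^{-\theta_1(\zeta)},e^{-\theta_2(\zeta)}\bigr),
\]
where the prefactor $E(z)$ is chosen analytic on $D(x_{\crit},\delta)$ so that the matching condition $P^{(\mathrm{tac})}=(I+O(n^{-1/3}))P^{(\infty)}$ on $\partial D(x_{\crit},\delta)$ holds, using the large-$\zeta$ expansion \eqref{M:asymptotics} of $M$.

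The specific values of $r_1,r_2,s_1,s_2$ arise by matching the local expansion of the $g$-functions near $x_{\crit}$ with the phases $\theta_1,\theta_2$ in \eqref{def:theta1}--\eqref{def:theta2}. The $(z-x_{\crit})^{3/2}$ behavior of $g_1, g_2$ at the touching edges, controlled by the semicircle densities, generates the cubic parts $\tfrac23 r_j(\mp\zeta)^{3/2}$ and forces \eqref{doublescaling:r}. The $O(n^{-2/3})$ shifts \eqref{doublescaling:a}--\eqref{doublescaling:b} of the endpoints contribute linear-in-$(z-x_{\crit})^{1/2}$ perturbations to $g_1, g_2$ which, through the combinations \eqref{doublescaling:L5}--\eqref{doublescaling:L6}, produce the values of $s_1, s_2$ in \eqref{doublescaling:s}. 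The affine part of $g_j$ at $x_{\crit}$ is responsible for the exponential prefactor $\exp(c_2 n^{1/3}(u-v))$, with $c_2$ as in \eqref{eq:c2}, which appears when the diagonal conjugation by $e^{\pm n g_j}$ is undone at the two nearby arguments.

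A final transformation $S\mapsto R$ produces a small-norm RH problem with jumps $I+O(n^{-1/3})$ on $\partial D(x_{\crit},\delta)$ and exponentially small ones elsewhere, yielding $R=I+O(n^{-1/3})$ uniformly. Substituting the chain of transformations back into \eqref{correlationkernel} at $x=x_{\crit}+u/(cn^{2/3})$, $y=x_{\crit}+v/(cn^{2/3})$, and using that the analytic continuation $\widehat M$ from the sector around the positive imaginary axis reproduces the boundary value $M_+$ via the jumps of Figure~\ref{fig:modelRHP}, yields \eqref{kernel at tacnode} with $K^{tacnode}$ as in \eqref{eq:tacnode kernel}. The main obstacle I expect is the construction and matching of $P^{(\infty)}$ at $x_{\crit}$: the spectral curve degenerates there as two branch points coalesce, so $P^{(\infty)}$ must carry a singular factor of order $(z-x_{\crit})^{-1/4}$ that exactly reproduces the $\diag((-\zeta)^{-1/4},\zeta^{-1/4},(-\zeta)^{1/4},\zeta^{1/4})$ factor of \eqref{M:asymptotics}, and securing the correct constants uniformly in the asymmetric case $p_1^*\neq p_2^*$ is the delicate point.
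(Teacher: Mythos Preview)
Your overall architecture is right, and your identification of where the parameters $r_1,r_2,s_1,s_2$ and the constant $c_2$ come from is correct. However, two genuine technical gaps would block the argument as stated.

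First, you propose to build the $g$-functions directly from the limiting semicircle densities \eqref{wignerdensity}. For finite $n$ the true semicircle supports $[\alpha_j,\beta_j]$ given by \eqref{eq:alphaj2}--\eqref{eq:betaj2} need not touch at $x_{\crit}$; they may overlap or be separated by $O(n^{-2/3})$. The paper handles this by introducing a \emph{modified equilibrium problem} (Section~\ref{section:xilamfunctions}) that forces the two supports to meet exactly at the tacnode, at the price of allowing signed minimizers $\mu_1,\mu_2$. The resulting $\lambda$-functions (not the naive $g$-functions) are what drive the normalization $X\mapsto U$ and control the lens estimates.

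Second, and more seriously, the matching $P^{(\mathrm{tac})}=(I+O(n^{-1/3}))P^{(\infty)}$ on a \emph{fixed} circle cannot be achieved in general. The jump matrices on the global lenses carry factors $e^{\pm n\kappa z}$ with $\kappa=O(n^{-2/3})$ (see \eqref{kappa:asy}), and these do not become harmless unless $|z|=O(n^{-1/3})$. The paper therefore (i) takes the local disk to be the \emph{shrinking} disk $D(x_{\crit},n^{-1/3})$; (ii) lets the parameters of $M$ depend analytically on $z$, writing $M(n^{2/3}f(z);r_1(z),r_2(z),n^{2/3}s_1(z),n^{2/3}s_2(z))$ so that the phase identities \eqref{P0:jumps1}--\eqref{P0:jumps2} hold exactly rather than only to leading order; and (iii) even then obtains only the weaker matching
\[
P^{(0)}(z)=\bigl(I+N(z)+O(n^{-1/3})\bigr)P^{(\infty)}(z),\qquad |z-x_{\crit}|=n^{-1/3},
\]
with $N(z)=O(1)$ a matrix with a simple pole at $x_{\crit}$ satisfying $N(z_1)N(z_2)=0$. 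An extra transformation $S\mapsto R$ exploiting this nilpotency (see \eqref{defR}) is then needed to reduce to a genuine small-norm problem. The obstacle you anticipated---matching the $(-\zeta)^{\pm1/4},\zeta^{\pm1/4}$ singularity of $P^{(\infty)}$---is comparatively routine and is absorbed into the analytic prefactor $E_n$; the real difficulty lies in the three points above.
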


Theorem~\ref{theorem:kernelpsi} will be proved in
Section~\ref{subsection:proofkernel}.

\begin{remark}
The extra factor $\exp(c_2 n^{1/3}(u-v))$ in \eqref{kernel at tacnode} is irrelevant
as it does not change any of the determinantal correlation
functions $\det \left( K_n(x_i, x_j) \right)$ of the determinanal
point process.
\end{remark}

\begin{remark}
In view of \eqref{criticalseparation:tris}, \eqref{doublescaling:L5} and
\eqref{doublescaling:L6}, we may rewrite \eqref{doublescaling:s} as
\begin{align*}
 s_1 = \frac{r_1}{2} \left( \sqrt{\frac{b_1^*-b_2^*}{a_1^* - a_2^*}} L_1
    +  \sqrt{\frac{a_1^*-a_2^*}{b_1^* - b_2^*}} L_3 \right), \quad
 s_2 = - \frac{r_2}{2} \left( \sqrt{\frac{b_1^*-b_2^*}{a_1^* - a_2^*}} L_2
    +  \sqrt{\frac{a_1^*-a_2^*}{b_1^* - b_2^*}} L_4 \right).
\end{align*}
\end{remark}

\begin{remark}
With the values \eqref{doublescaling:r}--\eqref{doublescaling:s}, the parameter
$\sigma$ of \eqref{def:sigma} that is the argument of the Painlev\'e II
transcendent is related to $p_1^*$, $p_2^*$, $L_5$ and $L_6$ by
\begin{equation}\label{doublescaling:sigma}
\sigma = \frac{(p_1^* p_2^*)^{1/6}}{(\sqrt{p_1^*} +
\sqrt{p_2^*})^{4/3}} \left(L_5 - L_6\right).
\end{equation}
\end{remark}

\begin{remark}\label{remark:varying:T}
Instead of varying the endpoints, we could have used the temperature
$T$ as a scaling parameter with the same scale $O(n^{-2/3})$. If we
keep $a_1, a_2, b_1, b_2$ fixed at the values $a_1^*, a_2^*, b_1^*, b_2^*$
and vary $T$ with $n$ such that
\[ T = 1 - L_7 n^{-2/3} + o(n^{-2/3}) \qquad \text{as } n \to \infty, \]
then it can be checked that the same local behavior at the
tacnode can be created with a change
in the endpoints while keeping the temperature fixed. Indeed, to
that end, we vary  the endpoints as in
\eqref{doublescaling:a}--\eqref{doublescaling:b} with
\begin{equation}
    L_1  = \frac{1}{2} a_1^* L_7, \quad L_2  = \frac{1}{2} a_2^* L_7,  \quad  L_3  = \frac{1}{2} b_1^* L_7,
    \quad L_4  = \frac{1}{2} b_2^* L_7.
\end{equation}
Thus, by \eqref{doublescaling:L5} and \eqref{doublescaling:L6}, it follows that
\begin{align*}
    L_5 & =  \frac{1}{2} \left((b_1^* - b_2^*) a_1^* + (a_1^* - a_2^*) b_1^*\right) L_7, \\
    L_6 & =  \frac{1}{2} \left((b_1^* - b_2^*) a_2^* + (a_1^* - a_2^*) b_2^* \right) L_7,
\end{align*}
and
\[ L_5 - L_6 = (a_1^*-a_2^*)(b_1^* - b_2^*) L_7
    = (\sqrt{p_1^*} + \sqrt{p_2^*})^2 L_7, \]
so that
\[ \sigma = (p_1^* p_2^*)^{1/6}(\sqrt{p_1^*} + \sqrt{p_2^*})^{2/3} L_7, \]
with the aid of \eqref{doublescaling:sigma}. This expression for $\sigma$ is
compatible with the one in \cite{DelKui1}.
\end{remark}

\begin{remark}\label{remark:varying:t}
We could similarly vary the time $t$ around the critical time $t_{\crit}$ while
keeping other parameters fixed, say,
\[ t = t_{\crit} + L_8 n^{-2/3}+ o(n^{-2/3}). \]
This effect can  be modeled by varying the endpoints as in
\eqref{doublescaling:a}--\eqref{doublescaling:b} with $t = t_{\crit}$ and $T =
1$ fixed, with now
\begin{equation}
\begin{aligned}
    L_1 & = -\frac{a_1^*}{2t_{\crit}(1-t_{\crit})}  L_8, \qquad &L_3 & = \frac{b_1^*}{2t_{\crit}(1-t_{\crit})}  L_8, \\
    L_2 & = -\frac{a_2^*}{2t_{\crit}(1-t_{\crit})}  L_8, \qquad  &L_4 & = \frac{b_2^*}{2t_{\crit}(1-t_{\crit})} L_8.
\end{aligned}
\end{equation}
Hence $L_5 - L_6 = 0$ and so $\sigma = 0$ by \eqref{doublescaling:sigma}.
Thus the argument \eqref{def:sigma} of the Painlev\'e transcendent
does not change if we only vary $t$.
\end{remark}

\subsection{Additional results}
\label{subsection:additional}

\subsubsection{Semicircle densities}
\label{subsubsection:semicircles}

There are two other results that come out of our asymptotic analysis. The first
one is about the limiting density of  the non-intersecting Brownian paths.

\begin{theorem}\label{st2ellipses}
$($Touching semicircle densities:$)$ Consider the double scaling limit as
described in Section~\ref{subsection:doublescaling}. Then as $n \to\infty$, the
Brownian particles at the critical time $t$ in \eqref{tcrit:bis} are
asymptotically supported on the two touching intervals $[\alpha_1^*,\beta_1^*]$
and $[\alpha_2^*,\beta_2^*]$ \eqref{def:alphaj}--\eqref{def:betaj}, with
$\beta_2^*=\alpha_1^*$, and with limiting densities given by the semicircles
\eqref{wignerdensity}.
\end{theorem}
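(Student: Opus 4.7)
The plan is to obtain the density as a byproduct of the Deift-Zhou steepest descent analysis of the $4\times 4$ RH problem for $Y$ from Section~\ref{subsection:RHintro}. Since this analysis is carried out later in the paper in order to establish Theorem~\ref{theorem:kernelpsi}, it suffices to identify the equilibrium measure produced by the very first normalization step ($Y \mapsto T$ via $g$-functions) and to check that its density is given by the touching semicircles.

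First I would verify the algebraic tangency $\beta_2^* = \alpha_1^*$ at $t = t_{\crit}$. From \eqref{def:alphaj}--\eqref{def:betaj} with $T=1$ one has
\begin{equation*}
\alpha_1^* - \beta_2^* = (1-t_{\crit})(a_1^*-a_2^*) + t_{\crit}(b_1^*-b_2^*) - 2\sqrt{t_{\crit}(1-t_{\crit})}\bigl(\sqrt{p_1^*}+\sqrt{p_2^*}\bigr).
\end{equation*}
Using \eqref{tcrit:bis} one computes $t_{\crit}(1-t_{\crit}) = (a_1^*-a_2^*)(b_1^*-b_2^*)/\bigl((a_1^*-a_2^*)+(b_1^*-b_2^*)\bigr)^2$, after which the critical separation identity \eqref{criticalseparation:tris} shows that the two terms cancel exactly, so $\beta_2^* = \alpha_1^*$.

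Next I would identify the limiting mean density of the paths with the density of the equilibrium measure for the vector problem naturally associated with the RH problem for $Y$, in the same spirit as \cite{DelKui1}. The data are two positive measures $\mu_1,\mu_2$ of masses $p_1^*,p_2^*$ on disjoint intervals, minimizing a logarithmic energy functional with quadratic external field coming from \eqref{gaussianw1}--\eqref{gaussianw2}. For strict large separation, this minimizer is explicitly the pair of semicircle densities \eqref{wignerdensity}, as is verified through a direct computation of logarithmic potentials via the Joukowski-type substitution $x = (1-t)a_j + tb_j + 2\sqrt{p_j^* t(1-t)}\cos\theta$. Since the external field is quadratic and the two supports touch at only a single point, the same computation of the Euler-Lagrange variational conditions goes through unchanged at $t = t_{\crit}$ under \eqref{criticalseparation:tris}; equivalently, continuity of the minimizer as one approaches the critical boundary from inside the large-separation regime gives the same conclusion.

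Finally, the $O(n^{-2/3})$ perturbations of the endpoints in \eqref{doublescaling:a}--\eqref{doublescaling:b} induce only $O(n^{-2/3})$ perturbations of the external field and therefore do not affect the macroscopic density at leading order. The main technical point I anticipate is ensuring that the $g$-function normalization still yields a well-posed RH problem at the touching point $\beta_2^* = \alpha_1^*$, rather than only at interior points of the support; this, however, is exactly the reason why the local parametrix near the tacnode must be built from the model problem of Section~\ref{subsection:modelrhp}, and is handled as part of the later steepest descent analysis needed for Theorem~\ref{theorem:kernelpsi}.
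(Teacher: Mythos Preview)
Your proposal is correct and aligned with the paper's own (very brief) treatment: the paper does not give a detailed proof either, stating only that the result follows from the steepest descent analysis of Section~\ref{section:steepestdescent} in the same way as in \cite{DelKui1}. Your outline---identify the limiting density with the equilibrium measure that enters the $g$-function normalization, verify that at $t=t_{\crit}$ under \eqref{criticalseparation:tris} this measure is the pair of touching semicircles, and observe that the $O(n^{-2/3})$ perturbations are macroscopically invisible---captures exactly this.

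One small point of comparison: rather than working with the limiting equilibrium problem at the starred parameters, the paper (Section~\ref{section:xilamfunctions}) sets up a \emph{modified} equilibrium problem at the finite-$n$ parameters, in which the two measures are allowed to be signed but are forced to have supports $[0,\beta]$ and $[-\alpha,0]$ meeting exactly at the origin; the minimizers are computed explicitly in Proposition~\ref{prop:signedmeasure} and shown (via \eqref{limit:al and beta}--\eqref{critlim:del2}) to converge to the touching semicircles as $n\to\infty$. This modification is what makes the $g$-function normalization well-posed uniformly in $n$ near the tacnode, the issue you flag in your final paragraph. Your version---pass to the limit first, then invoke continuity of the minimizer from the large-separation side---is a legitimate alternative for the density statement alone, but the paper's route has the advantage of fitting directly into the finite-$n$ RH transformations needed anyway for Theorem~\ref{theorem:kernelpsi}.
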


Theorem~\ref{st2ellipses} can be proved from the steepest descent analysis in
Section~\ref{section:steepestdescent} in quite the same way as in
\cite{DelKui1}. We will not go into the details.

\subsubsection{Critical limit of recurrence coefficients}
\label{subsubsection:reccrit}

The second result concerns the recurrence coefficients of the multiple Hermite
polynomials. In the critical limit, their behavior is governed by the
Hastings-McLeod solution of the Painlev\'e~II equation, in exactly the same way
as in \cite{DelKui1}. For the $4\times 4$ matrix $Y_1$ in
\eqref{asymptoticconditionY0}, the combinations $(Y_1)_{i,j}(Y_1)_{j,i}$,
$i,j=1,\ldots,4$ with $i\neq j$ are called the \lq off-diagonal recurrence
coefficients\rq\ in \cite{DelKui1}, where we use the notation $(M)_{i,j}$ to
denote the $(i,j)$th entry of any given matrix $M$. It is also shown in
\cite{DelKui1} that the quantities $(Y_1)_{i,j}(Y_1)_{j,i}$ with $i\neq j$ are
determined by $(Y_1)_{1,2}(Y_1)_{2,1}$ and $(Y_1)_{1,4}(Y_1)_{4,1}$ alone. We
then have the following result.

\begin{theorem}\label{theorem:Painleve2rec}
$($Asymptotics of off-diagonal recurrence coefficients:$)$ Consider the double
scaling limit as described in Section~{\rm\ref{subsection:doublescaling}}. Then we
have
\begin{align}
    \label{c12asymptotics}
    (Y_1)_{1,2}(Y_1)_{2,1} &= - K^2 t_{\crit}^2(b_1^*-b_2^*)^2q^2(\sigma)n^{-2/3}+O(n^{-1}),\\
    \label{c14asymptotics}
    (Y_1)_{1,4}(Y_1)_{4,1} &=  K^2  t_{\crit}(1-t_{\crit})(a_1^*-a_2^*)(b_1^*-b_2^*)q^2(\sigma)
    n^{-2/3}+O(n^{-1}),
\end{align}
as $n\to\infty$, where $q(s)$ is the Hastings-McLeod solution to the
Painlev\'e~II equation, $\sigma$ is given by \eqref{doublescaling:sigma} and
\begin{equation}\label{doublescaling:K}
K= \frac{(p_1^* p_2^*)^{1/6}}{(\sqrt{p_1^*} + \sqrt{p_2^*})^{4/3}}.
\end{equation}
\end{theorem}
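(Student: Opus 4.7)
The plan is to extract the asymptotics of $Y_1$ from the Deift--Zhou steepest descent analysis of RH problem~\ref{rhp:Y} at the critical time, and then to identify the relevant entries with the Painlev\'e~II transcendent via Theorem~\ref{theorem:Painleve2modelrhp}. This mirrors the computation of \cite{DelKui1} at non-critical time, but with the new $4\times 4$ tacnode parametrix replacing the local parametrix used there.

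First I would recall the sequence of transformations $Y \mapsto T \mapsto S \mapsto R$ of the steepest descent analysis carried out in Section~\ref{section:steepestdescent}, and unwind the asymptotic condition \eqref{asymptoticconditionY0} to express $Y_1$ in terms of the subleading coefficient $R_1$ at infinity of the final RH problem, together with the global parametrix $N$ and the contributions coming from the fixed point transformations. Since the equilibrium problem at the critical time still yields the two touching semicircle densities \eqref{wignerdensity} (Theorem~\ref{st2ellipses}), the global parametrix $N$ and the $g$-functions are the critical specialization of those in \cite{DelKui1}; in particular the entries $(Y_1)_{1,2}(Y_1)_{2,1}$ and $(Y_1)_{1,4}(Y_1)_{4,1}$ can be written, up to explicit scalar prefactors determined by $N$, as bilinear combinations of entries of $R_1$.

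Next I would compute $R_1$ from the small-norm RH problem for $R$. In the present critical regime the only local parametrix that contributes to the leading order $n^{-2/3}$ is the one built at the tangent point $x_{\crit}$ from the tacnode model $M(\zeta;r_1,r_2,s_1,s_2)$ of RH problem~\ref{rhp:modelM}; the Airy parametrices at the outer edges give only $O(n^{-1})$ corrections to $R_1$. By matching the local parametrix to $N$ on the boundary of the disk around $x_{\crit}$, the coefficient $R_1$ is proportional to $M_1/(cn^{2/3})$ pulled back through the local change of variable $\zeta = cn^{2/3}(x - x_{\crit})$, with $c$ as in \eqref{eq:c} and with $r_1,r_2,s_1,s_2$ given by \eqref{doublescaling:r}--\eqref{doublescaling:s}. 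Inserting the expression \eqref{eq:defM1} for $M_1$ and keeping only the entries that survive after sandwiching with $N$, one finds that $(Y_1)_{1,2}(Y_1)_{2,1}$ and $(Y_1)_{1,4}(Y_1)_{4,1}$ are both proportional to $d^2/n^{2/3}$, where $d$ is the entry in \eqref{eq:defM1} that by \eqref{d:Painleve2} equals
\[
 d \;=\; \frac{(r_1r_2)^{1/6}}{(r_1^2+r_2^2)^{1/3}}\,q(\sigma).
\]
Substituting $r_1 = (p_1^*)^{1/4}$, $r_2 = (p_2^*)^{1/4}$ gives $d^2 = K^2 q^2(\sigma)$ with $K$ as in \eqref{doublescaling:K}, and $\sigma$ reduces to the expression \eqref{doublescaling:sigma} as desired.

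The remaining step is to track the scalar prefactors produced by the global parametrix $N$ and the $g$-functions evaluated at $x_{\crit}$. These prefactors are local data at the tacnode: they involve the slopes of the equilibrium measures at $x_{\crit}$ and the conformal map used to pull $M$ back to a neighborhood of $x_{\crit}$. A computation identical in spirit to \cite[Section~5]{DelKui1} shows that for $(Y_1)_{1,2}(Y_1)_{2,1}$ the prefactor gives exactly $-t_{\crit}^{2}(b_1^*-b_2^*)^{2}$, while for $(Y_1)_{1,4}(Y_1)_{4,1}$ it gives $t_{\crit}(1-t_{\crit})(a_1^*-a_2^*)(b_1^*-b_2^*)$; this produces \eqref{c12asymptotics} and \eqref{c14asymptotics}. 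The main obstacle in this plan is precisely this bookkeeping: one must verify that the many factors arising from the successive transformations collapse to the clean prefactors claimed above, and that all other entries of $M_1$ (in particular $a,\wtil a,b,\wtil b,c,\wtil c,e,\wtil e,f$) either enter through the diagonal and cancel, or give only $O(n^{-1})$ subleading corrections. Since the transformations and the global parametrix are structurally identical to those in \cite{DelKui1} at non-critical time, the computation can be organized as a direct adaptation of that paper, with the single change that $M$ now plays the role that the classical Flaschka--Newell Painlev\'e~II parametrix played there.
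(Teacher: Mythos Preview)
Your overall strategy --- unwind the steepest descent transformations, isolate the tacnode-parametrix contribution to $Y_1$, and identify it with $d^2 = K^2 q^2(\sigma)$ via Theorem~\ref{theorem:Painleve2modelrhp}, then import the prefactor computation from \cite{DelKui1} --- is exactly the paper's route, and the paper likewise defers the final arithmetic to \cite{DelKui1}.

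There is, however, a real technical gap in the mechanism you describe for how the tacnode parametrix feeds into $Y_1$. You assert that matching $P^{(0)}$ to the global parametrix on the boundary of the disk yields, in the usual way, $R_1$ proportional to $M_1/(cn^{2/3})$. In the paper this standard matching is \emph{not} available: the factors $e^{\pm n\kappa z}$ (with $\kappa = O(n^{-2/3})$) in the jump matrices force one to build $P^{(0)}$ on a \emph{shrinking} disk $|z|=n^{-1/3}$ and to let the parameters $r_2,s_1,s_2$ of $M$ depend analytically on $z$. The matching then reads $P^{(0)}(P^{(\infty)})^{-1}=I+N(z)+O(n^{-1/3})$ with $N(z)=O(1)$, not $o(1)$; what saves the analysis is that $N(z)$ is nilpotent, $N(z_1)N(z_2)=0$, with a simple pole at $0$ and residue $N_0$. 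An extra transformation $S\mapsto R$ (right-multiplication by $I-N_0/z$ outside the disk) is needed to restore a small-norm problem. The resulting decomposition is
\[
(Y_1)_{i,j}(Y_1)_{j,i}=\bigl(P_1^{(\infty)}+N_0+R_1\bigr)_{i,j}\bigl(P_1^{(\infty)}+N_0+R_1\bigr)_{j,i},
\]
where now $R_1=O(n^{-2/3})$ is \emph{subleading} and the dominant $O(n^{-1/3})$ Painlev\'e contribution sits in $N_0$, not in $R_1$. Once $N_0$ is computed as the residue of \eqref{eq:N2}, its relevant off-diagonal entries carry exactly $d$, and the remainder is as you outline. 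So your plan is right in spirit, but the sentence ``$R_1$ is proportional to $M_1/(cn^{2/3})$'' hides a nontrivial device (shrinking disk, $z$-dependent parameters, nilpotent correction, extra transformation) without which the small-norm argument would fail.
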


Theorem~\ref{theorem:Painleve2rec} will be proved in
Section~\ref{subsection:proofrec}.

\begin{remark}
For ease of comparison, we have stated the above theorem in exactly the
same way as in \cite{DelKui1}. We note however that the expressions \eqref{c12asymptotics}
and \eqref{c14asymptotics} can be
simplified in the present case, since we are now looking exactly
at the critical time $t=t_{\crit}$. This means that the variable $t$ in the
above formulas can be substituted by \eqref{tcrit:bis}. Further simplification
can be obtained by substituting \eqref{criticalseparation:tris}.
\end{remark}

\begin{remark}
In \cite{DelKui1} there is also a result on the Painlev\'e~II asymptotics of
the so-called \lq diagonal recurrence coefficients\rq. One can show that
exactly the same result holds in the present case. This will be briefly
commented at the end of Section~\ref{subsection:proofrec}.
\end{remark}

\subsection{About the proofs}
\label{subsection:aboutproofs}

The rest of this paper contains the proofs of the theorems and is organized in
two parts: Part~I (Sections~\ref{section:Lun}--\ref{section:proofP2M}) and
Part~II
(Sections~\ref{section:xilamfunctions}--\ref{section:proofsmaintheorems}).
Part~I deals with the RH problem~\ref{rhp:modelM} for $M(\zeta)$. It consists
of three sections: In Section~\ref{section:Lun} we perform an asymptotic
analysis of this RH problem for $s_1\to +\infty$; in
Section~\ref{section:vanishing} we establish the existence result in
Theorem~\ref{theorem:solvability}; and in Section~\ref{section:proofP2M} we
prove the connection with the Painlev\'e~II equation in
Theorem~\ref{theorem:Painleve2modelrhp}.

Part~II deals with the critical asymptotics of the non-intersecting Brownian
motions at the tacnode.
The proofs of Theorems~\ref{theorem:kernelpsi} and \ref{theorem:Painleve2rec}
will be given in Section~\ref{section:proofsmaintheorems}. They are based on
the Deift-Zhou steepest descent analysis of the RH problem~\ref{rhp:Y}, which
will be discussed in Section~\ref{section:steepestdescent}. In the proofs an
important role is played by two modified equilibrium problems that give rise to
two functions $\xi_1$, $\xi_2$ and their anti-derivatives $\lambda_1$,
$\lambda_2$. The modified equilibrium problems will be discussed in
Section~\ref{section:xilamfunctions}.

The two parts are largely independent.
Both parts contain the steepest descent analysis of a $4 \times 4$ matrix valued RH problem,
that we give in some detail, although it makes the paper rather lengthy.
Some of our notation will have
different meanings in the two parts. For example, $V_1$ and $V_2$
are used to denote certain $4 \times 4$ matrices in part I, see \eqref{def:V1Lax} and \eqref{def:V2Lax},
while in part II they denote two external fields in an equilibrium problem, see \eqref{eq:V1V2}.
We trust that this will not lead to any confusion.

\part{Analysis of the Riemann-Hilbert problem for $M(\zeta)$}
\label{part:modelRHP}

\section{Asymptotic analysis of $M(\zeta)$ for $s_1\to +\infty$}
\label{section:Lun}

In this section we analyze the model RH problem \ref{rhp:modelM} for $M(\zeta)$
as $s_1 \to +\infty$. Our goal is twofold: we want to prove the solvability of
the RH problem for $s_1$ sufficiently large, and establish the large $s_1$
asymptotics for the quantities $c$, $\wtil c$ and $d$ in \eqref{eq:defM1}. The
goal of this section is to prove the following proposition.

\begin{proposition}\label{prop:large s1 solvability}
    Let $r_1, r_2 > 0$ and $s_2 \in \mathbb R$ be fixed.
    Then for large enough $s_1 \in \mathbb R$, the RH problem \ref{rhp:modelM} for
    $M(\zeta; r_1, r_2, s_1, s_2)$ is uniquely solvable.

    Moreover, we have
    \begin{align} \nonumber
    d & := - i \left(M_1(r_1, r_2, s_1, s_2)\right)_{1,4} \\
      & =
        \frac{1}{2\sqrt{\pi} s_1^{1/4}}
        \left(\frac{r_1}{2(r_1^2 + r_2^2)} \right)^{1/4}
        \exp \left(- \frac{2}{3} \sigma^{3/2} \right)
        (1 + O(s_1^{-3/2})), \label{eq:asy of d}
        \\
     c & := -i \left(M_1(r_1, r_2, s_1, s_2) \right)_{1,3} =
        \frac{s_1^2}{r_1} + O\left(s_1^{-1} \right), \label{eq:asy of c}\\
     \wtil c & := -i \left(M_1(r_1, r_2, s_1, s_2) \right)_{2,4} =
        \frac{s_2^2}{r_2} + O\left(s_1^{-1} \right),  \label{eq:asy of til c}
        \end{align}
        as $s_1 \to +\infty$,
        where $\sigma$ is given by \eqref{def:sigma}.
\end{proposition}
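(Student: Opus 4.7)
The plan is to perform a Deift--Zhou steepest descent analysis of the model RH problem~\ref{rhp:modelM}, treating $s_1\to+\infty$ as the large parameter (with $r_1,r_2,s_2$ fixed). First I would undo the asymptotic normalization by setting
\[
  N(\zeta) := M(\zeta)\,\diag(e^{\theta_1(\zeta)},e^{\theta_2(\zeta)},e^{-\theta_1(\zeta)},e^{-\theta_2(\zeta)})\, C^{-1}\,\diag((-\zeta)^{1/4},\zeta^{1/4},(-\zeta)^{-1/4},\zeta^{-1/4}),
\]
where $C$ is the constant matrix appearing in \eqref{M:asymptotics}. Then $N\to I$ at infinity and the ten original jumps become conjugated jumps whose off-diagonal entries carry exponential factors of the form $e^{\pm 2\theta_j(\zeta)}$.

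Next comes the sign analysis. The phase $\theta_1$ has its turning point at $\zeta_\star=-s_1/r_1$, which recedes to $-\infty$ as $s_1\to+\infty$, and $\mathrm{Re}\,\theta_1$ has definite sign on the sectors bounded by the rays $\Gamma_1,\Gamma_2$ (resp.~$\Gamma_7,\Gamma_8$); analogously for $\theta_2$ on the rays in the left half-plane. Opening lenses around all the $\theta_1$-rays converts the associated jumps into $I+O(e^{-c\, s_1^{3/2}})$ uniformly in $s_1$. The only non-trivial remaining jumps are the block-swapping $J_5$ on $\Gamma_5=\mathbb R^-$ that couples the $\theta_1$-block to the $\theta_2$-block, together with the localized Airy structure for the $\theta_2$-block in a fixed neighborhood of the origin.

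Based on this, I would construct a global parametrix $P^{(\infty)}(\zeta)$ that, after the row permutation dictated by $J_5$, is block-diagonal: a standard $2\times 2$ Airy parametrix for the $\theta_2$-block (built from $\Ai$ and $\Ai'$ in a rescaled variable proportional to $r_2^{2/3}\zeta+r_2^{-1/3}s_2$), paired with a near-trivial $2\times 2$ parametrix for the $\theta_1$-block, whose Airy structure is pushed to $\zeta_\star\to-\infty$ and thus acts as the identity on compact sets in $\zeta$. From the standard large-$\zeta$ expansion of the Airy parametrix, the subleading coefficient of its expansion in powers of $1/\zeta$ produces exactly the terms $s_1^2/r_1$ and $s_2^2/r_2$ in \eqref{eq:asy of c} and \eqref{eq:asy of til c} (they arise as the natural $\zeta^{-1}$-correction when the linear term $2s_j(\cdot)^{1/2}$ is incorporated into $\theta_j$). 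Standard small-norm theory applied to $R(\zeta):=N(\zeta)(P^{(\infty)}(\zeta))^{-1}$ yields unique solvability of the RH problem for large $s_1$ and the $O(s_1^{-1})$ error bounds.

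The main obstacle is to pin down the exponentially small coefficient $d$ in \eqref{eq:asy of d}, which is not captured by $P^{(\infty)}$ but comes from a tunnelling contribution through the $J_5$ jump. After the lens-opening, $(M_1)_{1,4}$ is recovered as the residue at $\infty$ of an off-diagonal contour integral whose integrand has phase $\theta_1(\zeta)+\theta_2(\zeta)=\tfrac{2}{3}r_1(-\zeta)^{3/2}+\tfrac{2}{3}r_2\zeta^{3/2}+2s_1(-\zeta)^{1/2}+2s_2\zeta^{1/2}$ along a contour crossing $\Gamma_5$. A steepest descent computation should locate the saddle where the first derivative of this phase vanishes, evaluate the phase there to the value $-\tfrac{2}{3}\sigma^{3/2}$ with $\sigma$ as in \eqref{def:sigma}, and use the second derivative at the saddle to produce the Gaussian prefactor $\tfrac{1}{2\sqrt{\pi}\, s_1^{1/4}}\bigl(\tfrac{r_1}{2(r_1^2+r_2^2)}\bigr)^{1/4}$. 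The delicate points are the careful tracking of the branches of $(-\zeta)^{1/2}$ and $\zeta^{1/2}$ along the steepest descent contour, the verification that the contour can actually be deformed through the saddle after the lens-opening, and the derivation of the next-order correction which gives the stated relative error of $O(s_1^{-3/2})$.
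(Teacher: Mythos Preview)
Your high-level strategy (Deift--Zhou steepest descent with Airy parametrices and a saddle-point contribution for the exponentially small $d$) is correct and is exactly what the paper does. However, several key geometric facts in your outline are wrong, and as stated the analysis would not close.

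First, the edge of the $\theta_1$-problem is not at $-s_1/r_1$. Writing $g_1(\zeta)=\tfrac{2}{3}r_1(c-\zeta)^{3/2}$ and matching to $\theta_1$ at infinity forces $c=2s_1/r_1$, so the soft edge sits on the \emph{positive} real axis and moves to $+\infty$. The paper handles this by first using the symmetries \eqref{eq:Msymmetry1}--\eqref{eq:Msymmetry2} to reduce to $r_1=1$, $s_2=0$, then rescaling $\zeta\mapsto s_1\zeta$, which brings both Airy points to fixed locations $\zeta=0$ (for $\theta_2$) and $\zeta=2$ (for $\theta_1$). Without this rescaling you cannot make the $\theta_1$-jumps uniformly small away from a fixed compact set, and your claim that ``opening lenses around all the $\theta_1$-rays converts the associated jumps into $I+O(e^{-cs_1^{3/2}})$'' fails near the moving edge.

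Second, the terms $s_1^2/r_1$ and $s_2^2/r_2$ do not come from the Airy parametrix expansion. They arise in the $g$-function normalization step from the discrepancy $g_1-\tilde\theta_1=(-\zeta)^{-1/2}+O(\zeta^{-3/2})$ (see \eqref{asy of g1}--\eqref{C1}); this produces a constant left factor $I+i\lambda E_{3,1}$ and the explicit $s_1^2$ shift in \eqref{cctild:D}.

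Third, the saddle for $d$ is not on $\Gamma_5=\mathbb R^-$ with phase $\theta_1+\theta_2$. After the contour deformations the $(1,4)$-coupling lives on the interval $(\epsilon,2-\epsilon)$ of the \emph{positive} real axis (in the rescaled variable), and the relevant phase is $g_1+g_2$, not $\theta_1+\theta_2$; the latter is not even single-valued across $\mathbb R^+$. The paper isolates this contribution by a further factorization $D=R\,D^{(1)}$ exploiting a checkerboard sparsity pattern, then applies Laplace's method on $(\epsilon,2-\epsilon)$ with saddle at $\zeta^*=2/(r_2^2+1)$; the values \eqref{htil:saddle1}--\eqref{htil:saddle3} give exactly the prefactor and exponent in \eqref{eq:asy of d}.
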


We will use Proposition~\ref{prop:large s1 solvability} in the proofs of
Theorems \ref{theorem:solvability} and \ref{theorem:Painleve2modelrhp}.

In the proof of Proposition~\ref{prop:large s1 solvability}, we will first show
that we can restrict ourselves to the case
\begin{equation}\label{eq:assumption}
r_1 = 1, \qquad s_2 = 0.
\end{equation}
This is due to certain dilation and translation symmetries for $M$. The further
analysis will then be based on the Deift-Zhou steepest descent method, using
ideas of \cite{DZ,IKO}. For the problem at hand, the analysis consists of a
series of transformations $M \mapsto A \mapsto B \mapsto C \mapsto D$, so that
the matrix-valued function $D$ uniformly tends to the identity matrix as $s_1
\to +\infty$.

\begin{remark}\label{remark:asymptotic:s}
Although we will perform the transformations for $s_1>0$ and
sufficiently large in what follows, it is also possible to apply an
asymptotic analysis as $s_1 \to -\infty$. But in that case the
`$g$-functions' will be more complicated: They must be constructed
from a $4$-sheeted Riemann surface of genus zero, which has a
somewhat similar flavor as the one in \cite{DKV}. We will not go
into the details.
\end{remark}

\subsection{Reduction to the case $r_1 = 1$ and $s_2 = 0$}
\label{subsection:reduction}

The four parameters $r_1, r_2, s_1, s_2$ in the RH problem \ref{rhp:modelM} for $M$ are
not independent. Indeed, we have the following dilation symmetry
\begin{multline} \label{eq:Msymmetry1}
     \begin{pmatrix} \gamma^{1/2} & 0 & 0 & 0 \\ 0 &  \gamma^{1/2} & 0 & 0 \\
     0 & 0 & \gamma^{-1/2} & 0 \\ 0 & 0 & 0 & \gamma^{-1/2} \end{pmatrix}
     M(\gamma^2 \zeta; r_1, r_2, s_1, s_2) \\
    =
    M(\zeta; \gamma^3 r_1, \gamma^3 r_2, \gamma s_1, \gamma s_2),
        \qquad \gamma > 0,
    \end{multline}
and the translational symmetry
\begin{multline} \label{eq:Msymmetry2}
    \begin{pmatrix} 1 & 0 & 0 & 0 \\ 0 & 1 & 0 & 0 \\
    i\delta(2s_1-r_1\delta) & 0 & 1 & 0 \\
    0 & i \delta(2s_2+r_2 \delta) & 0 & 1 \end{pmatrix}M(\zeta + 2\delta; r_1, r_2, s_1, s_2) \\
    \approx
    M(\zeta; r_1, r_2, s_1 - r_1 \delta, s_2 + r_2 \delta),
    \qquad \delta \in \mathbb R,
    \end{multline}
where the symbol $\approx$ in \eqref{eq:Msymmetry2} is used in
the sense of `equality up to contour deformation'. The equality
\eqref{eq:Msymmetry1} is immediate, due to the fact that both sides
satisfy the same RH problem. To show \eqref{eq:Msymmetry2}, we note that the
left-hand side of \eqref{eq:Msymmetry2} satisfies a RH problem with
the same jumps as $M$, but on a shifted contour. By an easy
transformation of the RH problem, we may shift the contour back to
the original contour. After this transformation, the equality
\eqref{eq:Msymmetry2} holds, since both sides of
\eqref{eq:Msymmetry2} also have the same asymptotic behavior as
$\zeta \to \infty$.

In view of \eqref{eq:Msymmetry1} and \eqref{eq:Msymmetry2}, we may
restrict ourselves to $r_1 = 1$ and $s_2 = 0$ in order to prove
the solvability statement in Proposition \ref{prop:large s1 solvability}.

For the residue matrix $M_1$ in \eqref{M:asymptotics}, we find from
\eqref{eq:Msymmetry1} that
\begin{multline} \label{eq:M1symmetry1}
    M_1(\gamma^3 r_1, \gamma^3 r_2, \gamma s_1, \gamma s_2) = \\
    \frac{1}{\gamma^2} \diag \begin{pmatrix} \gamma^{1/2}, \gamma^{1/2}, \gamma^{-1/2}, \gamma^{-1/2} \end{pmatrix}
    M_1( r_1, r_2, s_1, s_2) \diag \begin{pmatrix} \gamma^{-1/2}, \gamma^{-1/2}, \gamma^{1/2}, \gamma^{1/2}
    \end{pmatrix},
     \end{multline}
and from \eqref{eq:Msymmetry2}, after straightforward calculations,
\begin{multline} \label{eq:M1symmetry2}
    M_1(r_1, r_2, s_1 - r_1 \delta, s_2 + r_2 \delta)
    = M_{1,\delta}    \\
    +    \begin{pmatrix} 1 & 0 & 0 & 0 \\
        0 & 1 & 0 & 0 \\
        i\delta(2s_1-r_1\delta) & 0 & 1 & 0 \\
        0 & i \delta(2s_2+r_2 \delta) & 0 & 1 \end{pmatrix}
    M_1(r_1, r_2, s_1, s_2) \\
    \times
     \begin{pmatrix} 1 & 0 & 0 & 0 \\
        0 & 1 & 0 & 0 \\
        -i\delta(2s_1-r_1\delta) & 0 & 1 & 0 \\
        0 & -i \delta(2s_2+r_2 \delta) & 0 & 1 \end{pmatrix},
    \end{multline}
where
\begin{align*}
    (M_{1,\delta})_{1,1} & = - (M_{1,\delta})_{3,3} =
        -\frac{1}{2} \delta - 2 s_1^2 \delta^2 + 2 r_1 s_1 \delta^3 - \frac{1}{2} r_1^2 \delta^4, \\
    (M_{1,\delta})_{2,2} & = - (M_{1,\delta})_{4,4}  =
        -\frac{1}{2} \delta + 2 s_2^2 \delta^2 + 2 r_2 s_2 \delta^3 + \frac{1}{2}  r_2^2 \delta^4,  \\
    (M_{1,\delta})_{1,3} & = i (-2s_1 \delta +r_1 \delta^2), \\
    (M_{1,\delta})_{2,4} & = i (2 s_2 \delta + r_2 \delta^2), \\
    (M_{1,\delta})_{3,1} & = i \left(- s_1 \delta^2 + \frac{1}{3} (2 r_1 - 8s_1^3) \delta^3
    + 4 s_1^2 r_1 \delta^4 - 2 s_1 r_1^2 \delta^5 + \frac{1}{3} r_1^3 \delta^6\right), \\
    (M_{1,\delta})_{4,2} & = i\left(- s_2 \delta^2 +\frac{1}{3} (-2 r_2 + 8 s_2^3) \delta^3 +
    4 r_2 s_2^2 \delta^4  + 2 r_2^2 s_2 \delta^5 + \frac{1}{3}  r_2^3 \delta^6\right),
    \end{align*}
    and the other entries of $M_{1,\delta}$ are zero.

In view of the structure of $M_1$ in \eqref{eq:defM1}, a combination
of \eqref{eq:M1symmetry1} and \eqref{eq:M1symmetry2} yields
\begin{align*}
    d(\gamma^3 r_1, \gamma^3 r_2, \gamma(s_1 - r_1 \delta), \gamma(s_2 + r_2\delta)) &
        = \frac{1}{\gamma} d(r_1, r_2, s_1, s_2), \\
    c(\gamma^3 r_1, \gamma^3 r_2, \gamma(s_1 - r_1 \delta), \gamma(s_2 + r_2\delta)) &
        = \frac{1}{\gamma} \left( c(r_1, r_2, s_1, s_2)  - 2 s_1 \delta + r_1 \delta^2 \right), \\
    \wtil c(\gamma^3 r_1, \gamma^3 r_2, \gamma(s_1 - r_1 \delta), \gamma(s_2 + r_2\delta)) &
        = \frac{1}{\gamma} \left( \wtil c(r_1, r_2, s_1, s_2) + 2s_2 \delta + r_2 \delta^2
        \right),
    \end{align*}
for $\gamma > 0$ and $\delta \in \mathbb R$. These relations are
consistent with the formulas for $d, c$ and $\wtil c$ given in \eqref{eq:asy of d}--\eqref{eq:asy of til c}
as well as those in
\eqref{d:Painleve2}--\eqref{def:sigma}. Thus, we may indeed restrict to the
case $r_1=1$ and $s_2 = 0$ in the proof of Proposition \ref{prop:large s1 solvability}

\subsection{First transformation: $M \mapsto A$}

We consider the RH problem \ref{rhp:modelM} for $M$ with parameters $r_1 = 1$
and $s_2 = 0$.
The first transformation is a rescaling of the RH problem.
\begin{definition}
We define
\begin{equation}\label{M to A}
A(\zeta)=\diag(s_1^{1/4},s_1^{1/4},s_1^{-1/4},s_1^{-1/4})M(s_1\zeta),\qquad
\zeta\in\mathbb{C} \setminus \left( \bigcup_{k=0}^9 \Gamma_k \right).
\end{equation}
\end{definition}
Then $A$ satisfies the following RH problem similar to that for $M$.
\begin{rhp}
\textrm{ }
\begin{enumerate}[\rm (1)]
  \item $A(\zeta)$ is analytic for $\zeta\in\mathbb{C}\setminus
  \left( \bigcup_{k=0}^9 \Gamma_k \right)$, where $\Gamma_k$, $k=1,...,9$ is shown in Figure
  \ref{fig:modelRHP}.
  \item $A$ has the same jump matrix $J_k$ on $\Gamma_k$ as $M$.
  \item As $\zeta\to\infty$, we have
  \begin{align}\label{Aasy}
  A(\zeta)=&\left(I+\frac{A_1}{\zeta}+
  O\left(\frac{1}{\zeta^2}\right)\right)\diag((-\zeta)^{-1/4},\zeta^{-1/4},
  (-\zeta)^{1/4},\zeta^{1/4}) \nonumber \\
  &\times \frac{1}{\sqrt{2}}\begin{pmatrix}
  1 & 0 & -i & 0\\
  0 & 1 & 0 & i\\
  -i & 0 & 1 & 0\\
  0 & i & 0 & 1
  \end{pmatrix} \diag(e^{-\lambda \tilde \theta_1(\zeta)},e^{-\lambda \tilde\theta_2(\zeta)} ,e^{\lambda \tilde\theta_1(\zeta)},
  e^{\lambda \tilde\theta_2(\zeta)}),
  \end{align}
  with
  \begin{equation}\label{def:lambda:s}
    \lambda = s_1^{3/2},\end{equation}
  and
  \begin{align}\label{tilde theta}
  \tilde \theta_1(\zeta)=\frac{2}{3}(-\zeta)^{3/2}+2(-\zeta)^{1/2},\qquad
  \tilde \theta_2(\zeta)=\frac{2}{3} r_2 \zeta^{3/2}.
  \end{align}
\end{enumerate}
\end{rhp}

The residue matrix $A_1$ in \eqref{Aasy} is given by
\begin{equation}\label{A_1 reprentation}
A_1=
\frac{1}{s_1}\diag(s_1^{1/4},s_1^{1/4},s_1^{-1/4},s_1^{-1/4})M_1
\diag(s_1^{-1/4},s_1^{-1/4},s_1^{1/4},s_1^{1/4}).
\end{equation}
In view of \eqref{A_1 reprentation}, we find the following
expressions for the numbers $c,\wtil c$ and $d$ in \eqref{eq:defM1}:
\begin{equation}\label{d in A}
c =-i\sqrt{s_1}(A_1)_{1,3},\qquad \wtil c
=-i\sqrt{s_1}(A_1)_{2,4},\quad d =-i\sqrt{s_1}(A_1)_{1,4}.
\end{equation}

\subsection{Second transformation: $A \mapsto B$}

In the second transformation we apply contour deformations; see also
\cite{IKO}. The six rays $\Gamma_k$, $k=1,2,3,7,8,9$, emanating from
the origin are replaced by their parallel lines emanating from some
special points on the real line. More precisely, we replace
$\Gamma_1$ and $\Gamma_9$ by their parallel rays $\tilde\Gamma_1$
and $\tilde\Gamma_9$ emanating from the point~$2$, replace
$\Gamma_2$ and $\Gamma_8$ by their parallel rays $\til\Gamma_2$ and
$\til\Gamma_8$ emanating from the point $2-\epsilon$, and $\Gamma_3$
and $\Gamma_7$ by their parallel rays $\til\Gamma_3$ and
$\til\Gamma_7$ emanating from $\epsilon$, where $\epsilon$ is a
small positive number. See Figure~\ref{fig:ContourB}.

\begin{definition}
Denoting by $E_{i,j}$ the $4 \times 4$ elementary matrix with entry
$1$ at the $(i,j)$th position and all other entries equal to zero,
we then successively define
\begin{equation}\label{A to B1}
     B^{(1)}(\zeta)=\left\{
                \begin{array}{ll}
                  A(\zeta)(I+E_{3,1}), & \text{for $\zeta$ between $\Gamma_1$ and $\til\Gamma_1$}, \\
                  A(\zeta)(I-E_{3,1}), & \text{for $\zeta$  between $\Gamma_9$ and $\til\Gamma_9$}, \\
                  A(\zeta), & \text{elsewhere},
                \end{array}
              \right.
\end{equation}
\begin{equation}
    B^{(2)}(\zeta) = \left\{
                \begin{array}{ll}
                  B^{(1)}(\zeta)(I-E_{2,1} + E_{3,4}), &
                  \text{for $\zeta$ between $\Gamma_2$ and $\til \Gamma_2$} \\
        & \text{ and for $\zeta$ between $\Gamma_8$ and $\til \Gamma_8$}, \\
                  B^{(1)}(\zeta), & \text{elsewhere},
                \end{array}
              \right.
\end{equation}
and
\begin{equation}\label{B2 to B}
B(\zeta)= \left\{\begin{array}{ll} B^{(2)}(\zeta)
    \left( I + E_{1,2} - E_{4,3} \right), & \text{for $\zeta$ between $\Gamma_{3}$ and $\til \Gamma_3$}\\
    & \text{ and for $\zeta$ between $\Gamma_7$ and $\til \Gamma_7$}, \\
    B^{(2)}(\zeta), & \text{elsewhere}.
\end{array}\right.
\end{equation}
\end{definition}

\begin{figure}[t]
\begin{center}
   \setlength{\unitlength}{1truemm}
   \begin{picture}(100,70)(-5,2)
       \put(40,40){\line(1,0){50}}
       \put(40,40){\line(-1,0){50}}
       \put(60,40){\line(2,1){30}}
       \put(60,40){\line(2,-1){30}}
       \put(20,40){\line(-2,1){30}}
       \put(20,40){\line(-2,-1){30}}
       \put(55,40){\line(2,3){10}}
       \put(55,40){\line(2,-3){10}}
       \put(25,40){\line(-2,3){10}}
       \put(25,40){\line(-2,-3){10}}
       \put(20,40){\thicklines\circle*{1}}
       \put(60,40){\thicklines\circle*{1}}
       \put(25,40){\thicklines\circle*{1}}
       \put(55,40){\thicklines\circle*{1}}
       \put(19,36){$0$}
       \put(59.3,36){$2$}
       \put(24,36){$\epsilon$}
       \put(48,36){$2-\epsilon$}
       \put(75,40){\thicklines\vector(1,0){.0001}}
       \put(40,40){\thicklines\vector(1,0){.0001}}
       \put(5,40){\thicklines\vector(1,0){.0001}}
       \put(87,40.5){$\mathbb R$}
       \put(80,50){\thicklines\vector(2,1){.0001}}
       \put(80,30){\thicklines\vector(2,-1){.0001}}
       \put(0,50){\thicklines\vector(2,-1){.0001}}
       \put(0,30){\thicklines\vector(2,1){.0001}}
       \put(60.2,48){\thicklines\vector(2,3){.0001}}
       \put(60.3,32){\thicklines\vector(-2,3){.0001}}
       \put(19.8,48){\thicklines\vector(-2,3){.0001}}
       \put(19.7,32){\thicklines\vector(2,3){.0001}}

       \put(80,52.5){$\til\Gamma_1$}
       \put(65,52){$\til \Gamma_2$}
       \put(11,52){$\til \Gamma_3$}
       \put(0.5,50){$\Gamma_4$}
       \put(0,27){$\Gamma_6$}
       \put(11,25){$\til \Gamma_7$}
       \put(65,25){$\til \Gamma_8$}
       \put(80,25){$\til\Gamma_{9}$}
  \end{picture}
  \vspace{-20mm}
   \caption{Contour $\Sigma_B$ for the RH problem \ref{rhp:B} for $B$.
   Note the reversion of the orientation of some of the rays.}
   \label{fig:ContourB}
\end{center}
\end{figure}
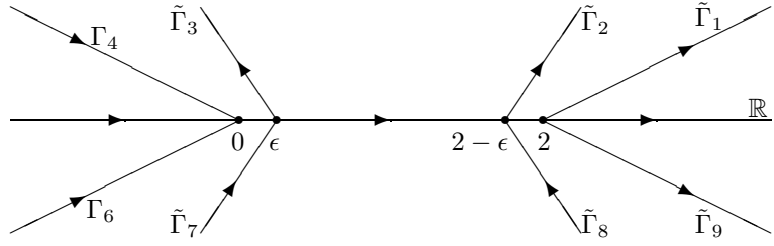

It is easily seen that $B$ is analytic in $\mathbb C \setminus
\Sigma_B$, where $\Sigma_B$ is the contour shown in
Figure~\ref{fig:ContourB}. Note that we reverse the orientation on
some of the rays, in particular the real line is oriented from left
to right; compare Figure~\ref{fig:ContourB} with
Figure~\ref{fig:modelRHP}. Moreover, $B$ satisfies the following RH
problem.
\begin{rhp}\label{rhp:B}
\textrm{}
\begin{enumerate}[\rm (1)]
  \item $B(\zeta)$ is analytic for $\zeta\in\mathbb{C}\setminus
  \Sigma_B$.
  \item $B$ has the following jumps on $\Sigma_B$:
  \begin{equation*}
  B_{+}(\zeta)=B_-(\zeta)J_B(\zeta),
  \end{equation*}
  where $J_B$ is defined by
  \begin{align*}
  J_B(\zeta)&=
  \begin{pmatrix}
  0 & 0 & 1 & 0\\
  0 & 1 & 0 & 0\\
  -1 & 0 & 0 & 0\\
  0 & 0 & 0 & 1
  \end{pmatrix}, \qquad \text{for $\zeta \in (2,+\infty)$},
  \\
  J_B(\zeta)&=(I+E_{3,1}),
\qquad \text{for $\zeta\in \tilde \Gamma_1\cup
  \tilde \Gamma_9$},
  \\
  J_B(\zeta)&=(I+E_{1,3}),
  \qquad \text{for $\zeta\in(2-\ep,2)$},\\
  J_B(\zeta)&= (I - E_{2,1} + E_{3,4}), \qquad \text{for $\zeta\in\tilde\Gamma_2
  \cup\tilde\Gamma_8$},
  \\
  J_B(\zeta)&=
  \begin{pmatrix}
  1 & 0 & 1 & 1\\
  0 & 1 & 1 & 1\\
  0 & 0 & 1 & 0\\
  0 & 0 & 0 & 1
  \end{pmatrix},
  \qquad \text{for $\zeta\in(\epsilon,2-\epsilon)$},
  \\
  J_B(\zeta)&= (I + E_{1,2} - E_{4,3}), \qquad \text{for $\zeta\in\tilde\Gamma_3
  \cup\tilde\Gamma_7$},
  \\
  J_B(\zeta)&= (I+E_{2,4}),
  \qquad \text{for $\zeta\in(0, \epsilon)$},
  \\
  J_B(\zeta)&=(I+E_{4,2}),
  \qquad \text{for $\zeta\in \Gamma_4
  \cup \Gamma_6$},
  \\
  J_B(\zeta)&=
  \begin{pmatrix}
  1 & 0 & 0 & 0\\
  0 & 0 & 0 & 1\\
  0 & 0 & 1 & 0\\
  0 & -1 & 0 & 0
  \end{pmatrix}, \qquad \text{for $\zeta \in (-\infty, 0)$}.
  \end{align*}

  \item As $\zeta\to\infty$, we have
  \begin{align}\label{asy of B}
  B(\zeta)=&\left(I+\frac{B_1}{\zeta}+
  O\left(\frac{1}{\zeta^2}\right)\right)\diag((-\zeta)^{-1/4},\zeta^{-1/4},
  (-\zeta)^{1/4},\zeta^{1/4}) \nonumber \\
  &\times \frac{1}{\sqrt{2}}\begin{pmatrix}
  1 & 0 & -i & 0\\
  0 & 1 & 0 & i\\
  -i & 0 & 1 & 0\\
  0 & i & 0 & 1
  \end{pmatrix} \diag(e^{-\lambda \tilde \theta_1(\zeta)},e^{-\lambda \tilde\theta_2(\zeta)} ,e^{\lambda \tilde\theta_1(\zeta)},
  e^{\lambda \tilde\theta_2(\zeta)}),
  \end{align}
  where $\lambda = s_1^{3/2}$ and $ \tilde \theta_{1}(\zeta)$, $\tilde \theta_2(\zeta)$ is
  given in \eqref{tilde theta}.
\end{enumerate}
\end{rhp}

By \eqref{d in A} and the transformation $A \mapsto B$ in \eqref{A
to B1}--\eqref{B2 to B}, it is readily seen that
\begin{equation}\label{d in B}
c =-i\sqrt{s_1}(B_1)_{1,3},\quad \wtil c =-i\sqrt{s_1}(B_1)_{2,4},\quad d
=-i\sqrt{s_1}(B_1)_{1,4}.
\end{equation}

\subsection{Third transformation: $B \mapsto C$}
In this transformation we partially normalize the RH problem
\ref{rhp:B} for $B$ at infinity. For this purpose, we introduce the
following \lq $g$-functions\rq:
\begin{equation}\label{g1}
g_1(\zeta)=\frac{2}{3}(2-\zeta)^{3/2},\qquad \zeta\in\mathbb{C}
\setminus [2,\infty),
\end{equation}
and
\begin{equation}\label{g2}
g_2(\zeta)=\tilde \theta_2(\zeta)=\frac{2}{3} r_2 \zeta^{3/2},
\qquad \zeta\in\mathbb{C} \setminus (-\infty, 0].
\end{equation}
Note that
\begin{equation}\label{asy of g1}
g_1(\zeta)=\tilde
\theta_1(\zeta)+(-\zeta)^{-1/2}-\frac{1}{3}(-\zeta)^{-3/2}+O(\zeta^{-5/2})
\end{equation}
as $\zeta \to \infty$, where $\tilde \theta_{1}$ is given in
\eqref{tilde theta}.

\begin{definition} We define
\begin{equation}\label{B to C}
C(\zeta)= (I+  i\lambda E_{3,1}) B(\zeta)\diag(e^{\lambda g_1(\zeta)},
e^{\lambda g_2(\zeta)},e^{-\lambda g_1(\zeta)},e^{-\lambda g_2(\zeta)}).
\end{equation}
\end{definition}
Then $C$ satisfies the following RH problem.

\begin{rhp}\label{rhpforC}
\textrm{}
\begin{enumerate}[\rm (1)]
  \item $C(\zeta)$ is analytic for $\zeta\in\mathbb{C}\setminus
  \Sigma_B$.
  \item $C$ has the following jumps on $\Sigma_B$:
  \[ C_+(\zeta) = C_-(\zeta) J_C(\zeta), \]
   where $J_C$ is given by
  \begin{align}
  J_C(\zeta)& =
  \begin{pmatrix}
  0 & 0 & 1 & 0\\
  0 & 1 & 0 & 0\\
  -1 & 0 & 0 & 0\\
  0 & 0 & 0 & 1
  \end{pmatrix}, \qquad \text{for $\zeta \in (2,\infty)$}, \nonumber
  \\ \nonumber
  J_C(\zeta) &=
  (I+e^{2\lambda g_1(\zeta)}E_{3,1}), \qquad \text{for $\zeta\in \tilde \Gamma_1\cup
  \tilde \Gamma_9$},
  \\   \nonumber
  J_C(\zeta) &=
  (I+e^{-2\lambda g_1(\zeta)}E_{1,3}),
  \qquad \text{for $\zeta\in(2-\ep,2)$},\\
  \nonumber
  J_C(\zeta) &= (I + e^{\lambda (g_1 - g_2)(\zeta)} (E_{3,4}-E_{2,1})),
  \qquad \text{for $\zeta\in\tilde\Gamma_2 \cup\tilde\Gamma_8$},
   \\ \nonumber
  J_C(\zeta) &=
  \begin{pmatrix}
  1 & 0 & e^{-2\lambda g_1(\zeta)} & e^{-\lambda (g_1+ g_2)(\zeta)}\\
  0 & 1 & e^{-\lambda (g_1 + g_2)(\zeta)} & e^{-2\lambda g_2(\zeta)}\\
  0 & 0 & 1 & 0\\
  0 & 0 & 0 & 1
  \end{pmatrix},
  \qquad \text{for $\zeta\in(\ep,2-\ep)$},\\
  \nonumber
  J_C(\zeta) &= (I + e^{-\lambda (g_1 - g_2)(\zeta)}(E_{1,2} - E_{4,3})),
  \qquad \text{for $\zeta\in\tilde\Gamma_3
  \cup\tilde\Gamma_7$},
  \nonumber
  \\
  \nonumber
  J_C(\zeta) &= (I+e^{-2\lambda g_2(\zeta)}E_{2,4}),
  \qquad \text{for $\zeta\in(0, \ep )$},
  \\
  \nonumber
  J_C(\zeta) &= (I+e^{2\lambda g_2(\zeta)}E_{4,2}), \qquad \text{for $\zeta\in \Gamma_4 \cup \Gamma_6$},
  \\
  \nonumber 
  J_C(\zeta) &=
  \begin{pmatrix}
  1 & 0 & 0 & 0\\
  0 & 0 & 0 & 1\\
  0 & 0 & 1 & 0\\
  0 & -1 & 0 & 0
  \end{pmatrix}, \qquad \text{for $\zeta \in (-\infty, 0)$}.
  \end{align}

  \item As $\zeta\to\infty$, we have
  \begin{align}\label{asy of C}
  C(\zeta)=\left(I+\frac{C_1}{\zeta}+
  O\left(\frac{1}{\zeta^2}\right)\right)\diag((-\zeta)^{-1/4},\zeta^{-1/4},
  (-\zeta)^{1/4},\zeta^{1/4})
  \frac{1}{\sqrt{2}}\begin{pmatrix}
  1 & 0 & -i & 0\\
  0 & 1 & 0 & i\\
  -i & 0 & 1 & 0\\
  0 & i & 0 & 1
  \end{pmatrix},
  \end{align}
  where
 \begin{equation}\label{C1}
 C_1 = (I + i \lambda E_{3,1})
  \left[B_1 (I - i \lambda E_{3,1})
  + \begin{pmatrix}
  - \frac{\lambda^2}{2} & 0 & -i\lambda  & 0\\
  0 & 0 & 0 & 0 \\
  -\frac{i\lam}{3} & 0 & -\frac{\lambda^2}{2} & 0\\
  0 & 0 & 0 & 0
  \end{pmatrix}\right].
 \end{equation}
\end{enumerate}
\end{rhp}

\begin{proof}
The jump condition for $C$ in item (2) follows from straightforward
calculations, where we have made use of the facts that
$(g_{1,+}+g_{1,-})(\zeta)=0$ for $\zeta \in [2,\infty)$ and
$(g_{2,+}+g_{2,-})(\zeta)=0$ for $\zeta \in (-\infty, 0]$.

To establish the large $\zeta$ behavior of $C$ shown in item (3), we
first observe from \eqref{asy of g1} and \eqref{g2} that
\begin{align*}
e^{\pm\lambda (g_1-\tilde \theta_1)(\zeta)} & =1\pm\lambda
(-\zeta)^{-1/2}+\frac{\lambda^2}{2} (-\zeta)^{-1}\mp
\frac{\lam}{3}(-\zeta)^{-3/2}+O(\zeta^{-2}),
\end{align*}
as $\zeta \to \infty$, and
\begin{align*}
e^{ \pm \lambda (g_2-\tilde \theta_2)(\zeta)} & \equiv 1.
\end{align*}
Hence, by \eqref{asy of B}, it is readily seen that
\begin{align} \label{B diag g }
&B(\zeta)\diag(e^{\lambda g_1(\zeta)}, e^{\lambda
g_2(\zeta)},e^{-\lambda g_1(\zeta)},e^{-\lambda g_2(\zeta)})
  \nonumber \\
&=\left(I+\frac{B_1}{\zeta}+
  O\left(\frac{1}{\zeta^2}\right)\right)\diag((-\zeta)^{-1/4},\zeta^{-1/4},
  (-\zeta)^{1/4},\zeta^{1/4})\frac{1}{\sqrt{2}}\begin{pmatrix}
  1 & 0 & -i & 0\\
  0 & 1 & 0 & i\\
  -i & 0 & 1 & 0\\
  0 & i & 0 & 1
  \end{pmatrix} \nonumber \\
  &~~~~\times  \left( I + \left(\frac{\lambda}{\sqrt{-\zeta}}-
  \frac{\lambda^2}{2\zeta}-\frac{\lam}{3(-\zeta)^{3/2}}\right)E_{1,1}
  -\left(\frac{\lambda}{\sqrt{-\zeta}}+\frac{\lambda^2}{2\zeta}-\frac{\lam}{3(-\zeta)^{3/2}}\right) E_{3,3}+O(\zeta^{-3/2}) \right),
\end{align}
as $\zeta\to\infty$. Note that
\begin{align*}
&\diag((-\zeta)^{-1/4},\zeta^{-1/4},
  (-\zeta)^{1/4},\zeta^{1/4})
 \frac{1}{\sqrt{2}}\begin{pmatrix}
  1 & 0 & -i & 0\\
  0 & 1 & 0 & i\\
  -i & 0 & 1 & 0\\
  0 & i & 0 & 1
  \end{pmatrix}
  \\
 &~~~~\times \left( I + \left(\frac{\lambda}{\sqrt{-\zeta}}
 -\frac{\lambda^2}{2\zeta}-\frac{\lam}{3(-\zeta)^{3/2}}\right)E_{1,1}
 -\left(\frac{\lambda}{\sqrt{-\zeta}}+\frac{\lambda^2}{2\zeta}-\frac{\lam}{3(-\zeta)^{3/2}}\right) E_{3,3}\right)
 \\
&=\left[\begin{pmatrix}
  1 & 0 & 0 & 0\\
  0 & 1 & 0 & 0\\
  -i\lambda & 0 & 1 & 0\\
  0 & 0 & 0 & 1
  \end{pmatrix}
  + \begin{pmatrix}
  -\frac{\lambda^2}{2} & 0 & -i\lambda  & 0\\
  0 & 0 & 0 & 0 \\
   -\frac{i\lam}{3} & 0&-\frac{\lambda^2}{2} & 0\\
  0 & 0 & 0 & 0
  \end{pmatrix}\frac{1}{\zeta}-\frac{i\lam}{3\zeta^2}E_{1,3}\right]
 \\
&~~~~\times \diag((-\zeta)^{-1/4},\zeta^{-1/4},
  (-\zeta)^{1/4},\zeta^{1/4})
 \frac{1}{\sqrt{2}}\begin{pmatrix}
  1 & 0 & -i & 0\\
  0 & 1 & 0 & i\\
  -i & 0 & 1 & 0\\
  0 & i & 0 & 1
  \end{pmatrix}.
\end{align*}
This, together with \eqref{B to C} and \eqref{B diag g }, implies
\eqref{asy of C} and \eqref{C1}.
\end{proof}

It follows from \eqref{d in B}, \eqref{C1} and $\lambda=s_1^{3/2}$
that
\begin{equation}\label{cctild:D}
c = -i\sqrt{s_1}(C_1)_{1,3}+s_1^2, \quad \wtil c =
-i\sqrt{s_1}(C_1)_{2,4},\quad d = -i\sqrt{s_1}(C_1)_{1,4}.
\end{equation}

\subsection{Asymptotic behavior of jump matrices for $C$}

The jump matrices of the RH problem \ref{rhpforC} for $C$ all tend
to the identity matrix exponentially fast as $\lambda \to +\infty$,
except for the jumps on $(-\infty,0)$ and $(2,\infty)$. This is
easily seen for the jumps on $\tilde \Gamma_1 \cup \Gamma_4
\cup(0,2)\cup \Gamma_6 \cup \tilde \Gamma_9 $, due to the facts that
\begin{equation}
\Re~g_1(\zeta)\left\{
                \begin{array}{ll}
                  <0, & \quad\text{if $\frac{2\pi}{3}<\arg (\zeta-2)<\frac{4 \pi}{3}$},
                  \\
                  >0, & \quad\text{if $0<\arg (\zeta-2)<\frac{2 \pi}{3}$ or
                              $\frac{2\pi}{3}<\arg (\zeta-2)<2\pi$},
                \end{array}
              \right.
\end{equation}
and
\begin{equation}
\Re~g_2(\zeta)\left\{
\begin{array}{ll}
<0, & \quad\text{if $\frac{\pi}{3}<\arg \zeta <\pi$ or $-\pi< \arg
\zeta < -\frac{\pi}{3}$},
\\
>0, & \quad\text{if $-\frac{\pi}{3}<\arg \zeta <\frac{\pi}{3}$}.
\end{array}
              \right.
\end{equation}

For the jump matrices on $\tilde \Gamma_2 \cup \tilde \Gamma_8$ and
$\tilde \Gamma_3 \cup \tilde \Gamma_7$, it is necessary to analyze
the function $(g_1-g_2)(\zeta)$. Recall that $\til\Gamma_2$,
$\til\Gamma_8$ ($\til\Gamma_3$, $\til\Gamma_7$) intersect the real
line at the point $2-\ep$ ($\ep$, respectively) with $\ep>0$ a small
number. It is easily seen that for $\ep\to 0$,
\begin{align*}
(g_1 - g_2)(2-\ep) &= -\frac{4\sqrt{2}}{3} r_2 +O(\ep),\\
(g_1 - g_2)\left(\ep\right) &= \frac{4\sqrt{2}}{3} +O(\ep).
\end{align*}
By choosing $\ep$ sufficiently small, we can then guarantee that
$(g_1-g_2)(2-\ep) < 0$ and $(g_1 - g_2)(\ep)>0$. Hence by deforming the
contours if necessary, we may assume that $\Re(g_1-g_2)(\zeta)<0$ on $\tilde
\Gamma_2 \cup \tilde \Gamma_8$, while $\Re(g_1-g_2)(\zeta)>0$ on $\tilde
\Gamma_3 \cup \tilde \Gamma_7$, which ensures that the jump matrices on these
contours uniformly tend to the identity matrix, exponentially fast as $\lambda
\to +\infty$.

\subsection{Construction of global parametrix}

Away from the  points $2$ and $0$, we expect that $C$ should be well
approximated by the solution $C^{(\infty)}$ of the following RH
problem, which is obtained from the RH problem \ref{rhpforC} for $C$
by removing all exponentially decaying entries in the jump matrices:
\begin{rhp}
\textrm{}
\begin{enumerate}[\rm (1)]
  \item $C^{(\infty)}(\zeta)$ is analytic for $\zeta\in\mathbb{C}\setminus
  \left( (-\infty,0]\cup[2,\infty) \right)$.

  \item $C^{(\infty)}(\zeta)$ has the jumps
  \begin{align*}
  C^{(\infty)}_{+}(\zeta)&=C^{(\infty)}_-(\zeta)
  \begin{pmatrix}
  0 & 0 & 1 & 0\\
  0 & 1 & 0 & 0\\
  -1 & 0 & 0 & 0\\
  0 & 0 & 0 & 1
  \end{pmatrix}, \qquad \text{for $\zeta>2$},
  \\
  C^{(\infty)}_{+}(\zeta)&=C^{(\infty)}_-(\zeta)
  \begin{pmatrix}
  1 & 0 & 0 & 0\\
  0 & 0 & 0 & 1\\
  0 & 0 & 1 & 0\\
  0 & -1 & 0 & 0
  \end{pmatrix}, \qquad \text{for $\zeta<0$}.
  \end{align*}

  \item As $\zeta\to\infty$, we have
  \begin{align}\label{Pinfty:asy}
  C^{(\infty)}(\zeta)=&\left(I+\frac{C^{(\infty)}_1}{\zeta}+O\left(\frac{1}{\zeta^2}\right)
  \right)\diag((-\zeta)^{-1/4},\zeta^{-1/4},
  (-\zeta)^{1/4},\zeta^{1/4}) \nonumber \\
   & \times
  \frac{1}{\sqrt{2}}
  \begin{pmatrix}
  1 & 0 & -i & 0\\
  0 & 1 & 0 & i\\
  -i & 0 & 1 & 0\\
  0 & i & 0 & 1
  \end{pmatrix}.
  \end{align}

\end{enumerate}
\end{rhp}

It is worthwhile to point out that $C^{(\infty)}$ is independent of $\lambda$.
The RH problem for $C^{(\infty)}$ can be solved explicitly, and its solution is
given by
\begin{equation}\label{C infty}
  C^{(\infty)}(\zeta)
  =\diag\left((2-\zeta)^{-1/4}, \zeta^{-1/4},
  (2-\zeta)^{1/4}, \zeta^{1/4}\right)
   \frac{1}{\sqrt{2}}
  \begin{pmatrix}
  1 & 0 & -i & 0\\
  0 & 1 & 0 & i\\
  -i & 0 & 1 & 0\\
  0 & i & 0 & 1
  \end{pmatrix},
  \end{equation}
where we take the branch cuts of $\zeta^{1/4}$ and $(2-\zeta)^{1/4}$
along $(-\infty,0]$ and $[2,\infty)$, respectively.

The residue matrix $C^{(\infty)}_1$ in \eqref{Pinfty:asy} is
diagonal:
\begin{equation}\label{Pinfty1}
C^{(\infty)}_1 = \diag\left(\frac 12, 0,-\frac 12,0 \right).
\end{equation}

\subsection{Construction of local parametrices}
\label{subsec:localparametrices}
 The global parametrix
$C^{(\infty)}$ is a good approximation for $C$ only for $\zeta$ bounded away
from the points $0$ and $2$. Here we will construct the parametrices $C^{(0)}$
and $C^{(2)}$ near these points. Since the local parametrices around $0$ and
$2$ can be built in a similar manner, we will only consider the local
parametrix $C^{(0)}$ near $0$. Let $D(0,\delta)$ be a fixed disk centered at
$0$ with radius $\delta<\ep$, and let $\partial D(0,\delta)$ denote its
boundary. We look for a $4 \times 4$ matrix-valued function $C^{(0)}$ defined
in $D(0,\delta)$ which satisfies the following.

\begin{rhp}
\textrm{}
\begin{enumerate}[\rm (1)]
  \item $C^{(0)}(\zeta)$ is analytic for $\zeta\in D(0,\delta)\setminus
  (\mathbb{R}\cup  \Gamma_4 \cup  \Gamma_6)$.

  \item For $\zeta\in D(0,\delta) \cap
  (\mathbb{R}\cup \tilde \Gamma_4 \cup \tilde \Gamma_6)$, $C^{(0)}$ has the jumps
  \begin{align}
   C^{(0)}_{+}(\zeta)&=  C^{(0)}_-(\zeta) J_C(\zeta),
  \end{align}
  where $J_C$ is the jump matrix in the RH problem for $C$.
  \item As $\lambda \to +\infty $, we have
  \begin{align}
  C^{(0)} (\zeta)= C^{(\infty)}\left(I+
  O\left(\frac{1}{\lambda}\right)\right),
  \end{align}
  uniformly for $\zeta \in \partial D(0,\delta) \setminus
  (\mathbb{R}\cup  \Gamma_4 \cup \Gamma_6)$.

\end{enumerate}
\end{rhp}

The solution of the above RH problem for $C^{(0)}$ can be built via
Airy functions and their derivatives in a standard way, we follow
the theme in \cite{Dei,DKMVZ1}.

Let $y_0$, $y_1$ and $y_2$ be the functions defined by
\begin{equation}
y_0(\zeta)=\Ai(\zeta), \qquad y_1(\zeta)=\omega\Ai(\omega \zeta),
\qquad y_2(\zeta)=\omega^2\Ai(\omega^2 \zeta),
\end{equation}
where $\Ai$ is the usual Airy function and $\omega=e^{2\pi i/3}$.
Consider the following $4\times4$ matrix-valued function $\Psi$:
\begin{align}
\Psi(\zeta)&=\begin{pmatrix} 1 & 0 & 0 & 0 \\
0 & y_0(\zeta) & 0 & -y_2(\zeta)\\
0 & 0 & 1 & 0 \\
0 & y_0'(\zeta) & 0 & -y_2'(\zeta)
\end{pmatrix}
,\quad \arg \zeta \in (0,2\pi/3), \nonumber
\\
\Psi(\zeta)&=\begin{pmatrix} 1 & 0 & 0 & 0 \\
0 & -y_1(\zeta) & 0 & -y_2(\zeta) \\
0 & 0 & 1 & 0 \\
0 & -y_1'(\zeta) & 0 & -y_2'(\zeta)
\end{pmatrix}
,\quad \arg \zeta \in (2\pi/3,\pi), \nonumber
\\
\Psi(\zeta)&=\begin{pmatrix}
1 & 0 & 0 & 0 \\
0 & -y_2(\zeta) & 0 & y_1(\zeta) \\
0 & 0 & 1 & 0 \\
0 & -y_2'(\zeta) & 0 & y_1'(\zeta)
\end{pmatrix}
,\quad \arg \zeta \in (-\pi,-2\pi/3), \nonumber
\\
 \Psi(\zeta)&=\begin{pmatrix} 1 & 0 & 0 & 0 \\
0 & y_0(\zeta) & 0 & y_1(\zeta) \\
0 & 0 & 1 & 0 \\
0 & y_0'(\zeta) & 0 & y_1'(\zeta)
\end{pmatrix}
,\quad \arg \zeta \in (-2\pi/3,0). \nonumber
\end{align}
The parametrix $C^{(0)}$ is then built in the following form
\begin{equation}\label{para near left point}
C^{(0)} (\zeta)=E(\zeta)\Psi\left(\lambda \zeta
\right)\diag(1,e^{\lambda g_2(\zeta)}, 1,e^{-\lambda g_2(\zeta)}),
\end{equation}
where $E(\zeta)$ is analytic in $D(0,\delta)$. It is straightforward
(cf. \cite[Sec.~7.6]{Dei}) to verify that $C^{(0)}$ given in
\eqref{para near left point} satisfies items (1) and (2) of the RH
problem for $C^{(0)}$. To determine the analytic prefactor $E$, one
needs to use the matching condition (3) and the asymptotics of Airy
function $\Ai(\zeta)$ as $\zeta\to\infty$. A direct calculation
gives
\begin{align*}
E(\zeta)&=C^{(\infty)}(\zeta)
\begin{pmatrix} 1 & 0 & 0 & 0 \\
0 & \sqrt{\pi} & 0 & -\sqrt{\pi} \\
0 & 0 & 1 & 0 \\
0 & -i\sqrt{\pi} & 0 & -i\sqrt{\pi}
\end{pmatrix}
\begin{pmatrix} 1 & 0 & 0 & 0 \\
0 & \left(\lambda \zeta\right)^{1/4} & 0 & 0 \\
0 & 0 & 1 & 0 \\
0 & 0 & 0 & \left(\lambda \zeta \right)^{-1/4}
\end{pmatrix}.
\end{align*}
This completes the construction of the local parametrix $C^{(0)}$.

\subsection{Final transformation: $C \mapsto D$}
\begin{definition}
We define the final transformation
\begin{equation}\label{def:R:Lun}
D(\zeta)=\left\{
       \begin{array}{ll}
         C(\zeta)(C^{(0)})^{-1}(\zeta), & \text{in a  $\delta$-neighborhood of $0$,} \\
         C(\zeta)(C^{(2)})^{-1}(\zeta), & \text{in a  $\delta$-neighborhood of $2$,} \\
         C(\zeta)(C^{(\infty)})^{-1}(\zeta), & \text{elsewhere.}
       \end{array}
     \right.
\end{equation}
\end{definition}

From the construction of the parametrices, it follows that $D$
satisfies the following RH problem.
\begin{rhp}\label{rhp:D}
\textrm{}
\begin{enumerate}[\rm (1)]
\item $D$ is analytic in $\mathbb{C} \setminus \Sigma_D$, where $\Sigma_D$ is
shown in Figure \ref{fig:ContourD}.

\item $D$ has jumps $D_+(\zeta)=D_-(\zeta)J_D(\zeta)$ for
 $\zeta\in \Sigma_D$, where
\begin{equation}\label{jump for D}
J_D(\zeta)=\left\{
      \begin{array}{ll}
        C^{(\infty)}(\zeta) (C^{(0)} )^{-1}(\zeta), & \text{on $|\zeta| = \delta$,} \\
        C^{(\infty)}(\zeta) (C^{(2)} )^{-1}(\zeta), & \text{on $|\zeta - 2| = \delta$,} \\
        C^{(\infty)}(\zeta) J_C(\zeta) (C^{(\infty)})^{-1}(\zeta), & \text{on the rest of $\Sigma_D$}.
      \end{array}
    \right.
\end{equation}
\item As $\zeta \to
\infty$, we have
\begin{equation}\label{eq: D asy}
D(\zeta)=I+\frac{D_1}{\zeta}+O\left(\frac{1}{\zeta^2}\right).
\end{equation}

\end{enumerate}
\end{rhp}

\begin{figure}[t]
\begin{center}
   \setlength{\unitlength}{1truemm}
   \begin{picture}(100,70)(-5,2)
       \put(40,40){\line(1,0){17.5}}
       \put(40,40){\line(-1,0){17.5}}
       \put(62.5,41){\line(2,1){28}}
       \put(62.5,39){\line(2,-1){28}}
       \put(17.5,41){\line(-2,1){28}}
       \put(17.5,39){\line(-2,-1){28}}
       \put(55,40){\line(1,2){10}}
       \put(55,40){\line(1,-2){10}}
       \put(25,40){\line(-1,2){10}}
       \put(25,40){\line(-1,-2){10}}
       \put(20,40){\thicklines\circle*{1}}
       \put(20,40){\circle{5}}
       \put(60,40){\thicklines\circle*{1}}
       \put(60,40){\circle{5}}
       \put(25,40){\thicklines\circle*{1}}
       \put(55,40){\thicklines\circle*{1}}
       \put(19.1,42.3){\thicklines\vector(-1,0){.0001}}
       \put(59.1,42.3){\thicklines\vector(-1,0){.0001}}
       \put(19,34){$0$}
       \put(59.3,34){$2$}
       \put(40,40){\thicklines\vector(1,0){.0001}}
       \put(80,50){\thicklines\vector(2,1){.0001}}
       \put(80,30){\thicklines\vector(2,-1){.0001}}
       \put(0,50){\thicklines\vector(2,-1){.0001}}
       \put(0,30){\thicklines\vector(2,1){.0001}}
       \put(61,52){\thicklines\vector(1,2){.0001}}
       \put(61,28){\thicklines\vector(-1,2){.0001}}
       \put(19,52){\thicklines\vector(-1,2){.0001}}
       \put(19,28){\thicklines\vector(1,2){.0001}}

       \put(80,52.5){$\til\Gamma_1$}
       \put(66.5,60){$\til\Gamma_2$}
       \put(15,60){$\til\Gamma_3$}
       \put(0.5,50){$\Gamma_4$}
       \put(0,32){$\Gamma_6$}
       \put(16.5,20){$\til\Gamma_7$}
       \put(65,20){$\til\Gamma_8$}
       \put(80,30){$\til\Gamma_{9}$}

  \end{picture}
   \vspace{-16mm}
   \caption{Contour  $\Sigma_D$ for the RH problem \ref{rhp:D} for $D$.}
   \label{fig:ContourD}
\end{center}
\end{figure}

The jump matrix for $D$ satisfies
\begin{equation*}
J_D(\zeta)=I+O(1/\lambda), \qquad \text{as $\lambda \to +\infty$},
\end{equation*}
uniformly for $z$ on the circles $|z| = \delta$ and $|z-2| =
\delta$, and the jumps on the remaining contours of $\Sigma_D$ are
exponentially close to the identity matrix. In particular, we note
that
\begin{align}\label{R:expsmall1}
    J_D(\zeta) &= I+ e^{\lambda(g_1-g_2)(\zeta)} C^{(\infty)}(\zeta)
        \left(E_{3,4}-E_{2,1} \right) (C^{(\infty)})^{-1}(\zeta) ,
    ~~~ \text{for } \zeta \in \tilde{\Gamma}_2 \cup \tilde{\Gamma}_8, \\
    J_D(\zeta) &= I+ e^{-\lambda(g_1-g_2)(\zeta)} C^{(\infty)}(\zeta)
        \left(E_{1,2} -E_{4,3} \right) (C^{(\infty)})^{-1}(\zeta) ,
       ~~~ \text{for } \zeta \in \tilde{\Gamma}_3 \cup \tilde{\Gamma}_7,
    \label{R:expsmall2}
\end{align}
and
\begin{align}\label{jumpR:explicit}
J_D(\zeta) =&~I + \frac{e^{-2 \lambda g_1(\zeta)}}{2}
    \begin{pmatrix} i & 0 & (2-\zeta)^{-1/2} & 0 \\
    0 & 0 & 0 & 0 \\
    (2-\zeta)^{1/2} & 0 & -i & 0 \\
    0 & 0 & 0 & 0 \end{pmatrix}
    + \frac{e^{-2 \lambda g_2(\zeta)}}{2}
    \begin{pmatrix}  0 & 0 & 0 & 0 \\
    0 & -i & 0 & \zeta^{-1/2}  \\
    0 & 0 & 0 & 0 \\
    0 & \zeta^{1/2} & 0 & i
    \end{pmatrix}
    \nonumber \\
     &+ \frac{e^{-\lambda(g_1 + g_2)(\zeta)}}{2}
    \begin{pmatrix}
    0 & -i \left( \frac{\zeta}{2-\zeta} \right)^{1/4} & 0 & \left(\zeta (2-\zeta) \right)^{-1/4} \\
    i \left( \frac{2-\zeta}{\zeta} \right)^{1/4} & 0 & \left(\zeta (2-\zeta)\right)^{-1/4} & 0 \\
    0 & -\left(\zeta (2-\zeta) \right)^{1/4} & 0 & - i \left( \frac{2-\zeta}{\zeta} \right)^{1/4} \\
    -\left(\zeta (2-\zeta) \right)^{1/4} & 0 &   i \left( \frac{\zeta}{2-\zeta} \right)^{1/4} & 0
    \end{pmatrix},
\end{align}
for $\zeta\in(\ep,2-\ep)$, with the aid of \eqref{jump for D} and
\eqref{C infty}.

By standard arguments in \cite{Dei,DKMVZ1}, we then conclude that
\begin{equation}\label{RLun:small}
D(\zeta)=I+O\left(\frac{1}{\lambda(|\zeta|+1)}\right),
\end{equation}
as $\lambda\to +\infty$, uniformly for $\zeta\in\mathbb{C} \setminus
\Sigma_D$.

It follows from \eqref{cctild:D} and the above constructions that
\begin{equation}\label{cctild:R} c =
-i\sqrt{s_1}(D_1)_{1,3}+s_1^2,\quad \wtil c =
-i\sqrt{s_1}(D_1)_{2,4}, \quad d =-i\sqrt{s_1}(D_1)_{1,4},
\end{equation}
where we used \eqref{def:R:Lun} for large $\zeta$ and the fact that
$C^{(\infty)}_1$ in \eqref{Pinfty1} is diagonal.

\subsection{Proof of Proposition \ref{prop:large s1 solvability}}

\subsubsection*{Solvability of the RH problem for $s_1$ large}

In the above, we applied a series of invertible transformations $M \mapsto A
\mapsto B \mapsto C \mapsto D$, so that the matrix-valued function $D$ exists
and uniformly tends to the identity matrix as $s_1 \to +\infty$. This
immediately implies the solvability of the RH problem for $M$ for $s_1$
sufficiently large.

\subsection*{Asymptotics of $c$, $\wtil c$ and $d$ as $s_1\to+\infty$}
Recalling $\lambda = s_1^{3/2}$, it follows from \eqref{RLun:small} that $D_1 =
O(\lambda^{-1})
    = O(s_1^{-3/2})$ as $s_1 \to +\infty$.
Then by \eqref{cctild:R}, we have
\[ c = s_1^2 + O(s_1^{-1}), \quad \wtil c = O(s_1^{-1}), \quad d = O(s_1^{-1}), \]
as $s_1 \to +\infty$. This, together with our assumption
\eqref{eq:assumption}, implies \eqref{eq:asy of c} and \eqref{eq:asy
of til c}.

It remains to establish \eqref{eq:asy of d}. This requires more
effort. First we make some observations on the jump matrix $J_D$ in
\eqref{jump for D}. The matrix $J_D$ has the \lq checkerboard\rq\
sparsity pattern
\begin{equation}\label{chessboard} \begin{pmatrix}*&0&*&0\\
0&*&0&*\\ *&0&*&0\\ 0&*&0&*\end{pmatrix},
\end{equation}
on the circles $|z| = \delta$ and $|z-2| = \delta$, on $(-\infty,
\epsilon) \cup (2-\epsilon, 2)$ and on the contours $\tilde \Gamma_1
\cup \tilde \Gamma_9 \cup \Gamma_4 \cup \Gamma_6$. Here, $*$ denotes
an unspecified matrix entry.

Let $D^{(1)}$ be the solution to the following RH problem.

\begin{rhp}
\textrm{}
\begin{enumerate}[\rm(1)]
\item $D^{(1)}$ is analytic in $\mathbb{C} \setminus \Sigma_D$, where $\Sigma_D$ is
shown in Figure \ref{fig:ContourD}.

\item $D^{(1)}$ has jumps $D^{(1)}_+(\zeta)=D^{(1)}_-(\zeta)J_{D^{(1)}}(\zeta)$ for
 $\zeta\in \Sigma_D$, where
\begin{equation*}\label{jump for D1}
    J_{D^{(1)}}(\zeta) = \left\{
    \begin{array}{ll}
    I + \frac{e^{-2 \lambda g_1(\zeta)}}{2}
    \begin{pmatrix} i & 0 & (2-\zeta)^{-1/2} & 0 \\
    0 & 0 & 0 & 0 \\
    (2-\zeta)^{1/2} & 0 & -i & 0 \\
    0 & 0 & 0 & 0 \end{pmatrix} \\
    ~~+ \frac{e^{-2 \lambda g_2(\zeta)}}{2}
    \begin{pmatrix}
    0 & 0 & 0 & 0 \\
    0 & -i & 0 & \zeta^{-1/2}  \\
    0 & 0 & 0 & 0 \\
    0 & \zeta^{1/2} & 0 & i
    \end{pmatrix}, & \text{for } \zeta \in (\ep, 2-\ep), \\
    I, &  \text{for } \zeta \in \til \Gamma_2 \cup \til \Gamma_3 \cup \til \Gamma_7 \cup \til \Gamma_8, \\
    J_D(\zeta), & \text{elsewhere}.
      \end{array}
    \right.
\end{equation*}

\item As $\zeta \to
\infty$, we have
\begin{equation}\label{D1:asy}
D^{(1)}(\zeta)=I+\frac{D_1^{(1)}}{\zeta}+O\left(\frac{1}{\zeta^2}\right).
\end{equation}
\end{enumerate}
\end{rhp}

It is easily seen that $J_{D^{(1)}}$ still tends to the identity
matrix as $\lambda \to +\infty$, hence, there is a unique solution
$D^{(1)}$ to this RH problem if $\lambda$ is large enough. Moreover,
we have
\begin{equation}\label{eq:D1 est}
D^{(1)}(\zeta)=I+O\left(\frac{1}{\lambda(|\zeta|+1)}\right),
\end{equation}
as $\lambda\to +\infty$, uniformly for $\zeta\in\mathbb{C} \setminus
\Sigma_D$.

Since all the jump matrices for $D^{(1)}$ have the checkerboard
pattern \eqref{chessboard}, the solution $D^{(1)}$ has the same
pattern. Obviously, the residue matrix $D_1^{(1)}$ in \eqref{D1:asy}
also has the pattern \eqref{chessboard}.

\begin{definition}
We define
\begin{equation}\label{def:R2}
R (\zeta) =D(\zeta)(D^{(1)})^{-1}(\zeta).
\end{equation}
\end{definition}
The matrix-valued function $R(\zeta)$ satisfies the following RH
problem.

\begin{rhp}\label{rhp:R}
\textrm{}
\begin{enumerate}[\rm (1)]
\item $R$ is analytic in $\mathbb C \setminus \Sigma_R$,
where $\Sigma_R$ is as in Figure \ref{fig:ContourR}.

\item $R$ has the jumps
$R_+(\zeta)=R_-(\zeta) J_{R}(\zeta)$ for $\zeta\in\Sigma_R$, where
\begin{equation}\label{eq:JR}
J_R(\zeta) = D^{(1)}_-(\zeta) J_D(\zeta) ( D^{(1)}_+)^{-1}(\zeta).
\end{equation}

\item As $\zeta\to\infty$, we have
\begin{equation}\label{R:asy}
R(\zeta) = I+\frac{R_1}{\zeta}+O\left(\frac{1}{\zeta^2}\right).
\end{equation}
\end{enumerate}
\end{rhp}

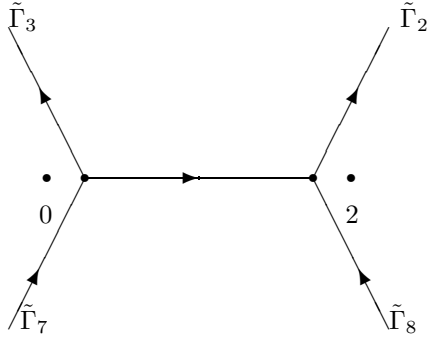
\begin{figure}[t]
\vspace{14mm}
\begin{center}
   \setlength{\unitlength}{1truemm}
   \begin{picture}(100,70)(-5,2)
       \put(40,40){\line(1,0){15}}
       \put(40,40){\line(-1,0){15}}
       \put(55,40){\line(1,2){10}}
       \put(55,40){\line(1,-2){10}}
       \put(25,40){\line(-1,2){10}}
       \put(25,40){\line(-1,-2){10}}
       \put(20,40){\thicklines\circle*{1}}
       \put(60,40){\thicklines\circle*{1}}
       \put(25,40){\thicklines\circle*{1}}
       \put(55,40){\thicklines\circle*{1}}

       \put(19,34){$0$}
       \put(59.3,34){$2$}
       \put(40,40){\thicklines\vector(1,0){.0001}}

       \put(61,52){\thicklines\vector(1,2){.0001}}
       \put(61,28){\thicklines\vector(-1,2){.0001}}
       \put(19,52){\thicklines\vector(-1,2){.0001}}
       \put(19,28){\thicklines\vector(1,2){.0001}}

       \put(66.5,60){$\til\Gamma_2$}
       \put(15,60){$\til\Gamma_3$}
       \put(16.5,20){$\til\Gamma_7$}
       \put(65,20){$\til\Gamma_8$}

  \end{picture}
   \vspace{-16mm}
   \caption{Contour  $\Sigma_R$ for the RH problem \ref{rhp:R} for $R$.}
   \label{fig:ContourR}
\end{center}
\end{figure}

Since $R = D ( D^{(1)} )^{-1}$, we obtain from \eqref{eq: D asy},
\eqref{D1:asy} and \eqref{R:asy} that
\[ R_1 = D_1 - D^{(1)}_1. \]
This, together with \eqref{cctild:R} and the checkerboard pattern of
$D^{(1)}_1$, gives us
\begin{equation}\label{R2:res14}
    d = -i\sqrt{s_1}(R_1)_{1,4}.
\end{equation}

Next, we give some estimates for $J_R$ defined in \eqref{eq:JR} on $\Sigma_R$.
By \eqref{R:expsmall1}, \eqref{R:expsmall2} and the above definitions, it is
readily seen that
\begin{align*}
    J_R(\zeta) & = I + O\left(e^{\lambda(g_1-g_2)(\zeta)}\right),
       \qquad \text{uniformly for } \zeta \in \tilde \Gamma_{2} \cup\tilde \Gamma_{8}, \\
    J_R(\zeta) & = I + O\left(e^{-\lambda(g_1-g_2)(\zeta)}\right),
        \qquad \text{uniformly for } \zeta \in \tilde \Gamma_{3}\cup\tilde \Gamma_{7},
        \end{align*}
        as $\lambda \to +\infty$.

On the interval $(\ep,2-\ep)$, we have
\[ J_R(\zeta)  = D_-^{(1)}(\zeta)
    J_D(\zeta) J_{D^{(1)}}^{-1}(\zeta)
    (D_-^{(1)})^{-1}(\zeta), \]
with
\begin{align*}
 &J_D(\zeta) J_{D^{(1)}}^{-1}(\zeta)
 \\
 &= I + \frac{e^{-\lambda(g_1 + g_2)(\zeta)}}{2}
    \begin{pmatrix}
    0 & -i \left( \frac{\zeta}{2-\zeta} \right)^{1/4} & 0 & \left(\zeta (2-\zeta)\right)^{-1/4} \\
    i \left( \frac{2-\zeta}{\zeta} \right)^{1/4} & 0 & \left(\zeta (2-\zeta)\right)^{-1/4} & 0 \\
    0 & -\left(\zeta (2-\zeta) \right)^{1/4} & 0 & - i \left( \frac{2-\zeta}{\zeta} \right)^{1/4} \\
    -\left(\zeta (2-\zeta) \right)^{1/4} & 0 &   i \left( \frac{\zeta}{2-\zeta} \right)^{1/4} & 0
    \end{pmatrix}\\
    &\times (I+O(e^{-2\lam g_1(\zeta)})+O(e^{-2\lam g_2(\zeta)})),
\end{align*}
as $\lam\to+\infty$; see \eqref{jumpR:explicit} and the definition of
$J_{D^{(1)}}$. Hence,
\begin{align*}
    J_R(\zeta)  =&~ I +  \frac{e^{-\lambda(g_1 +
g_2)(\zeta)}}{2}D_-^{(1)}(\zeta)\\
    &\times
    \begin{pmatrix}
    0 & -i \left( \frac{\zeta}{2-\zeta} \right)^{1/4} & 0 & \left(\zeta(2-\zeta) \right)^{-1/4} \\
    i \left( \frac{2-\zeta}{\zeta} \right)^{1/4} & 0 & \left(\zeta (2-\zeta) \right)^{-1/4} & 0 \\
    0 & -\left(\zeta (2-\zeta) \right)^{1/4} & 0 & - i \left( \frac{2-\zeta}{\zeta} \right)^{1/4} \\
    -\left(\zeta (2-\zeta) \right)^{1/4} & 0 &   i \left( \frac{\zeta}{2-\zeta} \right)^{1/4} & 0
    \end{pmatrix}\\ & \times (I+O(e^{-2\lam g_1(\zeta)})+O(e^{-2\lam g_2(\zeta)}))
    (D_-^{(1)})^{-1}(\zeta),
    \end{align*}
for $\zeta\in(\ep,2-\ep)$. In particular, it then follows from
\eqref{eq:D1 est} and the above formula that, for the $(1,4)$ entry,
\begin{equation}\label{jumpR2:14entry}
    (J_R)_{1,4}(\zeta) = \frac{e^{-\lambda (g_1 + g_2)(\zeta)}}{2(\zeta(2-\zeta))^{1/4}}
    \left(1+O(\lam^{-1}) \right),\qquad \zeta\in(\ep,2-\ep),
\end{equation}
as $\lam \to +\infty$.

Now we observe that the function $g_1 + g_2$ has a unique minimum on
the interval $(0,2)$, which is attained at the point
\begin{equation}\label{xstar}
    \zeta^* :=\frac{2}{r_2^2+1} \in \left(0,2\right).
\end{equation}
Indeed, straightforward calculations using \eqref{g1} and \eqref{g2}
yield
\begin{align}\label{htil:saddle1}
    (g_1 + g_2)(\zeta^*) &=
    \frac{4\sqrt{2}}{3}\frac{r_2}{\sqrt{r_2^2+1}},\\
\label{htil:saddle2}
    (g_1 + g_2)'(\zeta^*) &= 0,\\
\label{htil:saddle3}
    (g_1 + g_2)''(\zeta^*) & =
        \frac{\sqrt{2}}{4}\frac{(r_2^2+1)^{3/2}}{r_2}.
\end{align}

We may assume that the number $\ep>0$ is small enough so that
\[ (g_1 + g_2)(\zeta^*) < \Re (g_2 - g_1)(\zeta),
    \qquad \text{for } \zeta \in \til \Gamma_2 \cup \til \Gamma_8, \]
and
\[ (g_1 + g_2)(\zeta^*)  < \Re (g_1 - g_2)(\zeta),
    \qquad \text{for } \zeta \in \til \Gamma_3 \cup \til \Gamma_7. \]
Thus the exponents in the exponential estimates
\eqref{R:expsmall1}--\eqref{R:expsmall2} are strictly smaller than
$-\lam  (g_1 + g_2)(\zeta^*)$ with \eqref{htil:saddle1}. Therefore,
the jump matrix $J_R$ behaves as
\begin{equation}\label{jumpR2:expbound}
    J_{R}(\zeta) = I + O\left(e^{-\lam (g_1 + g_2)(\zeta^*)}\right),
   \qquad \lam\to+\infty,
\end{equation}
uniformly on $\Sigma_R$. From \eqref{jumpR2:expbound}, standard
theory implies a similar estimate for $R$ itself:
\begin{equation}\label{eq:est for R}
    R(\zeta) = I+O\left(e^{-\lam (g_1 + g_2)(\zeta^*)}\right),
    \qquad \lam\to +\infty,
\end{equation}
uniformly for $\zeta\in\mathbb C \setminus \Sigma_R$.

Since $R_+ = R_- + (J_R - I) + (R_- - I)(J_R-I)$, it then follows
from the Sokhotski-Plemelj formula that
\begin{align*}
    R(\zeta) & = I + \frac{1}{2\pi i} \int_{\Sigma_R} \frac{(J_R-I)(x)}{x-\zeta} dx
    +  \frac{1}{2\pi i} \int_{\Sigma_R} \frac{(R_- - I)(J_R-I)(x)}{x-\zeta} dx \\
    & = I + \frac{1}{2\pi i} \int_{\Sigma_R} \frac{(J_R-I)(x)}{x-\zeta} dx
    + O\left( \frac{e^{-2\lam(g_1+g_2)(\zeta^*)}}{\zeta} \right),\qquad \lam\to +\infty,
\end{align*}
on account of \eqref{jumpR2:expbound} and \eqref{eq:est for R}. For
the $(1,4)$ entry, we get
\begin{equation} \left(R_1 \right)_{1,4} =
    - \frac{1}{2\pi i} \int_{\Sigma_R} (J_R)_{1,4}(x) dx +
    O\left( e^{-2 \lam(g_1+g_2)(\zeta^*)}\right),
\end{equation}
and by \eqref{R2:res14},
\begin{equation}\label{eq:d by R}
  d =  \frac{\sqrt{s_1}}{2 \pi}  \int_{\Sigma_R} (J_R)_{1,4}(x) dx
    +O\left( \sqrt{s_1} e^{-2 \lam(g_1+g_2)(\zeta^*)}\right),
\qquad \lam\to +\infty.
\end{equation}

The main contribution to the integral in \eqref{eq:d by R} comes from a
neighborhood of $\zeta^*$. For any given $\delta > 0$ we have
\begin{align} \nonumber
 d & =  \frac{\sqrt{s_1}}{4 \pi} \int_{\zeta^*-\delta}^{\zeta^* + \delta}
    \frac{e^{-\lambda (g_1 + g_2)(x)}}{(x(2-x))^{1/4}} dx( 1 + O(1/\lambda))
\end{align}
as $\lam\to +\infty$, by virtue of \eqref{jumpR2:14entry}. Now a
standard saddle point approximation (Laplace method, cf.
\cite{Olver,Wong}) yields
\begin{align}\label{eq:d appro}
 d & = \frac{\sqrt{s_1}}{4 \pi} \cdot \sqrt{\frac{2\pi}{\lambda (g_1+g_2)''(\zeta^*)}}
 \cdot \frac{e^{-\lam (g_1+g_2)(\zeta^*)}}{(\zeta^*(2-\zeta^*))^{1/4}}  (1 + O(1/\lambda)),
\end{align}
as $\lam\to +\infty$. Inserting \eqref{xstar}, \eqref{htil:saddle1},
\eqref{htil:saddle3} and $\lambda = s_1^{3/2}$ into \eqref{eq:d
appro}, we finally obtain
\begin{align*}
    d & = \frac{\sqrt{2}}{4 \sqrt{\pi} s_1^{1/4}}
        \cdot \left( \frac{4}{\sqrt{2}} \frac{r_2}{(r_2^2+1)^{3/2}} \right)^{1/2}
        \cdot \left( \frac{r_2^2+1}{2} \frac{r_2^2+1}{2r_2^2} \right)^{1/4}
        \cdot  e^{-\frac{4 \sqrt{2}}{3} \frac{r_2}{\sqrt{r_2^2+1}} s_1^{3/2}}
        (1 + O(s_1^{-3/2})) \\
     & = \frac{1}{2 \sqrt{\pi} s_1^{1/4}} \left(\frac{1}{2(r_2^2+1)}\right)^{1/4}
     \cdot  e^{-\frac{4 \sqrt{2}}{3} \frac{r_2}{\sqrt{r_2^2+1}} s_1^{3/2}}
        (1 + O(s_1^{-3/2})), \qquad s_1 \to +\infty,
    \end{align*}
which is \eqref{eq:asy of d} with $r_1=1$ and $s_2=0$, as desired.

This completes the proof of Proposition \ref{prop:large s1
solvability}. $\bol$


\section{Proof of Theorem~\ref{theorem:solvability}}
\label{section:vanishing}

In this section we prove Theorem~\ref{theorem:solvability} on the solvability
of the RH problem for $M(\zeta)$ by using the technique of a vanishing lemma.
To this end we follow basically the scheme laid out in \cite{DKMVZ1,FZ,Zhou},
although the argument is somewhat more involved because our RH problem is of
size $4\times 4$ whereas the usual dimensions treated in the literature is
$2\times 2$.

Following \cite{DKMVZ1}, the proof consists of three steps, which as
in \cite{DKMVZ1} are called Step 1, Step 2 and Step 3.

\subsubsection*{Step 1: Fredholm property}

Standard theory show that the RH problem for $M$ is associated to a
singular integral operator. The first step is to show that this
operator is Fredholm. To this end we are going to apply a series of
transformations $M\mapsto \Mdec\mapsto \Mmod$. The first
transformation $M\mapsto \Mdec$ is defined by
\begin{equation}
\label{def:Mdec} \Mdec(\zeta) = M(\zeta)\Theta(\zeta),
\end{equation}
where
\begin{equation}\label{def:Theta}
\Theta(\zeta) =
\diag\left(e^{\theta_1(\zeta)},e^{\theta_2(\zeta)},e^{-\theta_1(\zeta)},e^{-\theta_2(\zeta)}\right).
\end{equation}
This transformation will kill the exponential factor in the
asymptotics \eqref{M:asymptotics}, at the expense of complicating
the jump matrices.

The second transformation $\Mdec\mapsto \Mmod$ is defined by
\begin{equation}
\label{def:Mmod} \Mmod(\zeta; r_1, r_2, s_1, s_2) = \Mdec(\zeta;
r_1, r_2, s_1, s_2)
    (\Mdec)^{-1}(\zeta; r_1, r_2, s_1^*,s_2),
\end{equation}
where $s_1^*$ is a fixed but sufficiently large real number for
which we already know that $M$ and therefore $\Mdec$ exists; see
Proposition \ref{prop:large s1 solvability}. Then $\Mmod$ satisfies
the following RH problem.

\begin{rhp}\label{rhp:Mmod}
\textrm{}
\begin{itemize}
\item[\rm (1)] $\Mmod(\zeta)$ is analytic for $\zeta\in\mathbb C\setminus\left(\bigcup_{k=1}^{4}\Gamma_k\cup \bigcup_{k=6}^{9}\Gamma_k\right)$.
\item[\rm (2)] For $\zeta\in\Gamma_k$, $\Mmod(\zeta)$
has the jump
$$ \Mmod_+(\zeta) =
\Mmod_-(\zeta)J_k^{(2)}(\zeta),
$$
where
$$ J_k^{(2)}(\zeta)=\Mdec_-(\zeta;r_1,r_2,s_1^*,s_2)\Theta_-^{-1}(\zeta)J_k
    \Theta_+(\zeta)(\Mdec_+)^{-1}(\zeta;r_1,r_2,s_1^*,s_2),
$$
and where $J_k$ is the original jump matrix on $\Gamma_k$ denoted in
Figure~\ref{fig:modelRHP}.
\item[\rm (3)] As $\zeta\to\infty$, we have $\Mmod(\zeta)=I + O(\zeta^{-1})$.
\end{itemize}
\end{rhp}

Since the transformations $M\mapsto \Mdec\mapsto \Mmod$ are
invertible, we have that the original RH problem for $M$ is solvable
if and only if the one for $\Mmod$ is solvable.

One checks that the jump matrices $J_k^{(2)}(\zeta)$ converge to the
identity matrix whenever $\zeta\to\infty$ or $\zeta\to 0$, along any
of the $8$ rays $\Gamma_k$, $k=1,\ldots,4,6,\ldots,9$. In other
words, the jump matrices are normalized at infinity and continuous
at zero. Then it follows from the techniques of Deift et al.
\cite{DKMVZ1} that the singular integral operator associated to the
RH problem for $\Mmod(\zeta)$ is Fredholm. The same statement then
holds for the original RH problem for $M(\zeta)$.

\subsubsection*{Step 2: the Fredholm index is zero}

The next step is to show that the Fredholm index of the Fredholm
operator associated to $\Mmod$ is zero. This follows by a continuity
argument in the same way as in \cite{DKMVZ1}. Indeed, we have that
the Fredholm index is a continuous function of $s_1$ which takes
only integer values. For large values of $s_1$, the Fredholm index
is equal to zero, and therefore this must hold for all $s_1$.

\subsubsection*{Step 3: triviality of solution to the homogeneous
version of the Riemann-Hilbert problem}

The third and final step is to show that the \lq homogeneous\rq\
version of the RH problem has only the trivial solution. This is
also known as a vanishing lemma. As in \cite[Page~1402]{DKMVZ1}, this reduces to studying the following RH problem,
which is the homogeneous version of the RH problem for $M(\zeta)$.

\begin{rhp}\label{rhp:Mhom} We look for a $4\times 4$ matrix-valued function
$\Mhom(\zeta)$ satisfying
\begin{itemize}
\item[\rm (1)] $\Mhom(\zeta)$ is analytic for $\zeta\in\mathbb C\setminus\left(\bigcup_{k=0}^{9}
\Gamma_k\right)$.
\item[\rm (2)] For $\zeta\in\Gamma_k$, $\Mhom(\zeta)$ has the same jumps as
$M(\zeta)$, see Figure~\ref{fig:modelRHP}.
\item[\rm (3)] As $\zeta\to\infty$, we have
\begin{align}
\label{Mhom:asymptotics} \Mhom(\zeta) =& \left(O(\zeta^{-1})\right)
\diag((-\zeta)^{-1/4},\zeta^{-1/4},(-\zeta)^{1/4},\zeta^{1/4})
\nonumber
\\
&\times \frac{1}{\sqrt{2}}
\begin{pmatrix} 1 & 0 & -i & 0 \\
0 & 1 & 0 & i \\
-i & 0 & 1 & 0 \\
0 & i & 0 & 1 \\
\end{pmatrix}
\Theta^{-1}(\zeta),
\end{align}
where we recall the notations \eqref{def:Theta} and
\eqref{def:theta1}--\eqref{def:theta2}.
\item[\rm (4)] $\Mhom(\zeta)$ is bounded near $\zeta=0$.
\end{itemize}
\end{rhp}

Note that the only difference between the RH problems for $M(\zeta)$
and $\Mhom(\zeta)$ is in the leftmost factor of the asymptotics;
compare \eqref{M:asymptotics} with \eqref{Mhom:asymptotics}.

We need to show that the only solution to RH problem~\ref{rhp:Mhom}
is when $\Mhom$ is the zero matrix.

First we apply a contour deformation to bring all jumps to the real
axis. Denote by $\Omega_k$, $k=0,\ldots,9$ the region between the
rays $\Gamma_k$ and $\Gamma_{k+1}$ in Figure~\ref{fig:modelRHP},
with $\Gamma_{10}:=\Gamma_0$. We define $\widehat\Mhomdef(\zeta)$
for $\zeta\in\mathbb C\setminus\bigcup_{k=0}^{9} \Gamma_k$ by
\begin{equation}\label{M3hat}
\widehat\Mhomdef = \left\{\begin{array}{ll}\Mhom J_1J_2,& \qquad \textrm{ for $\zeta\in\Omega_0$},\\
\Mhom J_2,& \qquad\textrm{ for $\zeta\in\Omega_1$},\\
\Mhom,& \qquad\textrm{ for $\zeta\in\Omega_2$},\\
\Mhom J_3^{-1},& \qquad\textrm{ for $\zeta\in\Omega_3$},\\
\Mhom J_4^{-1}J_3^{-1},& \qquad \textrm{ for $\zeta\in\Omega_4$},\\
\Mhom J_6J_7,& \qquad\textrm{ for $\zeta\in\Omega_5$,}\\
\Mhom J_7,& \qquad\textrm{ for $\zeta\in\Omega_6$,}\\
\Mhom,& \qquad\textrm{ for $\zeta\in\Omega_7$,}\\
\Mhom J_8^{-1},& \qquad\textrm{ for $\zeta\in\Omega_8$,}\\
\Mhom J_9^{-1}J_8^{-1},& \qquad\textrm{ for $\zeta\in\Omega_9$,}
\end{array}\right.
\end{equation}
where $J_k$, $k=0,\ldots,9$, denotes the jump matrix for $M(\zeta)$
on $\Gamma_k$ in Figure~\ref{fig:modelRHP}. Next we set
\[ \Mhomdef(\zeta) = \widehat\Mhomdef(\zeta)\Theta(\zeta). \]

\begin{rhp}\label{rhp:Mhomdef} The $4\times 4$ matrix-valued function $\Mhomdef(\zeta) $ satisfies the
following RH problem:
\begin{itemize}
\item[\rm (1)] $\Mhomdef(\zeta)$ is analytic for $\zeta\in\mathbb C\setminus\mathbb R$.
\item[\rm (2)] For $x\in\mathbb R$, we have the jump relation
\begin{equation}\label{jumps:Mhomdef}
\Mhomdef_+(x) = \Mhomdef_-(x) \Theta_-^{-1}(x)\begin{pmatrix}
1&0&1&1\\
0&1&1&1\\
0&0&1&0\\
0&0&0&1
\end{pmatrix}\Theta_+(x),
\end{equation}
where the orientation of the real axis is from left to right.
\item[\rm (3)] As $\zeta\to\infty$, we have
\begin{equation}
\label{Mhomdef:asymptotics} \Mhomdef(\zeta) = 
O(\zeta^{-3/4}).
\end{equation}
\item[\rm (4)] $\Mhomdef(\zeta)$ is bounded near $\zeta=0$.
\end{itemize}
\end{rhp}

\begin{proof}
The fact that $\Mhomdef(\zeta)$ does not have jumps on $\Gamma_k$,
$k=1,...,4,6,...,9$, follows immediately from \eqref{M3hat}. The
jump matrix on the real axis \eqref{jumps:Mhomdef} follows from the
relations
$$ J_8J_9J_0J_1J_2 = (J_3J_4J_5J_6J_7)^{-1} =
\begin{pmatrix}
1&0&1&1\\
0&1&1&1\\
0&0&1&0\\
0&0&0&1
\end{pmatrix},
$$
also note that we reverse the orientation of the negative real axis.
As for the asymptotics \eqref{Mhomdef:asymptotics}, one should check
that $\Theta^{-1}J_1J_2\Theta$ is bounded for $\zeta\in\Omega_0$,
$\Theta^{-1}J_2\Theta$ is bounded for $\zeta\in\Omega_1$, and so on.
Equivalently, $e^{\theta_1(\zeta)}$ should be bounded for
$\zeta\in\Omega_0$, $e^{\theta_1(\zeta)-\theta_2(\zeta)}$ should be
bounded for $\zeta\in\Omega_1$, and so on. These statements are
easily checked from \eqref{def:theta1}--\eqref{def:theta2} and
\eqref{def:angles}.
\end{proof}

Next, we define a new $4\times 4$ matrix-valued function
$\Mhomtris(\zeta)$ by
\begin{equation}\label{def:Mhomtris}
\Mhomtris(\zeta) = \left\{\begin{array}{ll}\Mhomdef(\zeta),&
\zeta\textrm{ in
lower half plane of $\mathbb C$,}\\ \Mhomdef(\zeta)\begin{pmatrix}0&-I_2 \\
I_2& 0\end{pmatrix},& \zeta\textrm{ in upper half plane of $\mathbb
C$.}\end{array}\right.
\end{equation}
Then $\Mhomtris(\zeta)$ has the jump
\begin{equation}\label{jumps:Mhomtris} \Mhomtris_+(x) =
\Mhomtris_-(x)J(x),\qquad x\in\mathbb R,
\end{equation}
 with
\begin{align}\nonumber
J(x) &:= \Theta_-^{-1}(x)\begin{pmatrix}
1&0&1&1\\
0&1&1&1\\
0&0&1&0\\
0&0&0&1
\end{pmatrix}\Theta_+(x)\begin{pmatrix}0&-I_2 \\
I_2& 0\end{pmatrix}
\\
\label{def:Jx} &= \Theta_-^{-1}(x)\begin{pmatrix}
1&1&-1&0\\
1&1& 0&-1\\
1&0&0&0\\
0&1&0&0
\end{pmatrix}\Theta_-^{-H}(x),
\end{align}
and where the superscript $^{-H}$ denotes the inverse Hermitian conjugate.
Similarly, we will use $^{H}$ to denote the Hermitian conjugate.

Now define a new $4\times 4$ matrix-valued function $Q(\zeta) =
\Mhomtris(\zeta)(\Mhomtris(\overline{\zeta}))^{H}$. Then $Q(\zeta)$
is analytic in the upper half plane of $\mathbb C$ and it decays
with a power $\zeta^{-3/2}$ as $\zeta\to\infty$. A standard argument
based on contour deformation and Cauchy's theorem shows that
$$ \int_{\mathbb R}Q_+(x)\ dx = 0.
$$
Hence,
$$ \int_{\mathbb R} \Mhomtris_-(x)J(x)(\Mhomtris_-(x))^H\ dx = 0.
$$
By adding this relation to its Hermitian conjugate, we find
\begin{equation}\label{vanishing:integralzero} \int_{\mathbb R}
\Mhomtris_-(x)\left(J(x)+J^H(x)\right)(\Mhomtris_-(x))^H\ dx = 0.
\end{equation}
But from \eqref{def:Jx} we have that
\begin{align*}  J(x)+J^H(x) &=
2\Theta_-^{-1}\begin{pmatrix}
1&1&0&0\\
1&1& 0&0\\
0&0&0&0\\
0&0&0&0
\end{pmatrix}\Theta_-^{-H}
\\
&= 2\begin{pmatrix} e^{-\theta_{1,-}(x)}\\ e^{-\theta_{2,-}(x)} \\
0\\0
\end{pmatrix}
\begin{pmatrix} e^{-\overline{\theta_{1,-}(x)}}&
 e^{-\overline{\theta_{2,-}(x)}} & 0&0
\end{pmatrix}.
\end{align*}
Substituting this expression in \eqref{vanishing:integralzero}
yields
\begin{equation}\int_{\mathbb R}
\Mhomtris_-(x)\begin{pmatrix} e^{-\theta_{1,-}(x)}\\ e^{-\theta_{2,-}(x)} \\
0\\0
\end{pmatrix}\begin{pmatrix} e^{-\overline{\theta_{1,-}(x)}}& e^{-\overline{\theta_{2,-}(x)}} & 0&0
\end{pmatrix}(\Mhomtris_-(x))^H\ dx = 0,
\end{equation}
which obviously implies that
\begin{equation}\label{eq:M5 fin}
\Mhomtris_-(x)\begin{pmatrix} e^{-\theta_{1,-}(x)}\\ e^{-\theta_{2,-}(x)} \\
0\\0
\end{pmatrix}\equiv 0,\qquad x\in\mathbb R.
\end{equation}
Inserting \eqref{eq:M5 fin} into the RH problem for $\Mhomtris$, we
see that the jump relation \eqref{jumps:Mhomtris}--\eqref{def:Jx}
reduces to
$$ \Mhomtris_+(x) = \Mhomtris_-(x) \Theta_-^{-1}(x)\begin{pmatrix}
0&0&-1&0\\
0&0& 0&-1\\
1&0&0&0\\
0&1&0&0
\end{pmatrix}\Theta_-^{-H}(x).
$$
By tracing back the transformation $\Mhomdef\mapsto\Mhomtris$, we
find
$$ \Mhomdef_+(x) = \Mhomdef_-(x) \Theta_-^{-1}(x)\Theta_+(x).
$$
This now decouples into four scalar RH problems for the individual
column vectors. For each of these scalar RH problems, one can use an
argument based on Carlson's theorem \cite[Page 1406]{DKMVZ1} to
conclude that it has only the zero solution. This then implies that
also $\Mhomdef$ must be the zero matrix. By tracing back the
transformation $\Mhom\mapsto\Mhomdef$, the same conclusion holds for
$\Mhom$. Thus we see that the homogeneous version of the RH problem
for $M$ indeed has only the trivial solution. This ends the proof of
Step 3, and therefore the proof of existence of $M$ by means of the
vanishing lemma. $\bol$

\section{Proof of Theorem~\ref{theorem:Painleve2modelrhp}} \label{subsection:laxpair}
\label{section:proofP2M}

In this section we prove Theorem~\ref{theorem:Painleve2modelrhp} on
the Painlev\'e~II behavior of the numbers $c,\wtil c$ and $d$ in
\eqref{eq:defM1}.

\subsection{Symmetry properties}

We will need a few properties of the RH problem~\ref{rhp:modelM}.
The first property follows from exploiting the symmetry of the
problem. In what follows, we use the elementary permutation matrix
\[ J=\begin{pmatrix}0&1\\1&0\end{pmatrix} \]
and recall that $I_2$ denotes the $2\times 2$ identity matrix.

\begin{lemma}\label{lemma:symmetries}

\begin{enumerate}
\item[\rm (a)]
For any fixed $r_1, r_2>0$ and $s_1,s_2\in\mathbb R$, we have the
symmetry relations
\begin{equation}\label{symmetry:conjugate}
\overline{M(\overline{\zeta})} =
\begin{pmatrix} I_2 & 0 \\ 0 & -I_2 \end{pmatrix}
M(\zeta) \begin{pmatrix} I_2 & 0 \\ 0 & -I_2
\end{pmatrix},
\end{equation}
where the bar denotes complex conjugation and
\begin{equation}\label{symmetry:inversetranspose}
M^{-T}(\zeta) =
\begin{pmatrix} 0 & I_2 \\ -I_2 & 0 \end{pmatrix}
M(\zeta) \begin{pmatrix} 0 & -I_2 \\ I_2 & 0 \end{pmatrix},
\end{equation} where the superscript ${}^{-T}$ denotes the inverse transpose.
\item[\rm (b)]
Denoting with $M(\zeta;r_1, r_2, s_1, s_2)$ the solution $M(\zeta)$
with parameters $r_1, r_2, s_1, s_2$, then we have the relation
\begin{equation}\label{symmetry:special}
M(-\zeta; r_1, r_2, s_1, s_2) =
 \begin{pmatrix} J & 0 \\ 0 & -J \end{pmatrix}
    M(\zeta; r_2, r_1, s_2, s_1)   \begin{pmatrix} J & 0
\\ 0 & -J
\end{pmatrix}.
\end{equation}
\end{enumerate}
\end{lemma}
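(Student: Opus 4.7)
The plan is to leverage the uniqueness of the solution to RH problem~\ref{rhp:modelM} established in Theorem~\ref{theorem:solvability}: for each of the three claimed identities I will define a candidate $\widetilde{M}(\zeta)$ as the right-hand side, verify that it satisfies the same RH problem as $M$, and conclude $\widetilde{M} \equiv M$ by uniqueness.

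For the two identities in part (a), I introduce the constant matrices
\begin{equation*}
 S := \diag(I_2,-I_2), \qquad K := \begin{pmatrix} 0 & I_2 \\ -I_2 & 0 \end{pmatrix}.
\end{equation*}
For \eqref{symmetry:conjugate}, set $\widetilde{M}(\zeta) := S\,\overline{M(\overline{\zeta})}\,S$. The contour $\bigcup_{k=0}^{9} \Gamma_k$ is invariant under complex conjugation, which acts on the indices by the involution $\tau:(0,1,2,3,4,5,6,7,8,9)\mapsto(0,9,8,7,6,5,4,3,2,1)$ and reverses orientation (so $+$- and $-$-sides are exchanged). Because the $J_k$ are real, the jump check reduces to the ten identities $S\,J_{\tau(k)}^{-1}\,S = J_k$, which are straightforward case-by-case verifications (for instance $J_1=J_9=I+E_{3,1}$ gives $S\,J_9^{-1}\,S = S(I-E_{3,1})S = I+E_{3,1}=J_1$). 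The asymptotic check uses $r_j,s_j\in\mathbb{R}$, which yields $\overline{\theta_j(\overline{\zeta})}=\theta_j(\zeta)$, together with $\overline{(-\overline{\zeta})^{\pm1/4}}=(-\zeta)^{\pm1/4}$ for the principal branches and the block identity $S\,\overline{A}\,S = A$ for the prefactor $A$ in \eqref{M:asymptotics}. For \eqref{symmetry:inversetranspose}, set $\widetilde{M}(\zeta) := K^{-1}M^{-T}(\zeta)\,K$; analyticity off the contour follows from $\det M \equiv 1$ (each $J_k$ and the leading diagonal in \eqref{M:asymptotics} have unit determinant), the jumps reduce to $K^{-1} J_k^{-T} K = J_k$ (e.g.\ for $J_1 = I+E_{3,1}$ one computes $J_1^{-T}=I-E_{1,3}$ and $K^{-1}(I-E_{1,3})K = I+E_{3,1}$), and the asymptotic normalization works out from $K A K^{-1}=A^{-T}$ together with $K^{-1}\diag(a,b,c,d)K = \diag(c,d,a,b)$, which correctly inverts the diagonal factors so that the composite expansion recovers the one in \eqref{M:asymptotics}.

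For \eqref{symmetry:special}, define
\begin{equation*}
 P := \begin{pmatrix} J & 0 \\ 0 & -J \end{pmatrix}, \qquad P^{-1}=P,
\end{equation*}
and set $\widetilde{M}(\zeta) := P\,M(-\zeta;\,r_2,r_1,s_2,s_1)\,P$. The map $\zeta\mapsto-\zeta$ is a rotation by $\pi$, so it preserves orientation and sends $\Gamma_k$ to $\Gamma_{(k+5)\bmod 10}$; the jump check therefore reduces to the ten identities $P\,J_{(k+5)\bmod 10}\,P = J_k$, again verified by block inspection. For the asymptotic behavior the key observation is that the simultaneous swap $(r_1,s_1)\leftrightarrow(r_2,s_2)$ combined with $\zeta\mapsto-\zeta$ interchanges the roles of $\theta_1$ and $\theta_2$ as defined in \eqref{def:theta1}--\eqref{def:theta2}; together with the interchange $(-\zeta)^{\pm1/4}\leftrightarrow\zeta^{\pm1/4}$, this is precisely the reshuffling produced by conjugation by $P$ on the two diagonal factors in \eqref{M:asymptotics}, while a direct calculation confirms the invariance $P A P = A$ of the prefactor.

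The main technical obstacle is the case-by-case verification of the thirty jump-matrix identities (ten for each of the three operations) and the three block identities $S\,\overline{A}\,S = A$, $K A K^{-1}=A^{-T}$, and $P A P = A$ that underpin the matching of the asymptotic expansions. Each piece is a routine but error-prone linear algebra computation, after which uniqueness of $M$ closes the argument.
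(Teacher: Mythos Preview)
Your proposal is correct and follows exactly the same strategy as the paper: define the transformed matrix, verify it satisfies the same RH problem~\ref{rhp:modelM}, and conclude by uniqueness. The paper's own proof is a one-line sketch (``one easily checks that the left and right hand sides satisfy the same RH problem''), so your version simply supplies the computational detail the paper omits.
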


\begin{proof} One easily checks that the left and right hand sides of
\eqref{symmetry:conjugate} satisfy the same RH problem. Then
\eqref{symmetry:conjugate} follows from the uniqueness of the
solution to this RH problem. The same argument applies to
\eqref{symmetry:inversetranspose} and \eqref{symmetry:special}.
\end{proof}

\begin{corollary}
For any fixed $r_1, r_2>0$ and $s_1, s_2 \in\mathbb R$, we have
\begin{equation*}  \overline{M_1} = \begin{pmatrix} I_2 & 0 \\ 0 & -I_2 \end{pmatrix}
M_1 \begin{pmatrix} I_2 & 0 \\ 0 & -I_2
\end{pmatrix},
\end{equation*}
and
$$  -M_1^T = \begin{pmatrix} 0 & I_2 \\ -I_2 & 0 \end{pmatrix} M_1 \begin{pmatrix}
0 & -I_2
\\ I_2 & 0 \end{pmatrix}.
$$
Consequently, $M_1$ takes the form
\begin{equation}\label{M1:explicit}
M_1 = \begin{pmatrix} a & b & ic & id \\
-\wtil b & -\wtil a & id & i\wtil c \\
ie & if & -a & \wtil b \\
if & i\wtil e & -b & \wtil a
\end{pmatrix},
\end{equation}
where $a,\wtil a,b,\wtil b,c,\wtil c,\ldots$ are real constants
depending parametrically on $r_1, r_2, s_1, s_2$.

We also have
$$
-M_1(r_1, r_2, s_1, s_2) =
\begin{pmatrix} J & 0 \\ 0 & -J \end{pmatrix}
M_1(r_2, r_1, s_2, s_1)  \begin{pmatrix} J & 0 \\ 0 & -J
\end{pmatrix},
$$
and consequently we have in \eqref{M1:explicit} that
\begin{equation}\label{eqs:a til a}
\begin{aligned}
    a(r_1, r_2, s_1,s_2) &= \wtil a(r_2, r_1, s_2,s_1), \quad
    b(r_1, r_2, s_1,s_2) = \wtil b(r_2, r_1, s_2,s_1), \\
    c(r_1, r_2, s_1,s_2) &= \wtil c(r_2, r_1, s_2,s_1), \quad
    e(r_1, r_2, s_1,s_2) = \wtil e(r_2, r_1, s_2,s_1).
\end{aligned}
\end{equation}
\end{corollary}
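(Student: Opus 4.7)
My plan is to derive each of the three assertions from the corresponding symmetry in Lemma~\ref{lemma:symmetries} by substituting the asymptotic expansion \eqref{M:asymptotics} and matching coefficients of $\zeta^{-1}$. Write $M(\zeta) = G(\zeta) F(\zeta)$ with $G(\zeta) = I + M_1 \zeta^{-1} + O(\zeta^{-2})$ and outer factor
\[
F(\zeta) = D_1(\zeta)\, C\, D_2(\zeta),
\]
where $D_1 = \diag\bigl((-\zeta)^{-1/4}, \zeta^{-1/4}, (-\zeta)^{1/4}, \zeta^{1/4}\bigr)$, $D_2 = \diag\bigl(e^{-\theta_1}, e^{-\theta_2}, e^{\theta_1}, e^{\theta_2}\bigr)$, and $C$ is the constant $\tfrac{1}{\sqrt 2}$-matrix of \eqref{M:asymptotics}. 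Each symmetry of Lemma~\ref{lemma:symmetries} has the form $M(\zeta) = P\, M(\psi(\zeta))\, Q$ for explicit constant matrices $P, Q$ and an involution $\psi$; my strategy is to first verify the outer-factor identity $F(\zeta) = P\, F(\psi(\zeta))\, Q$ (possibly also involving a parameter swap), after which substituting $M = GF$ and equating coefficients of $\zeta^{-1}$ turns the symmetry into a purely algebraic identity on $M_1$.

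For the first symmetry, reality of $r_j, s_j$ gives $\overline{\theta_j(\bar\zeta)} = \theta_j(\zeta)$ and a direct check yields $S C S = \overline{C}$ with $S = \diag(1,1,-1,-1)$; combining these produces $\overline{F(\bar\zeta)} = S F(\zeta) S$, hence $\overline{M_1} = S M_1 S$. For the second symmetry, block computations give $T D_j(-T) = D_j^{-1}$ for $j = 1, 2$ and $T C (-T) = C^{-T}$, so that $T F(\zeta) (-T) = F^{-T}(\zeta)$; using $(GF)^{-T} = G^{-T} F^{-T}$ and $T^{-1} = -T$, the $\zeta^{-1}$-order coefficient yields $-M_1^T = T M_1 (-T)$. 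For the third symmetry, the crucial scalar identity is that $\theta_1(-\zeta; r_1, s_1) = \theta_2(\zeta; r_2, s_2)$ when $(r_2, s_2) = (r_1, s_1)$, and analogously for $\theta_2$; combined with $U D_j U$ implementing the swap of entries $1 \leftrightarrow 2$ and $3 \leftrightarrow 4$, and $U C U = C$, this gives the outer-factor identity $F(-\zeta; r_1, r_2, s_1, s_2) = U F(\zeta; r_2, r_1, s_2, s_1) U$. Matching $\zeta^{-1}$ coefficients (with the extra minus from $-\zeta$) then produces $-M_1(r_1, r_2, s_1, s_2) = U M_1(r_2, r_1, s_2, s_1) U$.

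With the three matrix identities in hand, the explicit form \eqref{M1:explicit} reduces to a short block-matrix computation. Partition $M_1 = \bigl(\begin{smallmatrix} A & B \\ N & D \end{smallmatrix}\bigr)$ in $2 \times 2$ blocks (using $N$ for the bottom-left to avoid collision with the scalar $c$): the first identity forces $A, D$ real and $B, N$ purely imaginary, while the second forces $D = -A^T$ together with $B^T = B$ and $N^T = N$. These constraints leave exactly the ten real parameters $a, \wtil a, b, \wtil b, c, \wtil c, d, e, \wtil e, f$ arranged as in \eqref{M1:explicit}. Substituting this parametrization into the third identity and reading off entries block by block yields the swap relations \eqref{eqs:a til a}. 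I expect the only delicate step to be the outer-factor identity for the third symmetry, where one must track branch cuts of $(-\zeta)^{\pm 1/4}$ and $\zeta^{\pm 1/4}$ under $\zeta \mapsto -\zeta$; the sign factors that appear are exactly those absorbed by the $-J$ block in $U$, so after this bookkeeping everything reduces to straightforward matrix algebra.
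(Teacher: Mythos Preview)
Your proposal is correct and follows essentially the same approach as the paper: the paper's proof is the single line ``This follows from \eqref{M:asymptotics} and Lemma~\ref{lemma:symmetries},'' and what you have written is precisely a careful unpacking of that sentence, verifying the outer-factor identities and then reading off the $\zeta^{-1}$ coefficient in each symmetry.
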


\begin{proof} This follows from \eqref{M:asymptotics} and
Lemma \ref{lemma:symmetries}.
\end{proof}

\subsection{Lax pair equations}

We next obtain linear differential equations for $M$ with respect to
both $\zeta$ and $s_1, s_2$. This system of differential equations
has a Lax pair form and the compatibility condition of the Lax pair
will then lead to the Painlev\'e~II equation in
Theorem~\ref{theorem:Painleve2modelrhp}.


We start with a differential equation with respect to $\zeta$.

\begin{proposition}\label{prop:diffeq1}
 We have the differential equation
\begin{equation}\label{diffeq1}
\frac{\partial M}{\partial\zeta} = U M,
\end{equation}
where
\begin{equation}\label{def:U}
    U:=\begin{pmatrix} 0 & 0 & i r_1 & 0 \\ 0 & 0 & 0 & i r_2 \\ i(r_1 \zeta-s_1) & 0 & 0 & 0 \\
0 & -i(r_2 \zeta+ s_2) & 0 & 0
\end{pmatrix}+ \left[ M_1, i r_1 E_{3,1} - i r_2 E_{4,2} \right],
\end{equation}
and where $\left[ \cdot, \cdot \right]$ denotes the commutator.
\end{proposition}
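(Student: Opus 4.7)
\textbf{Proof proposal for Proposition \ref{prop:diffeq1}.}
The plan is to define $U(\zeta) := \frac{\partial M}{\partial \zeta}(\zeta)\, M^{-1}(\zeta)$ and show it is an entire function of polynomial growth, so that Liouville's theorem forces it to equal the explicit polynomial $U^{\mathrm{poly}}$ on the right-hand side of \eqref{def:U}. First, I would record that $\det M \equiv 1$: each jump matrix $J_k$ in Figure \ref{fig:modelRHP} has determinant $1$ (the four anti-diagonal jumps on $\Gamma_0$ and $\Gamma_5$ are signed permutations of signature $+1$, the others are unipotent), and the leading diagonal factor in \eqref{M:asymptotics} has determinant $(-\zeta)^{-1/4}\zeta^{-1/4}(-\zeta)^{1/4}\zeta^{1/4}=1$ while the constant matrix is easily seen to have determinant $1$. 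Hence $\det M$ has no jumps and tends to $1$ at infinity, so $\det M\equiv 1$ and $M^{-1}$ exists everywhere that $M$ does.

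Next I would check that $U$ is entire. Since every jump matrix $J_k$ is constant, differentiating $M_+=M_- J_k$ gives $U_+=U_-$, so $U$ extends analytically across each ray of $\bigcup_k\Gamma_k$. Near $\zeta=0$, condition (4) of RH problem \ref{rhp:modelM} says $M$ is bounded; together with no monodromy around $0$ (the product of jump matrices around $0$, with orientations, equals $I$) and the removability of isolated bounded singularities, $M$ extends holomorphically to $0$. Combined with $\det M\equiv 1$, this yields $M^{-1}$ bounded at $0$, so $U$ is analytic at $0$, and therefore entire.

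The key computation is the large-$\zeta$ expansion of $U$. Writing $M=N(\zeta)\,P(\zeta)\,\Theta(\zeta)$ with $N=I+M_1/\zeta+M_2/\zeta^2+\cdots$, with $P$ the product of the diagonal $(-\zeta)^{\pm 1/4},\zeta^{\pm 1/4}$ factor and the constant unitary rotation $R$ in \eqref{M:asymptotics}, and $\Theta$ as in \eqref{def:Theta}, we get
\[
U \;=\; (\partial_\zeta N)N^{-1} \;+\; N\,(\partial_\zeta P)P^{-1}\,N^{-1} \;+\; N\,P\,(\partial_\zeta \Theta\cdot\Theta^{-1})\,P^{-1}\,N^{-1}.
\]
Here $(\partial_\zeta N)N^{-1}=O(\zeta^{-2})$ and $(\partial_\zeta P)P^{-1}=\tfrac{1}{4\zeta}\operatorname{diag}(-1,-1,1,1)=O(\zeta^{-1})$. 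For the third term, using $\theta_1'(\zeta)=-r_1(-\zeta)^{1/2}-s_1(-\zeta)^{-1/2}$ and $\theta_2'(\zeta)=r_2\zeta^{1/2}+s_2\zeta^{-1/2}$, a direct computation of $R\cdot(\partial_\zeta\Theta\cdot\Theta^{-1})\cdot R^{-1}$ (where the $\pm\theta_j'$ on the diagonal couple only columns $1\leftrightarrow 3$ and $2\leftrightarrow 4$), followed by conjugation by the remaining diagonal factor in $P$, yields
\[
P\,(\partial_\zeta\Theta\cdot\Theta^{-1})\,P^{-1} \;=\; \underbrace{\begin{pmatrix} 0 & 0 & ir_1 & 0 \\ 0 & 0 & 0 & ir_2 \\ i(r_1\zeta-s_1) & 0 & 0 & 0 \\ 0 & -i(r_2\zeta+s_2) & 0 & 0 \end{pmatrix}}_{=:A_{\mathrm{poly}}(\zeta)} \;+\; O(\zeta^{-1}).
\]
Only the $\zeta$-linear part $\zeta B$ of $A_{\mathrm{poly}}$, with $B:=ir_1E_{3,1}-ir_2E_{4,2}$, interacts with $N$ at the polynomial level: the constant cross-terms $(M_1/\zeta)\cdot(\zeta B)-(\zeta B)\cdot(M_1/\zeta)$ produce exactly the commutator $[M_1,B]$. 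Thus
\[
U(\zeta) \;=\; A_{\mathrm{poly}}(\zeta) \;+\; [M_1,B] \;+\; O(\zeta^{-1}) \qquad(\zeta\to\infty),
\]
which is precisely $U^{\mathrm{poly}}$ as written in \eqref{def:U}. Since $U-U^{\mathrm{poly}}$ is entire and vanishes at infinity, it is identically $0$.

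The main obstacle is the middle step: bookkeeping the $O(1/\zeta)$ pieces of $P(\partial_\zeta\Theta\cdot\Theta^{-1})P^{-1}$, $(\partial_\zeta P)P^{-1}$ and the $M_1/\zeta$ corrections to check that \emph{no} additional constant matrix sneaks into $U$ beyond the commutator $[M_1,B]$. This requires verifying that all sub-leading conjugation terms are genuinely $O(1/\zeta)$, which hinges on the specific sparsity of $B$ (only $(3,1)$ and $(4,2)$ entries) and on the diagonal form of $(\partial_\zeta P)P^{-1}$ shown above.
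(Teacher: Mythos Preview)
Your proposal is correct and follows essentially the same route as the paper: define $U=(\partial_\zeta M)M^{-1}$, observe it is entire because the jumps are constant, compute its large-$\zeta$ expansion from the factorization in \eqref{M:asymptotics}, and apply Liouville to identify the polynomial part as \eqref{def:U}. The paper presents the expansion more compactly by displaying directly the conjugated matrix $P(\partial_\zeta\Theta\cdot\Theta^{-1})P^{-1}$ (with its $O(\zeta^{-1})$ entries in positions $(1,3)$ and $(2,4)$ kept explicit) and the $\tfrac{1}{4\zeta}\diag(-1,-1,1,1)$ term, but your decomposition and bookkeeping are equivalent, and your concern about stray constant terms is already resolved by the sparsity argument you give.
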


\begin{proof}
Since the jump matrices in the RH problem for  $M$ do not depend on
$\zeta$, we have that $\frac{\partial M}{\partial \zeta}$ has the
same jump properties as $M$. It follows that $U := \frac{\partial
M}{\partial \zeta} M^{-1}$ is entire. From the asymptotic behavior
of $M$ in \eqref{M:asymptotics}, we have as $\zeta \to \infty$,
\begin{align}\label{diffeq1:RH}
    U =&~ \frac{1}{4\zeta} \diag ( -1 , -1 , 1 , 1 )
      + \left(I+\frac{M_1}{\zeta}+\frac{M_2}{\zeta^2}+ \cdots\right) \nonumber \\
    &\times \begin{pmatrix} 0 & 0 & i(r_1 -s_1\zeta^{-1}) & 0 \\ 0 & 0 & 0 & i(r_2+s_2\zeta^{-1}) \\
    i(r_1\zeta-s_1) & 0 & 0 & 0 \\
    0 & -i(r_2 \zeta+s_2) & 0 & 0
\end{pmatrix}
\nonumber \\
&\times
\left(I-\frac{M_1}{\zeta}+\frac{M_1^2-M_2}{\zeta^2}+\cdots\right)
+O(\zeta^{-2}).
\end{align}
Thus the entries of $U$ are polynomial in $\zeta$ with degree at
most one. Dropping the non-polynomial terms in \eqref{diffeq1:RH},
we obtain \eqref{def:U}.
\end{proof}

The proof of Proposition~\ref{prop:diffeq1} also yields the
following.

\begin{lemma}\label{lemma:threerelations}
The entries $a,\wtil a,b,\wtil b,\ldots$ of $M_1$ in
\eqref{M1:explicit} satisfy the identities
\begin{align} \label{threeextrarelations:1}
    (2a + c^2) r_1 & = r_2 d^2+ s_1, \\
\label{threeextrarelations:2}
    (2\wtil a +\wtil c^2)r_2 & = r_1 d^2 + s_2, \\
\label{threeextrarelations:3}
    r_2b- r_1 \wtil b &= (r_2 \wtil c - r_1 c)d.
 \end{align}
 \end{lemma}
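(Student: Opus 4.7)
The plan is to continue the Laurent expansion of $U=\partial_\zeta M\cdot M^{-1}$ started in the proof of Proposition~\ref{prop:diffeq1}. There $U$ was shown to be entire and, by \eqref{def:U}, polynomial of degree one in $\zeta$; hence the coefficient of $\zeta^{-k}$ in its asymptotic expansion at $\infty$ must vanish for every $k\geq 1$. The three identities will come from the vanishing of the $\zeta^{-1}$ coefficient.

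Write the middle matrix in \eqref{diffeq1:RH} as $A(\zeta)=A_0\zeta+A_1+A_2\zeta^{-1}$, where $A_0=ir_1E_{3,1}-ir_2E_{4,2}$ and $A_1,A_2$ collect the constant and $\zeta^{-1}$ terms involving $r_1,r_2,s_1,s_2$. Expanding $M_\infty A(\zeta)M_\infty^{-1}$ with $M_\infty=I+M_1/\zeta+M_2/\zeta^2+\cdots$, and combining with the contribution $\tfrac{1}{4\zeta}\diag(-1,-1,1,1)$ that arises from differentiating the fractional powers in the asymptotic prefactor of $M$, the $\zeta^{-1}$ coefficient of $U$ equals
\[
\tfrac{1}{4}\diag(-1,-1,1,1)+A_2+[M_1,A_1]+[M_2,A_0]+[A_0,M_1]\,M_1,
\]
and this $4\times 4$ matrix must vanish identically.

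The key structural observation is that $A_0$ has support only in rows $3,4$ and columns $1,2$, hence $[M_2,A_0]$ vanishes at every position $(i,j)$ with $i\in\{1,2\}$ and $j\in\{3,4\}$. Consequently the $(1,3), (1,4)$ and $(2,4)$ entries of the vanishing condition involve only $M_1$ and the parameters $r_1,r_2,s_1,s_2$. Substituting the explicit form \eqref{M1:explicit} of $M_1$ and carrying out a short entrywise computation, these three entries yield (after dividing by $i$)
\[
-s_1+2r_1a+r_1c^2-r_2d^2=0,\qquad s_2-2r_2\wtil a+r_1d^2-r_2\wtil c^2=0,
\]
\[
r_2b-r_1\wtil b+d(r_1c-r_2\wtil c)=0,
\]
which are exactly \eqref{threeextrarelations:1}, \eqref{threeextrarelations:2}, and \eqref{threeextrarelations:3}. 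The only conceptual step is the support observation about $[M_2,A_0]$ that decouples the three equations from the unknown $M_2$; the remaining work is a routine bookkeeping of $4\times 4$ matrix products, simplified by the fact that $A_0, A_1, A_2$ each have only four nonzero entries.
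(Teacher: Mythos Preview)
Your proof is correct and follows essentially the same approach as the paper: both extract the three identities from the vanishing of the $\zeta^{-1}$ coefficient in the expansion \eqref{diffeq1:RH}, restricted to the upper right $2\times 2$ block. The paper writes this block computation directly in $2\times 2$ form, while you organize it via the commutator decomposition $A_2+[M_1,A_1]+[M_2,A_0]+[A_0,M_1]M_1$ and make explicit the support argument showing that $[M_2,A_0]$ contributes nothing to the upper right block; the content is the same.
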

 \begin{proof}
The coefficient of $\zeta^{-1}$ in \eqref{diffeq1:RH} is equal to
zero. For the upper right $2 \times 2$ block, we then obtain by
using \eqref{M1:explicit} that
\begin{multline*}
    \begin{pmatrix}-is_1 & 0 \\ 0 & is_2 \end{pmatrix} +
    \begin{pmatrix} a & b \\-\wtil b&-\wtil a \end{pmatrix}
    \begin{pmatrix} i r_1 & 0 \\ 0 & i r_2 \end{pmatrix}
    -\begin{pmatrix} i r_1 & 0 \\ 0 & ir_2 \end{pmatrix}
    \begin{pmatrix}-a&\wtil b\\- b&\wtil a\end{pmatrix} \\
    +
    \begin{pmatrix}c&d\\ d&\wtil c\end{pmatrix}
    \begin{pmatrix}i r_1&0 \\ 0&-ir_2\end{pmatrix}
    \begin{pmatrix}c&d\\ d&\wtil c\end{pmatrix}= 0.
\end{multline*}
These are four identities and they give us
\eqref{threeextrarelations:1}--\eqref{threeextrarelations:3}.
\end{proof}

In the special case $r_1 = r_2$ and $s_1 = s_2$, we see from
\eqref{eqs:a til a} that $\wtil a = a$, $\wtil b= b$, and so on. The
equation \eqref{threeextrarelations:3} then reduces to $0=0$, while
\eqref{threeextrarelations:1} and \eqref{threeextrarelations:2} are
the same. So in that case, the system
\eqref{threeextrarelations:1}--\eqref{threeextrarelations:3} reduces
to the single relation
\begin{equation}
 \label{threeextrarelations:symm} (2a + c^2) r= r^2 d^2+s,
    \qquad \text{ with } r = r_1 = r_2, \, s = s_1 = s_2.
\end{equation}

We obtain more differential equations by taking a derivative of $M$
with respect to the parameters $s_1, s_2$.

\begin{proposition}\label{prop:diffeq2}
We have the differential equation
\begin{equation}\label{diffeq2}
\frac{\partial M}{\partial s_j} = V_j M, \qquad j=1,2,
\end{equation}
where
\begin{equation}\label{def:V1Lax}
    V_1 = \begin{pmatrix} 0 & 0 & -2i & 0 \\ 0 & 0 & 0 & 0 \\
    -2i\zeta & 0 & 0 & 0 \\
    0 & 0& 0 & 0
    \end{pmatrix}
    + \left[ M_1, - 2i E_{3,1} \right]
\end{equation}
and
\begin{equation}\label{def:V2Lax}
    V_2 = \begin{pmatrix} 0 & 0 & 0 & 0 \\ 0 & 0 & 0 & 2i \\
    0 & 0 & 0 & 0 \\
    0 & -2i \zeta & 0 & 0
    \end{pmatrix}
    + \left[ M_1, - 2i E_{4,2} \right].
\end{equation}
\end{proposition}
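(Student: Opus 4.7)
The proof will closely parallel that of Proposition~\ref{prop:diffeq1}. The plan is to show first that $V_j := \frac{\partial M}{\partial s_j} M^{-1}$ is entire, and then to use the asymptotic expansion \eqref{M:asymptotics} to identify it as a polynomial in $\zeta$ of degree at most one with the stated explicit coefficients.

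For the first step, since the jump matrices $J_k$ in the RH problem~\ref{rhp:modelM} do not depend on $s_1$ or $s_2$, differentiation of the jump relations $M_+ = M_- J_k$ with respect to $s_j$ yields $\partial_{s_j} M_+ = \partial_{s_j} M_- \cdot J_k$, which together with $M_+^{-1} = J_k^{-1} M_-^{-1}$ shows that $V_j$ has no jumps across any $\Gamma_k$. Boundedness of $M$ and $M^{-1}$ near $\zeta = 0$ (which follows from RH condition (4), together with $\det M = 1$ since all jump matrices have determinant one and the asymptotics forces $\det M \equiv 1$) ensures that $V_j$ is also bounded near the origin. Hence $V_j$ is entire.

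For the second step I would compute the asymptotic behavior of $V_j$ at infinity. Writing $M(\zeta) = \hat M(\zeta) P(\zeta) Q\, \Theta(\zeta)$ with $\hat M = I + M_1/\zeta + M_2/\zeta^2 + \cdots$, $P = \diag((-\zeta)^{-1/4}, \zeta^{-1/4}, (-\zeta)^{1/4}, \zeta^{1/4})$, $Q$ the constant unitary matrix from \eqref{M:asymptotics}, and $\Theta$ given by \eqref{def:Theta}, only $\Theta$ depends on $s_j$. Since $\partial \theta_1/\partial s_1 = 2(-\zeta)^{1/2}$ and $\partial \theta_2/\partial s_2 = 2\zeta^{1/2}$, one obtains
\begin{equation*}
    V_j = 2(-1)^{j+1}(\pm\zeta)^{1/2}\, \hat M \cdot P Q D_j Q^{-1} P^{-1} \cdot \hat M^{-1} + \frac{\partial \hat M}{\partial s_j}\,\hat M^{-1},
\end{equation*}
where $D_1 = \diag(-1,0,1,0)$ and $D_2 = \diag(0,-1,0,1)$, and $(\pm\zeta) = -\zeta$ for $j=1$ and $\zeta$ for $j=2$. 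A direct computation gives $Q D_1 Q^{-1} = -i E_{1,3} + i E_{3,1}$ and $Q D_2 Q^{-1} = i E_{2,4} - i E_{4,2}$, and conjugation by $P$ together with the $2(\pm\zeta)^{1/2}$ prefactor produces precisely the constant/linear matrix $L_1 = -2i E_{1,3} - 2i\zeta E_{3,1}$ (respectively $L_2 = 2i E_{2,4} - 2i\zeta E_{4,2}$), i.e.\ the first matrix displayed in \eqref{def:V1Lax} (respectively \eqref{def:V2Lax}).

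The remaining task is to track the $O(1)$ correction. Since $L_j$ grows linearly in $\zeta$, conjugation by $\hat M = I + M_1/\zeta + O(\zeta^{-2})$ contributes a polynomial correction through the commutator $[M_1, L_j]/\zeta$; only the linear-in-$\zeta$ part of $L_j$ survives this division by $\zeta$, producing $-2i[M_1, E_{3,1}]$ for $j=1$ and $-2i[M_1, E_{4,2}]$ for $j=2$. All remaining terms are $O(\zeta^{-1/2})$ as $\zeta\to\infty$. Since $V_j$ is entire and has only polynomial growth of degree one, these decaying terms must vanish identically, and $V_j$ equals its polynomial part, giving \eqref{def:V1Lax} and \eqref{def:V2Lax}. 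The main (mild) obstacle is the bookkeeping of the various powers of $(\pm\zeta)^{1/4}$ and their branch structure when conjugating by $P$, but no genuine difficulty arises; alternatively one may obtain $V_2$ from $V_1$ via the symmetry relation \eqref{symmetry:special}, which swaps $(r_1,s_1)\leftrightarrow(r_2,s_2)$ and interchanges the two Airy sub-blocks, thereby avoiding the second computation.
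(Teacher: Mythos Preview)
Your proof is correct and follows essentially the same approach as the paper: observe that the jump matrices are independent of $s_j$ so that $V_j=(\partial_{s_j}M)M^{-1}$ is entire, then read off the polynomial part from the asymptotic expansion \eqref{M:asymptotics}, which the paper records compactly as $V_1=(I+M_1/\zeta+\cdots)L_1(I-M_1/\zeta+\cdots)$ with $L_1=-2iE_{1,3}-2i\zeta E_{3,1}$. Two minor bookkeeping slips (the sign factor $(-1)^{j+1}$ is spurious for $j=2$, and the remainder is $O(\zeta^{-1})$ rather than $O(\zeta^{-1/2})$) do not affect the argument, since your stated $L_1,L_2$ and the resulting commutator corrections are correct.
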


\begin{proof} Since the jumps in the RH problem for $M$ do not
depend on $s_j$, we have that $\partial M/ \partial s_j$ has the
same jumps as $M$ and so $V_j = \left( \partial M/ \partial s_j
\right) M^{-1}$ is entire. As $\zeta \to \infty$, we find for $j=1$,
\begin{align}\label{diffeq2:RH}
    V_1 =
    \left(I+\frac{M_1}{\zeta} + \cdots\right)
    \begin{pmatrix} 0 & 0 & -2i & 0 \\ 0 & 0 & 0 & 0 \\
    -2i\zeta & 0 & 0 & 0 \\
    0 & 0 & 0 & 0
    \end{pmatrix}
    \left(I-\frac{M_1}{\zeta}+ \cdots\right).
\end{align}
Keeping only the polynomial terms in $\zeta$, we obtain
\eqref{def:V1Lax}. The other relation \eqref{def:V2Lax} is proved
similarly.
\end{proof}

\subsection{Compatibility and
proofs of \eqref{d:Painleve2}--\eqref{ctil:Hamiltonian}} The
compatibility condition for the two differential equations
\eqref{diffeq1} and \eqref{diffeq2} is the zero curvature relation
\begin{equation}\label{compat:laxpair}
 \frac{\partial U}{\partial s_j} -
\frac{\partial V_j}{\partial\zeta} + UV_j - V_jU = 0, \qquad j=1,2.
\end{equation}

This is, in view of \eqref{def:U}, \eqref{def:V1Lax} and
\eqref{def:V2Lax},
\begin{align} \label{eq:compat1}
    \frac{\partial}{\partial s_1} \left[ M_1, i r_1 E_{3,1} - i r_2 E_{4,2} \right]
    & = - i E_{3,1} + \left[ V_1, U \right], \\
   \label{eq:compat2}
    \frac{\partial}{\partial s_2} \left[ M_1, i r_1 E_{3,1} - i r_2 E_{4,2} \right]
    & = - i E_{4,2} + \left[ V_2, U \right].
    \end{align}
Both \eqref{eq:compat1} and \eqref{eq:compat2} give us a system of
16 differential equations for the entries of $M_1$.

With the above preparations, we are ready to prove
\eqref{d:Painleve2}--\eqref{ctil:Hamiltonian} in Theorem
\ref{theorem:Painleve2modelrhp} concerning the Painlev\'e~II
behavior of the numbers $c,\wtil c$ and $d$. To this end, we work
with \eqref{eq:compat1} and first derive the differential equation
satisfied by $d$.

The entries in the matrix relation \eqref{eq:compat1} can be obtained from a
lengthy calculation or with the help of a symbolic software package such as
Maple. For the $(1,1)$ and $(2,2)$ entries of \eqref{eq:compat1}, this yields
\begin{equation} \label{eq:compat3a}
    r_1\frac{\partial c}{\partial s_1} = 2r_2 d^2 + 2s_1, \qquad
     \frac{\partial \wtil c}{\partial s_1} = 2d^2,
     \end{equation}
which imply
\begin{equation} \label{eq:compat3}
    \frac{\partial (r_1 c - r_2 \wtil c)}{\partial s_1} = 2s_1.
    \end{equation}
The $(1,2)$ and $(2,1)$ entries of \eqref{eq:compat1} give
expressions for the partial derivative of $d$:
\begin{equation} \label{eq:compat4}
    \frac{\partial d}{\partial s_1} =
2 cd - 2 \wtil b = \frac{r_2}{r_1} (2 \wtil c d - 2 b),
    \end{equation}
and the $(1,4)$ entry gives
\begin{equation} \label{eq:compat5}
    -\frac{\partial( r_1 b + r_2 \wtil b)}{\partial s_1}
    = r_2(4 a d + 4 \wtil a d +2 b \wtil c + 2 \wtil b c) + 2 s_2 d.
    \end{equation}

On the other hand, the identity \eqref{threeextrarelations:3}
implies
\[ r_1 (r_1 b  + r_2 \wtil b)  =
(r_1^2 + r_2^2) b + r_2 (r_1 c - r_2 \wtil c) d, \] which upon
differentiation leads to
\begin{equation} \label{eq:compat6}
    r_1 \frac{\partial (r_1 b + r_2 \wtil b)}{\partial s_1}
    = (r_1^2 + r_2^2) \frac{\partial b}{\partial s_1} + r_2 (r_1 c - r_2 \wtil c) \frac{\partial d}{\partial s_1}
    + 2 r_2 s_1 d,
    \end{equation}
where we have made use of \eqref{eq:compat3}. Combining this with
\eqref{eq:compat5} yields
\begin{equation} \label{eq:compat7}
    (r_1^2 + r_2^2) \frac{\partial b}{\partial s_1} + r_2 (r_1 c - r_2 \wtil c) \frac{\partial d}{\partial s_1}
    + 2 (r_1 s_2 + r_2 s_1) d
    = - r_1 r_2(4 a d + 4 \wtil a d +2 b \wtil c + 2 \wtil b c).
    \end{equation}
We next eliminate $a$ and $\wtil a$ from the right-hand side of
\eqref{eq:compat7} with the help of \eqref{threeextrarelations:1}
and \eqref{threeextrarelations:2}. This gives us
\begin{align} \label{eq:compat8}
&(r_1^2 + r_2^2) \frac{\partial b}{\partial s_1} + r_2 (r_1 c - r_2
\wtil c) \frac{\partial d}{\partial s_1}
    + 2 (r_1 s_2 + r_2 s_1) d \nonumber \\
&= - 2(r_1^2 + r_2^2) d^3 - 2 (r_1 s_2 + r_2 s_1) d
        + 2 r_1 r_2( c^2 d - \wtil b c + \wtil c^2 d -  b \wtil c).
\end{align}
We move the last term in the left-hand side of \eqref{eq:compat8} to the right,
and rewrite the last term in terms of $\partial d/\partial s_1$ by using
\eqref{eq:compat4} and \eqref{threeextrarelations:3}. It then follows that
\begin{align*}
&(r_1^2 + r_2^2) \frac{\partial b}{\partial s_1} + r_2 (r_1 c - r_2
\wtil c)
\frac{\partial d}{\partial s_1} \\
&= -4 (r_1 s_2 + r_2 s_1) d
     - 2(r_1^2 + r_2^2) d^3
 + r_1 (r_2 c   +  r_1 \wtil c) \frac{\partial d}{\partial s_1},
\end{align*}
or equivalently,
\begin{align} \label{eq:compat9}
     \frac{\partial b}{\partial s_1} - \wtil c \frac{\partial d}{\partial s_1} =
    - \frac{4(r_1 s_2 + r_2 s_1)}{r_1^2 + r_2^2} d - 2 d^3.
\end{align}

Taking a derivative of the second identity in \eqref{eq:compat4}
with respect to $s_1$ and using \eqref{eq:compat9} to eliminate
$\partial b/\partial s_1 - \wtil c \partial d/\partial s_1$, we
obtain
\begin{align} \label{eq:compat10}
    \frac{\partial^2 d}{\partial s_1^2} & =
        2 \frac{r_2}{r_1} \left( \frac{\partial \wtil c}{\partial s_1} d +
    \frac{4(r_1 s_2 + r_2 s_1)}{r_1^2 + r_2^2} d + 2 d^3 \right).
\end{align}
This, together with the fact that $\partial \wtil c/\partial s_1 = 2
d^2$ (see \eqref{eq:compat3a}), implies
\begin{align} \label{eq:compat11}
    \frac{\partial^2 d}{\partial s_1^2} & =
        2 \frac{r_2}{r_1} \left( 2 d^3  +
    \frac{4(r_1 s_2 + r_2 s_1)}{r_1^2 + r_2^2} d + 2 d^3 \right) \nonumber \\
    & = 8 \frac{r_2}{r_1}  \frac{(r_1 s_2 + r_2 s_1)}{r_1^2 + r_2^2} d
        + 8 \frac{r_2}{r_1} d^3.
    \end{align}

The differential equation \eqref{eq:compat11} is a scaled and
shifted version of the Painlev\'e II equation. Indeed, we have that
$q$ satisfies $q'' = s q + 2q^3$, if and only if
\[ f(s) = \alpha q(\beta s + \gamma) \]
satisfies
\begin{equation} \label{eq:compat12}
    f'' = \beta^2 (\beta s+ \gamma) f + 2 \frac{\beta^2}{\alpha^2} f^3.
    \end{equation}
Comparing this with \eqref{eq:compat11}, we see that we need
$\alpha$, $\beta$ and $\gamma$ so that
\[ 8 \frac{r_2}{r_1} = 2 \frac{\beta^2}{\alpha^2}, \qquad
    8 \frac{r_2}{r_1}  \frac{r_2}{r_1^2 + r_2^2} =
    \beta^3, \qquad
    8 \frac{r_2}{r_1}  \frac{r_1 s_2}{r_1^2 + r_2^2} =
    \beta^2 \gamma,
 \]
which means
\[ \alpha = \frac{(r_1 r_2)^{1/6} }{(r_1^2 + r_2^2)^{1/3}}, \qquad
    \beta = \frac{2r_2^{2/3}}{r_1^{1/3} (r_1^2 + r_2^2)^{1/3}}, \qquad
    \gamma = \frac{2r_1^{2/3} s_2}{r_2^{1/3} (r_1^2 + r_2^2)^{1/3}}.  \]

Therefore, we have proved that
\begin{equation}\label{eq:d in q}
d = \alpha q(\beta s_1 + \gamma) =
    \frac{(r_1 r_2)^{1/6} }{(r_1^2 + r_2^2)^{1/3}}
    q \left( \frac{2 (r_1 s_2 + r_2 s_1  )}{ (r_1r_2)^{1/3} (r_1^2 + r_2^2)^{1/3}} \right)
\end{equation}
with $q$ being a solution of the Painlev\'e II equation, which is
\eqref{d:Painleve2}. The fact that $q$ is the Hastings-McLeod
solution follows from the asymptotic behavior of $d$ in
\eqref{eq:asy of d}. To see this, we rewrite \eqref{eq:asy of d} in
the following form:
\begin{equation}
d = \frac{(r_1r_2)^{1/6}}{(r_1^2+r_2^2)^{1/3}}\Ai\left( \frac{2 (r_1
s_2 + r_2 s_1  )}{ (r_1r_2)^{1/3} (r_1^2 + r_2^2)^{1/3}} \right)
        (1 + O(s_1^{-3/2})), \qquad s_1 \to +\infty,
\end{equation}
by virtue of the asymptotics of Airy function; cf. \cite{AS}.
Comparing this expressions with \eqref{eq:d in q}, we see that $q(s)
\sim \Ai(s)$ as $s\to+\infty$, and so $q$ is the Hastings-McLeod
solution.

To establish \eqref{c:Hamiltonian} and \eqref{ctil:Hamiltonian}, we
first derive an identity for the $s_1$-derivative of $c$ and $\til
c$, respectively. Noting that $u' = -q^2$, where $u$ is the
Hamiltonian in \eqref{def:Hamiltonian}, the first equation in
\eqref{eq:compat3a} can be written as
\begin{align}\label{eq:c derivative}
    \frac{\partial c}{\partial s_1} & = 2 \frac{r_2}{r_1} d^2 + 2 \frac{s_1}{r_1}
   \nonumber  \\
    & = 2 \frac{r_2}{r_1} \frac{(r_1 r_2)^{1/3} }{(r_1^2 + r_2^2)^{2/3}}
    q^2 \left( \frac{2 (r_2 s_1 + r_1 s_2)}{r_1^{1/3} r_2^{1/3} (r_1^2 + r_2^2)^{1/3}} \right)
    + 2 \frac{s_1}{r_1} \nonumber \\
    & = \frac{\partial}{\partial s_1} \left[
        - \frac{r_2^{2/3} }{r_1^{1/3} (r_1^2 + r_2^2)^{1/3}}
         u\left( \frac{2 (r_2 s_1 + r_1 s_2)}{r_1^{1/3} r_2^{1/3} (r_1^2 + r_2^2)^{1/3}} \right)
         + \frac{s_1^2}{r_1} \right].
\end{align}
Similarly, it follows from the second equation in
\eqref{eq:compat3a} that
\begin{align}\label{eq:tilc derivative}
    \frac{\partial \wtil c}{\partial s_1} & = 2 d^2
     = \frac{\partial}{\partial s_1} \left[
        - \frac{r_1^{2/3} }{r_2^{1/3} (r_1^2 + r_2^2)^{1/3}}
         u\left( \frac{2 (r_2 s_1 + r_1 s_2)}{r_1^{1/3} r_2^{1/3} (r_1^2 + r_2^2)^{1/3}} \right)
         \right].
\end{align}
By integrating \eqref{eq:c derivative} and \eqref{eq:tilc
derivative} with respect to $s_1$, respectively, we then obtain
\eqref{c:Hamiltonian} and \eqref{ctil:Hamiltonian} on account of
\eqref{eq:asy of c}--\eqref{eq:asy of til c} and the fact that
$u(s)=o(1)$ as $s \to +\infty$ if $q$ is the Hastings-McLeod
solution of the Painlev\'e II equation.

This completes the proof of Theorem \ref{theorem:Painleve2modelrhp}.
$\bol$


\part{Non-intersecting Brownian motions at the tacnode}
\label{part:critbeh:tacnode}

\textbf{Remark on notation and conventions:} Throughout the next sections we work under the
assumption of the double scaling limit as described in
Section~\ref{subsection:doublescaling}. Throughout most of Sections \ref{section:xilamfunctions}
and \ref{section:steepestdescent} $n$ will be large but fixed. Most notions depend
on $n$, such as $p_j$, $a_j$, $b_j$, $\alpha_j$, $\beta_j$, $V_j$, $\lambda_j$, and so on,
although this is not indicated in the notation.
Their limit values as $n\to\infty$ are denoted with a star, such as
$p_j^*,a_j^*,b_j^*$. Also recall that the temperature $T = 1$ and
that the time $t=t_{\crit}$ is fixed (independent of $n$) according to
\eqref{tcrit:bis}.

By the translational symmetry of the problem, it will be sufficient to give the
proofs in case where the tacnode is at the origin. That is, we will assume that
\[ x_{\crit} := \beta^*_2 =\alpha_1^*=0. \]
From
\eqref{def:alphaj}--\eqref{def:betaj}, this yields the relations
\begin{equation}\label{particlesatzero}
(1-t)a_1^*+tb_1^*=2\sqrt{p_1^* t(1-t)},\qquad -(1-t)a_2^*-tb_2^*=2\sqrt{p_2^*
t(1-t)},
\end{equation}
since $T = 1$. For ease of notation we also set
\[ \alpha^*:=-\alpha_2^*, \qquad \beta^*:=\beta_1^*. \]
Then the Brownian paths are asymptotically distributed on the two touching intervals $[-\alpha^*,0]$
and $[0,\beta^*]$ where (use \eqref{particlesatzero} to simplify
\eqref{def:alphaj}--\eqref{def:betaj})
\begin{align} \label{def:alphastar}
    \alpha^* :=  4\sqrt{p_2^* t(1-t)}>0, \\
    \label{def:betastar}
    \beta^* := 4\sqrt{p_1^* t(1-t)}>0.
\end{align}
We will use these conventions throughout
Sections~\ref{section:xilamfunctions}--\ref{section:proofsmaintheorems}.

\section{Modified equilibrium problem, $\xi$-functions and $\lambda$-functions}
\label{section:xilamfunctions}

\subsection{Modified equilibrium problem}

In the steepest descent analysis of the RH problem \ref{rhp:Y} for
$Y$, we will use functions that come from an equilibrium problem for
the two external fields
\begin{equation} \label{eq:V1V2}
    V_j(x) = \frac{1}{2t(1-t)}\left(x^2-2 ((1-t) a_j + t b_j) x \right), \qquad j=1,2.
\end{equation}

In the usual equilibrium problem with external field
\eqref{eq:V1V2}, one asks for a measure that minimizes the energy
functional
\[ \iint \log \frac{1}{|x-y|} d\mu(x) d\mu(y) + \int V_j(x) d\mu(x) \]
among all measures on the real line with mass $\int d\mu = p_j$.
Since $V_j$ is quadratic, the solution is a semicircle law with density
\begin{equation} \label{semicircles}
    \frac{1}{2\pi t(1-t)} \sqrt{(\beta_j - x)(x- \alpha_j)},
    \qquad x \in [\alpha_j, \beta_j],
    \end{equation}
with
\begin{align} \label{eq:alphaj2}
    \alpha_j & = (1-t)a_j + tb_j - 2 \sqrt{p_j t(1-t)}, \\
    \beta_j  & = (1-t) a_j + tb_j + 2 \sqrt{p_j t(1-t)};
    \label{eq:betaj2}
\end{align}
see also \eqref{def:alphaj}--\eqref{wignerdensity}.

The limiting situation corresponds to
$a_1^*,a_2^*,b_1^*,b_2^*$. Then $\alpha_2^* < \beta_2^* = 0 =
\alpha_1^* < \beta_1^*$ and the two semicircles \eqref{semicircles}
meet each other in the origin. For finite $n$ however, this may not
be the case. Indeed the two semicircles may be separated, or may
overlap depending on the situation. In what follows we modify the
equilibrium problems in such a way that the two minimizing measures
have supports $[0, \beta]$ and $[\alpha,0]$, respectively, that meet
at $0$. We allow the measures to become negative, so we will be
dealing with signed measures.
The modification of an equilibrium problem to prepare for a  later
steepest descent analysis of a RH problem was first done in \cite{CK}.

We assume that $\alpha_2 < 0 < \beta_1$, which is certainly the case if $n$ is
large enough.

\begin{definition}
In the modified equilibrium problem for $V_1$ we ask to minimize
\[ \iint \log \frac{1}{|x-y|} d\mu(x) d\mu(y) + \int V_1(x) d\mu(x) \]
among all signed measures $\mu$ on $[0,\infty)$ with total mass
$\int d\mu = p_1$ and such that $\mu$ is non-negative on $[\alpha_1,
\infty)$, where $\alpha_1$ is given by \eqref{eq:alphaj2}.

In the modified equilibrium problem for $V_2$ we ask to minimize
\[ \iint \log \frac{1}{|x-y|} d\mu(x) d\mu(y) + \int V_2(x) d\mu(x) \]
among all signed measures $\mu$ on $(-\infty,0]$ with total mass
$\int d\mu = p_2$ and such that $\mu$ is non-negative on
$(-\infty,\beta_2]$, where $\beta_2$ is given by \eqref{eq:betaj2}.
\end{definition}

We can explicitly find the minimizers for these modified equilibrium problems.
\begin{proposition}\label{prop:signedmeasure}
\begin{enumerate}
\item[\rm (a)] The minimizer in the modified equilibrium problem for $V_1$ is the
signed measure $\mu_1$ on $[0,\beta]$ with
\begin{equation} \label{def:beta}
    \beta = \frac{\alpha_1 + \beta_1 + 2 \sqrt{\alpha_1^2 + \beta_1^2 - \alpha_1 \beta_1}}{3}  > 0
    \end{equation}
given by
\begin{equation} \label{def:mu1}
    \frac{d\mu_1}{dx} =
    \frac{x-\delta_1}{2\pi t (1-t)} \sqrt{\frac{\beta-x}{x}}, \qquad x \in [0, \beta], \end{equation}
with
\begin{equation} \label{def:delta1}
    \delta_1 = \frac{\alpha_1 + \beta_1 - \sqrt{\alpha_1^2 + \beta_1^2 - \alpha_1
    \beta_1}}{3}.
    \end{equation}
\item[\rm (b)] The minimizer in the modified equilibrium problem for $V_2$ is the
signed measure $\mu_2$ on $[-\alpha,0]$ with
\begin{equation} \label{def:alpha}
    \alpha = \frac{-\alpha_2 - \beta_2 + 2 \sqrt{\alpha_2^2 + \beta_2^2 - \alpha_2 \beta_2}}{3} > 0
    \end{equation}
given by
\begin{equation} \label{def:mu2}
    \frac{d\mu_2}{dx} =
    \frac{\delta_2-x}{2\pi t(1-t)}  \sqrt{\frac{x+\alpha}{-x}}, \qquad x \in [-\alpha, 0],
    \end{equation}
with
\begin{equation} \label{def:delta2}
    \delta_2 = \frac{\alpha_2 + \beta_2 + \sqrt{\alpha_2^2 + \beta_2^2 - \alpha_2
    \beta_2}}{3}.
    \end{equation}
\end{enumerate}
\end{proposition}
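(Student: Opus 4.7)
The plan is to invoke uniqueness of the minimizer (from strict convexity of the energy), derive the Euler--Lagrange conditions, and then verify directly that the announced signed measures satisfy them. By the reflection symmetry $x \mapsto -x$, which swaps the modified problems for $V_1$ and $V_2$ after the relabelings $a_j \leftrightarrow -a_{3-j}$, $b_j \leftrightarrow -b_{3-j}$, $\alpha_j \leftrightarrow -\beta_{3-j}$, $\beta_j \leftrightarrow -\alpha_{3-j}$, it suffices to prove part (a). The logarithmic energy is strictly convex on the affine space of signed measures with fixed total mass $p_1$ and finite energy, since the logarithmic kernel is conditionally positive definite on signed measures of total mass zero; together with coercivity from the quadratic growth of $V_1$, this gives existence and uniqueness of a minimizer $\mu_1$. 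Taking first-order variations $\mu_1 + \epsilon \nu$ with $\nu$ of zero total mass---unconstrained on $[0,\alpha_1)$ where $\mu_1$ may be signed, and required to be nonnegative where $\mu_1$ vanishes inside $[\alpha_1,\infty)$---yields the Euler--Lagrange conditions: there exists $\ell_1 \in \mathbb{R}$ with
\begin{align*}
    2U^{\mu_1}(x) + V_1(x) &= \ell_1, \qquad x \in [0,\alpha_1) \cup \supp(\mu_1), \\
    2U^{\mu_1}(x) + V_1(x) &\geq \ell_1, \qquad x \in [\alpha_1,\infty) \setminus \supp(\mu_1).
\end{align*}

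Next I would posit $\supp(\mu_1) = [0,\beta]$ for some $\beta > 0$ and reformulate the equality condition as a scalar Riemann--Hilbert problem for the Stieltjes transform $G_1(z) := \int d\mu_1(y)/(z-y)$: analytic off $[0,\beta]$, with $G_{1,+}(x) + G_{1,-}(x) = V_1'(x)$ on $(0,\beta)$, and with $G_1(z) = p_1/z + O(z^{-2})$ at infinity. The ansatz
\[ G_1(z) = \tfrac12 V_1'(z) - \frac{z - \delta_1}{2t(1-t)}\sqrt{\frac{z-\beta}{z}}, \]
with the square root analytic off $[0,\beta]$ and tending to $1$ at infinity, satisfies the jump relation by inspection (the square root changes sign across the cut while $V_1'$ is entire). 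Matching the $z^0$ and $z^{-1}$ coefficients of the expansion at infinity against $p_1/z$ produces two algebraic equations for $(\beta,\delta_1)$; using $\alpha_1+\beta_1 = 2((1-t)a_1+tb_1)$ and $\alpha_1\beta_1 = ((1-t)a_1+tb_1)^2 - 4p_1 t(1-t)$, so that $\alpha_1^2+\beta_1^2-\alpha_1\beta_1 = ((1-t)a_1+tb_1)^2 + 12 p_1 t(1-t)$, these reduce to precisely \eqref{def:beta} and \eqref{def:delta1} (the positive root being selected by $\beta > 0$). The density \eqref{def:mu1} is then recovered as $-\pi^{-1}\Im G_{1,+}$ on $(0,\beta)$.

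Finally, one verifies the constraints. Non-negativity of $\mu_1$ on $[\alpha_1,\beta]$ is equivalent to $\delta_1 \leq \max(0,\alpha_1)$, which follows from a brief computation with \eqref{def:delta1}: in the case $\alpha_1 \geq 0$ one shows $\delta_1 - \alpha_1 \leq 0$ by squaring $\beta_1 - 2\alpha_1$ against $\sqrt{\alpha_1^2+\beta_1^2-\alpha_1\beta_1}$ and using $\alpha_1 < \beta_1$, while in the case $\alpha_1 < 0$ one shows $\delta_1 \leq 0$ by squaring $\alpha_1+\beta_1$ and using $\alpha_1\beta_1 \leq 0$. For the inequality on $(\beta,\infty)$, differentiation gives
\[ \bigl(2U^{\mu_1}+V_1\bigr)'(x) = -2G_1(x) + V_1'(x) = \frac{x-\delta_1}{t(1-t)}\sqrt{\frac{x-\beta}{x}}, \qquad x > \beta, \]
which is positive since $\beta - \delta_1 = \sqrt{\alpha_1^2+\beta_1^2-\alpha_1\beta_1} > 0$ forces $\delta_1 < \beta < x$; integrating from $\beta$, where $2U^{\mu_1}+V_1 = \ell_1$ by continuity, closes the argument. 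The main obstacle is the non-negativity verification, as one must handle both signs of $\alpha_1$ that can occur for finite $n$ in the double scaling limit; everything else is routine manipulation of the square-root ansatz.
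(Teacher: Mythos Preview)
Your approach is essentially the same as the paper's: both posit the square-root ansatz $G_1(z)=\tfrac12 V_1'(z)-\frac{z-\delta_1}{2t(1-t)}\sqrt{(z-\beta)/z}$ for the Cauchy/Stieltjes transform and determine $(\beta,\delta_1)$ by matching the large-$z$ expansion. The paper phrases this via the observation that $z\bigl(\tfrac12 V_1'(z)-g_1'(z)\bigr)^2$ is entire with polynomial growth, hence equals $\frac{(z-\delta_1)^2(z-\beta)}{4t^2(1-t)^2}$, and then reads off $\beta,\delta_1$ from the $O(z^2)$ and $O(z)$ coefficients; you reach the same two algebraic equations by expanding $G_1$ directly.

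The difference is in completeness rather than strategy. The paper's argument simply asserts the variational equality $g_{1,+}+g_{1,-}=V_1+c_1$ on $(0,\beta)$ and never returns to check the inequality on $(\beta,\infty)$ or the non-negativity constraint on $[\alpha_1,\infty)$; nor does it discuss existence/uniqueness in the signed-measure setting. Your proof supplies these missing pieces: the convexity/coercivity argument for existence and uniqueness, the Euler--Lagrange conditions adapted to the one-sided sign constraint, the case split on $\mathrm{sgn}(\alpha_1)$ to verify $\delta_1\le\max(0,\alpha_1)$, and the monotonicity computation for the inequality on $(\beta,\infty)$. So your write-up is a genuine completion of what the paper leaves implicit, while the underlying mechanism is identical.
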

\begin{proof}
Let $g_1$ be the $g$-function associated with $ \mu_1$, i.e.,
\begin{equation}
g_1(z)=\int \log(z-s) d\mu_1(s),\qquad z\in
\mathbb{C}\setminus(-\infty,\beta].
\end{equation}
Clearly, $g_1$ is defined and analytic in $\mathbb{C}\setminus(-\infty,\beta]$.
It then follows from the variational conditions of the equilibrium problem that
\begin{equation}\label{eq:g1++g1-}
        g_{1,+}(x)+g_{1,-}(x)=V_1(x)+ c_1, \qquad x\in(0,\beta),
\end{equation}
for some constant $c_1$ depending on $n$. Differentiating both
sides of \eqref{eq:g1++g1-} with respect to $x$ gives
\begin{equation}
g_{1,+}'(x)+g_{1,-}'(x)=V_1'(x),
\end{equation}
or equivalently,
\begin{equation}\label{eq:v-g=v-g}
\left(\frac{1}{2}V_1'(x)-g_{1}'(x)\right)_+=
-\left(\frac{1}{2}V_1'(x)-g_{1}'(x)\right)_-
\end{equation}
for $x\in (0,\beta)$. This, together with the fact that $g_{1,+}$
and $g_{1,-}$ only differ by a constant for $x\leq 0$, implies that
$\left(\frac{1}{2}V_1'(z)-g_{1}'(z)\right)^2$ is an entire function
in the complex plane. Since this function only has polynomial growth
for $z$ large and $\mu_1$ is supported on $[0, \beta]$, it is
readily seen that
\begin{equation}\label{eq:v1/2-g1}
z\left(\frac{1}{2}V_1'(z)-g_{1}'(z)\right)^2
=\frac{(z-\delta_1)^2(z-\beta)}{4t^2(1-t)^2}
\end{equation}
for some $\delta_1<\beta$. Hence,
\begin{align}
\frac{d\mu_1}{dx} &= \frac{1}{\pi} \Im
\left(\frac{1}{2}V_1'(x)-g_{1}'(x)\right)_+ \\
&= \frac{x-\delta_1}{2\pi t (1-t)} \sqrt{\frac{\beta-x}{x}}, \qquad x \in [0,
\beta],
\end{align}
which is \eqref{def:mu1}.

To obtain the representations of $\beta$ and $\delta_1$, we expand
\eqref{eq:v1/2-g1} as $z\to\infty$. Comparing the coefficients of order
$O(z^2)$ and $O(z)$ on both sides and taking into account
\eqref{eq:alphaj2}--\eqref{eq:betaj2} leads to
\begin{equation}
\begin{aligned}
\beta+2\delta_1&=\alpha_1+\beta_1,
\\
\delta_1(2\beta+\delta_1)&=\alpha_1 \beta_1.
\end{aligned}
\end{equation}
Solving the above algebraic equations for $\delta_1$ and $\beta$
gives us \eqref{def:beta} and \eqref{def:delta1}.

The explicit formula for $\mu_2$ stated in item $(b)$ can be proved
in a similar manner as for $\mu_1$. To that end, we need to use
\begin{equation}\label{eq:v2-g}
z\left( \frac{1}{2} V_2'(z) - g_{2}'(z) \right)^2  =
\frac{(z-\delta_2)^2 (z+\alpha)}{4 t^2 (1-t)^2},
\end{equation}
where
\begin{equation}
g_2(z)=\int \log(z-s) d\mu_2(s),\qquad z\in
\mathbb{C}\setminus(-\infty,0],
\end{equation}
and $\delta_2 > -\alpha$. We omit the details here.

This completes the proof of the proposition.
\end{proof}

\begin{remark} We can check from formula \eqref{def:delta1} that $\delta_1$
has the same sign as $\alpha_1$. For example, if $\alpha_1 > 0$ then
\[ 0 < \tfrac{1}{3} \alpha_1 <  \delta_1 < \tfrac{1}{2} \alpha_1, \]
so in particular $\delta_1 > 0$. Then the density of $\mu_1$ is negative on the
interval $(0, \delta_1)$. Similarly, if $\alpha_1 < 0$ then \[ \tfrac{2}{3}
\alpha_1 < \delta_1 < \tfrac{1}{2} \alpha_1 <0, \]  so in particular $\delta_1
< 0$ and $\mu_1$ is positive on $[0,\beta]$.
\end{remark}

We recall that the constants $\alpha$, $\beta$, $\delta_{1}$ and $\delta_{2}$
in the above proposition all depend on $n$. From
\eqref{doublescaling:a}--\eqref{doublescaling:b},
\eqref{eq:alphaj2}--\eqref{eq:betaj2} and \eqref{particlesatzero}, it follows
that
\begin{align}
&\alpha=\alpha^*+O(n^{-2/3}), \qquad \beta=\beta^*+O(n^{-2/3}),
\label{limit:al and beta}\\
&\delta_1 = \frac{L_1(1-t)+L_3t}{2}
n^{-2/3}+O(n^{-4/3}), \label{critlim:del1} \\
&\delta_2 = \frac{L_2(1-t)+L_4t}{2} n^{-2/3}+O(n^{-4/3}),
\label{critlim:del2}
\end{align}
as $n\to \infty$, where $\alpha^*$ and $\beta^*$ are given by
\eqref{def:alphastar} and \eqref{def:betastar}, respectively.

\subsection{The $\xi$-functions}

Let $F_1$ and $F_2$ be the Cauchy transforms of the minimizers
$\mu_1$ and $\mu_2$, i.e.,
\[ F_j(z) = \int \frac{1}{z-x} d\mu_j(x), \quad
z\in\mathbb{C}\setminus \supp(\mu_j), \quad  j = 1,2.\] Clearly,
$g_{j}'(z)=F_j(z)$. In view of \eqref{eq:v1/2-g1} and \eqref{eq:v2-g}, we have
the following useful identities:
\begin{align*}
    \left( \frac{1}{2} V_1'(z) - F_1(z) \right)^2 &
= \frac{(z-\delta_1)^2 (z-\beta)}{4 t^2(1-t)^2 z}, \\
    \left( \frac{1}{2} V_2'(z) - F_2(z) \right)^2 & =
\frac{(z-\delta_2)^2 (z+\alpha)}{4 t^2 (1-t)^2 z},
    \end{align*}
which are valid for every $z \in \mathbb C$.

The $\xi$-functions are defined as follows:
\begin{definition}
We define
\begin{equation} \label{def:xi12}
\begin{aligned}
    \xi_1(z) & =  \frac{1}{2} V_1'(z) - F_1(z) = \frac{z-\delta_1}{2t(1-t)} \left(\frac{z-\beta}{z} \right)^{1/2}, \\
    \xi_2(z) & = \frac{1}{2} V_2'(z) - F_2(z) = \frac{z-\delta_2}{2t(1-t)} \left(\frac{z+\alpha}{z} \right)^{1/2},
\end{aligned}
\end{equation}
where the branch of the square root is taken which is positive for positive $z > \beta$. Then
$\xi_1$ is defined and analytic in $\mathbb C \setminus [0,\beta]$, $\xi_2$ is defined and analytic
in $[-\alpha,0]$.
\end{definition}

The $\xi$-functions have the asymptotic behavior
\begin{align} \label{xi12:asy}
    \xi_k(z) &=  \frac{1}{2}V_k'(z) -\frac{p_k}{z}+O(z^{-2}), \qquad k
    =1,2,
\end{align}
as $z \to \infty$.

\subsection{The $\lambda$-functions}\label{sec:lamda function}

The functions $\lambda_k$ are defined as the following
anti-derivatives of the $\xi_k$-functions:

\begin{definition}
We define
\begin{equation} \label{def:lambda12}
    \lambda_k(z) =  \int_{0}^z \xi_k(s)\ ds, \qquad k =1,2,
\end{equation}
where the contour of integration does not intersect $(0,\infty)$ if
$k=1$ and $(-\infty,0)$ if $k=2$. Then $\lambda_1$ is defined and
analytic in $\mathbb C\setminus [0,\infty)$, $\lambda_2$ is defined
and analytic in $\mathbb C\setminus (-\infty,0]$.
\end{definition}

From \eqref{def:xi12} and \eqref{def:lambda12}, we have the
following explicit expressions for the $\lambda$-functions:
\begin{equation}\label{lambda1}
\begin{aligned}
        \lambda_1(z) & =
        \frac{2z- \beta - 4\delta_1}{8t(1-t)}  (z^2 - \beta z)^{1/2} \\
    & \qquad - p_1 \left(\log \left(z - \tfrac{\beta}{2}
    + (z^2 - \beta z)^{1/2} \right) - \log\left(-\tfrac{\beta}{2}\right)
    \right),
\end{aligned}
\end{equation}
where the logarithm is defined with a branch cut along the positive real axis, so that for example
$\log(-\tfrac{\beta}{2}) = \log(\tfrac{\beta}{2}) + \pi i$,
and
\begin{equation} \label{lambda2}
\begin{aligned}
        \lambda_2(z) & =
        \frac{2z+ \alpha - 4 \delta_2}{8t(1-t)}   (z^2 + \alpha z)^{1/2} \\
        & \qquad - p_2 \left(\log \left(z + \tfrac{\alpha}{2}
        + (z^2 + \alpha z)^{1/2} \right) - \log\left(\tfrac{\alpha}{2}\right)
        \right),
\end{aligned}
\end{equation}
where now the logarithm is defined with a branch cut along the negative real axis.

Integrating \eqref{xi12:asy} (or from \eqref{lambda1}--\eqref{lambda2}), we get
the following asymptotic behavior:
\begin{align}\label{lambda12:asy}
    \lambda_k(z)&= \frac{1}{2} V_k(z) -p_k\log z+\ell_k+O(z^{-1}), \qquad k = 1,2,
\end{align}
as $z \to \infty$, for certain constants $\ell_1,\ell_2$ that can be computed from \eqref{lambda1}--\eqref{lambda2}.

In what follows, we will also need the following inequalities for the
$\lambda$-functions. Here we write $\lambda_{1,\pm}(x)$ for $x>0$ to denote the
boundary values of $\lam_1$ obtained from the upper or lower half plane
respectively, and similarly we define $\lam_{2,\pm}(x)$ for $x<0$.

\begin{lemma} \label{lemma:lambda:inequalities}
We have
\begin{equation} \label{var:ineq1}
    \Re \lambda_{1,+}(x)  =\Re \lambda_{1,-}(x)
        \begin{cases} =0, & x\in [0,\beta] \\
        > 0, & x\in(\beta,\infty), \end{cases}
        \end{equation}
\begin{equation} \label{var:ineq2}
    \Re \lambda_{2,+}(x)=\Re \lambda_{2,-}(x)
    \begin{cases} =0, & x\in [-\alpha,0] \\
        > 0, & x\in(-\infty,-\alpha), \end{cases}
    \end{equation}
\begin{equation} \label{var:ineq3}
    \Im \lambda_{1,+}(x)=-\Im \lambda_{1,-}(x)= \pi \mu_1((0,x]),\quad x\geq 0,
    \end{equation}
\begin{equation} \label{var:ineq4}
    \Im \lambda_{2,+}(x)=-\Im \lambda_{2,-}(x)= \pi \mu_2([x,0)),\quad x\leq 0.
    \end{equation}
\end{lemma}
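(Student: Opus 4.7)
The plan is to read off the boundary behaviour of $\lambda_k$ directly from the explicit factorization \eqref{def:xi12} of $\xi_k$, combined with a Schwarz reflection argument. I will treat $\lambda_1$ in detail; the statements for $\lambda_2$ follow by the obvious symmetric analysis.

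First I would set up Schwarz reflection. On the negative real axis both factors $x-\beta$ and $x$ in $\sqrt{(z-\beta)/z}$ are negative, so the quotient is positive and by the branch convention in \eqref{def:xi12} the square root is positive. Thus $\xi_1$ is real on $(-\infty,0)$, and since $\lambda_1$ is defined in $\mathbb{C}\setminus[0,\infty)$ as the antiderivative of $\xi_1$ vanishing at $0$ (along a path that may be taken through the left half plane), the Schwarz reflection principle gives $\lambda_1(\bar z)=\overline{\lambda_1(z)}$. Taking boundary values on $(0,\infty)$ this reads $\lambda_{1,-}(x)=\overline{\lambda_{1,+}(x)}$, which already gives the equalities $\Re\lambda_{1,+}(x)=\Re\lambda_{1,-}(x)$ and $\Im\lambda_{1,+}(x)=-\Im\lambda_{1,-}(x)$ appearing in \eqref{var:ineq1} and \eqref{var:ineq3}.

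Next I would analyze $\xi_{1,+}$ on the two real subintervals. For $x\in(0,\beta)$ the quotient $(x-\beta)/x$ is negative so the $+$-boundary value of the square root is $i\sqrt{(\beta-x)/x}$; hence
\[
\xi_{1,+}(x)=i\,\frac{x-\delta_1}{2t(1-t)}\sqrt{\tfrac{\beta-x}{x}}= i\pi\,\tfrac{d\mu_1}{dx}(x),
\]
the last equality by \eqref{def:mu1}. In particular $\xi_{1,+}$ is purely imaginary on $(0,\beta)$. Since $\xi_1(s)\sim -\delta_1\sqrt{-\beta/s}\,/(2t(1-t))$ is integrable at $s=0$, one has $\lambda_{1,+}(0)=0$, and integration along the upper side of the real axis from $0$ to $x\in[0,\beta]$ yields $\Re\lambda_{1,+}(x)=0$ and $\Im\lambda_{1,+}(x)=\pi\mu_1((0,x])$. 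This proves the first line of \eqref{var:ineq1} and the part of \eqref{var:ineq3} for $x\in[0,\beta]$.

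For $x>\beta$ the square root factor $\sqrt{(x-\beta)/x}$ is positive real; moreover the elementary identity $\beta-\delta_1=\sqrt{\alpha_1^2+\beta_1^2-\alpha_1\beta_1}>0$ (from \eqref{def:beta}--\eqref{def:delta1}, using that the quadratic form is positive definite) gives $x-\delta_1>0$, so $\xi_1(x)>0$. Integrating from $\beta$ to $x$ on the upper side, and using the previously established values $\Re\lambda_{1,+}(\beta)=0$ and $\Im\lambda_{1,+}(\beta)=\pi\mu_1((0,\beta])=\pi p_1$, yields $\Re\lambda_{1,\pm}(x)=\int_\beta^x\xi_1(s)\,ds>0$ and $\Im\lambda_{1,+}(x)=\pi p_1=\pi\mu_1((0,x])$, which completes \eqref{var:ineq1} and \eqref{var:ineq3}. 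The inequalities \eqref{var:ineq2} and \eqref{var:ineq4} are obtained in precisely the same way from the explicit formula for $\xi_2$ in \eqref{def:xi12}, noting now that $\xi_2$ is real on $(0,\infty)$ (so Schwarz reflection is applied across the positive real axis) and that $\delta_2<\alpha$ follows from \eqref{def:alpha}, \eqref{def:delta2}. There is no serious obstacle here; the verification is a bookkeeping exercise once the behaviour of the branch $\sqrt{(z-\beta)/z}$ and $\sqrt{(z+\alpha)/z}$ on each segment of the real axis has been correctly identified.
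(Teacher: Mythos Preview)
Your proof is correct and follows essentially the same approach as the paper: both compute the boundary values $\xi_{1,\pm}(x)=\pm i\pi\,d\mu_1/dx$ on $(0,\beta)$ from the explicit formula \eqref{def:xi12}, integrate to obtain the equalities, and then use $\xi_1(x)>0$ on $(\beta,\infty)$ for the strict inequality. Your explicit Schwarz reflection argument is a clean way to get the conjugate-symmetry relations $\lambda_{1,-}=\overline{\lambda_{1,+}}$, which the paper leaves implicit. One small slip: for the $\lambda_2$ inequality you need $\delta_2>-\alpha$ (so that $x-\delta_2<0$ for all $x<-\alpha$), not ``$\delta_2<\alpha$''; this is exactly what \eqref{def:alpha}--\eqref{def:delta2} give, since $\delta_2+\alpha=\sqrt{\alpha_2^2+\beta_2^2-\alpha_2\beta_2}>0$.
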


\begin{proof}
Using \eqref{def:mu1}, \eqref{def:mu2} and \eqref{def:xi12}, we find
\begin{align*}
 \xi_{1,\pm}(x) & = \pm \pi i \frac{d\mu_1}{dx}, \qquad 0 < x < \beta, \\
 \xi_{2,\pm}(x) & = \pm \pi i \frac{d\mu_2}{dx}, \qquad -\alpha < x < 0,
\end{align*}
which after integration, see \eqref{def:lambda12}, leads to the equalities in
\eqref{var:ineq1}--\eqref{var:ineq4}.
From \eqref{def:xi12} it is also clear that $\xi_1(x) > 0$ for $x > \beta$ and
$\xi_2(x) < 0$ for $x < -\alpha$. This leads after integration to the inequalities
in \eqref{var:ineq1}--\eqref{var:ineq2}.
\end{proof}

Note that as a consequence of \eqref{var:ineq1} and \eqref{var:ineq3} we have
\[ \lambda_{1,+}(x)-\lambda_{1,-}(x) = 2\pi i \mu_1((0,x]) = 2\pi i p_1,\qquad x\geq \beta. \]
Since $np_1 = n_1$ is an integer, see \eqref{defp1p2}, we then obtain
\begin{equation} \label{var:ineq5}
    e^{n (\lam_{1,+}(x)-\lam_{1,-}(x))}= 1, \qquad x \geq \beta.
\end{equation}
Similarly we have
\begin{equation} \label{var:ineq6}
    e^{n (\lam_{2,+}(x)-\lam_{2,-}(x))} = 1, \qquad x\leq -\alpha.
\end{equation}

\section{Steepest descent analysis for $Y(z)$}
\label{section:steepestdescent}

In this section we perform the steepest descent analysis of the RH
problem~\ref{rhp:Y} for $Y$. To this
end we apply a series of explicit and invertible transformations
\[ Y \mapsto X \mapsto U \mapsto T \mapsto S \mapsto R \]
of the RH problem. In Section~\ref{section:proofsmaintheorems} we will use
these transformations to prove Theorems~\ref{theorem:kernelpsi} and
\ref{theorem:Painleve2rec}.

\subsection{First transformation: Gaussian elimination}
\label{subsection:gausselim}

In the first transformation we apply Gaussian elimination to the
jump matrices of the RH problem for $Y(z)$. This kind of operation
was also done in \cite{DelKui1}. Note that in \cite{DelKui1} there
were actually two types of Gaussian elimination, since we needed to
make a case distinction between $0<t<t_{\crit}$ and $t_{\crit}<t<1$.
In the present case, however, we are looking precisely at the
critical time $t=t_{\crit}$, and therefore we are able to apply both
kinds of Gaussian elimination simultaneously.

We introduce a  curve $\Gamma_r$ in the right-half plane passing through $0$
at angles $\pm \varphi_2$ with
\[ 0 < \varphi_2 < \pi/3 \]
and extending to infinity and a similar curve $\Gamma_l$ in the left-half plane
with orientation as shown in Figure \ref{fig:jumpsX}. The domains enclosed by
$\Gamma_l$ around the negative real axis, and by $\Gamma_r$ around the positive
real axis are called the global lenses \cite{ABK}.

Now we start from the solution $Y$ of the RH problem \ref{rhp:Y} and
define a new matrix-valued function $X=X(z)$ as follows.
\begin{definition}
We define
\begin{align}
\label{defX1} X& = Y\begin{pmatrix}1&0&0&0\\ \frac{w_{1,2}}{w_{1,1}}&1&0&0  \\
    0&0&1&-\frac{w_{2,2}}{w_{2,1}}\\
    0&0&0&1\end{pmatrix},\quad && \begin{array}{l} \text{in the global lens} \\
    \text{around the positive real axis}, \end{array} \\
\label{defX2} X&= Y\begin{pmatrix}1&\frac{w_{1,1}}{w_{1,2}}&0&0\\ 0&1&0&0  \\
    0&0&1&0\\ 0&0&-\frac{w_{2,1}}{w_{2,2}}&1
    \end{pmatrix},\quad && \begin{array}{l} \text{in the global lens} \\
    \text{around the negative real axis}, \end{array} \\
\label{defX3} X&= Y, && \textrm{elsewhere,}
\end{align}
where we recall the weights $w_{j,k}$, $j,k=1,2$ from
\eqref{gaussianw1}--\eqref{gaussianw2}.
\end{definition}

The matrix-valued function $X$ satisfies a new RH problem, with
jumps on the contour $\mathbb R \cup\Gamma_l\cup\Gamma_r$ with the
jump matrices as shown in Figure \ref{fig:jumpsX}.

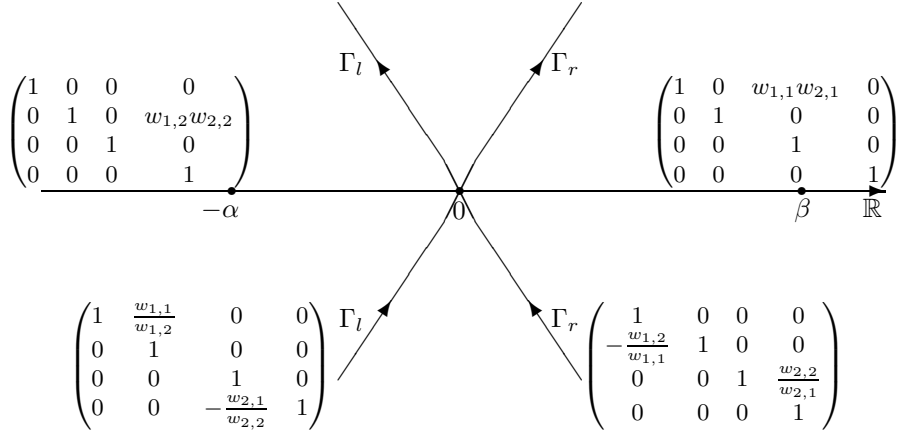
\begin{figure}[t]
\begin{center}
   \setlength{\unitlength}{1truemm}
   \begin{picture}(100,70)(-5,2)
       \put(40,40){\line(-1,-2){2}}
       \put(40,40){\line(-1,2){2}}
       \put(38,36){\line(-2,-3){14}}
       \put(38,44){\line(-2,3){14}}
       \put(24.1,56){$\Gamma_l$}
       \put(24.1,22){$\Gamma_l$}
       \put(40,40){\line(1,-2){2}}
       \put(40,40){\line(1,2){2}}
       \put(42,36){\line(2,-3){14}}
       \put(42,44){\line(2,3){14}}
       \put(52,56){$\Gamma_r$}
       \put(52,22){$\Gamma_r$}
       \put(50,56){\thicklines\vector(2,3){1}}
       \put(30,56){\thicklines\vector(-2,3){1}}
       \put(50,24){\thicklines\vector(-2,3){1}}
       \put(30,24){\thicklines\vector(2,3){1}}

       \put(-15,40){\line(1,0){110}}
       \put(93,36.6){$\mathbb R $}
       \put(95,40){\thicklines\vector(1,0){1}}
       \put(6,36.6){$-\alpha$}
       \put(39,36.3){$0$}
       \put(84,36.6){$\beta$}
       \put(10,40){\thicklines\circle*{1}}
       \put(40,40){\thicklines\circle*{1}}
       \put(85,40){\thicklines\circle*{1}}

       \put(56,16){$\small{\begin{pmatrix}1&0&0&0\\ -\frac{w_{1,2}}{w_{1,1}}&1&0&0  \\
       0&0&1&\frac{w_{2,2}}{w_{2,1}}\\ 0&0&0&1 \end{pmatrix}}$}
       \put(-11.5,16){$\small{\begin{pmatrix}1&\frac{w_{1,1}}{w_{1,2}}&0&0\\ 0&1&0&0  \\
       0&0&1&0\\ 0&0&-\frac{w_{2,1}}{w_{2,2}}&1 \end{pmatrix}}$}
       \put(-20,47){$\small{\begin{pmatrix}1&0&0&0\\ 0&1&0&w_{1,2}w_{2,2}  \\
       0&0&1&0\\ 0&0&0&1 \end{pmatrix}}$}
       \put(65,47){$\small{\begin{pmatrix}1&0&w_{1,1}w_{2,1}&0\\ 0&1&0&0  \\
       0&0&1&0\\ 0&0&0&1 \end{pmatrix}}$}

   \end{picture}
   \vspace{-4mm}
   \caption{The curves $\Gamma_l$ and $\Gamma_r$, and the jump matrices $J_X$
   on $\mathbb R \cup \Gamma_r \cup \Gamma_l$ in the RH problem \ref{rhpforX} for $X$.}
   \label{fig:jumpsX}
\end{center}
\end{figure}

Thus $X$ satisfies the following RH problem.
\begin{rhp} \label{rhpforX}
\textrm{ }\begin{enumerate}
\item[\rm (1)] $X$ is analytic in
$\mathbb C \setminus (\mathbb R  \cup \Gamma_l \cup \Gamma_r)$.
\item[\rm (2)] On $\mathbb R  \cup \Gamma_l \cup \Gamma_r$,
we have that $X_+ = X_- J_X$ with jump matrices $J_X$ as shown  in
Figure \ref{fig:jumpsX}.
\item[\rm (3)] As $z\to\infty$, we have that
\begin{equation}\label{asymptoticconditionX} X(z) =
    (I+O(1/z))\diag(z^{n_1},z^{n_2},z^{-n_1},z^{-n_2}).
\end{equation}
\end{enumerate}
\end{rhp}

Note that we have taken the temperature $T = 1$, so that by \eqref{gaussianw1},
\eqref{gaussianw2} and \eqref{eq:V1V2}
\begin{equation}\label{jump for X on real line}
        w_{1,1} w_{2,1} = e^{-nV_1}, \qquad
        w_{1,2} w_{2,2} = e^{-n V_2},
\end{equation}
which are the non-trivial entries in the jump matrices on the real
line, and
\begin{align}   \label{w11 over w12}
    \frac{w_{1,1}(x)}{w_{1,2}(x)} & = e^{\frac{n}{t} (a_1-a_2) x} =
    e^{-\frac{n}{2} (V_1(x) - V_2(x))} e^{n \kappa x}, \\
    \label{w22 over w21}
    \frac{w_{2,2}(x)}{w_{2,1}(x)} & = e^{- \frac{n}{1-t} (b_1-b_2) x} =
    e^{ \frac{n}{2} (V_1(x) - V_2(x))} e^{n \kappa x},
\end{align}
with a constant
\begin{equation} \label{def:kappa}
    \kappa = \frac{1}{2} \left( \frac{a_1-a_2}{t} - \frac{b_1-b_2}{1-t}\right),
\end{equation}
which appear in the jump matrices on $\Gamma_r$ and $\Gamma_l$.

The constant $\kappa$ depends on $n$, and tends to $0$ as $n \to
\infty$. Indeed we have
\begin{equation}\label{kappa:asy}
    \kappa = \frac{1}{2} \left( \frac{L_1 - L_2}{t} - \frac{L_3 -L_4}{1-t} \right) n^{-2/3},
\end{equation}
on account of \eqref{tcrit:bis} and
\eqref{doublescaling:a}--\eqref{doublescaling:b}.

\subsection{Second transformation: Normalization at infinity}
\label{subsection:normalization}

The next transformation is to normalize the RH problem at infinity.
To this end we use the $\lambda_k$-functions defined in
\eqref{def:lambda12}.

\begin{definition}
We define a new $4\times 4$ matrix-valued function $U = U(z)$ by
\begin{equation}\label{defU}
U=  L^{-n}X \Lambda^n ,
\end{equation}
where $\Lambda = \Lambda(z)$ is given by
\begin{multline} \label{def:Lam}
    \Lambda =
    \diag\left(\exp\left(\lambda_1- \tfrac{1}{2} V_1 \right),
    \exp\left(\lambda_2 - \tfrac{1}{2} V_2 \right),\right. \\
    \left. \exp\left(-\lambda_1 + \tfrac{1}{2} V_1 \right),
    \exp\left(-\lambda_2 + \tfrac{1}{2} V_2 \right) \right),
\end{multline}
and
\begin{equation}
\label{defLmx} L = \diag\left(e^{\ell_1},e^{\ell_2},
e^{-\ell_1},e^{-\ell_2}\right),
\end{equation}
with $\ell_1$ and $\ell_2$ as in \eqref{lambda12:asy}.
\end{definition}

Then $U$ satisfies the following RH problem.
\begin{rhp}  \label{rhpforU}
\textrm{ }
\begin{enumerate}
    \item[\rm (1)] $U$ is analytic in $\mathbb C \setminus ( \mathbb R  \cup \Gamma_l \cup \Gamma_r)$.
  \item[\rm (2)] On $\mathbb R  \cup \Gamma_l \cup \Gamma_r$,
  we have that $U_+ = U_- J_U$ with jump matrices $J_U$ as shown in
  Figure~\ref{fig:jumpsU}.
  \item[\rm (3)] As $z\to\infty$, we have that
\begin{equation}\label{asymptoticconditionU} U(z) =
    I+O(1/z).
\end{equation}
\end{enumerate}
\end{rhp}

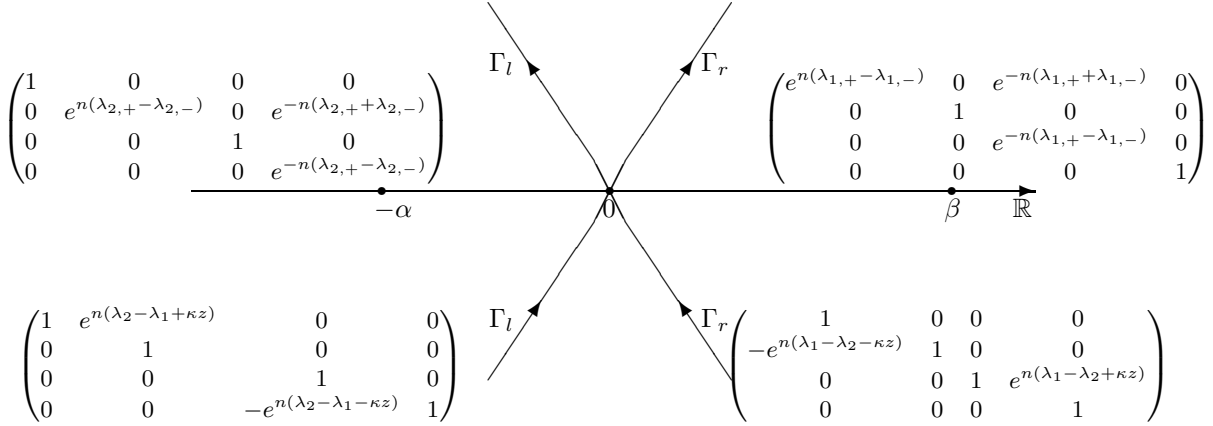
\begin{figure}[t]
\begin{center}
   \setlength{\unitlength}{1truemm}
   \begin{picture}(100,70)(-5,2)
          \put(40,40){\line(-1,-2){2}}
       \put(40,40){\line(-1,2){2}}
       \put(38,36){\line(-2,-3){14}}
       \put(38,44){\line(-2,3){14}}
       \put(24.1,56){$\Gamma_l$}
       \put(24.1,22){$\Gamma_l$}
       \put(40,40){\line(1,-2){2}}
       \put(40,40){\line(1,2){2}}
       \put(42,36){\line(2,-3){14}}
       \put(42,44){\line(2,3){14}}
       \put(52,56){$\Gamma_r$}
       \put(52,22){$\Gamma_r$}
       \put(50,56){\thicklines\vector(2,3){1}}
       \put(30,56){\thicklines\vector(-2,3){1}}
       \put(50,24){\thicklines\vector(-2,3){1}}
       \put(30,24){\thicklines\vector(2,3){1}}

       \put(-15,40){\line(1,0){110}}
       \put(93,36.6){$\mathbb R $}
       \put(95,40){\thicklines\vector(1,0){1}}
       \put(9,36.6){$-\alpha$}
       \put(39,36.6){$0$}
       \put(84,36.6){$\beta$}
       \put(10,40){\thicklines\circle*{1}}
       \put(40,40){\thicklines\circle*{1}}
       \put(85,40){\thicklines\circle*{1}}

       \put(55,16){$\small{\begin{pmatrix}1&0&0&0\\ -e^{n(\lambda_1-\lambda_2 - \kappa z)} &1&0&0  \\
       0&0&1&e^{n(\lambda_1-\lambda_2 + \kappa z)} \\ 0&0&0&1
       \end{pmatrix}}$}
       \put(-38,16){$\small{\begin{pmatrix}1&e^{n(\lambda_2-\lambda_1 + \kappa z)} &0&0\\ 0&1&0&0  \\
       0&0&1&0\\ 0&0&-e^{n(\lambda_2-\lambda_1-\kappa z)} &1
       \end{pmatrix}}$}
       \put(-40,47.5){$\small{\begin{pmatrix}1&0&0&0\\ 0&e^{n(\lambda_{2,+}-\lambda_{2,-})}&0&e^{-n(\lambda_{2,+}+\lambda_{2,-})}  \\
       0&0&1&0\\ 0&0&0&e^{-n(\lambda_{2,+}-\lambda_{2,-})}
       \end{pmatrix}}$}
       \put(60,47.5){$\small{\begin{pmatrix}e^{n(\lambda_{1,+}-\lambda_{1,-})}&0&e^{-n(\lambda_{1,+}+\lambda_{1,-})}&0\\ 0&1&0&0  \\
       0&0&e^{-n(\lambda_{1,+}-\lambda_{1,-})}&0\\ 0&0&0&1
       \end{pmatrix}}$}

   \end{picture}
   \vspace{-6mm}
   \caption{The jump matrices in the RH problem \ref{rhpforU} for $U$.}
   \label{fig:jumpsU}
\end{center}
\end{figure}

The asymptotic condition of $U$ in \eqref{asymptoticconditionU}
follows from \eqref{lambda12:asy} and the jump matrices $J_U$ in
Figure~\ref{fig:jumpsU} follow from \eqref{jump for X on real
line}--\eqref{def:kappa} and straightforward calculations.

By \eqref{var:ineq1}--\eqref{var:ineq6} the jump matrix $J_U$ takes the
following form on $(-\alpha, \beta)$,
\begin{align} \label{JU2}
    J_U & = \begin{pmatrix}
        1 & 0 & 0 & 0 \\ 0 & e^{2n \lambda_{2,+}} & 0 & 1 \\ 0 & 0 & 1 & 0 \\ 0 & 0 & 0 & e^{2n \lambda_{2,-}}
        \end{pmatrix} \qquad \text{ on } (-\alpha, 0) \\
    J_U & = \begin{pmatrix} \label{JU1}
         e^{2n \lambda_{1,+}} & 0 & 1 & 0 \\ 0 & 1 & 0 & 0 \\  0 & 0 & e^{2n \lambda_{1,-}} & 0 \\ 0 & 0 & 0 & 1
        \end{pmatrix} \qquad \text{ on } (0, \beta)
        \end{align}
and the following form on the rest of the real line
\begin{align*}
    J_U & =  I + e^{-2n \Re \lambda_{2,+}} E_{2,4}  \qquad \text{ on } (-\infty,-\alpha), \\
    J_U & =  I + e^{-2n \Re \lambda_{1,+}} E_{1,3}  \qquad \text{ on } (\beta, \infty),
    \end{align*}
which tend to the identity matrix as $n \to \infty$ at an exponential rate
because of the strict inequalities in \eqref{var:ineq1} and \eqref{var:ineq2}.
Recall that $E_{i,j}$ denotes the elementary matrix with $1$ at position
$(i,j)$ and zero elsewhere.

\subsection{Third transformation: Opening of local lenses}
\label{subsection:locallens}

In the next transformation $U\mapsto T$ we open a local lens around each of the
intervals $[-\alpha,0]$ and $[0,\beta]$.

The local lenses around $[0, \beta]$ and $[-\alpha,0]$ are illustrated in
Figure \ref{fig:jumpsT1}. The lips of the lens around $[0,\beta]$ are denoted
by $\Sigma_1$, and the lips of the lens around $[-\alpha,0]$ are denoted by
$\Sigma_2$. We make sure that $\Sigma_1$ and $\Sigma_2$ do not intersect with
$\Gamma_r$ and $\Gamma_l$, except at the origin.

The transformation $U\mapsto T$ is based on the standard factorization (we use
$\lambda_{j,+} + \lambda_{j,-} = 0$)
\begin{align*}
        \begin{pmatrix}  e^{2n \lambda_{j,+}} &  1 \\ 0 &  e^{2n \lambda_{j,-}}
        \end{pmatrix} =
        \begin{pmatrix} 1 & 0 \\ e^{2n \lambda_{j,-}} & 1 \end{pmatrix}
        \begin{pmatrix} 0 & 1 \\ -1 & 0 \end{pmatrix}
        \begin{pmatrix} 1 & 0 \\ e^{2n \lambda_{j,+}} & 1 \end{pmatrix}
        \end{align*}
that can be applied to the non-trivial $2\times 2$ blocks in \eqref{JU2} and \eqref{JU1}.

\begin{definition} \label{def:T}
We define
\begin{align} \label{defT1}
    T(z) & = \begin{cases}
    U(z) (I-e^{2n\lambda_1}E_{3,1}) &
    \textrm{ in upper lens region around $[0,\beta]$,}\\
    U(z)(I+e^{2n\lambda_1}E_{3,1}) &
    \textrm{ in lower lens region around $[0,\beta]$,} \\
    \end{cases} \\
    \label{defT2}
    T(z) & = \begin{cases}
    U(z) (I-e^{2n\lambda_2}E_{4,2}) &
    \textrm{ in upper lens region around $[-\alpha,0]$,}\\
    U(z) (I+e^{2n\lambda_2}E_{4,2}) &
    \textrm{ in lower lens region around $[-\alpha,0]$.}
    \end{cases} \\
        \label{defT3}
        T(z) & = U(z),\qquad \qquad \qquad \qquad \textrm{ outside the lenses.}
\end{align}
\end{definition}

\begin{figure}[t]
\begin{center}
   \setlength{\unitlength}{1truemm}
   \begin{picture}(100,70)(-5,2)
       \put(40,40){\line(-1,-2){2}}
       \put(40,40){\line(-1,2){2}}
       \put(38,36){\line(-2,-3){14}}
       \put(38,44){\line(-2,3){14}}
       \put(24.1,56){$\Gamma_l$}
       \put(24.1,22){$\Gamma_l$}
       \put(40,40){\line(1,-2){2}}
       \put(40,40){\line(1,2){2}}
       \put(42,36){\line(2,-3){14}}
       \put(42,44){\line(2,3){14}}
       \put(52,56){$\Gamma_r$}
       \put(52,22){$\Gamma_r$}
       \put(50,56){\thicklines\vector(2,3){1}}
       \put(30,56){\thicklines\vector(-2,3){1}}
       \put(50,24){\thicklines\vector(-2,3){1}}
       \put(30,24){\thicklines\vector(2,3){1}}

       \put(-15,40){\line(1,0){110}}
       \put(93,36.6){$\mathbb R $}
       \put(95,40){\thicklines\vector(1,0){1}}
       \put(9,36.6){$-\alpha$}
       \put(39,36.6){$0$}
       \put(84,36.6){$\beta$}
       \put(10,40){\thicklines\circle*{1}}
       \put(40,40){\thicklines\circle*{1}}
       \put(85,40){\thicklines\circle*{1}}

       \qbezier(10,40)(25,50)(40,40)
       \qbezier(10,40)(25,30)(40,40)
       \put(25,45){\thicklines\vector(1,0){1}}
       \put(25,35){\thicklines\vector(1,0){1}}
       \qbezier(40,40)(62.5,50)(85,40)
       \qbezier(40,40)(62.5,30)(85,40)
       \put(62.5,45){\thicklines\vector(1,0){1}}
       \put(62.5,35){\thicklines\vector(1,0){1}}
       \put(25,47){$\Sigma_2$}
       \put(25,30){$\Sigma_2$}
       \put(62.5,47){$\Sigma_1$}
       \put(62.5,30){$\Sigma_1$}

   \end{picture}
   \vspace{-14mm}
   \caption{The jump contour $\Sigma_T$ in the RH problem \ref{rhpforT} for  $T$.}
   \label{fig:jumpsT1}
\end{center}
\end{figure}
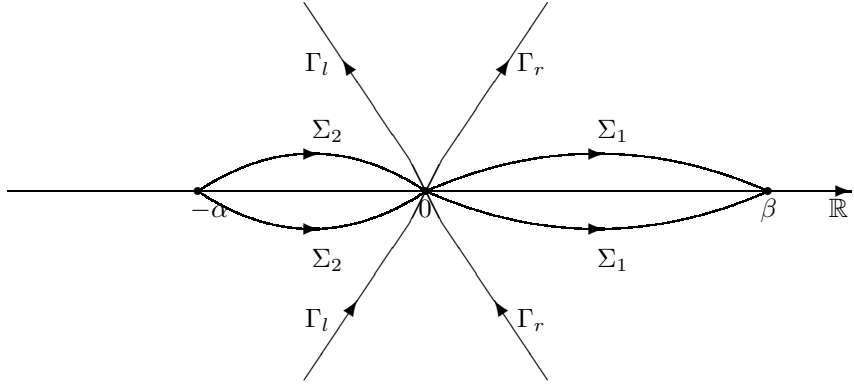

Then the matrix-valued function $T$ is defined and analytic in
$\mathbb C \setminus \Sigma_T$, where
\[ \Sigma_T = \mathbb R \cup \Gamma_l \cup \Gamma_r \cup \Sigma_1 \cup \Sigma_2 \]
and it satisfies the following RH problem.

\begin{rhp}  \label{rhpforT} \textrm{ }
\begin{enumerate}
\item[\rm (1)] $T$ is analytic
in $\mathbb C \setminus \Sigma_T$.
\item[\rm (2)] $T$ satisfies the jump relation $T_+ = T_- J_T$ on $\Sigma_T$ with jump matrices:
\begin{align*}
 J_T &= \begin{pmatrix} 0 & 0 & 1 & 0 \\ 0 & 1 & 0 & 0 \\
    -1 & 0 & 0 & 0 \\ 0 & 0 & 0 & 1 \end{pmatrix}, \quad \textrm{on } (0,\beta),
 \\
    J_T &= \begin{pmatrix} 1 & 0 & 0 & 0 \\ 0 & 0 & 0 & 1 \\
0 & 0 & 1 & 0 \\ 0 & -1 & 0 & 0 \end{pmatrix}, \quad \text{on } (-\alpha,0),
\\
    J_T &= I+e^{2n\lam_1}E_{3,1}, 
    \qquad \text{on }  \Sigma_1,
\\
 J_T & = I+e^{2n\lam_2}E_{4,2},
\qquad \text{on } \Sigma_2,
\end{align*}
and
\[ J_T = J_U, \qquad \text{on } \Gamma_l \cup \Gamma_r \cup (-\infty,-\alpha) \cup (\beta,\infty). \]
 \item[\rm (3)] As $z\to\infty$, we have that
\begin{equation*}T(z) =
    I+O(1/z).
\end{equation*}
\end{enumerate}
\end{rhp}
See Figure~\ref{fig:jumpsT1} for the jump contour $\Sigma_T$  and
Figure~\ref{fig:jumpsT2} for all jumps $J_T$ in a neighborhood of
the origin.

\begin{figure}[t]
\vspace{14mm}
\begin{center}
   \setlength{\unitlength}{1truemm}
   \begin{picture}(100,70)(-5,2)
       \put(40,40){\line(1,0){40}}
       \put(40,40){\line(-1,0){40}}
       \put(40,40){\line(2,1){30}}
       \put(40,40){\line(2,-1){30}}
       \put(40,40){\line(-2,1){30}}
       \put(40,40){\line(-2,-1){30}}
       \put(40,40){\line(2,3){15}}
       \put(40,40){\line(2,-3){15}}
       \put(40,40){\line(-2,3){15}}
       \put(40,40){\line(-2,-3){15}}
       \put(40,40){\thicklines\circle*{1}}
       \put(39.3,36){$0$}
       \put(60,40){\thicklines\vector(1,0){.0001}}
       \put(20,40){\thicklines\vector(1,0){.0001}}
       \put(60,50){\thicklines\vector(2,1){.0001}}
       \put(60,30){\thicklines\vector(2,-1){.0001}}
       \put(20,50){\thicklines\vector(2,-1){.0001}}
       \put(20,30){\thicklines\vector(2,1){.0001}}
       \put(50,55){\thicklines\vector(2,3){.0001}}
       \put(50,25){\thicklines\vector(-2,3){.0001}}
       \put(30,55){\thicklines\vector(-2,3){.0001}}
       \put(30,25){\thicklines\vector(2,3){.0001}}

       \put(60,40.5){}
       \put(60,52.5){$\Sigma_1$}
       \put(49,60){$\Gamma_r$}
       \put(27,60){$\Gamma_l$}
       \put(20.5,50){$\Sigma_2$}
       \put(20,40.5){}
       \put(20,32){$\Sigma_2$}
       \put(30,20){$\Gamma_l$}
       \put(48,20){$\Gamma_r$}
       \put(60,30){$\Sigma_1$}

       \put(81,40){$\small{\begin{pmatrix}0&0&1&0\\ 0&1&0&0\\ -1&0&0&0\\ 0&0&0&1 \end{pmatrix}}$}
       \put(71.5,57){$\small{\begin{pmatrix}1&0&0&0\\ 0&1&0&0\\ e^{2n\lambda_1} & 0 & 1 & 0 \\ 0&0&0&1 \end{pmatrix}}$}
       \put(46,73){$\small{\begin{pmatrix}1&0&0&0\\ -e^{n(\lambda_1-\lambda_2-\kappa z)}&1&0&0\\
                                    0&0&1&e^{n(\lambda_1-\lambda_2+\kappa z)}\\ 0&0&0&1 \end{pmatrix}}$}
       \put(-20,73){$\small{\begin{pmatrix}1&e^{n(\lambda_2-\lambda_1+\kappa z)}&0&0\\ 0&1&0&0\\
                                    0&0&1&0\\ 0&0&-e^{n(\lambda_2-\lambda_1-\kappa z)}&1 \end{pmatrix}}$}
       \put(-21,57){$\small{\begin{pmatrix}1&0&0&0\\ 0&1&0&0\\ 0&0&1&0\\ 0&e^{2n\lambda_2}&0&1 \end{pmatrix}}$}
       \put(-26,40){$\small{\begin{pmatrix}1&0&0&0\\ 0&0&0&1\\ 0&0&1&0\\ 0&-1&0&0 \end{pmatrix}}$}
       \put(-21,23){$\small{\begin{pmatrix}1&0&0&0\\ 0&1&0&0\\ 0&0&1&0\\ 0&e^{2n\lambda_2}&0&1 \end{pmatrix}}$}
       \put(-20,5){$\small{\begin{pmatrix}1&e^{n(\lambda_2-\lambda_1+\kappa z)}&0&0\\ 0&1&0&0\\
                                    0&0&1&0\\ 0&0&-e^{n(\lambda_2-\lambda_1-\kappa z)}&1 \end{pmatrix}}$}
       \put(46,5){$\small{\begin{pmatrix}1&0&0&0\\ -e^{n(\lambda_1-\lambda_2-\kappa z)}&1&0&0\\
                                    0&0&1&e^{n(\lambda_1-\lambda_2+\kappa z)}\\ 0&0&0&1 \end{pmatrix}}$}
       \put(71.5,23){$\small{\begin{pmatrix}1&0&0&0\\ 0&1&0&0\\ e^{2n\lambda_1}&0&1&0\\ 0&0&0&1 \end{pmatrix}}$}
  \end{picture}
   \vspace{6mm}
   \caption{The jump matrices $J_T$ in the RH problem \ref{rhpforT} for  $T$ in
   a neighborhood of $z=0$. A local parametrix in this neighborhood is built by means
   of the RH problem \ref{rhp:modelM} for $M$ as in Figure~\ref{fig:modelRHP}.}
   \label{fig:jumpsT2}
\end{center}
\end{figure}
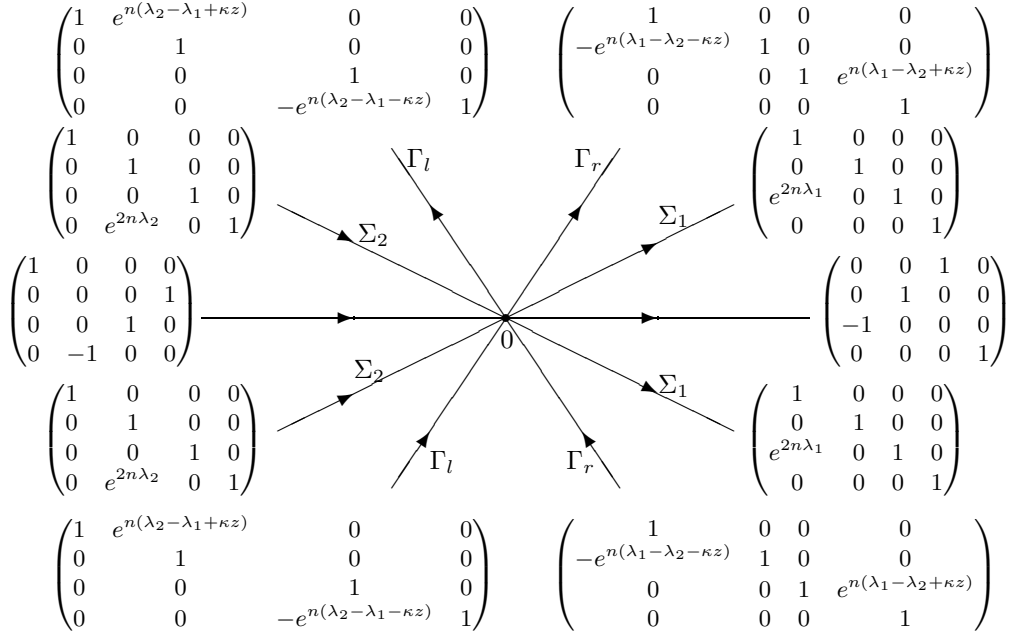

\subsection{Large $n$ behavior} \label{subsec:largen}

Now we take a closer look at the jump matrices $J_T$. The jump matrix is
constant on  $[-\alpha, 0]$ and on $[0,\beta]$. On all parts of $\Sigma_T
\setminus [-\alpha, \beta]$ the jump matrix is equal to the identity matrix
plus one or two non-zero off-diagonal entries. Ideally, we would like to have
that all off-diagonal entries tend to $0$ as $n \to \infty$. However, we cannot
hope for this in a neighborhood of $0$, due to the fact that we modified the
equilibrium problem near $0$. The  exceptional neighborhood of $0$  is
shrinking as $n$ increases at a rate of $O(n^{-2/3})$. Later we will construct
a local parametrix in a larger neighborhood of $0$ of radius $O(n^{-1/3})$.

We show here that outside this shrinking neighborhood the jump matrices $J_T$
on $\Sigma_T \setminus [-\alpha,\beta]$ do indeed tend to the identity matrix
as $n \to \infty$.

The functions $\lambda_1$ and $\lambda_2$ that appear in $J_T$ depend on $n$.
They have limits as $n\to\infty$, which we denote by $\lambda_1^*$ and
$\lambda_2^*$. Since $\beta \to \beta^*$, $\alpha \to \alpha^*$, $\delta_{1,2}
\to 0$ (see \eqref{limit:al and beta}--\eqref{critlim:del2}) and $p_{1,2} \to
p_{1,2}^*$ as $n \to \infty$, we have by \eqref{def:xi12} and
\eqref{def:lambda12},
\begin{align} \label{lambdakstar}
    \lambda_k^*(z)  = \int_0^{z} \xi_k^*(s) ds, \qquad k=1,2,
    \end{align}
where
\begin{equation} \label{xikstar}
    \xi_1^*(z)  = \frac{ z^{1/2} (z-\beta^*)^{1/2}}{2t(1-t)}, \qquad
    \xi_2^*(z)  = \frac{z^{1/2} (z+\alpha^*)^{1/2}}{2t(1-t)},
    \end{equation}
where the branches of the square roots are taken with are positive for
large positive $z$.

We also have
\begin{align}
    \lambda_1^*(z) & =
        \frac{2z- \beta^*}{8t(1-t)}  (z^2 - \beta^* z)^{1/2}
        \nonumber \\
        &\qquad -
         p_1^* \left(\log \left(z - \tfrac{\beta^*}{2} + (z^2 - \beta^* z)^{1/2}
         \right) - \log\left(-\tfrac{\beta^*}{2}\right) \right)
         \label{lambda1star},
        \\
      \lambda_2^*(z) & =
        \frac{2z + \alpha^*}{8t(1-t)}  (z^2 + \alpha^* z)^{1/2}
        \nonumber \\
        & \qquad  -
         p_2^* \left(\log \left(z + \tfrac{\alpha^*}{2} + (z^2 +\alpha^* z)^{1/2}
         \right) - \log\left(\tfrac{\alpha^*}{2}\right) \right)
          \label{lambda2star},
\end{align}
with the appropriate branches of the logarithms.

\begin{proposition}  There exists a constant $C > 0$
such that for every large $n$, we have
\begin{equation}\label{lambda:convergencespeed}
  \left| \lambda_k(z) - \lambda_k^*(z) \right| \leq C n^{-2/3} \max(|z|, |z|^{1/2}),
    \qquad z \in \mathbb C,
\end{equation}
for $k=1,2$.
\end{proposition}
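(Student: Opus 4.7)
The plan is to derive \eqref{lambda:convergencespeed} by integrating a pointwise estimate for the difference of the $\xi$-functions. Since both $\lambda_k$ and $\lambda_k^*$ are defined as anti-derivatives starting at $0$ (see \eqref{def:lambda12} and its starred analogue), we have
\begin{equation*}
\lambda_k(z) - \lambda_k^*(z) = \int_0^z \bigl[\xi_k(s) - \xi_k^*(s)\bigr]\, ds, \qquad k = 1, 2,
\end{equation*}
where the path of integration may be taken in the common domain of analyticity: $\mathbb{C} \setminus [0, \infty)$ for $k = 1$ and $\mathbb{C} \setminus (-\infty, 0]$ for $k = 2$. For $z$ on the branch cut the inequality follows by passing to boundary values. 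Note in particular that the $p_k$--dependence is implicit in the parameters $\beta,\delta_1$ (resp.\ $\alpha,\delta_2$), so no separate treatment of $p_k - p_k^*$ is required.

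The key step will be an explicit pointwise estimate of the integrand. For $k = 1$, combining \eqref{def:xi12} and \eqref{xikstar} with the elementary identity
\begin{equation*}
(s-\beta)^{1/2} - (s-\beta^*)^{1/2} = \frac{\beta^* - \beta}{(s-\beta)^{1/2} + (s-\beta^*)^{1/2}},
\end{equation*}
one obtains, after a short algebraic manipulation,
\begin{equation*}
\xi_1(s) - \xi_1^*(s) = -\frac{s^{1/2}(\beta - \beta^*)}{2t(1-t)\bigl[(s-\beta)^{1/2} + (s-\beta^*)^{1/2}\bigr]} - \frac{\delta_1 (s-\beta)^{1/2}}{2t(1-t)\, s^{1/2}}.
\end{equation*}
Since $\beta - \beta^* = O(n^{-2/3})$ and $\delta_1 = O(n^{-2/3})$ by \eqref{limit:al and beta}--\eqref{critlim:del1}, the first summand is $O(n^{-2/3})$ away from an $O(n^{-2/3})$-neighborhood of $\beta^*$ (where it may grow as large as $O(n^{-1/3})$, but only on a set of diameter $O(n^{-2/3})$), and the second is $O\bigl(n^{-2/3}|s-\beta|^{1/2}|s|^{-1/2}\bigr)$, which is $O(n^{-2/3})$ for $|s|$ bounded below and $O(n^{-2/3}|s|^{-1/2})$ near $s=0$. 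A fully analogous decomposition, obtained by replacing $(\beta,\beta^*,\delta_1)$ with $(-\alpha,-\alpha^*,\delta_2)$, controls $\xi_2 - \xi_2^*$ via \eqref{limit:al and beta} and \eqref{critlim:del2}.

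Finally, I would integrate these bounds along a contour from $0$ to $z$ lying in the domain of analyticity, with a local detour of radius $\sim n^{-1/3}$ around $\beta^*$ (resp.\ $-\alpha^*$) where the first summand is largest; the contribution of that detour is at most $O(n^{-1/3}) \cdot O(n^{-2/3}) = O(n^{-1})$ and hence absorbed by the target bound. The integrable singularity $|s|^{-1/2}$ of the second summand at $s=0$ integrates to $O(n^{-2/3}|z|^{1/2})$, while the bounded $O(n^{-2/3})$ portion integrates to $O(n^{-2/3}|z|)$ along a path of length $O(|z|)$. Summing gives precisely $O\bigl(n^{-2/3}\max(|z|,|z|^{1/2})\bigr)$. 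The main technical obstacle will be to choose the integration contour in a way that makes the constant $C$ uniform as $z$ ranges over all of $\mathbb C$; this is possible because the only ``bad'' region (the neighborhood of $\beta^*$, respectively $-\alpha^*$) has diameter $O(n^{-2/3})$ and can be circumvented without damaging the overall order of the estimate.
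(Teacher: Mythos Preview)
Your approach is essentially the same as the paper's: both write $\lambda_k-\lambda_k^*$ as an integral of $\xi_k-\xi_k^*$, split the integrand into the same two summands (one carrying the factor $\beta-\beta^*$, the other the factor $\delta_1$), and integrate. The only substantive difference is how the behaviour of the first summand near $\beta^*$ is handled.

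Your pointwise claim that this summand is $O(n^{-2/3})$ outside an $O(n^{-2/3})$-neighbourhood of $\beta^*$ is too optimistic: at distance $d$ from $\beta^*$ (with $n^{-2/3}\ll d\ll 1$) its size is $\sim n^{-2/3}d^{-1/2}$, not $O(n^{-2/3})$. Relatedly, your detour bookkeeping is inconsistent (a detour of radius $n^{-1/3}$ has length $\sim n^{-1/3}$, and on it the summand is $O(n^{-1/2})$, so the contribution is $O(n^{-5/6})$, not the product $O(n^{-1/3})\cdot O(n^{-2/3})$ you wrote). None of this is fatal---the singularity is integrable and any of these variants gives the desired $O(n^{-2/3}\max(|z|,|z|^{1/2}))$---but the argument as written needs tidying.

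The paper sidesteps the whole detour issue with one clean observation: since the principal-branch ratio $(z-\beta)^{1/2}/(z-\beta^*)^{1/2}$ has non-negative real part, one has
\[
\bigl|(s-\beta)^{1/2}+(s-\beta^*)^{1/2}\bigr|\;\ge\;|s-\beta^*|^{1/2}
\]
everywhere, giving the uniform bound $O(n^{-2/3}|s-\beta^*|^{-1/2})$ for the first summand. This is an integrable singularity, so a straight path from $0$ to $z$ suffices and no detour is needed. Incorporating this observation would streamline your proof to match the paper's.
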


\begin{proof}
We have by \eqref{def:xi12},
\begin{align} \nonumber
    \xi_1(z) - \xi_1^*(z) & =
        - \frac{\delta_1 (z-\beta)^{1/2} }{2t(1-t) z^{1/2}} + \frac{z^{1/2} ((z-\beta)^{1/2} - (z-\beta^*)^{1/2})}{2t(1-t)} \\
        & = - \frac{\delta_1 (z-\beta)^{1/2} }{2t(1-t) z^{1/2}} - \frac{(\beta-\beta^*) z^{1/2}}{2t(1-t)  ((z-\beta)^{1/2} + (z-\beta^*)^{1/2})}
        \nonumber \\
        & = - \frac{\delta_1 (z-\beta)^{1/2}}{2t(1-t) z^{1/2}} -
        \frac{\beta-\beta^*}{2t(1-t)}  \frac{z^{1/2}}{(z-\beta^*)^{1/2}}   \left(1 +
        \frac{(z-\beta)^{1/2}}{(z-\beta^*)^{1/2}}\right)^{-1}.
        \label{eq:xi1diffestimate}
        \end{align}
Note that  $\frac{(z-\beta)^{1/2}}{(z-\beta^*)^{1/2}}$ has non-negative real part, so that
\[ \left|1 + \frac{(z-\beta)^{1/2}}{(z-\beta^*)^{1/2}}\right|^{-1} \leq 1 \]
for every $z$. Then since $\delta_1 = O(n^{-2/3})$ and $\beta - \beta^* =
O(n^{-2/3})$, we can estimate \eqref{eq:xi1diffestimate} as
\begin{align*}
    \xi_1(z) - \xi_1^*(z) = O(n^{-2/3} |z|^{-1/2}) + O(n^{-2/3}|z-\beta^*|^{-1/2}) + O(n^{-2/3}),
            \qquad \text{as } n \to \infty,
     \end{align*}
uniformly for $z \in \mathbb C$. After integration from $0$ to $z$, see \eqref{def:lambda12},
we find \eqref{lambda:convergencespeed} with $k=1$.
The proof for $k=2$ is similar.
\end{proof}

\subsection{Estimate on local lenses}

We need an estimate for $\Re \lambda_1$ on $\Sigma_1$ outside
of the shrinking disk of radius $n^{-1/3}$,
\[ D(0, n^{-1/3}) = \{ z \in \mathbb C \mid |z| < n^{-1/3} \} \]
around $0$ and a fixed disk $D(\beta^*, \varepsilon)$ around $\beta^*$.
The following lemma gives such an estimate, together with a similar estimate
for $\Re \lambda_2$ on $\Sigma_2$.
\begin{lemma} \label{lem:locallensestimates}
\begin{enumerate}
\item[\rm (a)] Let $\varepsilon > 0$. Then there is a constant $c_1 > 0$ such that
for every large enough $n$, we have
\begin{equation} \label{eq:lambda1estimate}
    \Re \lambda_1(z) \leq - c_1 n^{-1/2}, \qquad z \in \Sigma_1 \setminus (D(0,n^{-1/3}) \cup D(\beta^*, \varepsilon)).
    \end{equation}
\item[\rm (b)] Let $\varepsilon > 0$. Then there is a constant $c_2 > 0$ such that
for every large enough $n$, we have
\begin{equation} \label{eq:lambda2estimate}
    \Re \lambda_2(z) \leq - c_2 n^{-1/2}, \qquad z \in \Sigma_2 \setminus (D(0,n^{-1/3}) \cup D(-\alpha^*, \varepsilon)).
    \end{equation}
\end{enumerate}
\end{lemma}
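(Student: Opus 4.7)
By the symmetry $z\mapsto -z$ together with the corresponding exchange of data $(\Sigma_1,\beta,\delta_1,\lambda_1)\leftrightarrow(\Sigma_2,-\alpha,\delta_2,\lambda_2)$, part~(b) follows from part~(a), so I only prove~(a). Fix $\varepsilon>0$ and assume that near $z=0$ the two lips of $\Sigma_1$ leave the real axis at angles uniformly inside the sectors $(0,2\pi/3)$ and $(-2\pi/3,0)$ respectively; this is standard in the opening of lenses. I split the set $\Sigma_1\setminus(D(0,n^{-1/3})\cup D(\beta^*,\varepsilon))$ into a bulk piece $B=\Sigma_1\cap\{|z|\ge r_0,\,|z-\beta^*|\ge\varepsilon\}$ and a local piece $L=\Sigma_1\cap\{n^{-1/3}\le|z|\le r_0\}$, for a sufficiently small fixed $r_0>0$.

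On $B$ the limit function $\lambda_1^*$ satisfies $\Re\lambda_1^*<0$ strictly: the starred analogues of \eqref{var:ineq1}--\eqref{var:ineq3} give $\Re\lambda_{1,+}^*\equiv 0$ on $[0,\beta^*]$ with $\partial_x\Im\lambda_{1,+}^*=\pi\,d\mu_1^*/dx>0$ on the interior of $[0,\beta^*]$, and the Cauchy--Riemann equations then force $\Re\lambda_1^*<0$ in a full complex neighborhood of $(0,\beta^*)\setminus\{0,\beta^*\}$. By compactness, $\Re\lambda_1^*(z)\le-c_0$ on $B$ for some $c_0>0$; combining with the uniform convergence \eqref{lambda:convergencespeed} yields $\Re\lambda_1(z)\le-c_0/2$ on $B$ for all large $n$, which is far stronger than \eqref{eq:lambda1estimate}.

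On $L$ I perform a local expansion. Taking the branch of $\sqrt{z}$ with cut on $[0,\infty)$ and values in the closed upper half plane (so that $\sqrt{-\epsilon+i0^+}=i\sqrt{\epsilon}$), one checks that
\begin{equation*}
    \sqrt{(z-\beta)/z} \;=\; \frac{i\sqrt{\beta}}{\sqrt{z}}\left(1-\frac{z}{2\beta}+O(|z|^2)\right)
\end{equation*}
is exactly the branch appearing in \eqref{def:xi12}. Inserting this into \eqref{def:xi12} and integrating termwise from $0$ along a contour in the upper half plane gives
\begin{equation*}
    \lambda_1(z) \;=\; \frac{i\sqrt{\beta}}{t(1-t)}\left(\tfrac{1}{3}z^{3/2}-\delta_1\sqrt{z}\right)+O(|z|^{5/2}),
\end{equation*}
uniformly for $z$ in a small neighborhood of $0$. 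Writing $z=re^{i\theta}$, the real part becomes
\begin{equation*}
    \Re\lambda_1(z) \;=\; \frac{\sqrt{\beta}}{t(1-t)}\left(-\tfrac{1}{3}r^{3/2}\sin(3\theta/2)+\delta_1\,r^{1/2}\sin(\theta/2)\right)+O(r^{5/2}).
\end{equation*}
On the admissible lens angles $\sin(3\theta/2)$ is bounded below by some $c_1>0$, so the cubic term is at most $-c_2\,r^{3/2}$; by \eqref{critlim:del1} the $\delta_1$-correction has size $O(n^{-2/3}r^{1/2})$, and its ratio to the main term is $O(|\delta_1|/r)=O(n^{-1/3})$ throughout $L$, while the $O(r^{5/2})$ remainder is comparably negligible for $r_0$ small. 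Therefore $\Re\lambda_1(z)\le-\tfrac12 c_2\,r^{3/2}\le-\tfrac12 c_2\,n^{-1/2}$ on $L$, which combined with the bulk estimate proves~(a); (b) follows by the symmetric argument.

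The only delicate point is the branch-tracking in the local expansion: $\sqrt{(z-\beta)/z}$ is the analytic function on $\mathbb{C}\setminus[0,\beta]$ whose upper boundary values on $(0,\beta)$ are $i\pi$ times the density, and identifying it to leading order with $i\sqrt{\beta}/\sqrt{z}$ requires matching this sign convention. Once the branch is pinned down, the estimate reduces to a routine order comparison, and the scale $|z|=n^{-1/3}$ is chosen precisely so that the cubic behavior $r^{3/2}\sim n^{-1/2}$ coming from the ``semicircle'' part dominates the perturbation $\delta_1 r^{1/2}\sim n^{-5/6}$ produced by the shift $\delta_1=O(n^{-2/3})$ in the modified equilibrium problem.
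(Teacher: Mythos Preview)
Your proof is correct and follows essentially the same approach as the paper. The paper condenses your bulk/local split into the single inequality $\Re\lambda_1^*(z)\le -c_1'\min(|z|^{3/2},|z-\beta^*|^{3/2})$ on all of $\Sigma_1$ and then applies the uniform convergence estimate \eqref{lambda:convergencespeed} directly, obtaining $\Re\lambda_1(z)\le -c_1''|z|^{3/2}+C'n^{-2/3}|z|^{1/2}$ on $\Sigma_1\setminus D(\beta^*,\varepsilon)$; your explicit local expansion of $\lambda_1$ (with the $\delta_1$-term visible) and compactness argument on the bulk are just a more hands-on version of the same estimate.
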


\begin{proof}
For  $\lambda_1^*$ it is easy to show from \eqref{lambdakstar} and \eqref{xikstar}
that $\Re \lambda_1^*(z) < 0$ in the strip $0 < \Re z < \beta^*$, $\Im z \neq 0$,
and
\[ \Re \lambda_1^*(z) \leq  - c_1' \min(|z|^{3/2}, |z-\beta^*|^{3/2}), \qquad z \in \Sigma_1 \]
for some constant $c_1'>0$.
Then  by \eqref{lambda:convergencespeed} we have for another constant $c_1'' > 0$
and for $n$ large enough.
\begin{align*}
    \Re \lambda_1(z) & = \Re \lambda_1^*(z) + \Re (\lambda_1(z) - \lambda_1^*(z))  \\
    &   \leq - c_1'' |z|^{3/2} + C' n^{-2/3} |z|^{1/2}, \qquad z \in \Sigma_1  \setminus D(\beta^*, \varepsilon)
        \end{align*}
        with $C' > 0$.
This leads to \eqref{eq:lambda1estimate} with a suitable constant $c_1 > 0$.

The proof of part (b) is similar.
\end{proof}

\subsection{Estimate on global lenses}

We start by being more precise on the location of the global lenses
$\Gamma_l$ and $\Gamma_r$.

\begin{lemma} \label{lemma:global lenses}
We can (and do) choose the global lenses $\Gamma_r$ in the right-half plane
and $\Gamma_l$ in the left-half plane such that for some constants $c_r, c_l > 0$,
\begin{equation} \label{eq:lambda1min2estimate1}
    \Re(\lambda_1^*(z)-\lambda_2^*(z)) \leq - c_r \min (|z|, |z|^{3/2}), \qquad \text{for } z \in \Gamma_r,
    \end{equation}
 and
\begin{equation} \label{eq:lambda1min2estimate2}
    \Re(\lambda_1^*(z)-\lambda_2^*(z)) \geq c_l \min(|z|, |z|^{3/2}), \qquad \text{for } z \in \Gamma_l.
    \end{equation}
\end{lemma}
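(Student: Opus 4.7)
Set $h:=\lambda_1^*-\lambda_2^*$. The jumps of $\lambda_j^*$ across $\mathbb{R}$ are purely imaginary by Lemma~\ref{lemma:lambda:inequalities}, so $\Re h$ extends to a continuous function on all of $\mathbb{C}$ that vanishes at the origin. The strategy is to pin down $\Re h$ in two asymptotic regimes, near $0$ and near $\infty$, and then to glue the two pictures by showing that the open set in each half-plane where $\Re h$ has the desired sign is path-connected through the real axis.

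First, from the explicit formulas \eqref{xikstar}, in the upper half-plane near $0$ one has
\[
(\xi_1^*-\xi_2^*)(z)=\frac{i\sqrt{\beta^*}-\sqrt{\alpha^*}}{2t(1-t)}\,z^{1/2}+O(z^{3/2})
\]
with the principal branch of $z^{1/2}$, and integration yields
\[
h(z) = \rho\,e^{i\phi}\,z^{3/2} + O(z^{5/2}),\qquad \rho>0,\ \phi\in(\pi/2,\pi).
\]
In polar form $\Re h(re^{i\theta}) = \rho\, r^{3/2}\cos(3\theta/2+\phi)+O(r^{5/2})$. For any $\varphi_2\in(0,\pi/3)$, the values $\pm 3\varphi_2/2+\phi$ lie inside a range on which cosine is strictly negative, while at angles $\pi\pm\varphi_2$ the cosine is strictly positive; the lower half-plane follows from the symmetry $h(\bar z)=\overline{h(z)}$. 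Second, from \eqref{lambda12:asy} one reads off
\[
h(z) = -Az+(p_2^*-p_1^*)\log z+O(1),\qquad A:=\frac{(1-t)(a_1^*-a_2^*)+t(b_1^*-b_2^*)}{2t(1-t)}>0,
\]
so $\Re h(z)\sim -A\,\Re z$ at infinity, which forces $\Re h\to-\infty$ as $\Re z\to+\infty$ and $\Re h\to+\infty$ as $\Re z\to-\infty$.

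Let $\Omega_r=\{\Re h<0\}\cap\{\Re z>0\}$. By the local expansion, $\Omega_r$ contains a truncated double sector $\{|\arg z|<\varphi_2,\ 0<|z|<r_0\}$ for some $r_0>0$; by the asymptotic expansion it contains $\{\Re z>R\}$ for some $R>0$; and it contains the whole punctured positive real axis, since on $(0,\beta^*)$ one has $\Re\lambda_1^*=0$ and $\lambda_2^*$ real and positive, while on $(\beta^*,\infty)$ a direct computation gives $\Re h(x)=\int_{\beta^*}^{x}\xi_1^*(s)\,ds-\int_0^{x}\xi_2^*(s)\,ds<0$ (negative at $x=\beta^*$ and with strictly negative derivative for large $x$). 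Hence $\Omega_r$ is path-connected, and we may choose $\Gamma_r\subset\overline{\Omega_r}$ as any smooth simple arc that leaves $0$ at angles $\pm\varphi_2$ and escapes to $+\infty$. The bound $\Re h(z)\leq -c_r\min(|z|,|z|^{3/2})$ then follows from the local expansion for $|z|\leq r_0$, from the asymptotic expansion for $|z|\geq R$, and from continuity together with compactness on the annular portion $r_0\leq|z|\leq R$ of $\Gamma_r$. The construction of $\Gamma_l$ in the left half-plane is entirely analogous, using the symmetric sign pattern near $0$ and the fact that $\Re h\to+\infty$ as $\Re z\to-\infty$.

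The only real subtlety is to guarantee the correct sign of $\Re h$ along all of $\Gamma_r$ (and $\Gamma_l$), not merely at the two asymptotic ends. Path-connectedness of $\Omega_r$ through the positive real axis bypasses a detailed study of the full zero set $\{\Re h=0\}$: any continuous curve inside the path-connected open set $\Omega_r$ that matches the prescribed angles at $0$ and the direction at infinity will automatically satisfy the required inequality, with the explicit quantitative rate furnished by the local cubic-root and asymptotic linear estimates.
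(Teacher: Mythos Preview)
Your argument is essentially correct and is organized somewhat differently from the paper. The paper analyzes the zero set $\mathcal{C}=\{\Re(\lambda_1^*-\lambda_2^*)=0\}$, computes the angles it makes at the origin (namely $\pm(\tfrac{\pi}{3}+\tfrac{2}{3}\arctan((p_1^*)^{1/4}/(p_2^*)^{1/4}))>\pi/3$) and its asymptotic direction at infinity, and then places $\Gamma_r$ to the right of $\mathcal{C}$ and $\Gamma_l$ to the left. You instead show directly that the open region $\Omega_r=\{\Re h<0,\ \Re z>0\}$ is path-connected by exhibiting a connected subset (local sector $\cup$ positive real axis $\cup$ far right half-plane) and then thread $\Gamma_r$ through it. Your route has the advantage of sidestepping any claim that $\mathcal{C}$ is a single simple curve separating the plane, a point the paper treats rather informally; the paper's route gives a cleaner geometric picture of where the boundary actually sits.

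One point in your write-up is incomplete as stated. For $x>\beta^*$ you justify $\Re h(x)<0$ by saying it is negative at $x=\beta^*$ and has strictly negative derivative \emph{for large $x$}; that alone does not exclude a positive excursion on some intermediate interval. In fact the derivative is negative for \emph{all} $x>\beta^*$, since
\[
\frac{d}{dx}\Re h(x)=\xi_1^*(x)-\xi_2^*(x)=\frac{\sqrt{x}\,\bigl(\sqrt{x-\beta^*}-\sqrt{x+\alpha^*}\bigr)}{2t(1-t)}<0,
\]
so $\Re h$ is strictly decreasing on $(\beta^*,\infty)$ from its negative value at $\beta^*$. With this easy fix your path-connectedness argument is complete. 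You should also make explicit that $\Gamma_r$ is chosen to stay in a sector $|\arg z|\le \theta_0<\pi/2$ for large $|z|$, so that $\Re z\ge (\cos\theta_0)|z|$ and the linear asymptotic $\Re h(z)\sim -A\,\Re z$ yields the required bound $\le -c_r|z|$ there.
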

\begin{proof}
We consider the curve
\[ \mathcal C : \quad \Re(\lambda_1^*(z) - \lambda_2^*(z)) = 0. \]
By definition $\lambda_1^*(0) = \lambda_2^*(0) = 0$ and so the
curve $\mathcal C $ contains $0$.
By \eqref{xikstar} we have as $z \to 0$,
\begin{align*}
    \xi_1^*(z) & =  \frac{\pm i \sqrt{\beta^*}}{2t(1-t)} z^{1/2} + O(z^{3/2})
                             = \frac{\pm i (p_1^*)^{1/4}}{t^{3/4}(1-t)^{3/4}} z^{1/2} + O(z^{3/2}), \qquad \pm \Im z > 0, \\
    \xi_2^*(z) & = \frac{\sqrt{\alpha^*}}{2t(1-t)} z^{1/2} + O(z^{3/2})
                  = \frac{(p_2^*)^{1/4}}{t^{3/4}(1-t)^{3/4}} z^{1/2} + O(z^{3/2}),
    \end{align*}
    where we used \eqref{def:alphastar} and \eqref{def:betastar}.
After integration, we find
\begin{equation} \label{lambda12starexpansion}
\begin{aligned}
    \lambda_1^*(z) & = \frac{\pm 2i (p_1^*)^{1/4}}{3 t^{3/4}(1-t)^{3/4}} z^{3/2} + O(z^{5/2}), \qquad \pm \Im z > 0, \\
    \lambda_2^*(z) & = \frac{2(p_2^*)^{1/4}}{3t^{3/4}(1-t)^{3/4}} z^{3/2} + O(z^{5/2}),
    \end{aligned}
    \end{equation}
and so
\begin{align*}
    \lambda_1^*(z) - \lambda_2^*(z) & =
    \frac{-2 \left((p_2^*)^{1/4} \mp i (p_1^*)^{1/4} \right)}{3 t^{3/4}(1-t)^{3/4}} z^{3/2} + O(z^{5/2}),
    \qquad \pm \Im z > 0,
\end{align*}
as $z \to 0$.
This implies that the curve $\mathcal C$ makes angles
\[ \pm \left( \frac{\pi}{3} + \frac{2}{3} \arctan \left( \frac{(p_1^*)^{1/4}}{(p_2^*)^{1/4}} \right) \right) \]
with the positive real axis. If $p_1^* = p_2^* = 1/2$, then this is a right angle, and then in fact $\mathcal C$
coincides with the imaginary axis.

As $z \to \infty$, we have by \eqref{lambda1star} and \eqref{lambda2star},
\[ \lambda_1^*(z) - \lambda_2^*(z) = - \frac{(\alpha^* + \beta^*)}{4t(1-t)} z   + O(\log |z|), \]
which implies that $\mathcal C$ gets more parallel to the imaginary axis as $z
\to \infty$ in the sense that $\arg z \to \pm \frac{\pi}{2}$ as $z \to \infty$
with $z \in \mathcal C$.

Thus $\mathcal C$ divides the plane into a part
on the right where $\Re (\lambda_1^*(z) - \lambda_2^*(z)) < 0$
and a part on the left where $\Re (\lambda_1^*(z) - \lambda_2^*(z)) > 0$.
We can then take a smooth curve $\Gamma_r$ in the region to the right of $\mathcal C$,
 making  angles $\pm \varphi_2$ with $0 < \varphi_2 < \pi/3$ at the origin, and extending
 to infinity  in such a way that \eqref{eq:lambda1min2estimate1} holds.

Similarly we can take $\Gamma_l$ to the left of $\mathcal C$ such that \eqref{eq:lambda1min2estimate2}
holds.
\end{proof}

Now we can make the necessary estimates for $\Re (\lambda_1 - \lambda_2 \pm \kappa_n z)$ on
$\Gamma_r$ and $\Gamma_l$ outside  of the shrinking disk $D(0, n^{-1/3})$ of radius $n^{-1/3}$
around $0$.
Here we write $\kappa = \kappa_n$ to emphasize its dependence on $n$, see \eqref{def:kappa}
and \eqref{kappa:asy}.
\begin{lemma} \label{lem:globallensestimates}
\begin{enumerate}
\item[\rm (a)] There is a constant $c_1 > 0$ such that for every large enough $n$, we have
\begin{equation} \label{eq:Gammarestimate}
    \Re (\lambda_1(z) - \lambda_2(z) \pm \kappa z) \leq - c_1 \min(|z|, |z|^{3/2}) \leq -c_1 n^{-1/2}, \qquad z \in \Gamma_r \setminus D(0,n^{-1/3}).
    \end{equation}
\item[\rm (b)] There is a constant $c_2 > 0$ such that for every large enough $n$, we have
\begin{equation} \label{eq:Gammalestimate}
    \Re (\lambda_1(z) - \lambda_2(z) \pm \kappa z) \geq c_2 \min(|z|, |z|^{3/2}) \geq  c_2 n^{-1/2}, \qquad z \in \Gamma_l \setminus D(0,n^{-1/3}).
    \end{equation}
\end{enumerate}
\end{lemma}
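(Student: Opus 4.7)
The plan is to treat the quantity $\lambda_1(z) - \lambda_2(z) \pm \kappa z$ as a perturbation of its limiting version $\lambda_1^*(z) - \lambda_2^*(z)$, where the latter is controlled by Lemma~\ref{lemma:global lenses}, and to verify that the perturbation is negligible on the relevant part of the contours. Specifically, I would write
\begin{equation*}
    \lambda_1(z) - \lambda_2(z) \pm \kappa z
    = \bigl(\lambda_1^*(z) - \lambda_2^*(z)\bigr) + \bigl(\lambda_1(z)-\lambda_1^*(z)\bigr) - \bigl(\lambda_2(z)-\lambda_2^*(z)\bigr) \pm \kappa z,
\end{equation*}
and bound the last three terms uniformly in $z$. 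By \eqref{lambda:convergencespeed} each of $|\lambda_k(z) - \lambda_k^*(z)|$ is at most $C n^{-2/3} \max(|z|,|z|^{1/2})$, and by \eqref{kappa:asy} we have $|\kappa z| \leq C' n^{-2/3} |z|$. Since $|z| \leq \max(|z|,|z|^{1/2})$ in both regimes $|z|\leq 1$ and $|z|\geq 1$, the combined perturbation satisfies
\begin{equation*}
    \bigl|\Re\bigl[(\lambda_1-\lambda_1^*) - (\lambda_2-\lambda_2^*) \pm \kappa z\bigr]\bigr|
    \leq C'' n^{-2/3}\max(|z|,|z|^{1/2}).
\end{equation*}

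Combining this with the bound $\Re(\lambda_1^*(z)-\lambda_2^*(z)) \leq -c_r \min(|z|,|z|^{3/2})$ from Lemma~\ref{lemma:global lenses} on $\Gamma_r$ gives
\begin{equation*}
    \Re(\lambda_1(z)-\lambda_2(z) \pm \kappa z)
    \leq -c_r \min(|z|,|z|^{3/2}) + C'' n^{-2/3}\max(|z|,|z|^{1/2}).
\end{equation*}
The key point is that on $\Gamma_r \setminus D(0,n^{-1/3})$ the error term is a small fraction of the main term. Indeed, for $|z|\leq 1$ we have $\min(|z|,|z|^{3/2}) = |z|^{3/2}$ and $\max(|z|,|z|^{1/2}) = |z|^{1/2}$, so the ratio is $n^{-2/3}/|z| \leq n^{-2/3}/n^{-1/3} = n^{-1/3}$; for $|z|\geq 1$ both quantities equal $|z|$ and the ratio is $n^{-2/3}$. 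In either case the ratio tends to $0$ as $n\to\infty$, so for sufficiently large $n$ the error is absorbed into the main term at the cost of halving the constant: $\Re(\lambda_1-\lambda_2 \pm \kappa z) \leq -\tfrac{c_r}{2}\min(|z|,|z|^{3/2})$ on $\Gamma_r\setminus D(0,n^{-1/3})$. This proves the first inequality of \eqref{eq:Gammarestimate} with $c_1 = c_r/2$.

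For the second inequality, note that for $|z|\geq n^{-1/3}$ one has $|z|^{3/2} \geq n^{-1/2}$ and $|z| \geq n^{-1/3} \geq n^{-1/2}$, so $\min(|z|,|z|^{3/2}) \geq n^{-1/2}$. The proof of (b) is entirely analogous, using the lower bound $\Re(\lambda_1^*-\lambda_2^*) \geq c_l \min(|z|,|z|^{3/2})$ on $\Gamma_l$ provided by Lemma~\ref{lemma:global lenses}, combined with the same perturbation estimate (whose sign now works in our favor after subtraction).

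No serious obstacle is expected; the whole argument reduces to a two-term comparison, and the choice of the inner cutoff radius $n^{-1/3}$ has been calibrated precisely so that the perturbations $O(n^{-2/3}\max(|z|,|z|^{1/2}))$ remain subleading relative to the dominant $O(\min(|z|,|z|^{3/2}))$ contribution. The only mild point requiring care is the regime $|z|\asymp n^{-1/3}$, where both the main term and the perturbation are small simultaneously; the ratio calculation above confirms that even in this worst-case regime the main term dominates by a factor of $n^{1/3}$.
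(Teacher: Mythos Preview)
Your proposal is correct and follows essentially the same approach as the paper: the paper's proof merely says that the result follows by combining \eqref{lambda:convergencespeed}, the estimates \eqref{eq:lambda1min2estimate1}--\eqref{eq:lambda1min2estimate2} of Lemma~\ref{lemma:global lenses}, and $\kappa = O(n^{-2/3})$ from \eqref{kappa:asy}, referring to the proof of Lemma~\ref{lem:locallensestimates} for details. You have spelled out precisely this argument, including the ratio computation at the cutoff $|z|=n^{-1/3}$ that the paper leaves implicit.
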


\begin{proof}
This follows by combining \eqref{lambda:convergencespeed} with  the estimates
\eqref{eq:lambda1min2estimate1}--\eqref{eq:lambda1min2estimate2} and the fact
that $\kappa = \kappa_n = O(n^{-2/3})$, see \eqref{kappa:asy}. We will not give
details as the proof is similar to the proof of Lemma
\ref{lem:locallensestimates}.
\end{proof}

As a result of Lemmas \ref{lem:locallensestimates} and
\ref{lem:globallensestimates} we have that outside the shrinking disk $D(0,
n^{-1/3})$ the jump matrices $J_T$ on the local and global lenses tend to the
identity matrix. More precisely:

\begin{corollary} \label{cor:JTestimates}
    \begin{enumerate}
    \item[\rm (a)]
    For every $\varepsilon > 0$ there is a constant $c_1 > 0$ such that
    \[ J_T(z) = I + O\left(e^{-c_1 n^{1/2}}\right) \qquad \text{ as } n \to \infty, \]
     uniformly for $z \in (\Sigma_1 \cup \Sigma_2 \cup \Gamma_l \cup \Gamma_r) \setminus \left(D(0,n^{-1/3}) \cup D(-\alpha^*, \varepsilon)
        \cup D(\beta, \varepsilon)\right)$.
    \item[\rm (b)]
    There is a constant $c_2 > 0$ such that
    \[ J_T(z) = I + O\left(e^{-c_2 n |z|}\right) \qquad \text{ as } n \to \infty, \]
    uniformly for  $z \in \Gamma_l \cup \Gamma_r$ with $|z| \geq 1$.
    \item[\rm (c)]
    For every $\varepsilon > 0$ there is a constant $c_3 > 0$ such that
    \[ J_T(z) = I + O\left(e^{-c_3 n |z|^2}\right) \qquad \text{ as } n \to \infty, \]
    uniformly for $z \in (-\infty, -\alpha^* - \varepsilon] \cup [\beta^* + \varepsilon, \infty)$.
    \end{enumerate}
\end{corollary}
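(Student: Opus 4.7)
Proof approach: The corollary is essentially a bookkeeping result, harvesting the uniform estimates already prepared in Lemmas \ref{lem:locallensestimates} and \ref{lem:globallensestimates}, together with the large-$|z|$ behavior of $\lambda_1,\lambda_2$ coming from \eqref{lambda12:asy}. On every component of $\Sigma_T\setminus\bigl((-\alpha,0)\cup(0,\beta)\bigr)$ the jump matrix $J_T$ equals the identity plus one or two off-diagonal entries, each of the form $\pm e^{n\phi(z)}$ for an explicit combination $\phi$ of $\lambda_1$, $\lambda_2$ and $\kappa z$ (compare Figure~\ref{fig:jumpsT2}). So the plan is to read off $\phi$ from each jump, insert the bound on $\Re\phi$ from the relevant lemma, and conclude.

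For part~(a), I would carry out the following four case checks. On $\Sigma_1\setminus(D(0,n^{-1/3})\cup D(\beta^*,\varepsilon))$ the nontrivial entry is $e^{2n\lambda_1(z)}$, and Lemma~\ref{lem:locallensestimates}(a) gives $\Re\lambda_1\le -c_1 n^{-1/2}$, so $|e^{2n\lambda_1}|\le e^{-2c_1 n^{1/2}}$. The corresponding statement on $\Sigma_2$ uses Lemma~\ref{lem:locallensestimates}(b). On $\Gamma_r\setminus D(0,n^{-1/3})$ the two nontrivial entries are $\pm e^{n(\lambda_1-\lambda_2\mp\kappa z)}$, each of which is bounded by $e^{-c_1 n^{1/2}}$ thanks to \eqref{eq:Gammarestimate}; on $\Gamma_l\setminus D(0,n^{-1/3})$ the analogous bound uses \eqref{eq:Gammalestimate} (both signs $\pm\kappa z$ are handled, since the lemma is stated for both).

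Part~(b) is essentially the same analysis on $\Gamma_l\cup\Gamma_r$, but now exploiting the stronger inequality $\min(|z|,|z|^{3/2})=|z|$ valid for $|z|\ge 1$; the exponent $n\Re\phi$ is then at most $-cn|z|$, giving $J_T=I+O(e^{-c_2 n|z|})$ uniformly in $|z|\ge 1$.

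Part~(c) requires one extra (mild) ingredient. On $(\beta,\infty)$ the off-diagonal entry of $J_T=J_U$ equals $e^{-2n\lambda_1(x)}$, since \eqref{var:ineq1} and \eqref{var:ineq5} reduce $J_U$ there to $I+e^{-2n\lambda_1(x)}E_{1,3}$; similarly on $(-\infty,-\alpha)$ one gets $I+e^{-2n\lambda_2(x)}E_{2,4}$. I would then combine two facts: (i) by Lemma~\ref{lemma:lambda:inequalities}, $\lambda_1(x)>0$ for all $x>\beta$, hence $\lambda_1(x)\ge c_\varepsilon>0$ uniformly on the bounded set $[\beta^*+\varepsilon,R]$ (for any large fixed $R$, once $n$ is large enough so that $\beta<\beta^*+\varepsilon$), and (ii) by \eqref{lambda12:asy} together with \eqref{eq:V1V2}, $\lambda_1(x)\sim x^2/\bigl(4t(1-t)\bigr)$ as $x\to\infty$. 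Patching (i) and (ii) at $x=R$ yields a constant $c_3>0$ with $\lambda_1(x)\ge c_3 x^2$ for all $x\ge \beta^*+\varepsilon$, whence $J_T=I+O(e^{-c_3 n x^2})$ on that half-line; the negative half-line $(-\infty,-\alpha^*-\varepsilon)$ is treated identically with $\lambda_2$. The only mildly delicate step is this patching, which is the one place where the claim is slightly stronger than the pointwise positivity of $\Re\lambda_k$ and has to be upgraded by the quadratic growth at infinity—but no genuine difficulty arises because $\lambda_1,\lambda_2$ are continuous and strictly positive away from their cuts.
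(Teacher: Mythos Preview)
Your proposal is correct and follows essentially the same approach as the paper: parts (a) and (b) are read off directly from Lemmas~\ref{lem:locallensestimates} and~\ref{lem:globallensestimates}, and part (c) is obtained by strengthening the strict positivity in \eqref{var:ineq1}--\eqref{var:ineq2} to a quadratic lower bound $\Re\lambda_{k,\pm}(x)\ge c_3 x^2$, exactly as the paper does. Your compact-set plus large-$|x|$ patching is just a spelled-out version of that strengthening; the only cosmetic slip is writing $\lambda_1(x)$ instead of $\Re\lambda_{1,+}(x)$ on $(\beta,\infty)$, but the intended quantity is clear.
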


\begin{proof}
Part (a) follows from the estimates in Lemmas \ref{lem:locallensestimates} and \ref{lem:globallensestimates}
which apply to the off-diagonal elements in the jump matrix $J_T$ on the lips
of the local and global lenses.

Part (b) also follows directly from Lemma \ref{lem:globallensestimates}.

Part (c) follows from the variational inequalities in \eqref{var:ineq1} and
\eqref{var:ineq2} that in fact can be strengthened to
\begin{align*}
    \Re \lambda_{1,+}(x) & = \Re \lambda_{1,-}(x) \geq c_3 x^2, \qquad x > \beta + \varepsilon, \\
  \Re \lambda_{2,+}(x) & = \Re \lambda_{2,-}(x) \geq c_3 x^2, \qquad x < -\alpha - \varepsilon,
  \end{align*}
for some constant $c_3 > 0$ independent of $n$.
\end{proof}

\subsection{Global parametrix}
\label{subsection:global}

In this section we construct a global parametrix $P^{(\infty)}(z)$ for $T(z)$. The matrix-valued function
$P^{(\infty)}(z)$ satisfies the following RH problem, which we obtain from
the RH problem \ref{rhpforT} for $T$ by ignoring the jumps on the local and global lenses and
on $(-\infty, -\alpha)$ and $(\beta, \infty)$. That is, we keep the jump on $[-\alpha,\beta]$ only.

\begin{rhp} \label{rhpforPinfty} \textrm{ }
\begin{enumerate}
\item[\rm (1)] $P^{(\infty)}(z)$ is analytic for $z\in\mathbb C \setminus([-\alpha,\beta])$.
\item[\rm (2)] $P^{(\infty)}$ satisfies the jumps
\begin{align*}
P^{(\infty)}_+(x) &= P^{(\infty)}_-(x)\begin{pmatrix} 0 & 0 & 1 & 0 \\ 0 & 1 & 0 & 0 \\
-1 & 0 & 0 & 0 \\ 0 & 0 & 0 & 1 \end{pmatrix}, \qquad \textrm{for }
x\in(0,\beta),
\\
P^{(\infty)}_+(x) &= P^{(\infty)}_-(x)\begin{pmatrix} 1 & 0 & 0 & 0 \\ 0 & 0 & 0 & 1 \\
0 & 0 & 1 & 0 \\ 0 & -1 & 0 & 0 \end{pmatrix}, \qquad \textrm{for }
x\in(-\alpha,0).
\end{align*}
  \item[\rm (3)] As $z\to\infty$, we have that
\begin{equation*}P^{(\infty)}(z) =
    I+O(1/z).
\end{equation*}
\end{enumerate}
\end{rhp}

The RH problem for $P^{(\infty)}$ decouples into two $2 \times 2$ RH problems
that can be easily solved. A solution of the RH problem \ref{rhpforPinfty} is
\begin{equation}\label{globalparametrix} P^{(\infty)}(z) =
\frac{1}{2}\begin{pmatrix} 1 & 0 & i & 0 \\ 0 & 1 & 0 & -i \\
i & 0 & 1 & 0 \\
0 & -i & 0 & 1
\end{pmatrix}
\begin{pmatrix}
    \gamma_1(z) &0&0&0 \\ 0&\gamma_2^{-1}(z)&0&0\\
    0&0&\gamma_1^{-1}(z)&0\\ 0&0&0&\gamma_2(z)
\end{pmatrix}
\begin{pmatrix} 1 & 0 & -i & 0 \\ 0 & 1 & 0 & i \\
-i & 0 & 1 & 0 \\
0 & i & 0 & 1
\end{pmatrix},
\end{equation}
where
\begin{equation}\label{def:gamma12}
\gamma_1(z):=\left(\frac{z-\beta}{z}\right)^{1/4},\qquad
\gamma_2(z):=\left(\frac{z}{z+\alpha}\right)^{1/4},
\end{equation}
and where we choose the branches of  the $1/4$ powers that
are positive and real for  large enough real $z$.

\subsection{Local parametrices around the non-critical endpoints}
\label{subsection:Airy}

On a small but fixed neighborhood around each of the non-critical endpoints $-\alpha$
and $\beta$, we construct local parametrices $P^{(-\alpha)}$ and
$P^{(\beta)}$ out of Airy functions.
We can in fact take $n$-independent disks $D(\beta^*, \varepsilon_1)$ and $D(-\alpha^*, \varepsilon_2)$
for certain $\varepsilon_1, \varepsilon_2 > 0$ such that
$P^{(\beta)}$ and $P^{(-\alpha)}$ have the same jumps as $T$ has in the
respective disks and such that
    \begin{equation} \label{Airymatching}
\begin{aligned}
    P^{(\beta)}(z) & = (I + O(n^{-1})) P^{(\infty)}(z), \qquad \text{uniformly for } |z-\beta^*| = \varepsilon_1, \\
    P^{(-\alpha)}(z) & = (I+O(n^{-1})) P^{(\infty)}(z), \qquad \text{uniformly for } |z+\alpha^*| = \varepsilon_2,
    \end{aligned}
    \end{equation}
as $n \to \infty$.

The construction with Airy functions is well-known in the literature and we do not give details, cf.\ also
Section \ref{subsec:localparametrices} of this paper.

\subsection{Local parametrix around the origin}
\label{subsection:local}

\subsubsection{Statement of the local RH problem}

In this section we construct a local parametrix around the origin, with the
help of the model RH problem for $M(\zeta)$. We want to solve the following RH
problem.

\begin{rhp} \label{rhpforP0} We look for $P^{(0)}$
satisfying the following:
\begin{enumerate}
\item[\rm (1)] $P^{(0}(z)$ is analytic for $z\in D(0, n^{-1/3}) \setminus \Sigma_T$,
where $D(0,n^{-1/3})$ denotes the disk of radius $n^{-1/3}$ around $0$.
\item[\rm (2)] $P^{(0)}$ satisfies the jumps
\begin{align*}
    P^{(0)}_+ = P^{(0)}_- J_T, \qquad \text{on } \Sigma_T \cap D(0, n^{-1/3}),
\end{align*}
where $J_T$ is the jump matrix in the RH problem for $T$.
  \item[\rm (3)] As $n\to\infty$, we have that
\begin{equation}\label{matching condition}
    P^{(0)}(z) =   \left(I+ N(z) + O(n^{-1/3}) \right) P^{(\infty)}(z) \quad \text{uniformly for } |z| = n^{-1/3}
\end{equation}
where
\begin{equation} \label{eq:NisO1}
    N(z) = O(1) \qquad \text{as } n \to \infty \quad \text{uniformly for } |z| = n^{-1/3}.
\end{equation}
\end{enumerate}
\end{rhp}

The matching condition in (3) is posed on the circle $|z| = n^{-1/3}$ which
is shrinking as $n$ increases. It is different from the usual matching
condition which requires
\begin{equation} \label{eq:usualmatching}
    P^{(0)}(z) = \left( I + O(n^{-\delta}) \right) P^{(\infty)}(z)  \quad \text{ as } n\to\infty
    \end{equation}
for some $\delta > 0$. It turns out that in general we cannot achieve \eqref{eq:usualmatching}.
Only in case $\kappa = 0$ we can achieve \eqref{eq:usualmatching} on a circle of
fixed radius.
For $\kappa \neq 0$, we have to control the terms $e^{\pm n \kappa z}$ that appear
in the jump matrix $J_T$ on $\Gamma_l$ and $\Gamma_r$. These terms remain bounded if $|z| = O(n^{-1/3})$
which (partly) explains the radius $n^{-1/3}$ of the shrinking disk.

An essential issue for the further analysis is that the $4 \times 4$ matrix $N(z)$ from \eqref{matching condition}--\eqref{eq:NisO1} is nilpotent
of degree two, i.e., $N^2(z) = 0$, and even more
\[ N(z_1) N(z_2) = 0 \qquad \text{for any } z_1, z_2 \text{ different from $0$}. \]
The matrix-valued function $N(z)$ is also analytic in a punctured neighborhood
of $0$ with a simple pole at $0$, see the explicit formula in Section
\ref{sec:local parametrix}. See also \cite{DesK,KMW2} for a similar feature in
the RH analysis.

\subsubsection{Basic idea for the construction of the parametrix}
\label{main idea}

We will construct $P^{(0)}$ in the form
\[ P^{(0)}(z) = \widehat{P}^{(0)}(z)
    \diag\left(e^{n\lambda_1(z)}, e^{n(\lambda_2(z)
+ \kappa z)},e^{-n\lambda_1(z)},e^{-n(\lambda_2(z) - \kappa z)}\right). \] Then
in order that $P^{(0)}$ has the jump matrices $J_T$, the matrix-valued function
$\widehat{P}^{(0)}$ should have constant jumps on each part of $\Sigma_T \cap
D(0, n^{-1/3})$ and these constant jumps are exactly the same as the jumps in the RH
problem \ref{rhp:modelM} for $M$, cf.\ the jump matrices in Figures
\ref{fig:modelRHP} and \ref{fig:jumpsT2}. The model RH problem \ref{rhp:modelM}
depends on parameters $r_1$, $r_2$, $s_1$, $s_2$. It follows that for any
choice of these parameters, any conformal map $f_n(z)$ that maps the contours
$\Sigma_T \cap D(0,n^{-1/3})$ into the ten rays $\bigcup_{j=0}^9 \Gamma_j$ and any
analytic prefactor $E_n(z)$, the definition
\begin{multline*}
    P^{(0)}(z) = E_n(z) M\left(f_n(z); r_1, r_2, s_1, s_2\right) \\
    \times \diag\left(e^{n\lambda_1(z)}, e^{n(\lambda_2(z) + \kappa z)},e^{-n\lambda_1(z)},e^{-n(\lambda_2(z) - \kappa z)}\right)
    \end{multline*}
    will give us
$P^{(0)}$ that satisfies the jump conditions in the RH problem \ref{rhpforP0}
for $P^{(0)}$. The conformal map $f_n$ and the analytic prefactor $E_n(z)$
should then be chosen in order to have the asymptotic condition \eqref{matching
condition}.

However, there is not enough freedom to achieve this. Since the jumps in the
model RH problem for $M$ do not depend on the parameters $r_1, r_2, s_1, s_2$,
we also let these depend on $z$ and on $n$, and we put
\begin{multline*}
    P^{(0)}(z) = E_n(z) M\left(f_n(z); r_{1,n}(z), r_{2,n}(z), s_{1,n}(z), s_{2,n}(z) \right) \\
    \diag\left(e^{n\lambda_1(z)}, e^{n(\lambda_2(z) + \kappa z)},e^{-n\lambda_1(z)},e^{-n(\lambda_2(z) - \kappa z)}\right),
 \end{multline*}
 where $r_{1,n}(z)$, $r_{2,n}(z)$, $s_{1,n}(z)$, $s_{2,n}(z)$ depend  analytically on $z$.
Now we have these functions at our disposal, as well as $E_n(z)$ and
$f_n(z)$, to achieve the condition (3) in the RH problem
\ref{rhpforP0} for $P^{(0)}$.

It turns out that the right choice for these functions takes the form
\[ f_n(z) = n^{2/3} f(z), \quad r_{j,n}(z) = r_j(z), \quad s_{j,n}(z) = n^{2/3} s_{j}(z), \qquad j=1,2, \]
for certain analytic $f(z)$, $r_1(z)$, $r_2(z)$ that do not depend
on $n$, and analytic $s_1(z)$, $s_2(z)$ that still depend on $n$ but
only in a mild way. We next describe these functions.

\subsubsection{Auxiliary functions}
Recall the functions $\lambda_1$ and $\lambda_2$ from Section~\ref{sec:lamda function}
and their limits $\lambda_1^*$ and $\lambda_2^*$ as $n \to \infty$ from Section~\ref{subsec:largen}.

We define
the following auxiliary functions.
\begin{definition}
\begin{enumerate}
\item[(a)]
We define
\begin{equation}
    \label{def:conformal} f(z) = - (p_1^*)^{-1/6}  \left( \frac{3}{2} \lambda_1^*(z) \right)^{2/3},
\end{equation}
which is a conformal map in a neighborhood of the origin, see also \eqref{f:limit}.
\item[(b)] We define functions
\begin{equation}\label{def:r12}
    r_1 = r_1(z)=(p_1^*)^{1/4}, \qquad
    r_2(z)=  \pm i  (p_1^*)^{1/4}   \frac{\lambda_2^*(z)}{\lambda_1^*(z)}, \quad  \pm \Im z > 0,
\end{equation}
where $r_1$ is a constant and $r_2(z)$ is an analytic function near $0$,
independent from~$n$.
\item[(c)] Finally, we define the $n$-dependent functions
\begin{align}\label{def:s12}
    s_1(z) = \frac{1}{2} \frac{\lambda_1(z)-\lambda_1^*(z)}{(-f(z))^{1/2}},
\qquad
    s_2(z) =  \frac{1}{2} \frac{\lambda_2(z) -
 \lambda_2^*(z)}{f(z)^{1/2}}.
\end{align}
\end{enumerate}
\end{definition}

Note that the functions are defined such that
\begin{align}
    \lambda_1(z) & = \frac{2}{3} r_1(z) (-f(z))^{3/2}+ 2s_1(z)(-f(z))^{1/2}
      \label{P0:jumps1}, \\
    \lambda_2(z) & = \frac{2}{3} r_2(z) (f(z))^{3/2} + 2s_2(z)(f(z))^{1/2}
    \label{P0:jumps2}.
\end{align}

\begin{lemma}\label{lemma:limit of parameters}
There is $r > 0$ such that the functions $f$, $r_2$, $s_1$ and $s_2$ are analytic in the disk $D(0,r) = \{ z \mid |z| < r\}$.
In addition the  following hold.
\begin{enumerate}
\item[\rm (a)] The function $f(z)$ is real for real $z \in D(0,r)$, and
\begin{equation} \label{f:limit}
     f(z) = \frac{z}{\sqrt{t(1-t)}} + O(z^2) \qquad \text{as } z \to 0.
     \end{equation}
\item[\rm (b)] The function $r_2(z)$ is real and positive for
real $z \in D(0,r)$ and
\begin{align} \label{r2:limit}
    r_2(0) = (p_2^*)^{1/4}.
\end{align}
\item[\rm (c)] The functions $s_1$ and $s_2$ depend on $n$ in such a way
that $n^{2/3}s_1$ and $n^{2/3}s_2$ have limits as $n \to \infty$. The limiting
functions are analytic in $D(0,r)$ and satisfy
\begin{equation}
\begin{aligned} \label{s12:limit}
    \lim_{n\to\infty} n^{2/3} s_1(0) & = \frac{(p_1^*)^{1/4}}{2(\sqrt{p_1^*}+\sqrt{p_2^*})}L_5, \\
    \lim_{n\to\infty} n^{2/3} s_2(0) &= -\frac{(p_2^*)^{1/4}}{2(\sqrt{p_1^*}+\sqrt{p_2^*})}L_6,
    \end{aligned}
    \end{equation}
where $L_5$ and $L_6$ are given in \eqref{doublescaling:L5}--\eqref{doublescaling:L6}.
\end{enumerate}
\end{lemma}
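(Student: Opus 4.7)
The unifying technical tool is that for $k=1,2$ the function $\lambda_k^*$ is purely imaginary on its cut (on $(0,\beta^*)$ for $k=1$ and on $(-\alpha^*,0)$ for $k=2$), since $\lambda_{k,+}^*+\lambda_{k,-}^*=0$ there, which is immediate from the analogue for $\lambda_{k}$ combined with $\beta\to\beta^*$, $\alpha\to\alpha^*$, $\delta_k\to 0$. Consequently $(\lambda_k^*(z))^2$ extends analytically across the cut and defines an entire-type analytic function in a neighborhood of $0$. Moreover the expansion \eqref{lambda12starexpansion} shows that $(\lambda_k^*)^2$ has a zero of exact order $3$ at $z=0$, so we may write
\begin{equation*}
    (\lambda_1^*(z))^2 = z^3 H_1(z), \qquad (\lambda_2^*(z))^2 = z^3 H_2(z),
\end{equation*}
where $H_k$ is analytic near $0$, real-valued on $\mathbb R$, with
\begin{equation*}
    H_1(0) = -\frac{4(p_1^*)^{1/2}}{9 t^{3/2}(1-t)^{3/2}}<0, \qquad
    H_2(0) = \frac{4(p_2^*)^{1/2}}{9 t^{3/2}(1-t)^{3/2}}>0.
\end{equation*}

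For part (a), I write $(\tfrac{3}{2}\lambda_1^*(z))^{2/3}=z\bigl(\tfrac{9}{4}H_1(z)\bigr)^{1/3}$ and select the analytic branch of the cube root that is real and negative at $0$; this gives $f(z)=-(p_1^*)^{-1/6}(\tfrac{3}{2}\lambda_1^*(z))^{2/3}$ as an analytic real-valued function on a disk $D(0,r)$, and a short check against $H_1(0)$ shows $f'(0)=1/\sqrt{t(1-t)}\ne 0$, so $f$ is conformal near $0$ and satisfies \eqref{f:limit}. For part (b), the ratio
\[
    r_2(z)^2=-(p_1^*)^{1/2}\,\frac{(\lambda_2^*(z))^2}{(\lambda_1^*(z))^2}=-(p_1^*)^{1/2}\,\frac{H_2(z)}{H_1(z)}
\]
is analytic and nonzero near $0$, is real on the real axis, and evaluates at $0$ to $-(p_1^*)^{1/2}\,H_2(0)/H_1(0)=(p_2^*)^{1/2}>0$; taking the positive analytic square root yields \eqref{r2:limit}. (The sign $\pm i$ in the definition of $r_2$ in \eqref{def:r12} is absorbed correctly because $\lambda_1^*/\lambda_2^*$ changes sign across each cut in exactly the way needed to make $r_2$ single-valued.)

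For part (c), I check analyticity and limit behavior separately. The relations $\lambda_{1,+}+\lambda_{1,-}=\lambda_{1,+}^*+\lambda_{1,-}^*=0$ on $(0,\beta)$ give $(\lambda_1-\lambda_1^*)_-=-(\lambda_1-\lambda_1^*)_+$, while simultaneously $(-f)^{1/2}_-=-(-f)^{1/2}_+$ on the same segment, so the ratio defining $s_1$ is continuous across the cut; the vanishing of the denominator at $0$ is of order $z^{1/2}$, while the numerator vanishes at least to the same order (this follows by inspecting $\xi_1-\xi_1^*$ near $0$ using the estimate \eqref{eq:xi1diffestimate}), so $s_1$ extends analytically in $D(0,r)$. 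For the $n$-dependence, \eqref{limit:al and beta}--\eqref{critlim:del2} give $\delta_1,\beta-\beta^*=O(n^{-2/3})$, which makes $\xi_1-\xi_1^*=O(n^{-2/3})$ uniformly and yields a leading-order expansion $n^{2/3}(\xi_1-\xi_1^*)\to\widetilde\xi_1$ analytically; integrating from $0$ and dividing by $2(-f(z))^{1/2}$ produces an analytic limit of $n^{2/3}s_1$ with a definite value at $0$. A direct but routine evaluation of the dominant term $-\delta_1\sqrt{\beta^*}/(2t(1-t)\sqrt{s})$ of $(\xi_1-\xi_1^*)_+$, together with $\sqrt{\beta^*}=2(p_1^*)^{1/4}(t(1-t))^{1/4}$, gives
\[
    n^{2/3}s_1(0)\;\longrightarrow\;\frac{(p_1^*)^{1/4}}{2\sqrt{t(1-t)}}\bigl(L_1(1-t_{\crit})+L_3 t_{\crit}\bigr),
\]
and the stated expression in \eqref{s12:limit} then follows from $\sqrt{t_{\crit}(1-t_{\crit})}=(\sqrt{p_1^*}+\sqrt{p_2^*})/((a_1^*-a_2^*)+(b_1^*-b_2^*))$ and $L_1(1-t_{\crit})+L_3 t_{\crit}=L_5/((a_1^*-a_2^*)+(b_1^*-b_2^*))$, both of which follow by combining \eqref{tcrit:bis}, \eqref{criticalseparation:tris}, and \eqref{doublescaling:L5}. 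The argument for $s_2$ is identical upon reflecting through the origin via the symmetry that also underlies \eqref{symmetry:special}.

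The routine but delicate part is the bookkeeping of branches and signs: choosing the right analytic cube root in (a) so that the leading coefficient comes out with the correct sign, and keeping track of the conventions for $(-z)^{1/2}$, $(-f)^{1/2}$ versus the signs of $\lambda_{1,\pm}$ on the cut so that the computed value of $s_1(0)$ matches the target formula in \eqref{s12:limit} (rather than its negative). I expect this sign/algebra verification, combined with the identities forced by the critical separation condition, to be the only real obstacle; the analyticity assertions themselves are straightforward once the squaring trick for $\lambda_k^*$ and the cut-cancellation for $s_k$ are in place.
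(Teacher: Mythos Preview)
Your proposal is correct and follows essentially the same route as the paper. The paper's proof is terser: for (a) and (b) it simply invokes the definitions of $\lambda_k^*$ and the expansion \eqref{lambda12starexpansion}, whereas your ``squaring trick'' $(\lambda_k^*)^2=z^3H_k(z)$ makes explicit the analyticity mechanism that the paper leaves implicit; for (c) the paper computes exactly the same $\delta_1$-dominated integral $\int_0^z\bigl(\tfrac{s-\beta^*}{s}\bigr)^{1/2}ds$ divided by $2(-f)^{1/2}$, arriving at the intermediate form $\tfrac{L_1(1-t)+L_3t}{4t^{3/4}(1-t)^{3/4}}\sqrt{\beta^*}$, which is the same number you obtained before the final algebraic rewriting via \eqref{tcrit:bis}, \eqref{criticalseparation:tris}, and \eqref{doublescaling:L5}.
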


\begin{proof}
Parts (a) and (b) follow from the definitions of $\lambda_1^*$ and
$\lambda_2^*$ in \eqref{lambdakstar}--\eqref{xikstar}, see also
\eqref{lambda12starexpansion}.

For part (c) we note that by \eqref{def:s12}, \eqref{def:xi12}--\eqref{def:lambda12}, and \eqref{lambdakstar}--\eqref{xikstar},
\begin{align} \nonumber
    \lim_{n \to \infty} n^{2/3} s_1(z) & =
    \frac{1}{2 (-f(z))^{1/2}} \lim_{n \to \infty} \left[ n^{2/3} \int_0^z (\xi_1(s) - \xi_1^*(s)) ds \right] \\
    & \nonumber
       = - \frac{\lim_{n \to \infty} \left[ n^{2/3} (\beta - \beta^*) \right]}{2t(1-t) (-f(z))^{1/2}}
                \int_0^z \left(\frac{s}{s-\beta^*}\right)^{1/2} ds \\
         & \quad        - \frac{\lim_{n \to \infty} \left[ n^{2/3} \delta_1 \right]}{4t(1-t) (-f(z))^{1/2}}
                \int_0^z \left(\frac{s-\beta^*}{s} \right)^{1/2} ds, \label{s1limit}
            \end{align}
which by \eqref{f:limit} is indeed an analytic function in a neighborhood of $0$.
Recall that $\beta$ and $\delta_1$ depend on $n$, and that the limits
of $ n^{2/3} (\beta - \beta^*)$ and $n^{2/3} \delta_1$ as $n \to \infty$ exist.

To evaluate \eqref{s1limit} for $z=0$ we only need to consider the second term on the
right-hand side of \eqref{s1limit}. By \eqref{critlim:del1}, \eqref{f:limit}
and \eqref{s1limit} we then  obtain
\[ \lim_{n \to \infty} n^{2/3} s_1(0) =
    \frac{L_1(1-t) + L_3 t}{4 t^{3/4} (1-t)^{3/4}} \sqrt{\beta^*}, \]
    which can be rewritten to the formula given in  \eqref{s12:limit} using
    \eqref{def:betastar}, \eqref{criticalseparation:tris}, \eqref{tcrit:bis}    and \eqref{doublescaling:L5}.

The statements in part (c) dealing with  $s_2$ follow in a similar way.
\end{proof}

Observe that $r_1$ in \eqref{def:r12}, $r_2(0)$ in \eqref{r2:limit}
and the limits of $n^{2/3}s_1(0)$ and $n^{2/3} s_2(0)$ from \eqref{s12:limit}
correspond precisely to the values in
\eqref{doublescaling:r}--\eqref{doublescaling:s}.

\subsubsection{Definition of parametrix}
\label{sec:local parametrix}

We finally define the local parametrix $P^{(0)}$ near the origin as
follows.

\begin{definition} We define
\begin{multline} \label{def:localparametrix}
P^{(0)}(z) = E_n(z) M \left(n^{2/3}f(z); r_1(z),r_2(z), n^{2/3}s_1(z),n^{2/3}s_2(z) \right) \\
    \times
    \diag\left(e^{n\lambda_1(z)},e^{n(\lambda_2(z) + \kappa z)},e^{-n\lambda_1(z)},e^{-n(\lambda_2(z) - \kappa z)}\right),
\end{multline}
where $\kappa$ is given in \eqref{def:kappa} and
\begin{multline} \label{def:En}
    E_n(z) = P^{(\infty)}(z)
    \diag(1,e^{-n\kappa z}, 1, e^{-n\kappa z})\frac{1}{\sqrt{2}}\begin{pmatrix} 1 & 0 & i & 0 \\ 0 & 1 & 0 & -i \\
    i & 0 & 1 & 0 \\
    0 & -i & 0 & 1
    \end{pmatrix}  \\
    \times \diag\left((-n^{2/3}f(z))^{1/4},(n^{2/3}f(z))^{1/4}, (-n^{2/3}f(z))^{-1/4},(n^{2/3}f(z))^{-1/4}\right).
\end{multline}
\end{definition}

\begin{remark}
Note that the parameters $r_2(z)$, $n^{2/3} s_1(z)$, $n^{2/3} s_2(z)$ of $M$ in
\eqref{def:localparametrix} are real for real $z$, but in general non-real if
$z$ is not real. We proved in Theorem \ref{theorem:solvability} that the RH
problem for $M$ is solvable, and thus that $M(\zeta; r_1, r_2, s_1, s_2)$
exists for real parameters (and $r_1 > 0$, $r_2 > 0$). By perturbation
arguments it can then be shown that the RH problem for $M$ is also solvable for
parameters that are sufficiently close to the real line.

We are interested in $z \in D(0,n^{-1/3})$ and for such $z$, the values of
$r_2(z)$, $n^{2/3} s_1(z)$, $n^{2/3} s_2(z)$ come arbitrarily close to the real
axis as $n \to \infty$. Therefore for large enough $n$, $M \left(\zeta;
r_1(z),r_2(z)\right.$, $\left. n^{2/3}s_1(z),n^{2/3}s_2(z) \right)$ exists for
every $z \in D(0, n^{-1/3})$ and the local parametrix
\eqref{def:localparametrix} is well-defined.

Also the asymptotic condition in the RH problem \ref{rhp:modelM} for $M$
will be valid uniformly for $z \in D(0, n^{-1/3})$.
\end{remark}

\begin{lemma} The prefactor $E_n(z)$ in \eqref{def:En} is
analytic in a neighborhood of  the origin.
\end{lemma}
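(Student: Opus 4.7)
The plan is to use the explicit product formula \eqref{globalparametrix} to rewrite $E_n$ as the constant matrix times a diagonal matrix, and then check analyticity entry by entry. Write $P^{(\infty)}(z) = \tfrac{1}{2} C_1 D(z) C_2$ with $D = \diag(\gamma_1,\gamma_2^{-1},\gamma_1^{-1},\gamma_2)$ and the two constant matrices $C_1, C_2$ from \eqref{globalparametrix}; a direct computation gives $C_1 C_2 = C_2 C_1 = 2I$, and $C_2$ commutes with any diagonal of the form $\diag(1,a,1,a)$. Substituting into \eqref{def:En} and collapsing, one finds
\[ E_n(z) = \tfrac{1}{\sqrt{2}}\, C_1\, \diag\!\bigl(\gamma_1(-n^{2/3}f)^{1/4},\ \gamma_2^{-1}(n^{2/3}f)^{1/4} e^{-n\kappa z},\ \gamma_1^{-1}(-n^{2/3}f)^{-1/4},\ \gamma_2 (n^{2/3}f)^{-1/4} e^{-n\kappa z}\bigr). \]
Since $C_1$ and the exponentials are entire, it remains to show that $\gamma_1(z)(-f(z))^{1/4}$ and $\gamma_2^{-1}(z)f(z)^{1/4}$ extend analytically and without zeros across $z=0$; the reciprocals will then be analytic too.

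The potential singularities are jumps on $(-\alpha,0)\cup(0,\beta)$ and the branch points at $z=0$. For the jump on $(0,\beta)$: using the definitions of the branches in Section~\ref{subsection:global} and the fact that $f$ is conformal at the origin with $f'(0)>0$, one computes $\gamma_{1,+}/\gamma_{1,-}= i$ and $(-f)^{1/4}_+/(-f)^{1/4}_-= -i$, so the product is continuous across $(0,\beta)$. The analogous calculation $\gamma_{2,+}/\gamma_{2,-}\cdot f^{1/4}_+/f^{1/4}_- = (-i)(i)=1$ disposes of $(-\alpha,0)$. Hence both scalar combinations are analytic on a punctured neighborhood of $0$.

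For analyticity at the origin itself I take fourth powers:
\[ \bigl(\gamma_1(z)(-f(z))^{1/4}\bigr)^4 = \frac{(z-\beta)(-f(z))}{z}, \qquad \bigl(\gamma_2^{-1}(z)\,f(z)^{1/4}\bigr)^4 = \frac{(z+\alpha)f(z)}{z}. \]
By Lemma~\ref{lemma:limit of parameters}(a), $f(z)= z/\sqrt{t(1-t)}+O(z^2)$, so both right-hand sides extend analytically to $z=0$ with the positive nonzero values $\beta/\sqrt{t(1-t)}$ and $\alpha/\sqrt{t(1-t)}$ respectively. The two combinations are therefore bounded on a punctured neighborhood of $0$, so Riemann's removable-singularity theorem yields analytic (and non-vanishing) extensions to $z=0$; taking reciprocals gives the analyticity of the remaining two diagonal entries, and thus of $E_n$.

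The only delicate point is to pin down the branches of $\gamma_1,\gamma_2,(\pm f)^{1/4}$ consistently so that the cancellations $i\cdot(-i)=1$ hold with the correct signs and so that the fourth-root identities above match after taking the fourth root; once this bookkeeping is made explicit, the rest of the argument is mechanical. This is the only place where sign errors could creep in, but the underlying structural reason---$P^{(\infty)}$ block-diagonalises compatibly with the constant matrices in \eqref{def:En}---guarantees that everything must work.
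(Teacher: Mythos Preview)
Your proposal is correct and follows essentially the same route as the paper: both substitute the explicit formula \eqref{globalparametrix} for $P^{(\infty)}$ into \eqref{def:En}, exploit the commutation of $\diag(1,e^{-n\kappa z},1,e^{-n\kappa z})$ with the constant matrices to collapse $E_n$ into $\tfrac{1}{\sqrt{2}}C_1$ times a diagonal matrix, and then read off analyticity from the diagonal entries $\gamma_1(-f)^{1/4}$, $\gamma_2^{-1}f^{1/4}$ and their reciprocals. The paper simply asserts this last step by citing \eqref{f:limit} and \eqref{def:gamma12}, whereas you spell out the jump cancellations on $(0,\beta)$ and $(-\alpha,0)$ and use fourth powers plus removability at $z=0$; your version is more explicit but the underlying argument is the same.
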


\begin{proof} By using the expression for $P^{(\infty)}(z)$ in
\eqref{globalparametrix}, we can rewrite \eqref{def:En} as
\begin{multline}\label{En:zero}
E_n(z) = \diag(1,e^{-n\kappa z}, 1, e^{-n\kappa z})
\frac{1}{\sqrt{2}}\begin{pmatrix} 1 & 0 & i & 0 \\ 0 & 1 & 0 & -i \\
i & 0 & 1 & 0 \\
0 & -i & 0 & 1
\end{pmatrix}\\ \times
    \diag\left(\gamma_1(z) (-n^{2/3}f(z))^{1/4},\gamma_2^{-1}(z)(n^{2/3}f(z))^{1/4}, \right. \\
    \left.
    \gamma_1^{-1}(z)(-n^{2/3}f(z))^{-1/4},\gamma_2(z)(n^{2/3}f(z))^{-1/4}\right),
\end{multline}
with $\gamma_1$ and $\gamma_2$ as in \eqref{def:gamma12}. Here we used that the
matrix $\diag(1,e^{-n\kappa z}, 1, e^{-n\kappa z})$ commutes with all the other
matrices in \eqref{def:En}. From \eqref{En:zero}, \eqref{f:limit} and
\eqref{def:gamma12}, we then see that $E_n(z)$ is indeed analytic in a
neighborhood of the origin.
\end{proof}

As discussed in Section \ref{main idea}, the matrix-valued function $P^{(0)}$
defined in \eqref{def:localparametrix} satisfies the jump condition in the RH
problem \ref{rhpforP0}. [We modify the contours $\Sigma_T$ if necessary, in
such a way that $f$ maps $\Sigma_T \cap D(0, n^{-1/3})$ into $\bigcup_{j=0}^9
\Gamma_j$.] It then remains to show that  $P^{(0)}$ satisfies the matching
condition $(3)$ in the RH problem \ref{rhpforP0}.

\begin{lemma}
$P^{(0)}$ defined in \eqref{def:localparametrix} satisfies the matching
condition $(3)$ in the RH problem \ref{rhpforP0}.
\end{lemma}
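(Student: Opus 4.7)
The plan is to verify the matching condition by substituting the asymptotic expansion \eqref{M:asymptotics} of $M(\zeta)$ at $\zeta=\infty$ into the definition \eqref{def:localparametrix} of $P^{(0)}$. On the circle $|z|=n^{-1/3}$ we have $\zeta = n^{2/3}f(z) = O(n^{1/3})$, which is large, so the expansion is applicable. The key algebraic observation is that with the parameters chosen in \eqref{def:r12}--\eqref{def:s12}, the exponentials in the $\theta_j$ factors of \eqref{def:theta1}--\eqref{def:theta2} reduce exactly to $n\lambda_j(z)$ via the identities \eqref{P0:jumps1}--\eqref{P0:jumps2}.

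First I would verify that the leading term in \eqref{M:asymptotics} (the identity) produces precisely $P^{(\infty)}(z)$ after being multiplied by $E_n(z)$ on the left and by the exponential diagonal in \eqref{def:localparametrix} on the right. Indeed, the diagonal factor $\diag((-n^{2/3}f)^{1/4},(n^{2/3}f)^{1/4},(-n^{2/3}f)^{-1/4},(n^{2/3}f)^{-1/4})$ in $E_n$ cancels the corresponding power diagonal in \eqref{M:asymptotics}; the sandwich matrix $\frac{1}{\sqrt{2}}(\cdots)$ in $E_n$ is the inverse of the corresponding matrix in \eqref{M:asymptotics}; and the factor $\diag(1,e^{-n\kappa z},1,e^{-n\kappa z})$ in $E_n$ together with the shift $\lambda_2\mapsto\lambda_2+\kappa z$ in \eqref{def:localparametrix} absorbs the $\kappa z$ terms. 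The net effect is
\[
P^{(0)}(z)\bigl(P^{(\infty)}(z)\bigr)^{-1} = E_n(z)\Bigl(I+\frac{M_1}{\zeta}+O(\zeta^{-2})\Bigr)E_n^{-1}(z).
\]
Setting $N(z):= E_n(z)M_1 E_n^{-1}(z)/\zeta$ and estimating the remainder will give the matching.

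Next I would analyze $N(z)$ and verify its stated properties. The conjugation by the diagonal portion of $E_n$ scales the entries of $M_1$ in \eqref{eq:defM1}: the upper-right $2\times 2$ block (entries $ic,id,id,i\tilde c$) is multiplied by factors of order $(-\zeta)^{1/2}$ or $\zeta^{1/2}$, the lower-left block shrinks by the reciprocal, and the diagonal blocks are preserved. On $|z|=n^{-1/3}$ the factor $n\kappa z = O(1)$ by \eqref{kappa:asy}, so the matrices $\diag(1,e^{-n\kappa z},1,e^{-n\kappa z})$ and $P^{(\infty)}(z)$ are uniformly bounded; hence conjugation by them does not change the orders of magnitude. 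The resulting $N(z)$ has its dominant contribution coming from the upper-right block of $M_1$, which is a block-strictly-upper-triangular matrix and hence has image in a fixed two-dimensional subspace of $\mathbb C^4$ that is also contained in its kernel. Since conjugation preserves this property, $N(z_1)N(z_2)=0$ for all $z_1,z_2$, and the simple pole at $0$ arises from the factor $1/\zeta = 1/(n^{2/3}f(z))$ together with $f(0)=0$.

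The main obstacle is controlling the $O(\zeta^{-2})$ remainder uniformly when the parameters $s_j = n^{2/3}s_j(z)$ of $M$ are themselves large, of order $n^{2/3}$. The expansion \eqref{M:asymptotics} is a priori an asymptotic statement as $\zeta\to\infty$ with $r_1,r_2,s_1,s_2$ fixed, and in our regime $\zeta/s_j = O(n^{-1/3})$ is small, so the naive remainder bound is not useful. Moreover, by Proposition \ref{prop:large s1 solvability} and \eqref{c:Hamiltonian}--\eqref{ctil:Hamiltonian}, the entries $c,\tilde c$ contain polynomial pieces $s_j^2/r_j = O(n^{4/3})$ that would produce spuriously large contributions if analysed term by term. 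The cure, analogous to the scaling in \eqref{M to A}, is to perform a preliminary rescaling $\zeta\mapsto\zeta/s_1$ (or a Schlesinger-type shift exploiting the symmetry \eqref{eq:Msymmetry2}) that absorbs the polynomial growth into a gauge transformation, after which a uniform bound on the genuine transcendental remainder may be extracted from the steepest-descent construction of Section \ref{section:Lun}. This uniform remainder bound, combined with the leading order identification of $N(z)$ above, yields \eqref{matching condition}--\eqref{eq:NisO1}.
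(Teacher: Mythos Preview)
Your overall strategy---substitute the expansion \eqref{M:asymptotics} into \eqref{def:localparametrix}, use \eqref{P0:jumps1}--\eqref{P0:jumps2} to cancel the exponentials, and arrive at $P^{(0)}(P^{(\infty)})^{-1}=E_n(I+M_1/\zeta+O(\zeta^{-2}))E_n^{-1}$---is correct and is essentially what the paper does. The gap is in your identification of the ``main obstacle.'' You assert that the parameters $n^{2/3}s_j(z)$ are of order $n^{2/3}$ and hence that $c,\wtil c$ carry $s_j^2/r_j=O(n^{4/3})$ terms. This is a misreading: by Lemma~\ref{lemma:limit of parameters}(c) the functions $s_j(z)$ in \eqref{def:s12} are themselves $O(n^{-2/3})$ (they are built from $\lambda_j-\lambda_j^*$, which is $O(n^{-2/3})$ by \eqref{lambda:convergencespeed}), so the rescaled parameters $n^{2/3}s_j(z)$ are $O(1)$ uniformly for $|z|\le n^{-1/3}$. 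All four parameters of $M$ therefore stay in a compact set, $M_1$ is uniformly bounded, and \eqref{M:asymptotics} applies uniformly (the paper notes this in the Remark preceding the lemma). There is nothing to absorb; the rescalings and Schlesinger shifts you propose are unnecessary. With this correction the estimates are immediate: on $|z|=n^{-1/3}$ one has $\zeta=O(n^{1/3})$, while \eqref{En:zero} shows $E_n,E_n^{-1}=O(n^{1/6})$, whence $E_n\,O(\zeta^{-2})\,E_n^{-1}=O(n^{-1/3})$ and $E_n(M_1/\zeta)E_n^{-1}=O(1)$.

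A secondary point: your claim that $N(z_1)N(z_2)=0$ for $N(z):=E_n(z)M_1E_n^{-1}(z)/\zeta$ is false, since $M_1$ has nontrivial diagonal blocks and the conjugating matrices differ at $z_1$ and $z_2$. The paper instead rewrites \eqref{M:asymptotics} as \eqref{M:asymptoticsbis} and takes for $N(z)$ only the contribution of the leading correction $\widetilde M_1/\zeta^{1/2}$, which is an $A$-conjugate of a strictly block upper-triangular matrix; this is what gives the nilpotency \eqref{eq:Nz1Nz2} needed in Section~\ref{subsection:finaltransfo}. The next term $\widetilde M_2/\zeta$ has a block-diagonal structure which, after conjugation by $P^{(\infty)}$, remains bounded and lands in the $O(n^{-1/3})$ remainder.
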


\begin{proof}
The asymptotics \eqref{M:asymptotics} for $M(\zeta)$ can be rewritten as
\begin{multline} \label{M:asymptoticsbis}
        M(\zeta)  = \diag((-\zeta)^{-1/4},\zeta^{-1/4},(-\zeta)^{1/4},\zeta^{1/4})
        \frac{1}{\sqrt{2}}
        \begin{pmatrix} 1 & 0 & -i & 0 \\
            0 & 1 & 0 & i \\
            -i & 0 & 1 & 0 \\
            0 & i & 0 & 1
    \end{pmatrix}   \\
        \times \left(I+\frac{\widetilde M_1}{\zeta^{1/2}}+
            \frac{\widetilde M_2}{\zeta} + O\left(\frac{1}{\zeta^{3/2}}\right) \right)
        \diag\left(e^{-\theta_1(\zeta)},e^{-\theta_2(\zeta)},e^{\theta_1(\zeta)},e^{\theta_2(\zeta)}\right),
\end{multline}
where
\begin{equation*}
    \widetilde M_1 =
        \frac{1}{2}\begin{pmatrix} 1 & 0 & i & 0 \\
        0 & 1 & 0 & -i \\
        i & 0 & 1 & 0 \\
        0 & -i & 0 & 1 \end{pmatrix}
        \begin{pmatrix}  e^{\mp\pi i/4} &0&0&0\\0&1&0&0\\ 0&0&0&0\\0&0&0&0 \end{pmatrix}
        M_1
        \begin{pmatrix}0 &0&0&0\\0&0&0&0\\ 0&0&e^{\mp\pi i/4}&0\\0&0&0&1 \end{pmatrix}
        \begin{pmatrix} 1 & 0 & -i & 0 \\
        0 & 1 & 0 & i \\
        -i & 0 & 1 & 0 \\
            0 & i & 0 & 1  \end{pmatrix}
\end{equation*}
as $\pm\Im \zeta> 0$, and $\widetilde M_2$ takes the following form:
\begin{equation*}
        \widetilde M_2 =
        \frac{1}{2}\begin{pmatrix} 1 & 0 & i & 0 \\
            0 & 1 & 0 & -i \\
            i & 0 & 1 & 0 \\
            0 & -i & 0 & 1 \end{pmatrix}
        \begin{pmatrix}\ast & \ast &0&0\\ \ast& \ast &0&0\\ 0&0&\ast&\ast\\0&0&\ast&\ast
            \end{pmatrix}
            \begin{pmatrix} 1 & 0 & -i & 0 \\
                0 & 1 & 0 & i \\
                -i & 0 & 1 & 0 \\
                0 & i & 0 & 1 \end{pmatrix}
\end{equation*}
with $\ast$ denoting certain unimportant entries.

Recall that $M(\zeta)$ as well as $\widetilde{M}_1$ depend on $r_1, r_2, s_1,
s_2$. Also $\theta_j(\zeta)$ depends on $r_j$ and $s_j$, for $j=1,2$, see
\eqref{def:theta1}--\eqref{def:theta2}. Using the notation $
\theta_j(\zeta;r_j,s_j)$ to denote the dependence on $r_j$ and $s_j$, we have
\begin{equation} \label{thetajrelation}
    \theta_j\left(n^{2/3} f(z); r_j(z),n^{2/3} s_j(z)\right) = n \lambda_j(z)
    \end{equation}
because of \eqref{def:theta1}--\eqref{def:theta2} and
\eqref{P0:jumps1}--\eqref{P0:jumps2}.

Then we have for $z$ on the circle $|z|=n^{-1/3}$, by
\eqref{def:localparametrix}, \eqref{def:En} and \eqref{M:asymptoticsbis},
\begin{multline}
        P^{(0)}(z)(P^{(\infty)})^{-1}(z)  \\
            = P^{(\infty)}(z)\diag(1,e^{-n\kappa z}, 1, e^{-n\kappa z})
            \left(I+\frac{\widetilde M_1(z)}{n^{1/3}f^{1/2}(z)}+O\left(\frac{1}{n^{2/3}f(z)} \right)\right)
        \\  \times  \diag(1,e^{n\kappa z}, 1, e^{n \kappa
        z})(P^{(\infty)})^{-1}(z),
\label{jump on boundary}
\end{multline}
as the exponential factors involving $\theta_j$ and $\lambda_j$ cancel
due to \eqref{thetajrelation}.
In \eqref{jump on boundary} we have written $\widetilde M_1(z)$ for
\[ \widetilde M_1(r_1(z), r_2(z), n^{2/3} s_1(z), n^{2/3} s_2(z)) \]
by a slight abuse of notation.

Using the formula \eqref{globalparametrix} for $P^{(\infty)}(z)$ in \eqref{jump
on boundary}, we obtain
\begin{equation}
        P^{(0)}(z)(P^{(\infty)})^{-1}(z) = I+ N(z)+O(n^{-1/3}),\qquad \textrm{for }|z|=n^{-1/3},
\end{equation}
with
\begin{multline} \label{eq:N}
    N(z) = \frac{1}{2 n^{1/3} f^{1/2}(z)}
            \begin{pmatrix}  1 & 0 & i & 0 \\
                0 & 1 & 0 & -i \\
                i & 0 & 1 & 0 \\
                0 & -i & 0 & 1 \end{pmatrix}
                \diag \begin{pmatrix} e^{\mp \pi i/4} \gamma_1(z) & e^{-n \kappa z} \gamma_2^{-1}(z) & 0 & 0 \end{pmatrix} \\
                \times M_1(z)
                \diag \begin{pmatrix} 0 & 0 & e^{\mp \pi i/4} \gamma_1(z) & e^{n \kappa z} \gamma_2^{-1}(z)  \end{pmatrix}
                    \begin{pmatrix} 1 & 0 & -i & 0 \\
                    0 & 1 & 0 & i \\
                        -i & 0 & 1 & 0 \\
                    0 & i & 0 & 1 \end{pmatrix},
\end{multline}
where
\[ M_1(z) = M_1(r_1(z), r_2(z), n^{2/3} s_1(z), n^{2/3}s_2(z)) \]
is analytic.

Then by  the definition \eqref{eq:defM1} of $M_1$ and \eqref{eq:N},
\begin{multline} \label{eq:N2}
    N(z)=\frac{1}{2n^{1/3}f^{1/2}(z)}\begin{pmatrix} 1 & 0 & i & 0 \\
                0 & 1 & 0 & -i \\
                i & 0 & 1 & 0 \\
                0 & -i & 0 & 1  \end{pmatrix}\\ \times
    \begin{pmatrix}
        0 & 0 & \pm \gamma_1^2(z) c(z) &     i e^{\mp \pi i/4} e^{n\kappa z} \gamma_1(z) \gamma^{-1}_2(z) d(z)  \\
        0 & 0 & i e^{\mp\pi i/4}  e^{-n \kappa z} \gamma_1(z) \gamma^{-1}_2(z) d(z) &   i \gamma_2^{-2}(z) \tilde c(z)\\
        0&0&0&0 \\
        0&0&0&0
    \end{pmatrix}  \begin{pmatrix} 1 & 0 & -i & 0 \\
        0 & 1 & 0 & i \\
    -i & 0 & 1 & 0 \\
        0 & i & 0 & 1  \end{pmatrix}
\end{multline}
with $d(z)$, $c(z)$, $\widetilde c(z)$ given by \eqref{d:Painleve2}--\eqref{ctil:Hamiltonian}
with $r_1$, $r_2$, $s_1$, $s_2$ replaced by $r_1(z)$, $r_2(z)$, $n^{2/3} s_1(z)$, $n^{2/3} s_2(z)$,
respectively. These functions are analytic and remain  bounded as $n \to \infty$.

The combinations
\[ \pm \frac{\gamma_1^2(z)}{f^{1/2}(z)}, \quad e^{\mp \pi i/4} \frac{\gamma_1(z)}{\gamma_2(z) f^{1/2}(z)},
    \quad \frac{1}{\gamma_2^2(z) f^{1/2}(z)}, \qquad \mp \Im z  > 0, \]
        are analytic in a punctured neighborhood of $0$ with a simple pole at $0$.
        On the circle with radius $n^{-1/3}$ they all grow like  $O(n^{1/3})$ as $n \to \infty$.
Since in \eqref{eq:N2} there is also a factor $n^{-1/3}$ we find that
indeed
\[ N(z) = O(1),  \qquad \text{as } n \to \infty \text{ uniformly for } |z| = n^{-1/3}, \]
and this proves the lemma.
\end{proof}

From the proof of the lemma, we also find that $N(z)$ is analytic with a simple pole at $0$.
From \eqref{eq:N} or \eqref{eq:N2} it is also clear that
\begin{equation} \label{eq:Nz1Nz2}
    N(z_1) N(z_2) = 0, \qquad \text{for any } z_1, z_2 \text{ different from } 0.
    \end{equation}

\subsection{Fourth transformation}
\label{subsection:fourthtransfo}

 Using the global
parametrix $P^{(\infty)}$ and the local parametrices $P^{(-\alpha)}$,
$P^{(\beta)}$ and $P^{(0)}$, we define the fourth transformation $T \mapsto S$
as follows.

\begin{definition}
We define
\begin{equation}\label{def:S}
S = \begin{cases}
    T (P^{(0)})^{-1}, &  \text{inside the disk } D(0, n^{-1/3}) \textrm{ around }0, \\
    T (P^{(-\alpha)})^{-1}, & \text{in a fixed disk around } -\alpha^*, \\
    T (P^{(\beta)})^{-1},& \text{in a fixed disk around } \beta^*, \\
    T (P^{(\infty)})^{-1},&  \text{elsewhere}.
\end{cases}
\end{equation}
\end{definition}

Then $S$ is defined and analytic outside of $\Sigma_T$ and the three disks
around $0$, $-\alpha^*$ and $\beta^*$, with an analytic continuation across
those parts of $\Sigma_T$ where the jumps of the parametrices coincide with
those of $T$. What remains are the jumps on a contour $\Sigma_{S}$ that
consists of the three circles around $0$, $-\alpha^*$, and $\beta$, the parts
of the real intervals $(-\infty, -\alpha^*]$ and $[\beta^*, \infty)$ outside of
the disks, and the lips of the local and global lenses outside of the disks.

The circles are oriented clockwise. Then the RH problem for $S$ is

\begin{rhp} \label{rhpforS} \textrm{ }
\begin{enumerate}
\item[\rm (1)] $S$ is analytic in $\mathbb C \setminus \Sigma_{S}$.
\item[\rm (2)] $S$ satisfies the jump relation $S_+ = S_- J_{S}$
on $\Sigma_{S}$ with jump matrices:
\begin{align} \label{def:JS}
    J_{S}  = \begin{cases}
            P^{(0)} (P^{(\infty)})^{-1} &  \text{ on the boundary of } D(0, n^{-1/3}),  \\
            P^{(-\alpha)} (P^{(\infty)})^{-1} & \textrm{ on the boundary of  the disk around } -\alpha^*,  \\
            P^{(\beta)} (P^{(\infty)})^{-1}  &  \textrm{ on the boundary of  the disk around } \beta^*,  \\
            P^{(-\infty)} J_T (P^{(\infty)})^{-1} & \textrm{ elsewhere on } \Sigma_{S}.
            \end{cases}
\end{align}
 \item[\rm (3)] As $z\to\infty$, we have that
\begin{equation*}
        S(z) = I+O(1/z).
\end{equation*}
\end{enumerate}
\end{rhp}

The jump matrix $J_{S}$ is not close to the identity matrix on the circle
around $0$, since by \eqref{def:JS} and \eqref{matching condition} we have
\begin{equation} \label{eq:JS0}
    J_{S} = I + N(z) + O(n^{-1/3}) \qquad \text{as } n \to \infty
    \end{equation}
uniformly for $|z| = n^{-1/3}$, with $N(z) = O(1)$.

The other jump matrices, however, are close to the identity matrix as $n$ gets large.
Indeed from \eqref{def:JS} and the matching condition \eqref{Airymatching} we find that
\begin{equation} \label{eq:JS1}
    J_{S} = I + O(n^{-1}) \qquad \text{as } n \to \infty
    \end{equation}
for $z$ on the fixed circles around $-\alpha^*$ and $\beta^*$.

From Corollary \ref{cor:JTestimates} and \eqref{def:JS}, it follows that
$J_{S}$ has similar estimates as $J_T$ on the rest of $\Sigma_{S}$. Namely, for
certain $c_1, c_2, c_3 >0$ we have
\begin{equation} \label{eq:JS2}
    J_{S}(z) = I + O(e^{-c_1 n^{1/2}})
    \end{equation}
on the lips of the local and global lenses outside the disks,
\begin{equation} \label{eq:JS3}
    J_{S}(z) = I + O(e^{-c_2 n |z|})
    \end{equation}
on the lips of the global lenses with $|z| \geq 1$, and
\begin{equation} \label{eq:JS4}
    J_{S}(z) = I + O(e^{-c_3 n |z|^2})
    \end{equation}
on the real line outside of the disks around $-\alpha^*$ and $\beta^*$.


\subsection{Final transformation}
\label{subsection:finaltransfo}

The jump matrix \eqref{eq:JS0} on the shrinking circle $|z| = n^{-1/3}$ around
the origin does not tend to the identity matrix as $n \to \infty$. The final
transformation $S\mapsto R$ serves to resolve this issue. An important role
will be played by the special structure of the matrix $N$ in \eqref{eq:N}, see
also \cite{DesK,KMW2}.

Recall that $N(z)$ is analytic for $z$ in a punctured neighborhood of $0$ with a
simple pole at $z=0$. Define
\begin{equation} \label{eq:N0}
    N_0 = \Res_{z=0} \, N(z)
    \end{equation}
    as the residue matrix.
Then we have the splitting
\[ N(z) = \left(N(z) - \frac{N_0}{z} \right) + \frac{N_0}{z} \]
where $N(z) - N_0/z$ is analytic inside the disk $D(0, n^{-1/3})$ and $N_0/z$ is
analytic outside.
Note also that because of \eqref{eq:Nz1Nz2}
we have
\begin{equation} \label{eq:NzN0}
    N(z) N_0 = 0, \qquad N^2(z) = 0, \qquad N_0^2 = 0.
    \end{equation}

The transformation $S \mapsto R$ is now defined as follows.
\begin{definition}
We define
\begin{equation}\label{defR}
    R(z) = \begin{cases} \ds
     S(z)\left(I+N(z)- \frac{N_0}{z} \right), & \textrm{for } z \in D(0,n^{-1/3}) \setminus \Sigma_{S}, \\
     \ds    S(z)\left(I-\frac{N_0}{z} \right),& \textrm{for } z \in \mathbb C \setminus (D(0, n^{-1/3}) \cup \Sigma_{S}),
    \end{cases}
\end{equation}
where $N$ and $N_0$ are given in \eqref{eq:N} and \eqref{eq:N0}, respectively.
\end{definition}

Then $R$ is defined and analytic in $\mathbb C \setminus \Sigma_R$ where
$\Sigma_R = \Sigma_{S}$ and  $R$ satisfies a RH problem of the following form.
\begin{rhp}  \label{rhpforR} \textrm{ }
\begin{enumerate}
   \item[\rm (1)] $R$ is analytic in $\mathbb C \setminus \Sigma_R$.
   \item[\rm (2)]  $R$ satisfies the jumps $R_+ = R_- J_R$ on  $\Sigma_{R}$.
   \item[\rm (3)]  $R(z) = I+O(1/z)$ as $z\to\infty$.
\end{enumerate}
\end{rhp}

The jump matrix $J_R$ for $|z|=n^{-1/3}$ is by \eqref{defR} (recall that we use
the clockwise orientation)
\begin{align*}
    J_R(z) & = \left(I + N(z) - \frac{N_0}{z} \right)^{-1} J_{S}(z) \left( I - \frac{N_0}{z} \right) \\
    & =
    \left(I - N(z) + \frac{N_0}{z} \right) \left( I + N(z) + O(n^{-1/3}) \right) \left( I - \frac{N_0}{z} \right)
    \end{align*}
where the calculation of the inverse of $I + N(z) - N_0/z$ was done with the
help of \eqref{eq:NzN0}. Then if we expand this product of three matrices, many
terms cancel due to \eqref{eq:NzN0}. What remains is simply
\begin{equation} \label{eq:JR0}
    J_R(z) = I + O(n^{-1/3}), \qquad |z| =n^{-1/3},
    \end{equation}
where we also use the fact that $N(z)$ as well as $N_0/z$ are $O(1)$ for $|z| = n^{-1/3}$.

The transformation \eqref{defR} does not change the jump matrices on the other
parts of $\Sigma_R$ in an essential way. Thus $J_R$ tends to the identity
matrix on these parts as well, with a rate of convergence that is the same as
that for $J_{S}$, see \eqref{eq:JS1}--\eqref{eq:JS4}.

We have now achieved the goal of the steepest descent analysis of the RH
problem. The jump matrices for $R$ tend to the identity matrix as $n \to
\infty$, uniformly on $\Sigma_R$ as well as in $L^2(\Sigma_R)$. By standard
arguments, see \cite{Dei}, (and also \cite{BK3} for the case of contours that
are varying with $n$), we conclude that
\begin{equation} \label{eq:Restimate}
    R(z)=I+O\left( \frac{1}{n^{1/3}(1+|z|)}\right)
\end{equation}
as $n\to\infty$, uniformly for $z\in\mathbb{C}\setminus\Sigma_{R}$.

The estimate \eqref{eq:Restimate} is the main outcome of our Deift-Zhou
steepest descent analysis for the RH problem \ref{rhp:Y} for~$Y$. We will use
it to prove the main theorems in the next section.


\section{Proofs of Theorems \ref{theorem:kernelpsi} and \ref{theorem:Painleve2rec}}
\label{section:proofsmaintheorems}

\subsection{Proof of Theorem~\ref{theorem:kernelpsi}}
\label{subsection:proofkernel}

In this section we prove Theorem~\ref{theorem:kernelpsi} by
following the subsequent transformations of the RH problem. We start
with the expression \eqref{correlationkernel} for the correlation
kernel $K_n(x,y)$:
\begin{equation} \label{correlationkernel:Y}
    K_n(x,y) = \frac{1}{2\pi i(x-y)}\begin{pmatrix} 0 & 0 & w_{2,1}(y) &
    w_{2,2}(y)\end{pmatrix} Y_{+}^{-1}(y)Y_{+}(x)\begin{pmatrix} w_{1,1}(x)\\
    w_{1,2}(x)\\ 0 \\ 0
\end{pmatrix}.
\end{equation}

We first assume that both $x$ and $y$ are positive and close to 0.
From the first transformation $Y\mapsto X$ in
\eqref{defX1}--\eqref{defX3}, it follows that
\begin{equation} \label{correlationkernel:X}
    K_n(x,y) = \frac{1}{2\pi i(x-y)}\begin{pmatrix} 0 & 0 & w_{2,1}(y) &
    0\end{pmatrix} X_{+}^{-1}(y)X_{+}(x)\begin{pmatrix} w_{1,1}(x)\\
    0\\ 0 \\ 0 \end{pmatrix}.
\end{equation}

Applying the second transformation $X\mapsto U$ \eqref{defU} to
\eqref{correlationkernel:X}, we have
\begin{equation*} \label{correlationkernel:U}
    K_n(x,y) = \frac{e^{n\left(\frac{1-2t}{4t(1-t)}(y^2-x^2)+k(y-x)\right)}}{2\pi i(x-y)}\begin{pmatrix} 0 & 0 & e^{-n\lambda_{1,+}(y)} &
    0\end{pmatrix} U_{+}^{-1}(y)U_{+}(x)\begin{pmatrix} e^{-n\lambda_{1,+}(x)}\\
    0\\ 0 \\ 0 \end{pmatrix},
\end{equation*}
where $k=-\frac{a_1}{2t}+\frac{b_1}{2(1-t)}$. This constant depends on $n$ and
from \eqref{doublescaling:a}, \eqref{doublescaling:b} and \eqref{tcrit:bis} it
follows that
\begin{equation}\label{kstar}
k^*:=\lim_{n\to\infty} k=-\frac{a_1^*}{2t}+\frac{b_1^*}{2(1-t)} =
-\frac{a_2^*}{2t}+\frac{b_2^*}{2(1-t)}
\end{equation}
with the convergence rate $O(n^{-2/3})$.

By the third transformation $U\mapsto T$ in
\eqref{defT1}--\eqref{defT3} and our assumption on $x,y$, it is
readily seen that
\begin{multline} \label{correlationkernel:T}
    K_n(x,y) = \frac{e^{n\left(\frac{1-2t}{4t(1-t)}(y^2-x^2)+k(y-x)\right)}}{2\pi i(x-y)}\begin{pmatrix}
    -e^{n\lambda_{1,+}(y)} & 0 & e^{-n\lambda_{1,+}(y)} &
    0\end{pmatrix} \\
    \times T_{+}^{-1}(y)T_{+}(x) \begin{pmatrix} e^{-n\lambda_{1,+}(x)}\\
    0\\ e^{n\lambda_{1,+}(x)} \\ 0 \end{pmatrix}.
\end{multline}
For $z\in D(0,n^{-1/3})$, we have from \eqref{def:S} and \eqref{defR} that
\begin{align}\label{repre of s}
    T(z) & =S(z)P^{(0)}(z) =R(z)\left(I+N(z)-\frac{N_0}{z} \right)^{-1}P^{(0)}(z) \nonumber \\
        &=R(z)\left(I-N(z)+ \frac{N_0}{z} \right)P^{(0)}(z).
\end{align}
Substituting \eqref{repre of s} and \eqref{def:localparametrix} into
\eqref{correlationkernel:T}, it then follows that
\begin{multline} \label{correlationkernel:P}
    K_n(x,y) = \frac{e^{n\left(\frac{1-2t}{4t(1-t)}(y^2-x^2)+k(y-x)\right)}}{2\pi i(x-y)} \\
    \times \begin{pmatrix} -1 & 0 & 1 &     0\end{pmatrix}
        M^{-1}_+(n^{2/3}f(y);r_1(y),r_2(y),n^{2/3}s_1(y),n^{2/3}s_2(y)) \\
    \times E_n^{-1}(y) \left(I+N(y)-\frac{N_0}{y} \right)    R^{-1}(y)R(x)\left(I-N(x)+\frac{N_0}{x} \right)E_n(x) \\
    \times M_+(n^{2/3}f(x);r_1(x),r_2(x),n^{2/3}s_1(x),n^{2/3}s_2(x))
    \begin{pmatrix} 1\\ 0\\ 1 \\ 0 \end{pmatrix}.
\end{multline}

Now we fix $u,v>0$ and take
\begin{equation}\label{scale of x y}
        x=\frac{u}{cn^{2/3}},\qquad y=\frac{v}{cn^{2/3}},
\end{equation}
where $c = \frac{1}{\sqrt{t(1-t)}}$ with $t$ in \eqref{tcrit:bis}.
Then for $n$ large enough, $x$ and $y$ are inside the disk $D(0,n^{-1/3})$. It is then readily seen that
\begin{equation}
        \lim_{n\to \infty}e^{n\left(\frac{1-2t}{4t(1-t)}(y^2-x^2)\right)}=1,
\end{equation}
and by \eqref{f:limit},
\[ n^{2/3}f(x)\to  u,\qquad n^{2/3}f(y)\to v, \]
as $n\to\infty$. From \eqref{scale of x y} we also find that
\begin{equation}\label{eq:limit of parameters}
\begin{aligned}
&r_2(x) \to r_2,  &&r_2(y) \to r_2,\\
&n^{2/3}s_1(x)\to s_1, &&n^{2/3}s_1(y)\to s_1,\\
&n^{2/3}s_2(x)\to s_2,&&n^{2/3}s_2(y)\to s_2,
\end{aligned}
\end{equation}
as $n\to\infty$ (see Lemma \ref{lemma:limit of parameters}). Note that
$r_1(x)=r_1(y)=r_1$; compare \eqref{r2:limit}--\eqref{s12:limit} with
\eqref{doublescaling:r}--\eqref{doublescaling:s}. Furthermore, we have that
\begin{equation}\label{RinverseR}
R^{-1}(y)R(x)=I+O\left(\frac{x-y}{n^{1/3}}\right) =
I+O\left(\frac{u-v}{n}\right),
\end{equation}
and in view of \eqref{En:zero} we see that $E_n(x)=O(n^{1/6})$,
$E_n(y)=O(n^{1/6})$ and
\begin{equation}\label{EinverseE}
E_n^{-1}(y)E_n(x) = I+O(n^{-1/3}),
\end{equation}
as $n\to\infty$. The constants implied by the $O$-symbols are independent of
$u$ and $v$ when $u$ and $v$ are restricted to compact subsets of the real line.
Thus, a combination of \eqref{RinverseR}--\eqref{EinverseE} and the fact that
$N(z)-N_0/z$ is uniformly bounded with respect to $z$ near the origin and $n$
gives us
\[
\lim_{n\to\infty}E_n^{-1}(y)
    \left(I+N(y)- \frac{N_0}{y} \right)
    R^{-1}(y)R(x)\left(I-N(x)+\frac{N_0}{x} \right)E_n(x) = I.
\]
Inserting this into \eqref{correlationkernel:P}, we then obtain from
\eqref{kstar} and \eqref{scale of x y}--\eqref{eq:limit of
parameters} that
\begin{align}
\label{correlationkernel:M}
    \nonumber &\lim_{n\to\infty} \frac{e^{k^*(u-v)n^{1/3}/c}}
    {cn^{2/3}}K_n\left(\frac{u}{cn^{2/3}},\frac{v}{cn^{2/3}}\right)
    \\&= \frac{1}{2\pi i(u-v)}\begin{pmatrix} -1 & 0 & 1 &
    0\end{pmatrix} M^{-1}_+(u;r_1,r_2,s_1,s_2)M_+(v;r_1,r_2,s_1,s_2)
    \begin{pmatrix} 1\\
    0\\ 1 \\ 0 \end{pmatrix}.
\end{align}
This, together with \eqref{tacnodekernel:pos}, yields
\begin{align}
    \lim_{n \to \infty} \frac{e^{k^*(u-v)n^{1/3}/c}}{c n^{2/3}}
    K_n \left(  \frac{u}{cn^{2/3}}, \frac{v}{cn^{2/3}}\right)
        = K^{tacnode}(u,v; r_1, r_2, s_1, s_2),
\end{align}
which is \eqref{kernel at tacnode}, since $c_2 = k^*/c$.

The case where $x$ and/or $y$ are negative, or equivalently, $u$ and/or $v$ are
negative can be proved in a similar manner. We do not give details here.

This completes the proof of Theorem \ref{theorem:kernelpsi}. $\bol$

\subsection{Proof of Theorem~\ref{theorem:Painleve2rec}}
\label{subsection:proofrec}

In this section we prove Theorem~\ref{theorem:Painleve2rec}. The proof will be
quite similar to the one in \cite{DelKui1}.

Following the transformations \eqref{defX3}, \eqref{defU}, \eqref{defT3},
\eqref{def:S} and \eqref{defR} in the steepest descent analysis, we have
the following representation for  $z$ sufficiently large in the region between
the contours $\Gamma_r$ and $\Gamma_l$:
\begin{align}
    Y(z) &= L^n U(z) \Lambda^{-n}(z)
    \label{tracebacksteepestdescent}
    = L^n R(z)\left(I+N_0/z\right)P^{(\infty)}(z)\Lambda^{-n}(z),
\end{align}
where $L$ and $\Lambda$ are given in \eqref{defLmx} and \eqref{def:Lam}. As in
\cite{DelKui1}, the following lemma is easy to check.
\begin{lemma} \label{lemmaY1}
For the matrix $Y_1$ in \eqref{asymptoticconditionY0} we have
\begin{equation} \label{Y1formula}
    Y_1 = L^n \left(\Lam_1 + P_1^{(\infty)} + N_0 + R_1 \right) L^{-n},
    \end{equation}
where $\Lam_1$, $P_1^{(\infty)}$ and $R_1$ are matrices  from the expansions as
$z \to \infty$,
\begin{align} \label{asymptoticG}
    \Lam^{-n}(z)L^n & =
    \left(I+\frac{\Lam_1}{z}+O\left(\frac{1}{z^2}\right)\right)\diag(z^{n_1},z^{n_2},z^{-n_1},z^{-n_2}), \\
    \label{asymptoticPinfty}
    P^{(\infty)}(z) & = I+\frac{P_1^{(\infty)}}{z}+O\left(\frac{1}{z^2}\right),\\
    \label{asymptoticR0}
    R(z) & = I+\frac{R_1}{z}+ O\left(\frac{1}{z^2}\right).
\end{align}
\end{lemma}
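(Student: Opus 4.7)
The plan is to substitute the expansions \eqref{asymptoticG}--\eqref{asymptoticR0} into the representation \eqref{tracebacksteepestdescent} and read off the coefficient of $1/z$. First I would rewrite \eqref{asymptoticG} in the equivalent form
\[
    \Lambda^{-n}(z) = \left(I + \frac{\Lambda_1}{z} + O(z^{-2})\right)
    \diag(z^{n_1},z^{n_2},z^{-n_1},z^{-n_2})\, L^{-n},
\]
which is valid because the diagonal matrix $\diag(z^{n_1},z^{n_2},z^{-n_1},z^{-n_2})$ commutes with the diagonal matrix $L^{-n}$. Substituting this into \eqref{tracebacksteepestdescent} yields
\[
    Y(z) = L^n R(z)\left(I + \tfrac{N_0}{z}\right) P^{(\infty)}(z) \left(I + \tfrac{\Lambda_1}{z} + O(z^{-2})\right) L^{-n}
        \diag(z^{n_1},z^{n_2},z^{-n_1},z^{-n_2}),
\]
where again I use that $L^{-n}$ commutes with the final diagonal factor.

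Next I would expand the middle product using \eqref{asymptoticPinfty} and \eqref{asymptoticR0}. At the level of the $1/z$ term the product of the four factors $(I+R_1/z+\cdots)(I+N_0/z)(I+P_1^{(\infty)}/z+\cdots)(I+\Lambda_1/z+\cdots)$ simply contributes the sum of coefficients, so
\[
    L^n R(z)\left(I+\tfrac{N_0}{z}\right)P^{(\infty)}(z)\left(I+\tfrac{\Lambda_1}{z}+O(z^{-2})\right) L^{-n}
    = I + \frac{L^n(R_1+N_0+P_1^{(\infty)}+\Lambda_1)L^{-n}}{z} + O(z^{-2}).
\]
Comparing this with the asymptotic condition \eqref{asymptoticconditionY0} for $Y$, namely $Y(z) = (I + Y_1/z + O(z^{-2}))\diag(z^{n_1},z^{n_2},z^{-n_1},z^{-n_2})$, and reading off the $1/z$ coefficient gives the claimed formula \eqref{Y1formula}.

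This is essentially bookkeeping; the only subtlety is verifying that the representation \eqref{tracebacksteepestdescent} indeed holds for $z$ in the unbounded region between $\Gamma_r$ and $\Gamma_l$, so that the expansion as $z\to\infty$ is taken in a region where no further transformations intervene. This in turn follows by tracing the five transformations $Y\mapsto X\mapsto U\mapsto T\mapsto S\mapsto R$: outside the lenses and the three disks around $-\alpha^*$, $0$, $\beta^*$ we have $X=Y$, $T=U$, $S=TP^{(\infty)-1}$, and $R=S(I-N_0/z)$, so that $Y = L^n R(I+N_0/z)P^{(\infty)}\Lambda^{-n}L^n\cdot L^{-n}$ combined with $U=L^{-n}Y\Lambda^n$ yields \eqref{tracebacksteepestdescent} in that unbounded region, which is sufficient for the asymptotic analysis as $z\to\infty$.
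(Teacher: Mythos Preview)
Your proof is correct and is exactly the straightforward bookkeeping the paper has in mind; the paper itself does not spell out the argument but simply declares the lemma ``easy to check'' by reference to \cite{DelKui1}. The only cosmetic point is that in your final paragraph the chain $Y = L^n R(I+N_0/z)P^{(\infty)}\Lambda^{-n}L^n\cdot L^{-n}$ is slightly garbled (the extra $L^n L^{-n}$ is harmless but unnecessary), and you implicitly use $N_0^2=0$ when inverting $I-N_0/z$, which is justified by \eqref{eq:NzN0}.
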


We are only interested in the combinations $(Y_1)_{1,2}(Y_1)_{2,1}$ and
$(Y_1)_{1,4}(Y_1)_{4,1}$
of entries of $Y_1$. Since $L$ is a diagonal matrix, the factors $L^n$ and
$L^{-n}$ in \eqref{Y1formula} will not play a role for these combinations.
Also, since $\Lam_1$ is a diagonal matrix (which is clear from
\eqref{asymptoticG}, since $\Lam(z)$ and $L$ are both diagonal), this does not
play a role either. Therefore we have for $i < j$,
\begin{equation} \label{cijcjiformula}
    (Y_1)_{i,j} (Y_1)_{j,i} =
    \left(P_1^{(\infty)} + N_0+ R_1 \right)_{i,j}
    \left(P_1^{(\infty)} + N_0+ R_1 \right)_{j,i}.
\end{equation}

In what follows we evaluate $P_1^{(\infty)}$, $N_0$ and $R_1$.
\begin{lemma} \label{lemma:Pinfty1}
The matrix $P_1^{(\infty)}$ in \eqref{asymptoticPinfty} can be written as
\begin{equation}
\label{Pinfty1formula} P_1^{(\infty)} =
    i \sqrt{t(1-t)} \begin{pmatrix}
    0 & 0 & \sqrt{p_1} & 0 \\
    0 & 0 & 0 & \sqrt{p_2} \\
    -\sqrt{p_1} & 0 & 0 & 0 \\
    0 & -\sqrt{p_2} & 0 & 0
\end{pmatrix}.
\end{equation}
\end{lemma}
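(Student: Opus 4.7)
\textbf{Proof proposal for Lemma~\ref{lemma:Pinfty1}.}
The plan is to read off the residue matrix directly from the explicit formula \eqref{globalparametrix} for $P^{(\infty)}(z)$. Since that formula sandwiches a diagonal middle factor between two $z$-independent $4\times 4$ matrices, it suffices to expand $\diag(\gamma_1,\gamma_2^{-1},\gamma_1^{-1},\gamma_2)$ to first order in $1/z$ and then conjugate by these fixed matrices.

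First I would Taylor-expand the quartic roots in \eqref{def:gamma12} as $z\to\infty$:
\begin{equation*}
    \gamma_1(z) = 1 - \tfrac{\beta}{4z} + O(z^{-2}), \qquad
    \gamma_2(z) = 1 - \tfrac{\alpha}{4z} + O(z^{-2}),
\end{equation*}
with reciprocals obtained by flipping the sign of the $1/z$ term. Plugging into \eqref{globalparametrix} yields $P_1^{(\infty)} = A D_1 A^{-1}$, where $A$ denotes the leftmost constant factor of \eqref{globalparametrix} (its inverse being the rightmost factor) and $D_1 = \tfrac{1}{4}\diag(-\beta,\alpha,\beta,-\alpha)$. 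The matrix $A$ has a ``checkerboard'' block structure: it couples only rows and columns $\{1,3\}$, through the $2\times 2$ block $\tfrac{1}{\sqrt 2}\bigl(\begin{smallmatrix}1&i\\i&1\end{smallmatrix}\bigr)$, and rows and columns $\{2,4\}$, through $\tfrac{1}{\sqrt 2}\bigl(\begin{smallmatrix}1&-i\\-i&1\end{smallmatrix}\bigr)$. A short direct calculation (multiply, collect, simplify) shows that conjugating $\diag(-\tfrac{\beta}{4},\tfrac{\beta}{4})$ by the first block produces $\tfrac{i\beta}{4}\bigl(\begin{smallmatrix}0&1\\-1&0\end{smallmatrix}\bigr)$, and likewise for the $\alpha$-block. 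Re-embedding into the full $4\times 4$ framework, this gives
\begin{equation*}
    P_1^{(\infty)} = \frac{i}{4}\begin{pmatrix} 0 & 0 & \beta & 0\\ 0 & 0 & 0 & \alpha\\ -\beta & 0 & 0 & 0\\ 0 & -\alpha & 0 & 0 \end{pmatrix}.
\end{equation*}

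To match \eqref{Pinfty1formula} it remains to trade $\beta,\alpha$ for $\sqrt{t(1-t) p_1},\sqrt{t(1-t)p_2}$. Under the standing convention $\alpha_1^*=\beta_2^*=0$, the identities \eqref{particlesatzero} give $(1-t)a_1^* + tb_1^* = 2\sqrt{p_1^* t(1-t)}$ and $-(1-t)a_2^* - tb_2^* = 2\sqrt{p_2^* t(1-t)}$. Substituting into \eqref{eq:alphaj2}--\eqref{eq:betaj2} one finds $\alpha_1^*=0$, $\beta_1^* = 4\sqrt{p_1^* t(1-t)}$ and $\beta_2^*=0$, $\alpha_2^* = -4\sqrt{p_2^* t(1-t)}$, at which point the explicit expressions \eqref{def:beta} and \eqref{def:alpha} collapse to $\beta^* = \beta_1^*$ and $\alpha^* = -\alpha_2^*$, so that $\beta^* = 4\sqrt{t(1-t)p_1^*}$ and $\alpha^* = 4\sqrt{t(1-t)p_2^*}$. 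In view of \eqref{limit:al and beta} and $p_j = p_j^* + O(1/n)$, this yields \eqref{Pinfty1formula} up to errors of order $n^{-2/3}$, which is the level of precision needed for the application to Theorem~\ref{theorem:Painleve2rec}. The only step requiring care is the sign-and-$i$ bookkeeping in the conjugation $AD_1A^{-1}$; once that is in place, there is no real obstacle and the identification with $\sqrt{p_j}$ is an algebraic consequence of the critical-tangency identities.
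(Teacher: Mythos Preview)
Your proof is correct and follows the natural direct computation that the paper defers to \cite{DelKui1}: expand the diagonal middle factor of \eqref{globalparametrix} to first order in $1/z$ and conjugate by the constant outer matrices, exploiting the $\{1,3\}/\{2,4\}$ block structure. You are also right to flag that the exact result of this computation is
\[
P_1^{(\infty)} = \frac{i}{4}\begin{pmatrix} 0 & 0 & \beta & 0\\ 0 & 0 & 0 & \alpha\\ -\beta & 0 & 0 & 0\\ 0 & -\alpha & 0 & 0 \end{pmatrix},
\]
and that the identification $\beta = 4\sqrt{p_1 t(1-t)}$, $\alpha = 4\sqrt{p_2 t(1-t)}$ holds only up to $O(n^{-2/3})$ in the present modified-equilibrium setting (in \cite{DelKui1} it was exact); this is harmless for Theorem~\ref{theorem:Painleve2rec} since the entries $(1,2),(2,1),(1,4),(4,1)$ vanish identically and the nonzero entries are only used at leading order.
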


This result and its proof are exactly the same as in \cite{DelKui1}.

\begin{lemma} \label{lemma:R1evaluation}
    The matrix $R_1$ in \eqref{asymptoticR0} satisfies
    \begin{equation}
        R_1 = O(n^{-2/3}),
        \qquad \text{ as } n \to \infty.
    \end{equation}
\end{lemma}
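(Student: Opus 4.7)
\textbf{Proof plan for Lemma~\ref{lemma:R1evaluation}.} The plan is to extract $R_1$ via the Sokhotski--Plemelj formula and bound the resulting contour integral piece by piece. Since $R$ solves a small-norm RH problem, expanding $R(z) = I + \frac{1}{2\pi i}\int_{\Sigma_R}\frac{R_-(s)(J_R(s)-I)}{s-z}\,ds$ as $z\to\infty$ yields
\[
    R_1 \;=\; -\frac{1}{2\pi i}\int_{\Sigma_R} R_-(s)\bigl(J_R(s)-I\bigr)\,ds .
\]
From the general estimate \eqref{eq:Restimate}, $R_-(s) = I + O(n^{-1/3})$ uniformly on $\Sigma_R$, so it suffices to estimate $\int_{\Sigma_R} \|J_R-I\|\,|ds|$.

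The contour $\Sigma_R$ splits into four parts, each of which I would treat separately. On the lips of the local and global lenses away from the shrinking disk we have \eqref{eq:JS2}--\eqref{eq:JS3}, which after the transformation $S\mapsto R$ stay exponentially small; these parts contribute $O(e^{-c n^{1/2}})$ and are negligible. The real-axis estimate \eqref{eq:JS4} is similarly negligible. On the two fixed circles around $-\alpha^*$ and $\beta^*$, \eqref{eq:JS1} together with \eqref{defR} gives $J_R-I = O(n^{-1})$ over a contour of fixed length, contributing $O(n^{-1})$. The entire game therefore reduces to the shrinking circle $|z|=n^{-1/3}$, which has length $2\pi n^{-1/3}$.

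The key step is to verify that $J_R - I = O(n^{-1/3})$ on the shrinking circle, so that its contribution is $O(n^{-1/3})\cdot O(n^{-1/3}) = O(n^{-2/3})$, matching the claim. This is the content of \eqref{eq:JR0}, but I would redo the bookkeeping paying attention to orders of magnitude: on $|z|=n^{-1/3}$, both $N(z)$ and $N_0/z$ are $O(1)$, while $N(z)-N_0/z$ is $O(n^{-1/3})$ (since the residue of $N$ accounts for the full $z^{-1}$ singularity and the remainder is analytic with an extra $n^{-1/3}$ prefactor from \eqref{eq:N2}). Writing
\[
    J_R \;=\; \Bigl(I - N(z) + \tfrac{N_0}{z}\Bigr)\bigl(I + N(z) + O(n^{-1/3})\bigr)\Bigl(I - \tfrac{N_0}{z}\Bigr)
\]
and using the nilpotent relations \eqref{eq:NzN0}, namely $N(z)N_0 = N_0 N(z) = N_0^2 = N(z)^2 = 0$, every product of the $O(1)$ pieces telescopes to $I$, leaving only the $O(n^{-1/3})$ remainder conjugated by bounded factors. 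Hence $J_R - I = O(n^{-1/3})$ uniformly on $|z|=n^{-1/3}$.

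The main obstacle is not analytic but organizational: one must confirm that the cancellation driven by the nilpotent structure of $N$ is exact at the leading order and does not leave an $O(1)$ residue, because otherwise the shrinking-circle contribution would only give $O(n^{-1/3})$, not $O(n^{-2/3})$. Once this cancellation is in hand, assembling the four contributions gives
\[
    R_1 = O(n^{-1/3}) \cdot O(n^{-1/3}) + O(n^{-1}) + O(e^{-c n^{1/2}}) = O(n^{-2/3}),
\]
as asserted.
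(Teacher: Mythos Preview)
Your argument is correct and in fact more economical than the paper's own proof. Both proofs localize the dominant contribution to the shrinking circle $|z|=n^{-1/3}$, but they extract the bound on $R_1$ differently. The paper pushes the expansion of $J_R$ on that circle one order further, writing $J_R(z)=I+J_R^{(1)}(z)\,n^{-1/3}+O(n^{-2/3})$, then analyzes the Laurent structure $J_R^{(1)}(z)=C_{3,n}z^{-3}+C_{2,n}z^{-2}+C_{1,n}z^{-1}+C_{0,n}(z)$ with $C_{k,n}=O(n^{-1/3})$, and from this builds $R^{(1)}$ explicitly so that $R_1=C_{1,n}n^{-1/3}=O(n^{-2/3})$. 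Your route bypasses the second-order expansion entirely: you use only $J_R-I=O(n^{-1/3})$ from \eqref{eq:JR0}, feed it into the exact formula $R_1=-\tfrac{1}{2\pi i}\int_{\Sigma_R}R_-(s)(J_R(s)-I)\,ds$, and gain the missing factor of $n^{-1/3}$ from the arclength of the shrinking circle. The paper's approach yields a bit more structural information about $R^{(1)}$ (its explicit rational form outside the disk), but that information is not used elsewhere, so for the stated lemma your contour-length argument is the cleaner one. One minor remark: you invoke $N_0N(z)=0$ in the cancellation, which the paper does not list explicitly among \eqref{eq:NzN0}; it follows immediately from \eqref{eq:Nz1Nz2} by taking the residue in the first variable, and you might say so.
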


\begin{proof}
The asymptotic behavior of $J_R$ in \eqref{eq:JR0} can be extended to
\begin{equation}\label{jumpR:nextterm}
J_R(z) = I+ J_R^{(1)}(z)n^{-1/3}+O(n^{-2/3}),
\end{equation}
as $n\to\infty$, uniformly for all $z$ on the circle $|z|=n^{-1/3}$, for a
certain matrix-valued function $J_R^{(1)}(z)$. An explicit formula for
$J_R^{(1)}(z)$ can be obtained in terms of the matrices $\widetilde M_2$ and
$\widetilde M_3$ in \eqref{M:asymptoticsbis}. For us, it will be sufficient to
know that this matrix has a Laurent series expansion at the origin:
$$ J_R^{(1)}(z) = C_{3,n}\frac{1}{z^3}+C_{2,n} \frac{1}{z^2}+C_{1,n}\frac{1}{z}+C_{0,n}(z),$$
where the matrix function $C_{0,n}(z)$ is analytic at the origin, and where the
matrices $C_{k,n}$, $k=1,2,3$, are depending on $n$ in such a way that the
limiting matrices
\begin{equation}\label{Rexp:bisa} \lim_{n\to\infty} n^{1/3}C_{k,n} =: C_k
\end{equation}
exist for all $k=1,2,3$.

From \eqref{jumpR:nextterm}, standard considerations \cite{DKMVZ1,KMVV} show
that $R$ itself has an expansion of the form
\begin{equation}\label{Rexp:bisb}
R(z) = I+R^{(1)}(z)n^{-1/3}+O(n^{-2/3}),\end{equation} for $n\to\infty$, where
the leading term $R^{(1)}$ is given by
\begin{equation}\label{Rexp:bisc}
R^{(1)}(z) = \left\{\begin{array}{ll}C_{3,n}\frac{1}{z^3}+C_{2,n}
\frac{1}{z^2}+C_{1,n}\frac{1}{z},& \textrm{if }|z|>n^{-1/3},\\
-C_{0,n}(z),& \textrm{ else}.
\end{array}\right.\end{equation}
Taking into account \eqref{Rexp:bisa}--\eqref{Rexp:bisc}, we see that the
matrix $R_1$ in \eqref{asymptoticR0} must be of order $O(n^{-2/3})$, and the
lemma follows.
\end{proof}

\begin{lemma} \label{lemma:N0evaluation}
    We have
    \begin{equation} \label{R1blocks}
        N_0 = K \begin{pmatrix} A & B \\ C & D \end{pmatrix} n^{-1/3} + O(n^{-2/3}),
        \qquad \text{ as } n \to \infty,
    \end{equation}
    where $K$ is the constant defined in \eqref{doublescaling:K},
    and where the $2 \times 2$ blocks $A$, $B$, $C$, $D$ are
    given by
    \begin{align} \label{defA}
    A & = t (b_1^* - b_2^*) \begin{pmatrix} * & - q(\sigma)
    \\ q(\sigma) & * \end{pmatrix}, \\
    \label{defB}
    B & = i\sqrt{t(1-t)(a_1^*-a_2^*)(b_1^*-b_2^*)}
    \begin{pmatrix} * & q(\sigma) \\ q(\sigma) & * \end{pmatrix}, \\
    \label{defC}
    C & = -i\sqrt{t(1-t)(a_1^*-a_2^*)(b_1^*-b_2^*)}
    \begin{pmatrix} * & q(\sigma) \\ q(\sigma) & * \end{pmatrix}, \\
    \label{defD}
    D & = (1-t)(a_1^*-a_2^*)
    \begin{pmatrix} * & -q(\sigma) \\ q(\sigma) & * \end{pmatrix},
    \end{align}
    with the value $\sigma$ given by \eqref{doublescaling:sigma}, with $q$ denoting the
Hastings-McLeod solution to Painlev\'e~II, and where the entries denoted with
$*$ depend on the Hamiltonian $u$ through the constants $c,\til c$ in
\eqref{c:Hamiltonian}--\eqref{ctil:Hamiltonian}.
\end{lemma}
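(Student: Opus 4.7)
The plan is to compute $N_0 = \Res_{z=0} N(z)$ directly from the explicit formula \eqref{eq:N2}. That formula has the factored shape $N(z) = \frac{1}{2n^{1/3}f(z)^{1/2}}\mathcal P_1 X(z)\mathcal P_2$, where $\mathcal P_1,\mathcal P_2$ are the two constant $4\times 4$ matrices in \eqref{eq:N2} and $X(z)$ is the matrix whose only nonzero entries form a $2\times 2$ block $X_{12}(z)$ embedded in the top-right corner. Writing $\mathcal P_1$ in $2\times 2$ block form as $\bigl(\begin{smallmatrix} I & i\sigma_3 \\ i\sigma_3 & I\end{smallmatrix}\bigr)$ with $\sigma_3=\diag(1,-1)$ and similarly for $\mathcal P_2$, a short block calculation gives the four blocks of $\mathcal P_1 X\mathcal P_2$ as $-iX_{12}\sigma_3,\; X_{12},\; \sigma_3 X_{12}\sigma_3,\; i\sigma_3 X_{12}$; this already explains the $*$-pattern in the statement, since the off-diagonal entries of $X_{12}$ (carrying the factor $d$, and hence $q(\sigma)$) land at positions $(1,2)$ and $(2,1)$ of each of the blocks $A,B,C,D$, while the diagonal entries of $X_{12}$ (carrying $c$ and $\tilde c$, hence the Hamiltonian $u$) land on their diagonals.

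Next I would record the leading singular behavior of the branch-cut factors at $z=0$. From Lemma \ref{lemma:limit of parameters}(a) one has $f(z)^{-1/2}\sim (t(1-t))^{1/4}z^{-1/2}$, and from \eqref{def:gamma12} together with \eqref{limit:al and beta} one finds in the upper half plane
\[
\gamma_1^2(z)\sim i\sqrt{\beta^*}\,z^{-1/2},\quad
\gamma_2^{-2}(z)\sim\sqrt{\alpha^*}\,z^{-1/2},\quad
\gamma_1\gamma_2^{-1}(z)\sim(\alpha^*\beta^*)^{1/4}e^{i\pi/4}\,z^{-1/2},
\]
with analogous expressions in the lower half plane. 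The case-split phases $\pm$ and $e^{\mp i\pi/4}$ built into \eqref{eq:N2} are designed so that on each of the cuts $(-\alpha,0)$ and $(0,\beta)$ the boundary jumps of the $\gamma$-functions cancel the flips of these phases, so that $N$ is genuinely meromorphic at $0$ and the residue is unambiguous; moreover the combination $ie^{\mp i\pi/4}\cdot e^{\pm i\pi/4}=i$ appearing in the $d$-entries yields real prefactors. The analytic factors $e^{\pm n\kappa z}$, $c(z), \tilde c(z), d(z)$ contribute only through their values at $z=0$ (higher Taylor terms multiply $z^{-1}\mapsto z^0,z^1,\dots$ and drop out of the residue), and by Lemma \ref{lemma:limit of parameters}(c) and Theorem \ref{theorem:Painleve2modelrhp} those values converge as $n\to\infty$ to the expressions at the parameters \eqref{doublescaling:r}--\eqref{doublescaling:s}; in particular $d\to\frac{(p_1^*p_2^*)^{1/24}}{(\sqrt{p_1^*}+\sqrt{p_2^*})^{1/3}}q(\sigma)$ by \eqref{d:Painleve2}, while $c,\tilde c$ are expressed through $u(\sigma)$ by \eqref{c:Hamiltonian}--\eqref{ctil:Hamiltonian}.

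Assembling the pieces, the sixteen entries of $N_0$ are read off in closed form. The scalar simplification uses $\sqrt{\beta^*}(t(1-t))^{1/4}=2(p_1^*)^{1/4}(t(1-t))^{1/2}$, $\sqrt{\alpha^*}(t(1-t))^{1/4}=2(p_2^*)^{1/4}(t(1-t))^{1/2}$ from \eqref{def:alphastar}--\eqref{def:betastar}, the critical-separation identity $(a_1^*-a_2^*)(b_1^*-b_2^*)=(\sqrt{p_1^*}+\sqrt{p_2^*})^2$ from \eqref{criticalseparation:tris}, and the identity $t_{\crit}(b_1^*-b_2^*)=(1-t_{\crit})(a_1^*-a_2^*)$ from \eqref{tcrit:bis}. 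The fractional $p^*$-exponents combine as $\tfrac{1}{8}+\tfrac{1}{24}=\tfrac{1}{6}$ and one power of $\sqrt{p_1^*}+\sqrt{p_2^*}$ cancels between the denominator of the Painlev\'e prefactor $(p_1^*p_2^*)^{1/24}/(\sqrt{p_1^*}+\sqrt{p_2^*})^{1/3}$ and the factor $\sqrt{(a_1^*-a_2^*)(b_1^*-b_2^*)}=\sqrt{p_1^*}+\sqrt{p_2^*}$, producing exactly the constant $K$ of \eqref{doublescaling:K} multiplying the scalar block coefficients $t(b_1^*-b_2^*)$, $\pm i\sqrt{t(1-t)(a_1^*-a_2^*)(b_1^*-b_2^*)}$, and $(1-t)(a_1^*-a_2^*)$ seen in $A,B,C,D$. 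The $O(n^{-2/3})$ remainder reflects the next order in the local expansions of $\gamma_j,f,c,\tilde c,d$ at $z=0$ as well as the next correction in $r_2(z)$ and $n^{2/3}s_j(z)$ as $n\to\infty$.

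The main obstacle is the branch-cut bookkeeping. The formula \eqref{eq:N2} is a genuine case-split across the real axis near $0$, and one must verify carefully that on each of the cuts $(-\alpha,0)$ and $(0,\beta)$ the boundary values of $\gamma_1^2,\gamma_2^{-2},\gamma_1\gamma_2^{-1}$ exactly compensate the sign and phase flips in \eqref{eq:N2}, so that the principal part at $0$ is the same from both sides and produces the real block coefficients required by \eqref{defA}--\eqref{defD}. Once this bookkeeping is done, the remaining algebra is routine.
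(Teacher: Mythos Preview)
Your proposal is correct and follows essentially the same approach as the paper: the paper's proof simply says that $N_0$ is the residue of $N(z)$ at the origin and that the lemma follows by computing this residue from \eqref{eq:N2} together with the formulas \eqref{tcrit:bis}, \eqref{d:Painleve2}--\eqref{ctil:Hamiltonian}, \eqref{doublescaling:r}--\eqref{doublescaling:s}, \eqref{def:gamma12}, \eqref{f:limit}, and Remark~\ref{remark:compare}. Your write-up is precisely a fleshed-out version of that computation, with the block decomposition via $\sigma_3$, the local expansions of $\gamma_1,\gamma_2,f$ at $0$, and the algebraic reduction using the critical-separation identities being exactly the ingredients the paper points to but does not spell out.
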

\begin{proof} Recall that by definition,
$N_0$ is the residue of the matrix $N$ in \eqref{eq:N} at the origin. The lemma
then follows by computing this residue on account of the formulas in
\eqref{tcrit:bis}, \eqref{d:Painleve2}--\eqref{ctil:Hamiltonian},
\eqref{doublescaling:r}--\eqref{doublescaling:s}, \eqref{def:gamma12} and
\eqref{f:limit}, taking also into account Remark~\ref{remark:compare}.
\end{proof}

\begin{remark}\label{remark:compare}
For facility of comparison, we have stated the above lemma in exactly the same
way as in \cite{DelKui1}. We note however that the statement can be
considerably simplified in the present case, since we are now looking exactly
at the critical time $t=t_{\crit}$. This means that the variable $t$ in the
above formulas can be substituted by \eqref{tcrit:bis}. This implies in
particular that in \eqref{defA}--\eqref{defD} we have
$$ t (b_1^* - b_2^*) = \sqrt{t(1-t)(a_1^*-a_2^*)(b_1^*-b_2^*)} = (1-t)(a_1^*-a_2^*).
$$
Further simplification can be obtained by substituting
\eqref{criticalseparation:tris}.
\end{remark}

Lemmas~\ref{lemma:Pinfty1} and \ref{lemma:N0evaluation} have exactly the same
form as the corresponding results in \cite{DelKui1}. (The entries denoted with
$*$ in Lemma~\ref{lemma:N0evaluation} are slightly different when compared to
\cite{DelKui1}, but this turns out to be irrelevant.) Hence
Theorem~\ref{theorem:Painleve2rec} (and also the analogous result for the \lq
diagonal recurrence coefficients\rq) follows from the calculations in
\cite{DelKui1}.

\section*{Acknowledgements}

We thank Pavel Bleher, Tom Claeys, and Alexander Its for useful
discussions.

S.~Delvaux is a Postdoctoral Fellow of the Fund for Scientific Research - Flanders
(Belgium).

A.B.J.~Kuijlaars is supported by K.U.\ Leuven research
grant OT/08/33, FWO-Flanders project G.0427.09, by the Belgian Interuniversity
Attraction Pole P06/02, and by grant MTM2008-06689-C02-01 of the Spanish Ministry
of Science and Innovation.

L.~Zhang is supported by  FWO-Flanders project G.0427.09.

\end{document}